\documentclass{amsart}

\usepackage{graphicx}
\usepackage{amssymb, amsmath, amsthm}
\usepackage{geometry}
\usepackage{lipsum}
\usepackage{enumerate}
\usepackage{dsfont}
\usepackage{mathrsfs}
\usepackage{xcolor}

\input xy
\xyoption{all}

\usepackage{hyperref}

\newtheorem{theorem}{Theorem}[section]
\newtheorem{lemma}[theorem]{Lemma}
\newtheorem{corollary}[theorem]{Corollary}
\newtheorem{proposition}[theorem]{Proposition}
\newtheorem{lemma-definition}[theorem]{Lemma-Definition}

\theoremstyle{definition}
\newtheorem{definition}[theorem]{Definition}
\newtheorem{example}[theorem]{Example}

\newtheorem{parrafo}[theorem]{\unskip}

\theoremstyle{remark}
\newtheorem{remark}[theorem]{Remark}
\numberwithin{equation}{section}



\setlength{\topmargin}{0cm}
\setlength{\textheight}{23cm}
\setlength{\textwidth}{6.5in}
\setlength{\oddsidemargin}{-0.50cm}
\setlength{\evensidemargin}{-0.50cm}

\newcommand{\mb}{\mathbb}
\newcommand{\mbvtex}{\mathbb}

\newcommand{\xvtex}{\mathbf{x}}

\newcommand{\yvtex}{\mathbf{y}}
\newcommand{\z}{\mathbf{z}}

\newcommand{\e}{\mathbf{e}}

\newcommand{\Diff}{\mathit{Diff}}
\newcommand{\m}{\mathfrak{m}}
\newcommand{\mvtex}{\mathfrak{m}}

\newcommand{\pvtex}{\mathfrak{p}}
\newcommand{\Spec}{\operatorname{Spec}}

\makeatletter
\def\@seccntformat#1{\@ifundefined{#1@cntformat}%
	{\csname the#1\endcsname\quad}
	{\csname #1@cntformat\endcsname}}
\newcommand{\section@cntformat}{\S\thesection\quad}
\newcommand{\subsection@cntformat}{\S\thesubsection\quad}
\makeatletter
\begin{document}

	\title{Multiplicity along points of a radicial covering of a regular variety}
	
	\author{D. Sulca}
	\address{Facultad de Matem\'atica, Astronom\'ia, F\'isica y Computaci\'on, Universidad Nacional de C\'ordoba, Ciudad Universitaria, X5000HUA C\'ordoba, Argentina.}
	
	\email{sulca@famaf.unc.edu.ar}
	
	\author{O. E. Villamayor U.}
	\address{Dpto de Matem\'aticas, Universidad Aut\'onoma de Madrid and Instituto de Ciencias Matem\'aticas CSIC-UAM-UC3M-UCM, Ciudad Universitaria de Cantoblanco, 28049 Madrid, Spain.
	}
	\email{villamayor@uam.es}
	
	\thanks{Partially supported from the Spanish Ministry of Economy and Competitiveness, through the ``Severo Ochoa'' Program for Centres of Excellence in R\&D (SEV-2015-0554)}
	
	\subjclass[2010]{14E15}
	
	\date{\today}

	\keywords{multiplicity, singularities}
	
	\begin{abstract}
		We study the maximal multiplicity locus of a variety $X$ over a field of
		characteristic $p>0$ that is provided with a finite surjective radicial
		morphism $\delta:X\rightarrow V$, where $V$ is regular, for example, when
		$X\subset\mathbb{A}^{n+1}$ is a hypersurface defined by an equation of
		the form $T^{q}-f(x_{1},\ldots,x_{n})=0$ and $\delta$ is the projection
		onto $V:=\operatorname{Spec}(k[x_{1},\ldots,x_{n}])$. The
		multiplicity along
		points of $X$ is bounded by the degree, say $d$, of the field extension
		$K(V)\subset K(X)$. We denote by $F_{d}(X)\subset X$ the set of points
		of multiplicity~$d$. Our guiding line is the search for invariants of
		singularities
		$x\in F_{d}(X)$ with a good behavior property under blowups
		$X'\rightarrow X$ along regular centers included in $F_{d}(X)$, which we
		call \emph{invariants with the pointwise inequality property}.
		
		A finite radicial morphism $\delta:X\to V$ as above will be expressed
		in terms of an $\mathcal{O}_{V}^{q}$-submodule
		$\mathscr{M}\subseteq\mathcal{O}_{V}$. A~blowup $X'\to X$ along a regular
		equimultiple center included in $F_{d}(X)$ induces a blowup
		$V'\to V$ along a regular center and a finite morphism
		$\delta':X'\to V'$. A~notion of transform of the
		$\mathcal{O}_{V}^{q}$-module $\mathscr{M}\subset\mathcal{O}_{V}$ to an
		$\mathcal{O}_{V'}^{q}$-module $\mathscr{M}'\subset\mathcal{O}_{V'}$ will
		be defined in such a way that $\delta':X'\to V'$ is the radicial morphism
		defined by $\mathscr{M}'$. Our search for invariants relies on techniques
		involving differential operators on regular varieties and also on logarithmic
		differential operators. Indeed, the different invariants we introduce and
		the stratification they define will be expressed in terms of ideals obtained
		by evaluating differential operators of~$V$ on $\mathcal{O}_{V}^{q}$-submodules
		$\mathscr{M}\subset\mathcal{O}_{V}$.
	\end{abstract}

	\maketitle
	
	\section{Introduction}
	
	\noindent Let $k$ be a field of characteristic $p>0$, and fix $q=p^{e}$, a power
	of $p$. Let $X\subset\mathbb{A}^{n+1}_{k}$ be a hypersurface defined by
	a polynomial equation of the form $T^{q}-f(x_{1},\ldots,x_{n})=0$. The
	maximal multiplicity along points of $X$ is $\leq q$, and we denote by
	$F_{q}(X)\subset X$ the set of points where the multiplicity is $q$. This
	subset is closed, as is indicated below, and when it is nonempty, it can
	be thought of as the set of the worst singularities. Our guideline is the
	search for invariants of singularities $x\in F_{q}(X)$. We focus our attention
	on the projection $X\to V:=\mathbb{A}_{k}^{n}$, which is a finite morphism
	of generic rank $q$, rather than on the immersion
	$X\subset\mathbb{A}_{k}^{n+1}$. The following theorem, which collects
	results taken from
	\cite[Sections~5 and 6]{villamayor2014equimultiplicity}, serves as starting
	point in our discussion.
	
	\begin{theorem}%
		\label{finite_morphisms_and_closed_subsets}
		Let $\delta:X\rightarrow V$ be a finite and surjective morphism of Noetherian
		integral schemes, where $V$ is excellent and regular, and
		$\dim\mathcal{O}_{V,x}$ is constant along closed points. We set
		$d:=[K(X):K(V)]$, the generic rank. Then the multiplicity along points
		of $X$ is at most $d$. Let $F_{d}(X)$ denote the set of points where the
		multiplicity is $d$ and assume that it is nonempty. Then the following
		holds.
		\begin{enumerate}[(2)]
			\item[(1)]$F_{d}(X)\subset X$ is closed and homeomorphic to its image
			$\delta(F_{q}(X))\subset V$ (via $\delta$); moreover,
			$F_{d}(X)=\delta^{-1}(\delta(F_{d}(X))$.
			\item[(2)] An integral subscheme $Y\subset X$ included in $F_{d}(X)$ is regular
			if and only if its schematic image $\delta(Y)\subset V$ is also regular.
			In that case the blowup of $X$ along $Y$ and the blowup of $V$ along
			$\delta(Y)$ fit into a commutative diagram
			%
			%
			\begin{equation}
				\label{2421} \xymatrix{X\ar[d]_{\delta} &\ar[l] X_{1}\ar[d]^{\delta_{1}}\\
					V&\ar[l] V_{1} }
			\end{equation}
			for a unique morphism $\delta_{1}:X_{1}\rightarrow V_{1}$. This morphism
			is again finite and surjective and has generic rank $d$. In particular,
			the maximal multiplicity of $X_{1}$ is $\leq d$.
		\end{enumerate}
	\end{theorem}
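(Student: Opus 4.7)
The plan for part (1) is to combine the associativity formula for multiplicities with upper semicontinuity. For $x\in X$ with image $v=\delta(x)$, finiteness of $\delta$ makes $\mathfrak{m}_{v}\mathcal{O}_{X,x}$ an $\mathfrak{m}_{x}$-primary ideal, and since $\mathcal{O}_{V,v}$ is regular the length/multiplicity formula applied to $\mathcal{O}_{X,x}$ as a finite $\mathcal{O}_{V,v}$-module of generic rank $d$ gives $e_{\mathfrak{m}_{v}\mathcal{O}_{X,x}}(\mathcal{O}_{X,x})\leq d$; using $\mathfrak{m}_{v}\mathcal{O}_{X,x}\subseteq\mathfrak{m}_{x}$ one concludes $e_{\mathfrak{m}_{x}}(\mathcal{O}_{X,x})\leq d$, and upper semicontinuity shows $F_{d}(X)$ is closed. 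When $x\in F_{d}(X)$, equality must be attained at every step of the comparison, which forces $\sqrt{\mathfrak{m}_{v}\mathcal{O}_{X,x}}=\mathfrak{m}_{x}$ and leaves no room for a second point in the fiber $\delta^{-1}(v)$. Consequently $\delta$ is injective on $F_{d}(X)$; since $\delta$ is finite and hence closed, the restriction $\delta|_{F_{d}(X)}$ is a homeomorphism onto a closed subset of $V$, and the identity $F_{d}(X)=\delta^{-1}(\delta(F_{d}(X)))$ is then immediate from the uniqueness of the preimage.

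For part (2), let $Y\subset F_{d}(X)$ be integral and put $Z:=\delta(Y)$, so that $\delta|_{Y}:Y\to Z$ is a finite bijection by (1). The crucial point is that equimultiplicity forces a tight algebraic relationship between the ideal sheaves $I_{Y}\subset\mathcal{O}_{X}$ and $I_{Z}\subset\mathcal{O}_{V}$. The approach is to reduce to the complete local situation and present $\mathcal{O}_{X,x}$ via the radicial model alluded to in the abstract (an $\mathcal{O}_{V}^{q}$-submodule $\mathscr{M}\subset\mathcal{O}_{V}$); in the hypersurface toy case $\mathcal{O}_{X,x}=\mathcal{O}_{V,v}[T]/(T^{q}-f)$ the condition $Y\subseteq F_{d}(X)$ translates to $f\in I_{Z}^{q}$, which gives a canonical isomorphism $\mathcal{O}_{Y,y}\simeq\mathcal{O}_{Z,z}$ at every point of $Y$, so that $Y$ is regular precisely when $Z$ is regular.

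For the commutative square~\eqref{2421} I would use the universal property of blowups. The blowup $V_{1}\to V$ makes $I_{Z}\mathcal{O}_{V_{1}}$ invertible, hence $I_{Z}\mathcal{O}_{X\times_{V}V_{1}}$ is invertible on the fibre product. On $X_{1}=\mathrm{Bl}_{Y}X$ the ideal $I_{Y}\mathcal{O}_{X_{1}}$ is invertible by construction; combining this with the tightness from the previous paragraph (in the hypersurface model: $I_{Y}^{q}\subseteq I_{Z}\mathcal{O}_{X}$ modulo the defining equation of $X$) one checks chart by chart that $I_{Z}\mathcal{O}_{X_{1}}$ is also invertible. The universal property then yields a unique morphism $\delta_{1}:X_{1}\to V_{1}$ making the square commute; finiteness of $\delta_{1}$ follows from its factoring through the finite morphism $X\times_{V}V_{1}\to V_{1}$, and the generic rank is preserved because both vertical maps are birational modifications over the common generic point.

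The main obstacle is the algebraic tightness invoked in the second paragraph: one must verify, in the intrinsic language of the $\mathcal{O}_{V}^{q}$-submodule $\mathscr{M}$ describing $\delta$, that equimultiplicity along $Y$ forces the relevant generators of $\mathscr{M}$ to lie in high enough powers of $I_{Z}$ to kill the difference between $\mathcal{O}_{Y,y}$ and $\mathcal{O}_{Z,z}$, and to control this tightness under the simultaneous blowup. I would handle this by completing at points of $Y$, reducing to the hypersurface model $T^{q}-f$ to carry out the explicit computation, and then translating back to the intrinsic formulation in terms of $\mathscr{M}$.
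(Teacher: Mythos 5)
Your overall strategy for part (1) tracks the paper's: the paper also invokes Zariski's multiplicity formula together with Rees's theorem on multiplicity and integral closure (see its \ref[Lemma]{lem1}), and derives the homeomorphism exactly as you indicate. For closedness, however, the paper does not appeal to upper semicontinuity of the multiplicity function in the abstract (a deep theorem in this generality, due to Dade, which it only cites); in the radicial case the paper instead identifies $\delta(F_d(X))$ with the vanishing locus of the explicit $\mathcal{O}_V$-ideal $\Diff_{V,+}^{q-1}(\mathscr{M})$, so that closedness is immediate from the Jacobian criterion. You are also invoking a result whose proof you do not supply.

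Part (2) has two genuine gaps. First, the translation you write down in the hypersurface model is wrong: for the equation $T^{q}-f$, the condition $Y\subseteq F_d(X)$ with $Z=\delta(Y)$ is \emph{not} $f\in I_{Z}^{q}$ but $f\in\mathcal{O}_{V}^{q}+I_{Z}^{q}$ (see \ref[Proposition]{0310}(3)); the ambient $q$-power class of $f$ is exactly what the change of variables $T\mapsto T+g$ alters, so dropping the $\mathcal{O}_V^q$ summand breaks the argument even in the toy case. Second, you cannot always reduce to the hypersurface model: a radicial $\delta\in\mathscr{C}_q(V)$ is given by an $\mathcal{O}_{V}^{q}$-submodule $\mathscr{M}\subset\mathcal{O}_V$ that need not be principal, so $\mathcal{O}_{X,x}$ is generally not $\mathcal{O}_{V,v}[T]/(T^q-f)$. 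The paper avoids both problems by working with $p$-bases of $\mathcal{O}_{V,v}$ and with the integral-closure identity $\mathcal{I}(Y)=\overline{\mathcal{I}(Z)\cdot\mathcal{O}_X}$ directly, which is what then drives the Rees-algebra/Proj construction of $\delta_1$ in \ref[Proposition]{blow-ups_of_radicial_morphisms}; your universal-property route for the square is plausible but you would still need that integral-closure tightness, stated intrinsically for $\mathscr{M}$, and it is not established by your hypersurface computation. Finally, note the theorem is stated for \emph{arbitrary} finite surjective morphisms, so a reduction to the radicial case has to be justified (the paper handles this by citation and only re-proves the statement for $\mathscr{C}_q(V)$, remarking the argument extends).
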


\begin{remark}%
	\label{remark_of_first_theorem}
	\begin{enumerate}[(iii)]
		\item[(i)] The closeness of $F_{d}(X)$ under these general hypothesis
		was proved
		by Dade \cite{dade1960multiplicity}; see
		\cite[Remark 6.13]{villamayor2014equimultiplicity}.
		\item[(ii)] The approach of viewing a variety, or a scheme, as a
		finite cover
		of a regular one, has been shown to be efficient when studying the multiplicity
		along singular points. For instance, Lipman
		\cite{lipman1982equimultiplicity} uses it to discuss the multiplicity of
		complex analytic and algebraic varieties and the effect of blowing up along
		equimultiple centers. This idea was taken further in
		\cite{villamayor2014equimultiplicity}, by the second author, to give an
		alternative proof of resolution of singularities in characteristic zero
		by using the multiplicity as the main invariant.
		\item[(iii)] Assume that $V=\Spec(S)$, where $S$ is of finite type
		over a field
		$k$, and that $X\subset V\times\mathbb{A}_{k}^{1}$ is defined by a polynomial
		of the form $T^{d}+f_{1} T^{d-1}+\cdots+f_{d}$, where $f_{i}\in S$, and
		$d$ is not divisible by the characteristic of $k$. By using elimination
		theory we can produce an $\mathcal{O}_{V}$-ideal $\mathcal{J}$ and a positive
		integer $b$ such that
		$\delta(F_{d}(X))=\operatorname{Sing}(\mathcal{J},b):=\{x\in V: \nu_{x}(
		\mathcal{J})\geq b\}$ and such that this description holds under any sequence
		of blowups that arises by applying Theorem \ref{finite_morphisms_and_closed_subsets}(2) successively. Here
		$\nu_{x}(\mathcal{J})$ denotes the order of $\mathcal{J}$ at the regular
		local ring $\mathcal{O}_{V,x}$. We refer to
		\cite[Section~3]{villamayor2014equimultiplicity} for a detailed discussion
		(see also \cite{sulca2018introduction}). We only stress the fact that
		$(\mathcal{J},b)$ is obtained quite explicitly: it is determined by the
		weighted homogeneous polynomials on the coefficients
		$f_{1},\ldots,f_{d}$ that are invariant under the change of variables
		$T\mapsto T+\lambda$, $\lambda\in S$, such as the discriminant
		\cite[Theorem 3.5]{villamayor2014equimultiplicity}.
	\end{enumerate}
\end{remark}

We return to our original situation, where $X$ is a hypersurface defined
by a purely inseparable equation $T^{q}-f(x_{1},\ldots,x_{n})=0$, and
where $\delta:X\to V$ is the projection. When applying the theorem to
this morphism, the resulting $\delta_{1}:X_{1}\to V_{1}$ is locally in
the same situation as $X\to V$, that is, defined by a purely inseparable
equation of degree $q$. This process can be repeated as long as the maximal
multiplicity along points of $X_{1}$ is $q$. The intention in this first
part of the introduction is to illustrate how the study of the singularities
along points $x\in F_{q}(X)$ and of blowups along regular centers included
in $F_{q}(X)$ naturally leads to the consideration of
$\mathcal{O}_{V}^{q}$-submodules $\mathscr{M}\subset\mathcal
{O}_{V}$ and
transformations of $\mathcal{O}_{V}^{q}$-submodules under suitable blowups
of the regular scheme $V$.

In contrast to the situation described in Remark \ref{remark_of_first_theorem}(iii), with $S=k[x_{1},\ldots,x_{n}]$, here
the only nonzero coefficient of the equation (apart from the principal
one) is the constant coefficient $f:=f(x_{1},\ldots,x_{n})$, and the change
of variables $T':=T+g$ with $g\in S$ produces the equation
${T'}^{q}-(f+g^{q})=0$, that is, $f$ is changed by $f+g^{q}$. This tells
us that we should not consider the ideal
$\langle f\rangle\subset S$ if we attempt to find a substitute for
$(\mathcal{J},b)$, since this ideal has nothing to do with
$\langle f+g^{q}\rangle$. Instead, we may consider the element $f$ only
up to equivalence, where $f\sim f'$ if $f-f'\in S^{q}$. In the case that
$S$ is a more general regular ring, we can also perform a change of variable
of the form $T':= u^{-1} T$ with $u\in S^{*}$, and the equation will take
the form $T^{q}-f u^{q}$. This suggests that it is a better idea to take
the full $S^{q}$-submodule $M\subset S$ generated by $f$ and to consider
$S^{q}$-submodules only up to equivalence, where $M\sim M'$ if
$M+S^{q}=M'+S^{q}$. Finally, if we are willing to completely forget the
immersion $X\subset\mathbb{A}_{k}^{n+1}$ and regard $X$ simply as a
$V$-scheme, then we have to look at the whole $S^{q}$-subalgebra of
$S$ generated by $f$, denoted by $S^{q}[f]$.

In general, we will attach to any $S^{q}$-submodule $M\subset S$ a finite
morphism $X\to V$ so that $M$ and $M'$ define the same morphism if and
only if $S^{q}[M]=S^{q}[M']$. One important step in our search for invariants
is the introduction of assignments of $S$-ideals $M\to I(M)$ so that
$I(M)=I(S^{q}[M])$. Our main device will be higher-order differential operators
that are $S^{q}$-linear, such as
${\partial^{\alpha}}/{\partial x^{\alpha}}$, with
$\alpha=(\alpha_{1},\ldots,\alpha_{n})\neq0$ and
$\sum_{i=1}^{n} \alpha_{i}<q$. To be more precise, for
$i=1,\ldots,q-1$, we denote by $\operatorname{Diff}_{S,+}^{i}$ the $S$-module
of differential operators $D:S\to S$ of order $\leq i$ such that
$D(1)=0$ (see \ref{Differential_operators}); for example,
$\operatorname{Diff}_{S,+}^{1}=\operatorname{Der}_{S}$, the set of
derivations on
$S$. All these operators are $S^{q}$-linear, and we have
$\operatorname{Diff}_{S,+}^{i}(S^{q}[M])=\operatorname
{Diff}_{S,+}^{i}(M)$ (Proposition \ref{Diff(M)=Diff(O^q[M])}).

We return to our hypersurface $X\subset\mathbb{A}_{k}^{n+1}$ defined by
the polynomial $\Theta(T):=T^{q}-f(x_{1},\ldots,x_{n})$. Recall that
$\delta:X\to V=\mathbb{A}_{k}^{n}$ denotes the projection. The following
proposition (reformulated and generalized in Proposition \ref{d(F(X))_is_closed}) offers a first example of the role played by
differential
operators.

\begin{proposition}
	$\delta(F_{q}(X))\subset V$ is the closed subset defined by the $S$-ideal
	$\operatorname{Diff}_{S,+}^{q-1}(f)=\operatorname
	{Diff}_{S,+}^{q-1}(S^{q}[f])$.
\end{proposition}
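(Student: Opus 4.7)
The plan is to reduce the claim to a pointwise verification at each $v \in V$ (which, since $\delta$ is radicial, has a unique preimage $x \in X$) and to exploit characteristic $p$ to eliminate the contributions of differential operators that differentiate in the $T$-direction. Specifically, I would show that $\operatorname{mult}_x(X) = q$ if and only if $D(f) \in \mathfrak{p}_v$ for every $D \in \operatorname{Diff}_{S,+}^{q-1}$, which is precisely the condition $v \in V(\operatorname{Diff}_{S,+}^{q-1}(f))$.

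Since $X = V(\Theta) \subset \mathbb{A}_k^{n+1}$ is a hypersurface in a regular scheme, the multiplicity at $x$ equals the order of $\Theta = T^q - f$ in the regular local ring $R = \mathcal{O}_{\mathbb{A}^{n+1}, x}$, and the standard differential-operator characterization says that $\operatorname{ord}_{\mathfrak{m}_R}(\Theta) \geq q$ iff $D(\Theta) \in \mathfrak{m}_R$ for every $D \in \operatorname{Diff}_R^{q-1}$. The key computation then uses the decomposition $\operatorname{Diff}_{S[T]}^{q-1} = \sum_{i+j \leq q-1} \operatorname{Diff}_S^{i} \cdot D_T^{(j)}$ in Hasse--Schmidt form: any summand with $j \geq 1$ annihilates $\Theta$, because $D_T^{(j)}(T^q) = \binom{q}{j} T^{q-j} = 0$ in characteristic $p$ for $0 < j < q = p^e$, while $D_T^{(j)}(f) = 0$ since $f$ does not involve $T$. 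Only the purely-in-$x$ operators $D \in \operatorname{Diff}_S^{q-1}$ contribute, and on these $D(\Theta) = -D(f)$.

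The identity piece only records $\Theta \in \mathfrak{m}_R$, which holds automatically because $x \in X$. For each $D \in \operatorname{Diff}_{S,+}^{q-1}$, the output $D(f)$ lies in $S$, so $D(f) \in \mathfrak{m}_R$ is equivalent to $D(f) \in \mathfrak{p}_v$ (as $\mathfrak{m}_R \cap S = \mathfrak{p}_v$). Combining with the a priori bound $\operatorname{mult}_x(X) \leq q$ from Theorem~\ref{finite_morphisms_and_closed_subsets} gives the claimed equivalence, hence $\delta(F_q(X)) = V(\operatorname{Diff}_{S,+}^{q-1}(f))$ as subsets of $V$; the second equality $\operatorname{Diff}_{S,+}^{q-1}(f) = \operatorname{Diff}_{S,+}^{q-1}(S^q[f])$ is Proposition~\ref{Diff(M)=Diff(O^q[M])}.

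The delicate point is to justify the differential-operator characterization of order at points $x$ that are not $k$-rational; in particular, when $T(x)$ lies in a purely inseparable extension of $\kappa(v)$, the element $T$ itself is not a regular parameter of $R$, and one must check that the Hasse--Schmidt splitting above still functions as a legitimate description of $\operatorname{Diff}_R^{q-1}$. I would handle this by passing to the completion $\widehat{R}$, if necessary after a faithfully flat base change trivializing $\kappa(x)/\kappa(v)$: the order of $\Theta$ and the mod-$p$ vanishing of the binomial coefficients $\binom{q}{j}$ are both preserved under such extensions, and in the completion the splitting is immediate.
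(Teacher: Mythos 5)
Your argument matches the paper's remark almost exactly: both apply the differential-operator (Jacobian) characterization of the locus where $\Theta=T^{q}-f$ has order $\geq q$ in $\mathbb{A}^{n+1}$, and then observe that every Taylor operator of $S[T]$ with a nonzero $T$-component annihilates $\Theta$ in characteristic $p$, so only the $S$-direction piece $\operatorname{Diff}_{S,+}^{q-1}(f)$ survives. Your closing worry about points where $\kappa(x)/\kappa(v)$ is inseparable is a red herring coming from conflating a $p$-basis with a regular system of parameters: $\{x_{1},\ldots,x_{n},T\}$ stays a $p$-basis of every localization $\mathcal{O}_{\mathbb{A}^{n+1},x}$ whether or not $T$ is a regular parameter there, and the Taylor decomposition depends only on the $p$-basis and survives localization by (\ref{S^-1Diff(R)=Diff(S^-1R)}), so no completion or faithfully flat base change is needed; more directly still, the ideal $\operatorname{Diff}_{S[T]}^{q-1}(\Theta)\subseteq S[T]$ is computed once globally and Proposition \ref{Jacobian_criterion:_global_version} applies scheme-theoretically, so the pointwise framing that created the worry was never necessary.
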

We remark that this is a direct consequence of the classical fact that
$F_{q}(X)\subset\mathbb{A}_{k}^{n+1}$ (the set of points where the order
of $\Theta(T)$ is $\geq q$) is the closed subset of
$\mathbb{A}_{k}^{n+1}$ defined by the ideal
$\operatorname{Diff}_{S[T]}^{q-1}(\Theta(T))$. Indeed, this ideal is generated
by $\Theta(T)$, $\operatorname{Diff}_{S,+}^{q-1}(\Theta(T))$, and
$({\partial^{i}}/{\partial T^{i}})(\Theta(T))$,
$i=1,\ldots,q-1$, where differential operators of $S$ are extended to
$S[T]$ by acting as $0$ on $T$. Note that
$\operatorname{Diff}_{S,+}^{q-1}(\Theta(T))=\operatorname
{Diff}_{S,+}^{q-1}(f)$ and
that $({\partial^{i}}/{\partial T^{i}})(\Theta(T))=0$ for
$i=1,\ldots,q-1$. The proposition follows from these observations.

The above proposition describes $\delta(F_{q}(X))$ with an ideal of
$S$, which is intrinsically attached to the $V$-scheme $X$. The next natural
step would be stratifying this closed set. This will be done by defining
upper-semicontinuous functions, which in general are defined in terms of
the order of suitable ideals along points of the regular scheme $V$. Recall
that a function $s:Y\to\mathbb{N}$, from a topological space $Y$, is
upper-semicontinuous
if the sets $\{x\in Y: s(x)\geq m\}$, $m\in\mathbb{N}$, are closed.

According to our previous discussion, the order of
$f\in\mathcal{O}_{V,x}$ along points $x\in\delta(F_{q}(X)))$ is not a
good function to look at. Instead, we might consider
%
%
\begin{equation}
	\nu^{(q)}_{\mathfrak{p}}(f):=\sup\{\nu_{\mathfrak{p}}(f+g^{q}):
	g\in S_{\mathfrak{p}}\}.
\end{equation} 
By using this function we obtain the following description.
%
%
\begin{proposition}
	$\delta(F_{q}(X))=\{x\in V:\nu_{x}^{(q)}(f)\geq q\}$.
\end{proposition}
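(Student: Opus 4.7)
The plan is to establish the two inclusions separately: one uses the previous proposition as a bridge, and the other requires a direct multiplicity computation at the unique preimage of $x$ in $X$. The central algebraic mechanism throughout is the $S^q$-linearity of every $D \in \operatorname{Diff}_{S,+}^{q-1}$, which implies $D(g^q) = g^q D(1) = 0$ for any $g \in S$, so that $D(f) = D(f + g^q)$ for every such $g$.

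For the inclusion $\{x : \nu_x^{(q)}(f) \geq q\} \subseteq \delta(F_q(X))$, I would pick $g \in S_x$ with $f + g^q \in \mathfrak{m}_x^q$. Combining the identity above with the standard bound $D(\mathfrak{m}_x^q) \subseteq \mathfrak{m}_x$ for operators of order at most $q - 1$, one obtains $D(f) \in \mathfrak{m}_x$ for every $D \in \operatorname{Diff}_{S,+}^{q-1}$, so the previous proposition places $x$ in $\delta(F_q(X))$.

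For the reverse inclusion, given $x \in \delta(F_q(X))$ I would let $\bar{x} \in X$ be its unique preimage (unique because $\delta$ is radicial) and compute the multiplicity of $X$ at $\bar{x}$ using that $X \subset \mathbb{A}_k^{n+1}$ is a hypersurface: it equals the order of $T^q - f$ in the regular local ring $\mathcal{O}_{\mathbb{A}_k^{n+1},\bar{x}}$. To identify this local ring explicitly, I would factor $T^q - f(x) = (T^{p^{e-s}} - \gamma)^{p^s}$ in $k_x[T]$ --- where $s \in \{0, 1, \ldots, e\}$ is maximal with $f(x) \in k_x^{p^s}$ and $\gamma \in k_x$ is the associated $p^s$-th root of $f(x)$ --- and lift $\gamma$ to some $\beta \in S_x$: the maximal ideal of $\mathcal{O}_{\mathbb{A}_k^{n+1},\bar{x}}$ is then $(\mathfrak{m}_x, u)$ with $u := T^{p^{e-s}} - \beta$. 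The identity $T^q - f = u^{p^s} - (f - \beta^{p^s})$, valid in characteristic $p$, shows the multiplicity equals $\min\bigl(p^s,\, \nu_x(f - \beta^{p^s})\bigr)$. The equality with $q = p^e$ will force $s = e$ and $\nu_x(f - \beta^q) \geq q$; the first condition yields $f(x) \in k_x^q$, and then $g := -\beta$ (or $g := \beta$ in characteristic $2$) satisfies $f + g^q \in \mathfrak{m}_x^q$, giving $\nu_x^{(q)}(f) \geq q$.

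The main obstacle is the multiplicity computation in the reverse inclusion. The technically delicate points will be verifying that $u$ is a regular parameter at $\bar{x}$ in the ambient $\mathbb{A}_k^{n+1}$, justifying the coincidence of $\mathfrak{m}_x$-adic and ambient orders for elements of $S_x$ (which follows from flatness, via the associated graded), and --- conceptually the heart of the argument --- showing that multiplicity exactly $q$ rules out the case $s < e$, where $k_{\bar{x}}/k_x$ would be a nontrivial purely inseparable extension and the multiplicity would be at most $p^s < q$.
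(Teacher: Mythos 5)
Your proposal is correct, and it takes a genuinely different route from the paper's.

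The paper derives this statement from Proposition~\ref{0309}, whose proof reduces to Lemma~\ref{lem1}: Zariski's multiplicity formula for finite local extensions, Rees's theorem linking multiplicity to integral closure, and the identity $\overline{F^{e}\m\cdot B^{q}}=\m^{q}\cap B^{q}$. That machinery is built for arbitrary finite radicial covers in $\mathscr{C}_q(V)$ and for $\mathcal{O}_V^q$-submodules of any rank.

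You instead exploit the hypersurface presentation. For the inclusion $\{\nu_x^{(q)}(f)\geq q\}\subseteq\delta(F_q(X))$ you use the preceding proposition (the closed set $\mathcal{V}(\operatorname{Diff}_{S,+}^{q-1}(f))$), the $\mathcal{O}^q$-linearity of operators of order $<q$, and the bound $D(\m_x^q)\subseteq\m_x$; this is exactly the easy direction of the paper's later Corollary~\ref{characterization_of_R^q+m^qa} (case $a=1$), proved directly without $p$-bases. For the reverse inclusion your key observation is the factorization $T^q-f(x)=(T^{p^{e-s}}-\gamma)^{p^s}$ over $k_x$ with $s$ maximal, the identification of $u=T^{p^{e-s}}-\beta$ as a regular parameter complementary to $\m_x$ in $\mathcal{O}_{\mathbb{A}^{n+1},\bar x}$, and the identity $T^q-f=u^{p^s}-(f-\beta^{p^s})$, giving $e(\mathcal{O}_{X,\bar x})=\min(p^s,\nu_x(f-\beta^{p^s}))$. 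The minimum formula is correct: the leading form of $u^{p^s}$ is $\bar u^{\,p^s}$, while the leading form of $f-\beta^{p^s}\in S_x$ lies in $\operatorname{gr}_{\m_x}(S_x)\otimes_{k_x}k_{\bar x}$ and involves no $\bar u$, so there is no cancellation; the equality $\nu_{\bar x}(h)=\nu_x(h)$ for $h\in S_x$ follows from flatness of $S_x\to S[T]_Q$, as you note. Multiplicity $q$ then forces $s=e$ and $\nu_x(f-\beta^q)\geq q$, hence $\nu_x^{(q)}(f)\geq q$.

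What your approach buys: it avoids both Rees's theorem and the Taylor-operator/$p$-basis apparatus of Section~\ref{Section_on_Diff}, replacing them with an explicit local computation on the ambient regular scheme, and it makes the role of the residue-field degree $p^{e-s}=[k_{\bar x}:k_x]$ transparent. What it costs: it is tied to the hypersurface presentation $T^q-f$ and does not directly generalize to arbitrary $\delta\in\mathscr{C}_q(V)$ or to non-principal modules $\mathscr{M}$, whereas the paper's proof via Lemma~\ref{lem1} does. One small presentational point: when you say ``$\bar x$ is the unique preimage, hence $\bar x\in F_q(X)$,'' this uses surjectivity plus radiciality of $\delta$; worth saying explicitly, since otherwise $x\in\delta(F_q(X))$ only gives \emph{some} preimage of multiplicity $q$.
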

See Proposition \ref{0309} for an equivalent formulation. A~drawback of
the above function along points $x\in\delta(F(X))$ is that it is not
upper-semicontinuous, as it is well known, and we recall it in the following:
%
%
\begin{example}%
	\label{the_q-order_is_not_upper-semicontinuous}
	Let $f:=x_{1}^{q}x_{2}\in k[x_{1},x_{2}]$ with algebraically closed
	$k$. We obtain $\operatorname{Diff}_{S,+}^{q-1}(f)=(x_{1}^{q})$, so that
	$Z:=\delta(F_{q}(X))\subset V$ is the $x_{2}$-axis. At $\xi\in Z$, the
	generic point, we have $\nu_{\xi}^{(q)}(f)=q$. However, at any closed point
	$x=(0,\lambda)$ of $Z$, we have
	$x_{1}^{q}x_{2}=x_{1}^{q}(x_{2}-\lambda)+x_{1}^{q}\lambda\sim x_{1}^{q}(x_{2}-
	\lambda)$ since $\lambda$ is a $q$-power; hence
	$\nu_{x}^{(q)}(f)\geq q+1$ (in fact, it is an equality). Were
	$x\mapsto\nu_{x}^{(q)}(f)$ upper-semicontinuous on $Z$, the set
	$\{x\in Z: \nu_{x}^{(q)}(f)=q\}$ would be open in $Z$, which is not the
	case.
\end{example}
We will use differential operators on the regular ring $S$ to define an
upper-semicontinuous function that coincides ``almost always'' with
$\nu_{x}^{(q)}(f)$. To motivate its definition, we recall that if
$D$ is a differential operator of order $i$ and $f$ has order $m$ at a
point $x\in V$, then $D(f)$ has order at least $m-i$ at $x$. In characteristic
zero, we can always select $D$ so that $D(f)$ has order exactly
$m-i$. This is not longer true in positive characteristic. If we apply
differential operators of order $<q$ to the polynomial
$x_{1}^{q}x_{2}$, we cannot get rid of the factor $x_{1}^{q}$. However,
${\partial}/{\partial x_{2}}$, which has degree 1, lowers the order
of $x_{1}^{q}x_{2}$ at the origin in exactly one unit. Similarly,
${\partial^{p}}/{\partial x_{2}^{p}}$ has degree $p$ and lowers the
order of $x_{1}^{q}x_{2}^{p}$ at the origin in exactly $p$ units.
%
%
\begin{definition}
	For $x\in V$, we define
	\begin{equation*}
		\eta_{x}(f)=\inf\{\nu_{x}(\operatorname{Diff}_{S,+}^{i}(f))+i
		| i=1, \ldots,q-1\}.
	\end{equation*}
\end{definition}
This defines an upper-semicontinuous function on $V$ with values in
$\mathbb{N}$: $\eta_{x}(f)\geq m$ if and only if $x$ belongs to the
intersection
of the closed subsets
$\{x\in V | \nu_{x}(\operatorname{Diff}_{S,+}^{i}(f)))\geq m-i\}$. We stress
the fact that this function is intrinsically attached to the $V$-scheme
$X$, that is, if $S^{q}[f]=S^{q}[g]$, then
$\eta_{x}(f)=\eta_{x}(g)$. The fact that $\eta_{x}(f)$ coincides with
$\nu_{x}^{(q)}(f)$ almost always on $\delta(F_{q}(X))$ will be made precise
in Lemma \ref{Alternative_Lemma}. The next proposition, reformulated and
generalized in Proposition \ref{Describing_Sing(M,a)_with_eta(M)}, shows
that $\delta(F_{q}(X))$ can be described by this function.
%
%
\begin{proposition}%
	\label{prop-intro-1}
	For $x\in V$, $x\in\delta(F_{q}(X))$ if and only if
	$\eta_{x}(f)\geq q$.
\end{proposition}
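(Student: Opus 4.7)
The plan is to bridge the two characterizations of $\delta(F_q(X))$ already given in the excerpt---namely $\delta(F_q(X)) = \{x : \operatorname{Diff}_{S,+}^{q-1}(f) \subset \mathfrak{m}_x\}$ and $\delta(F_q(X)) = \{x : \nu_x^{(q)}(f) \geq q\}$---and to exploit the $S^q$-linearity of the operators in $\operatorname{Diff}_{S,+}^{i}$ for $i < q$.

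The implication $\eta_x(f) \geq q \Rightarrow x \in \delta(F_q(X))$ is immediate: specialising the defining infimum to $i = q - 1$ yields $\nu_x(\operatorname{Diff}_{S,+}^{q-1}(f)) \geq 1$, i.e.\ $\operatorname{Diff}_{S,+}^{q-1}(f) \subset \mathfrak{m}_x$, which by the first earlier proposition places $x$ in $\delta(F_q(X))$.

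For the converse, I would use the second earlier proposition to pick, after localising at $x$, some $g \in S_x$ with $\nu_x(f + g^q) \geq q$. The crucial observation is that any $D \in \operatorname{Diff}_{S,+}^{i}$ with $i < q$ annihilates $g^q$: by $S^q$-linearity, $D(g^q) = D(g^q \cdot 1) = g^q D(1) = 0$. Hence $D(f) = D(f + g^q)$, and the standard inequality $\nu_x(D(h)) \geq \nu_x(h) - \operatorname{ord}(D)$ applied to $h = f + g^q$ gives $\nu_x(D(f)) \geq q - i$. Minimising over $D \in \operatorname{Diff}_{S,+}^{i}$ and then adding $i$, we obtain $\nu_x(\operatorname{Diff}_{S,+}^{i}(f)) + i \geq q$ for every $i = 1, \ldots, q-1$, whence $\eta_x(f) \geq q$.

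The main obstacle is conceptual rather than computational: one has to recognise that the replacement $f \mapsto f + g^q$, which is invisible to the $V$-scheme $X$ because $S^q[f] = S^q[f + g^q]$, is precisely the mechanism that converts the poorly behaved function $\nu_x^{(q)}$ into the upper-semicontinuous invariant $\eta_x$. Once this is identified, the remaining ingredients---namely $D(g^q) = 0$ for $D$ in $\operatorname{Diff}_{S,+}^{i}$ with $i<q$, and the order-lowering inequality for differential operators---are essentially mechanical.
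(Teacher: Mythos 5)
Your proof is correct, and it takes a route that is genuinely different from the one the paper ultimately uses. The paper reformulates this statement as Proposition~\ref{Describing_Sing(M,a)_with_eta(M)} and derives it from Lemma~\ref{Alternative_Lemma}, which establishes the two-sided sandwich $\nu_\mvtex^{(q)}(f)\leq \eta_\mvtex(f)< q(\lfloor \nu_\mvtex^{(q)}(f)/q\rfloor +1)$ via a delicate case analysis with adapted $p$-bases and Taylor operators; both inequalities are then used to conclude $\{\eta_x\geq q\}=\{\nu_x^{(q)}\geq q\}$. You instead exploit the fact that the set $\delta(F_q(X))$ has already been described two different ways. For the forward direction you observe, correctly, that $\eta_x(f)\geq q$ forces the $i=q-1$ term in the defining minimum to satisfy $\nu_x(\operatorname{Diff}_{S,+}^{q-1}(f))\geq 1$, and you then invoke the first characterization $\delta(F_q(X))=\mathcal{V}(\operatorname{Diff}_{S,+}^{q-1}(f))$. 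For the converse you invoke the second characterization via $\nu_x^{(q)}$ and prove directly (and correctly) the easy half of Lemma~\ref{Alternative_Lemma}, namely $\nu_x^{(q)}(f)\leq\eta_x(f)$, using the $\mathcal{O}_{V,x}^q$-linearity of $\operatorname{Diff}_{S_x,+}^i$ for $i<q$ to replace $f$ by $f+g^q$ and then applying the order-lowering property $D(\m_x^n)\subseteq \m_x^{n-i}$. This avoids the hard half of Lemma~\ref{Alternative_Lemma} entirely, which is a genuine simplification for the specific case $a=1$. Two caveats worth noting: first, the Diff characterization you invoke is itself established in the paper through the absolute Jacobian criterion, which rests on the same $p$-basis machinery, so the routes are not independent in the grand scheme; second, your computation happens in $S_\pvtex$ rather than $S$, so the $S^q$-linearity argument should strictly be phrased as $(S_x)^q$-linearity of the extended operator, which the paper's localization discussion in~\ref{compatibility_with_localization} guarantees.
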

%
%
\begin{example}
	Set $f=x_{1}^{q}x_{2}$, as in the previous example. We obtain
	$\operatorname{Diff}_{S,+}^{i}(f)=(x_{1}^{q})$ for all $i=1,\ldots
	,q-1$. Therefore
	$\eta_{x}(f)$ is constant and equal to $q+1$ along
	$\delta(F_{q}(X))$, even at the generic point.
\end{example}
Our definition of $\eta_{x}(f)$ and the above proposition, which shows
that $\eta_{x}(f)$ can be used to stratify $\delta(F_{q}(X))$, offer
a first glimpse of the role played by the collection of ideals
%
%
\begin{equation}
	(\operatorname{Diff}_{S,+}^{1}(f),\ldots,
	\operatorname{Diff}_{S,+}^{q-1}(f)).
\end{equation}
We will return to this point later, where collections like this will be
studied under the name of $q$-differential collections. This is the subject
of Section \ref{Section_on_q-Diff}. We now turn the discussion to blowups.

Regular hypersurfaces $H\subset V$ included in $\delta(F_{q}(X))$ will
play a privileged role in our discussion. They will arise naturally when
blowing up along a regular center. For such a hypersurface, the blowup
of $X$ along $\delta^{-1}(H)\subset X$ can be interpreted in terms of
the $S^{q}$-modulo generated by $f$ as follows.
%
%
\begin{proposition}%
	\label{prop2-intro}
	Let $H\subset V$ be a regular hypersurface, say with defining equation
	$h=0$. Then the following are equivalent:
	\begin{enumerate}[(2)]
		\item[(1)] $H$ is included in $\delta(F_{q}(X))$.
		\item[(2)] $f=h^{q}f_{1}+g^{q}$ for some $f_{1},g\in S$.
	\end{enumerate}
	If these conditions are satisfied, then the blowup of $X$ along
	$\delta^{-1}(H)$ is defined by the equation $T^{q}-f_{1}=0$.
\end{proposition}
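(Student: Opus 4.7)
\emph{Direction $(2)\Rightarrow(1)$.} The plan is to verify the criterion of Proposition~\ref{prop-intro-1}. Any $D\in\operatorname{Diff}_{S,+}^{i}$ with $1\leq i\leq q-1$ is $S^{q}$-linear and satisfies $D(1)=0$, so
\begin{equation*}
D(h^{q}f_{1}+g^{q})\;=\;h^{q}D(f_{1})+g^{q}D(1)\;=\;h^{q}D(f_{1})\;\in\;(h^{q})\;\subset\;(h^{q-i}).
\end{equation*}
Thus $\operatorname{Diff}_{S,+}^{i}(f)\subset(h^{q-i})$ for every $i$, which gives $\eta_{x}(f)\geq q$ at every $x\in H$, and Proposition~\ref{prop-intro-1} yields $H\subset\delta(F_{q}(X))$.

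\emph{Direction $(1)\Rightarrow(2)$.} By Proposition~\ref{prop-intro-1}, the hypothesis $H\subset\delta(F_{q}(X))$ translates into the global inclusions $\operatorname{Diff}_{S,+}^{i}(f)\subset(h)^{q-i}$ for $i=1,\ldots,q-1$ (the pointwise condition at the generic point $\mathfrak{p}=(h)$ of $H$ already forces the ideal inclusion, since $(h)^{q-i}$ is $(h)$-primary in the UFD $S$). The plan is then a local Hasse-derivative computation: near any point of $H$, choose regular parameters $(h,y_{1},\ldots,y_{n-1})$ of $V$ and expand $f=\sum_{j\geq 0}c_{j}h^{j}$. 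Applying the Hasse derivatives in $h$ to the inclusions and invoking Lucas's theorem on binomial coefficients modulo $p$ forces $c_{j}=0$ for $1\leq j<q$. Rerunning the argument with Hasse derivatives in the transverse parameters $y_{\bullet}$ (and, where $k$ is not perfect, with the absolute derivations arising from a $p$-basis of $k$) applied to the leading term $c_{0}$ forces $c_{0}$ to be annihilated by every nonzero absolute differential operator of order $<q$; iterated Lucas then forces $c_{0}=g^{q}$ for some $g\in S$. Consequently $f_{1}:=(f-g^{q})/h^{q}$ lies in $S$ and we obtain the required decomposition.

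\emph{Blowup description, and main obstacle.} Assuming (2) holds, substitute $T\mapsto T-g$: since Frobenius is additive and $q=p^{e}$, the equation of $X$ becomes $(T-g)^{q}-h^{q}f_{1}=0$. The reduced preimage $\delta^{-1}(H)\subset X$ is cut out by $(h,T-g)$, and the blowup of the ambient $\mathbb{A}^{n+1}$ along this regular codimension-two center has an affine chart with new coordinate $T_{1}:=(T-g)/h$; substituting $T-g=hT_{1}$ into $(T-g)^{q}-h^{q}f_{1}$ yields $h^{q}(T_{1}^{q}-f_{1})$, and factoring out the $q$-th power of the exceptional divisor $h=0$ leaves the strict transform $T_{1}^{q}-f_{1}=0$, as asserted. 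The main obstacle lies in the $(1)\Rightarrow(2)$ step: producing a global $g\in S$ with $c_{0}=g^{q}$ requires using the full package of absolute (not merely $k$-linear) differential operators to rule out obstructions coming from inseparable extensions $k/k^{q}$, and requires checking that the local $q$-th roots produced by the Lucas analysis assemble into a single polynomial in $S$ rather than only defining a formal section near $H$.
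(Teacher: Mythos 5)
Your route is genuinely different from the paper's. The paper deduces this proposition from Proposition~\ref{blow-up_along_codimension_one_subscheme} and the equivalence $(1)\Leftrightarrow(3)$ of Proposition~\ref{0310}, which are established via multiplicity theory (Zariski's formula, Rees's theorem, integral closure of ideals in Paragraph~\ref{integral_closure_of_ideals}); you argue instead via the differential-operator criterion of Proposition~\ref{prop-intro-1}. Your $(2)\Rightarrow(1)$ direction is clean and correct --- it amounts to the $S^q$-linearity of $\operatorname{Diff}_{S,+}^{i}$ for $i<q$ and the observation $(h^q)\subset(h^{q-i})$ --- and is in fact simpler than tracing through the paper's Lemma following Proposition~\ref{0310}. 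The blowup computation is also correct: after the translation $T\mapsto T-g$, the reduced preimage $(\delta^{-1}(H))_{\mathrm{red}}$ is the regular codimension-two center $(h,T-g)$, and the strict transform in the relevant chart is exactly $T_{1}^{q}-f_{1}=0$, consistent with Proposition~\ref{blow-up_along_codimension_one_subscheme}.

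The $(1)\Rightarrow(2)$ direction is where you leave a genuine gap, which you flag yourself. Two issues. First, the expansion $f=\sum_{j\geq 0}c_{j}h^{j}$ is not well-defined as you set it up: choosing regular parameters $(h,y_{1},\ldots,y_{n-1})$ does not give a canonical splitting of $S$ into coefficient ring and $h$-powers. The paper's $q$-expansion with respect to an adapted $p$-basis containing $h$ (Section~3.2, Proposition~\ref{order_in_terms_of_q-atoms}) is the correct formalism, and the Lucas/Hasse analysis you gesture at is precisely what Lemma~\ref{Alternative_Lemma} and Corollary~\ref{characterization_of_R^q+m^qa} carry out. Second --- and this is the point you explicitly leave open --- the conclusion at the generic point of $H$ is $f\in S_{\mathfrak{p}}^{q}+\mathfrak{p}^{q}S_{\mathfrak{p}}$ with $\mathfrak{p}=(h)$, and you need to contract this back to $f\in S^{q}+(h)^{q}$. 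That descent is exactly Corollary~\ref{R^q+p^n_and_localization}, which asserts $R^{q}+\mathfrak{p}^{n}=(R_{\mathfrak{p}}^{q}+\mathfrak{p}^{n}R_{\mathfrak{p}})\cap R$ for a regular prime $\mathfrak{p}$. Without it the argument produces only a formal section near $H$, as you anticipate.

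In fact, once one reaches for those two tools, the hands-on Lucas computation becomes unnecessary: $H\subset\delta(F_{q}(X))$ gives $\nu_{\xi}(\operatorname{Diff}_{S,+}^{q-1}(f))\geq 1$ at the generic point $\xi$ of $H$ (this is the $i=q-1$ term in $\eta_{\xi}(f)\geq q$); Corollary~\ref{characterization_of_R^q+m^qa} with $a=1$ applied to the DVR $S_{(h)}$ gives $f\in S_{(h)}^{q}+(h)^{q}S_{(h)}$; and Corollary~\ref{R^q+p^n_and_localization} descends this to $f\in S^{q}+(h)^{q}$. This three-line version is essentially the differential-operator route to Proposition~\ref{characterization_of_permissible_centers_for_a_pair_(M,a)}, which the paper develops in parallel to its multiplicity-theoretic proof. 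So: correct and genuinely different approach in spirit, but the $(1)\Rightarrow(2)$ step as written is a plan rather than a proof, and the key closure step it needs is Corollary~\ref{R^q+p^n_and_localization}.
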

This will be reformulated in a more general form in Proposition\ref{blow-up_along_codimension_one_subscheme}. The outcome is that blowing
up $X$ along a regular 1-codimensional center included in $F_{q}(X)$ corresponds
to a notion of factorization of a $q$-power of the $S^{q}$-module generated
by $f$ or, more precisely, of the equivalence class of this module. We
will illustrate this in the following example.
%
%
\begin{example}
	Let $X\subset\mathbb{A}_{k}^{3}$ be defined by the equation
	$T^{p}-x_{1}^{3p-2}x_{2}(x_{2}-2x_{1}+x_{1}^{3})+x_{2}^{p}$ ($q=p>2$),
	and set $S=k[x_{1},x_{2}]$. We obtain the $S^{p}$-submodule of $S$ generated
	by $x_{1}^{3p-2}x_{2}(x_{2}-2x_{1}+x_{1}^{3})+x_{2}^{p}$, which is equivalent
	to that generated by $x_{1}^{3p-2}x_{2}(x_{2}-2x_{1}+x_{1}^{3})$. We apply
	$\operatorname{Diff}_{S,+}^{p-1}$ to any of these modules and obtain
	an ideal
	divisible by $x_{1}^{2p}$, which tells us that $\delta(F_{p}(X))$ includes
	the hypersurface $x_{1}=0$. Changing
	$x_{1}^{3p-2}x_{2}(x_{2}-2x_{1}+x_{1}^{3})$ by
	$f_{1}:=x_{1}^{2p-2}x_{2}(x_{2}-2x_{1}+x_{1}^{3})$ corresponds to a blowup
	$X_{1}\to X$ along a regular one-codimensional center. Changing again this
	polynomial by $f_{2}:=x_{1}^{p-2}x_{2}(x_{2}-2x_{1}+x_{1}^{3})$ corresponds
	to a blowup $X_{2}\to X_{1}$ of the same kind. Note that
	$\delta(F_{p}(X_{2}))$ has no one-dimensional component. The maximum
	of $\eta_{x}(f_{2})$ along points of $V$ is $p$, and this maximum is only
	attained at the origin $(0,0)$. By Proposition \ref{prop-intro-1},
	$\delta(F_{p}(X_{2}))=\{(0,0)\}$.
\end{example}

We now show with an example that a blowup of $X$ along a more general regular
center included in $F_{q}(X)$ leads to a notion of transformation of the
$S^{q}$-module assigned to $X$ under the induced blowup of $V$ (Theorem \ref{finite_morphisms_and_closed_subsets}(2)). We will give the details
in the second part of Section \ref{Section_on_modules}; see Corollary \ref{a-transform_as_sequence_of_blow-ups} for a generalized version.
%
%
\begin{example}
	Let us consider the hypersurface $X\subset\mathbb{A}_{k}^{3}$ of equation
	$T^{p}-x_{1}^{p-2}x_{2}(x_{2}-2x_{1}+x_{1}^{3})$. This has attached the
	(equivalence class of the) $S^{p}$-submodule of $S$ generated by
	$f:=x_{1}^{p-2}x_{2}(x_{2}-2x_{1}+x_{1}^{3})$. We obtain from our previous
	example that $\delta(F_{p}(X))=\{(0,0)\}$, and hence
	$F_{p}(X)=\{(0,0,0)\}$. Let $V_{1}\to V$ be the blowup at
	$\{(0,0)\}$, and let $X_{1}\to X$ be the blowup at $\{(0,0,0)\}$. We set
	$S_{1}:=k[x_{1},\frac{x_{2}}{x_{1}}]$ and
	$S_{2}:=k[\frac{x_{1}}{x_{2}},x_{2}]$. Then $X_{1}$ is defined by
	$T_{1}^{p}-\frac{x_{2}}{x_{1}}(\frac{x_{2}}{x_{1}}-2+x_{1}^{2})\in
	S_{1}[T_{1}]$
	at the $x_{1}$-chart and by
	$T_{2}^{p}-(\frac{x_{1}}{x_{2}})^{p-2}(1-2\frac{x_{1}}{x_{2}}+(
	\frac{x_{1}}{x_{2}})^{3}x_{2}^{2})$ at the $x_{2}$-chart. (The strict transform
	of $X$ is already included in the previous two charts.) We easily check
	that the $S_{1}^{p}$-submodule of $S_{1}$ and the $S_{2}^{p}$-submodule
	of $S_{2}$ generated respectively by
	%
	%
	\begin{align}
		\label{equations_of_the_transform} \frac{x_{2}}{x_{1}} \biggl(\frac{x_{2}}{x_{1}}-2+x_{1}^{2}
		\biggr) \quad \text{and}\quad  \biggl(\frac{x_{1}}{x_{2}} \biggr)^{p-2}
		\biggl(1-2 \frac{x_{1}}{x_{2}}+ \biggl(\frac{x_{1}}{x_{2}}
		\biggr)^{3}x_{2}^{2}\biggr)
	\end{align}
	glue to a sheaf of $\mathcal{O}_{V_{1}}^{p}$-submodule
	$\mathscr{M}\subset\mathcal{O}_{V_{1}}$. In addition, if
	$\mathcal{L}\subset\mathcal{O}_{V_{1}}$ denotes the exceptional
	ideal of
	the blowup $V_{1}\to V$, then we have the relation
	$F\mathcal{L}\cdot\mathscr{M}=\mathcal{O}_{V_{1}}^{p}\cdot f$, where
	$F$ stands for Frobenius.
\end{example}

So far we have illustrated how (or why) hypersurfaces defined by purely
inseparable equations and the notion of blowups along suitable equimultiple
regular centers relate to $S^{q}$-submodules and transforms of
$S^{q}$-submodules. This approach appears already in
\cite[Theorem 5]{abhyankar1956local} and also in
\cite{giraud1983condition}. There is yet a natural question that arises
when viewing (\ref{equations_of_the_transform}): there is a factorization
of the exceptional divisor to the power one, and we may ask if this power
is optimal. For example, let us look at the first expression. Can we have
an equivalent expression (in the sense that they both differ by an element
in $S_{1}^{p}$) in which $\frac{x_{2}}{x_{1}}$ factors to a higher
power? More generally, given a sequence of monoidal transformation, can
we characterize an ``optimal exceptional monomial'' in the previous sense?

These questions are studied within the frame ``jumping phenomenon'', concerning
the behavior of singularities in positive characteristic. The reader is
referred to \cite{moh1996newton} and \cite{HauPer2019} for more on this
concept. The characterization of the optimal exceptional monomial can be
achieved by using logarithmic differential operators with poles along such
exceptional monomial. Unfortunately, our work does not enlighten these
questions. Our results are related rather to the characterization of an
optimal exceptional monomial in which the exponents are multiple of a fixed
power $q$ of $p$; see, for instance, Theorem \ref{invariants_1}. As from
a technical point of view, we do use logarithmic differential operators
to search for invariants that do not get ``worst'' under suitable blowups;
see Theorem \ref{invariants_2}.

\subsection{Content of the Paper}%
\label{sec1.1}

We now turn to the more technical part of Introduction, where we present
the main results. First, we set the notation. Given a ring $B$ of characteristic
$p>0$, we denote by $F:B\to B$ the Frobenius endomorphism. Given
$q=p^{e}$ and an ideal $I\subset B$, we denote
$F^{e}I:=F^{e}(I)=\{x^{q}:x\in I\}$ and
$B^{q}:=F^{e}B=\{x^{q}:x\in B\}$, so that $F^{e} I$ is an ideal of
$B^{q}$. For a domain $B$, we set $B^{1/q}:=\{x\in L:x^{q}\in B\}$, where
$L$ is an algebraic closure of the fraction field of $B$. This notation
extends also to the setting of sheaves of rings and ideals on schemes of
characteristic $p$. Given a scheme $V$, when we use the expressions
$\mathcal{O}_{V}$-ideal, $\mathcal{O}_{V}$-module, and
$\mathcal{O}_{V}$-algebra, we assume that they are quasi-coherent. If
$\mathcal{I}$ is an $\mathcal{O}_{V}$-ideal on $V$, then
$\mathcal{V}(\mathcal{I})\subset V$ denotes the support of
$\mathcal{O}_{V}/\mathcal{I}$.

Let $V$ be a Noetherian irreducible regular scheme of characteristic
$p>0$, and assume that $V$ is $F$-finite, which means that
$\mathcal{O}_{V}$ is a finite $\mathcal{O}_{V}^{p}$-module. This ensures
that $V$ is excellent and that the sheaves of (absolute) differential operators
$\Diff_{V}^{i}$, $i=0,1,2,\ldots$, are locally free of finite rank. We
fix $q=p^{e}$, a power of~$p$. We consider $\mathcal{O}_{V}^{q}$-submodules
of $\mathcal{O}_{V}$. Such a module
$\mathscr{M}\subset\mathcal{O}_{V}$ has attached a finite surjective radicial
morphism $\delta:X\to V$, namely the one defined by the
$\mathcal{O}_{V}$-algebra $(\mathcal{O}_{V}^{q}[\mathscr
{M}])^{1/q}$. The
generic rank $d:=[K(X):K(V)]$ of this morphism is a power of $p$, which
is different from $q$ in general. We denote by $F_{d}(X)\subset X$ the
set of points of multiplicity $d$. Finally, we attach to
$\mathscr{M}$ the collection of $\mathcal{O}_{V}$-ideals
\begin{equation*}
	\Diff_{V,+}^{1}(\mathscr{M})\subseteq\cdots\subseteq
	\Diff_{V,+}^{q-1}( \mathscr{M}),
\end{equation*}
where $\Diff_{V,+}^{i}$ denotes the sheaf of differential operators of
order $i$ that annihilates 1. The next theorem contains results already
mentioned in the first part of the introduction in the particular case
that $X$ is a hypersurface defined by a purely inseparable equation.
\begin{theorem}%
	\label{main_theorem_2}
	In the above setting, the following holds.
	\begin{enumerate}[(3)]
		\item[(1)] There are equality of sets
		\begin{equation*}
			\delta(F_{d}(X))=\{x\in V: \mathscr{M}_{x}\subseteq
			\mathcal {O}_{V,x}^{q}+m_{V,x}^{q}
			\} = \mathcal{V}(\Diff_{V,+}^{q-1}(\mathscr{M})).
		\end{equation*}
		\item[(2)] Let $Y\subset X$ be a closed irreducible subscheme, and let
		$Z\subset V$ denote its image. The following statements are equivalent:
		\begin{enumerate}[(b)]
			\item[(a)]$Y$ is regular and included in $F_{d}(X)$.
			\item[(b)]$Z$ is regular, and
			$\mathscr{M}\subseteq\mathcal{O}_{V}^{q}+\mathcal{I}(Z)^{q}$.
		\end{enumerate}
		\item[(3)] Assume that the equivalent conditions in (2) are satisfied, and consider
		the commutative diagram
		\begin{equation}
			\label{eqdiac} \xymatrix{Y\subset\\ Z\subset} \xymatrix{X\ar[d]_{\delta} &\ar[l] X_{1}\ar[d]^{\delta_{1}}\\
				V&\ar[l] V_{1}}
		\end{equation}
		obtained in Theorem \ref{finite_morphisms_and_closed_subsets}(2). Then
		$\delta_{1}$ is the $V_{1}$-scheme attached to the
		$\mathcal{O}_{V_{1}}^{q}$-module
		$\mathscr{M}_{1}:=(\mathscr{M}\mathcal{O}_{V_{1}}^{q}+\mathcal
		{O}_{V_{1}}^{q}:
		F^{e}\mathcal{L}_{1})\subseteq\mathcal{O}_{V_{1}}$. Here
		$\mathscr{M}\mathcal{O}_{V_{1}}^{q}$ denotes the
		$\mathcal{O}_{V_{1}}^{q}$-submodule of $\mathcal{O}_{V_{1}}$
		generated by
		the sections of $\mathscr{M}$ when these are viewed as sections on the blowup
		$V_{1}$, and $\mathcal{L}_{1}\subset\mathcal{O}_{V_{1}}$ denotes the
		exceptional
		ideal of the blowup $V\leftarrow V_{1}$.
	\end{enumerate}
\end{theorem}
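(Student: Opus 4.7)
The plan is to reduce each assertion to a local analysis at a point $x\in V$ and then to a calculation involving a local generating set of $\mathscr{M}_{x}$ over $\mathcal{O}_{V,x}^{q}$. The key facts we rely on are: (a) $\delta$ is radicial, so there is a unique point of $X$ over $x$; (b) $\mathcal{O}_{V,x}$ is free over $\mathcal{O}_{V,x}^{q}$ on a monomial basis of exponents $<q$ in a regular system of parameters; (c) the ``hypersurface'' instance of Part~(1), already established in the introduction for a single $f$, and the $\mathcal{O}_{V}^{q}$-linearity of $\Diff_{V,+}^{i}$ which gives $\Diff_{V,+}^{i}(\mathscr{M})=\Diff_{V,+}^{i}(\mathcal{O}_{V}^{q}[\mathscr{M}])$.

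\textbf{Part (1).} At a closed point $x\in V$, I would identify the fiber of $\delta$ with $(\mathcal{O}_{V,x}^{q}[\mathscr{M}_{x}])^{1/q}\otimes_{\mathcal{O}_{V,x}}\kappa(x)$. Theorem \ref{finite_morphisms_and_closed_subsets} bounds its length by $d$, and equality defines $F_{d}(X)$. Using (b), the fiber achieves length $d$ exactly when the image of $\mathscr{M}_{x}$ in $\mathcal{O}_{V,x}/(\mathcal{O}_{V,x}^{q}+\mathfrak{m}_{V,x}^{q})$ vanishes, i.e.\ $\mathscr{M}_{x}\subseteq\mathcal{O}_{V,x}^{q}+\mathfrak{m}_{V,x}^{q}$. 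The equality with $\mathcal{V}(\Diff_{V,+}^{q-1}(\mathscr{M}))$ then reduces, by $\mathcal{O}_{V,x}^{q}$-linearity, to the elementwise statement $f\in\mathcal{O}_{V,x}^{q}+\mathfrak{m}_{V,x}^{q}\iff\Diff_{V,+}^{q-1}(f)\subseteq\mathfrak{m}_{V,x}$, already recalled in the introduction.

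\textbf{Part (2).} The regularity equivalence $Y$ regular iff $Z$ regular is exactly Theorem \ref{finite_morphisms_and_closed_subsets}(2). The inclusion $Y\subseteq F_{d}(X)$ is equivalent to every point of $Y$ having multiplicity $d$, which by Part~(1) translates to $\mathscr{M}_{x}\subseteq\mathcal{O}_{V,x}^{q}+\mathfrak{m}_{V,x}^{q}$ for every $x\in Z$. Evaluating at the generic point $\eta$ of $Z$, where $\mathfrak{m}_{V,\eta}=\mathcal{I}(Z)\mathcal{O}_{V,\eta}$, and extending by quasi-coherence yields $\mathscr{M}\subseteq\mathcal{O}_{V}^{q}+\mathcal{I}(Z)^{q}$; the converse is immediate by specializing to each $x\in Z$.

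\textbf{Part (3).} Fix a chart of $V_{1}$ on which $\mathcal{L}_{1}$ is principal, generated by $h$, and choose local generators $f_{1},\ldots,f_{r}$ of $\mathscr{M}$ over $\mathcal{O}_{V}^{q}$. By Part~(2) we may write $f_{i}=s_{i}^{q}+h^{q}g_{i}$ for some $s_{i},g_{i}\in\mathcal{O}_{V_{1}}$. Using characteristic $p$, the substitution $T_{i}=s_{i}+hT_{i}'$ transforms the equation $T_{i}^{q}-f_{i}=0$ into $(T_{i}')^{q}-g_{i}=0$, identifying $X_{1}$ on this chart with the $V_{1}$-scheme attached to the $\mathcal{O}_{V_{1}}^{q}$-submodule $N$ generated by $g_{1},\ldots,g_{r}$. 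It remains to check $N+\mathcal{O}_{V_{1}}^{q}=\mathscr{M}_{1}$. The inclusion $\subseteq$ is clear since $h^{q}g_{i}=f_{i}-s_{i}^{q}\in\mathscr{M}\mathcal{O}_{V_{1}}^{q}+\mathcal{O}_{V_{1}}^{q}$. For $\supseteq$, let $g\in\mathscr{M}_{1}$, so $gh^{q}=\sum f_{i}\alpha_{i}^{q}+\beta^{q}$; substituting $f_{i}=s_{i}^{q}+h^{q}g_{i}$ and using the Frobenius identity $\sum a_{i}^{q}=(\sum a_{i})^{q}$ in characteristic $p$ gives
\begin{equation*}
    \bigl(g-\textstyle\sum g_{i}\alpha_{i}^{q}\bigr)h^{q}=\bigl(\textstyle\sum s_{i}\alpha_{i}+\beta\bigr)^{q}.
\end{equation*}
Since $h$ is a prime element of the UFD $\mathcal{O}_{V_{1}}$ (locally along the exceptional divisor), it follows that $h$ divides $\sum s_{i}\alpha_{i}+\beta$; writing $\sum s_{i}\alpha_{i}+\beta=h\gamma'$ and cancelling $h^{q}$ yields $g=\sum g_{i}\alpha_{i}^{q}+\gamma'^{q}\in N+\mathcal{O}_{V_{1}}^{q}$. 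A routine check shows that the colon description is chart-independent, giving the global identification of $\delta_{1}$ with the morphism attached to $\mathscr{M}_{1}$.

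\textbf{Main obstacle.} The substantive step is the reverse inclusion $\mathscr{M}_{1}\subseteq N+\mathcal{O}_{V_{1}}^{q}$ in Part~(3): it requires extracting a $q$-th root after division by $h^{q}$, which is precisely the ``optimal exceptional exponent'' phenomenon alluded to in the introduction. Without the primality of $h$ and the Frobenius collapse of $q$-th powers, one would lose a factor of $h$ in the colon and the colon formula would overshoot. Once this algebraic lemma is in place, Parts~(1) and~(2) are straightforward consequences of the free $\mathcal{O}_{V,x}^{q}$-module structure on $\mathcal{O}_{V,x}$.
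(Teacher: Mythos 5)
Your argument for Part~(1) contains a genuine gap that invalidates the reduction you propose, and since Parts~(2) and~(3) build on it, the entire proof is undermined. You write that you would ``identify the fiber of $\delta$ with $(\mathcal{O}_{V,x}^q[\mathscr{M}_x])^{1/q}\otimes_{\mathcal{O}_{V,x}}\kappa(x)$'' and that ``the fiber achieves length $d$ exactly when the image of $\mathscr{M}_x$ in $\mathcal{O}_{V,x}/(\mathcal{O}_{V,x}^q+\mathfrak{m}_{V,x}^q)$ vanishes.'' This conflates the length of the scheme-theoretic fiber with the Hilbert--Samuel multiplicity, and the two are genuinely different invariants. For a finite morphism the fiber length at $x$ is always $\geq d$ and equals $d$ precisely when $\mathcal{O}_X$ is flat over $\mathcal{O}_V$ near $x$; it is not an equimultiplicity criterion. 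Consider $q=p>2$, $V=\mathbb{A}^2=\operatorname{Spec}\,k[x,y]$, $\mathscr{M}=\mathcal{O}_V^p\cdot xy$, so that $X$ is cut out by $T^p-xy$. The morphism $\delta$ is flat (a hypersurface), so the fiber length is $p$ at every point; yet the multiplicity of $X$ at the point over the origin is $2<p$, and accordingly $xy\notin\mathcal{O}_{V,0}^p+\mathfrak{m}_0^p$. So the fiber-length criterion would place the origin inside $\delta(F_p(X))$ while the correct criterion (and the fact) does not; your two claimed equivalences contradict each other. What is actually required is the identification of the locus of multiplicity $d$ via Zariski's multiplicity formula together with Rees' theorem linking multiplicity to reductions of the maximal ideal — this is exactly what the paper's Lemma~\ref{lem1} does, showing that multiplicity $d$ is equivalent to $B=S+M$ and $M=\overline{\mathfrak{m}B}$, and only then passing to the condition $B^q\subseteq S^q+\mathfrak{m}^q$ via properties of integral closure of $F^eP\cdot B^q$ in \ref{integral_closure_of_ideals}(h). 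This piece of commutative algebra cannot be replaced by the free $\mathcal{O}_{V,x}^q$-module structure on $\mathcal{O}_{V,x}$.

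Part~(2) inherits the same issue and also skips a nontrivial contraction step: knowing $\mathscr{M}_\eta\subseteq\mathcal{O}_{V,\eta}^q+(\mathcal{I}(Z)\mathcal{O}_{V,\eta})^q$ at the generic point $\eta$ of $Z$ does not by itself give $\mathscr{M}\subseteq\mathcal{O}_V^q+\mathcal{I}(Z)^q$; one has to descend from the localization at $\eta$ back to $\mathcal{O}_{V,x}$, which uses the $p$-basis argument in Corollary~\ref{R^q+p^n_and_localization}. Your Part~(3) computation — passing to a chart, substituting $T_i=s_i+hT_i'$, and using the primality of $h$ to extract a $q$-th root — is a reasonable sketch in spirit, and correctly identifies the algebraic mechanism behind the colon formula, but it implicitly identifies the blowup $X_1$ of $X$ along $Y$ with the scheme cut by the new equations without justification; the paper establishes this by factoring the blowup through the fiber product $X\times_V V_1$ and comparing Rees algebras (Propositions~\ref{blow-ups_of_radicial_morphisms} and~\ref{blow-up_along_codimension_one_subscheme}). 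Your ``Main obstacle'' paragraph identifies Part~(3) as the substantive step, but in fact Part~(1) is where the deep input lies; once Lemma~\ref{lem1} is in place, the rest reduces to manipulations with integral closure and conductors, as the paper organizes it.
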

\begin{definition}%
	\label{permissible_center_for_pairs}
	Given an $\mathcal{O}_{V}^{q}$-submodule
	$\mathscr{M}\subset\mathcal{O}_{V}$, we define
	$\operatorname{Sing}(\mathscr{M},1):=\mathcal{V}(\Diff
	_{V,+}^{q-1}(\mathscr{M}))$,
	which is a closed subset of $V$. A~regular closed subscheme
	$Z\subset V$ included in $\operatorname{Sing}(\mathscr{M},1)$ is
	called a permissible
	center for $(\mathscr{M},1)$. If $V\leftarrow V_{1}$ is the blowup of
	$V$ along $Z$ and $\mathcal{L}_{1}$ denotes the exceptional ideal, then
	we call the $\mathcal{O}_{V_{1}}^{q}$-module
	$(\mathscr{M}\mathcal{O}_{V_{1}}^{q}+\mathcal{O}_{V_{1}}^{q}:F^{e}
	\mathcal{L}_{1})$ the 1-transform of $\mathscr{M}$ by the blowup.\looseness=1
\end{definition}
\begin{remark}%
	\label{nmls}
	Roughly speaking, the theorem enables us to replace diagrams like (\ref
	{eqdiac})
	by diagrams of the form
	\begin{equation}
		\label{eq3313bfrer} \xymatrix@R=0pt @C=30pt { V & \ar[l] V_{1} &
			\\
			\mathscr{M} & \mathscr{M}_{1} & }
	\end{equation}
	where $V\leftarrow V_{1}$ is a blowup along a permissible center for
	$(\mathscr{M},1)$, and $\mathscr{M}_{1}$ is the 1-transform of
	$\mathscr{M}$. Similarly, an iteration of diagrams like (\ref
	{eqdiac}) corresponds
	to a sequence
	\begin{equation}
		\label{eq3313b} \xymatrix@R=0pt @C=30pt { V & \ar[l]_-{\pi_{1}} V_{1} & \ar[l]_-{
				\pi_{2}} \ldots& \ar[l]_-{\pi_{r}} V_{r}%
			\\
			\mathscr{M} & \mathscr{M}_{1} & \ldots& \mathscr{M}_{r} }
	\end{equation}
	where $V_{i}\leftarrow V_{i+1}$ is the blowup along a permissible center
	for $(\mathscr{M}_{i},1)$, and $\mathscr{M}_{i+1}$ is the 1-transform of
	$\mathscr{M}_{i}$.
\end{remark}

We saw already the role played by the $\mathcal{O}_{V}$-ideal
$\Diff_{V,+}^{q-1}(\mathscr{M})$. We now consider the full collection
\begin{equation}
	\label{collection} \mathcal{G}(\mathscr{M}):=(\Diff_{V,+}^{1}(
	\mathscr{M}),\ldots ,\Diff_{V,+}^{q-1}( \mathscr{M})).
\end{equation}
This is an example of the following more general concept.
\begin{definition}%
	\label{q-diff}
	A \emph{$q$-differential collection of ideals} is a sequence of
	$\mathcal{O}_{V}$-ideals
	$\mathcal{G}=(\mathcal{I}_{1},\ldots,\mathcal{I}_{q-1})$ such that
	$\Diff_{V}^{i}(\mathcal{I}_{j})\subseteq\mathcal{I}_{i+j}$ whenever
	$i+j<q$. Given such a sequence and a point $x\in V$, we set
	\begin{equation*}
		\eta_{x}(\mathcal{G}):=\min\{\nu_{x}(
		\mathcal{I}_{i})+i:1\leq i\leq q-1 \},
	\end{equation*}
	where $\nu_{x}(\mathcal{I}_{i})$ denotes, as usual, the order of
	$\mathcal{I}_{i}$ at $x$.
\end{definition}
In the case where $\mathcal{G}=\mathcal{G}(\mathscr{M})$ for an
$\mathcal{O}_{V}^{q}$-submodule $\mathscr{M}\subseteq\mathcal
{O}_{V}$, we
simply write $\eta_{x}(\mathscr{M})$ instead of
$\eta_{x}(\mathcal{G}(\mathscr{M}))$. This number was already introduced
in the first part of Introduction in the case that $\mathscr{M}$ is principal
and $V$ is an affine space. The same argument given there shows that
$\eta_{x}(\mathcal{G})$ defines an upper-semicontinuous function on
$V$ with values on $\mathbb{N}$. It rests on the fact that for an
$\mathcal{O}_{V}$-ideal $\mathcal{J}$, the function
$x\mapsto\nu_{x}(\mathcal{J})$, is upper-semicontinuous (see, e.g.,
\cite[Chapter~2]{dietel2014refinement} or
\cite[Chapter~3]{giraud1972etude} for an affine space $V$). We include in
these notes a self-contained proof within our setting (see Proposition \ref{Jacobian_criterion:_global_version}). The following theorem establishes
that, under an appropriate definition of transformation of $q$-differential
collections, the function $x\mapsto\eta_{x}(\mathcal{G})$ satisfies the
so-called \emph{fundamental pointwise inequality}.
\begin{theorem}%
	\label{point-wise_inequality_for_q-diff}
	Let $\mathcal{G}$ be a $q$-differential collection on $V$, let
	$Z\subset V$ be a regular center included in the maximum locus of
	$\eta(\mathcal{G})$, say $\eta_{x}(\mathcal{G})=aq+b$ for all
	$x\in Z$ (with $a,b\in\mathbb{N}_{0}$ and $0\leq b<q$), let
	$V\xleftarrow{\pi} V_{1}$ be the blowup along $Z$, and let
	$\mathcal{L}$ denote the exceptional ideal. Then the collection of
	$\mathcal{O}_{V_{1}}$-ideals
	\begin{equation}
		\label{G_1} \mathcal{G}_{1}^{(a)}:=((
		\mathcal{I}_{1}\mathcal{O}_{V_{1}}:\mathcal
		{L}^{qa}), \ldots, (\mathcal{I}_{q-1}\mathcal{O}_{V_{1}}:
		\mathcal{L}^{qa}))
	\end{equation}
	is also $q$-differential, and the following pointwise inequality holds:
	\begin{equation}
		\eta_{\pi(x_{1})}(\mathcal{G})\geq\eta_{x_{1}}(
		\mathcal{G}_{1}^{(a)}), \quad \forall x_{1}\in
		V_{1}.
	\end{equation}
\end{theorem}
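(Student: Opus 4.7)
The plan is to verify the two assertions separately: that $\mathcal{G}_{1}^{(a)}$ is $q$-differential, and the pointwise inequality. Both are local on $V_{1}$. Outside $\pi^{-1}(Z)$, $\pi$ is a local isomorphism with $\mathcal{L}$ trivial, so both reduce to tautologies. I work at $x_{1}\in\pi^{-1}(Z)$ in adapted coordinates $(y_{1},\ldots,y_{c},z_{1},\ldots,z_{n-c})$ at $\pi(x_{1})$ with $Z=V(y_{1},\ldots,y_{c})$, using chart coordinates $(y_{1},y_{2}',\ldots,y_{c}',z)$ on $V_{1}$ satisfying $y_{j}=y_{1}y_{j}'$ and $\mathcal{L}=(y_{1})$.

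For the $q$-differential property, fix $D\in\Diff_{V_{1}}^{i}$ and $g$ a local section of $(\mathcal{I}_{j}\mathcal{O}_{V_{1}}:\mathcal{L}^{qa})$ with $i+j<q$. Since $i<q$, the operator $D$ is $\mathcal{O}_{V_{1}}^{q}$-linear by Proposition \ref{Diff(M)=Diff(O^q[M])}, and $y_{1}^{qa}=(y_{1}^{a})^{q}$ gives $D(g\,y_{1}^{qa})=y_{1}^{qa}D(g)$. Since $g\,y_{1}^{qa}\in\mathcal{I}_{j}\mathcal{O}_{V_{1}}$, it suffices to prove $\Diff_{V_{1}}^{i}(\mathcal{I}_{j}\mathcal{O}_{V_{1}})\subseteq\mathcal{I}_{i+j}\mathcal{O}_{V_{1}}$, which by Leibniz reduces to showing that any $D'\in\Diff_{V_{1}}^{i}$ restricted to $\pi^{-1}\mathcal{O}_{V}$ lands in $\mathcal{O}_{V_{1}}\cdot\Diff_{V}^{\leq i}$. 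In Hasse-derivative coordinates this is direct: $D_{y_{j}'}^{(k)}$ acts on $\pi^{-1}\mathcal{O}_{V}$ as $y_{1}^{k}D_{y_{j}}^{(k)}$ for $j\geq2$, and $D_{y_{1}}^{(k)}$ decomposes as $\sum_{|a|=k}(y')^{a''}D_{y_{1}}^{(a_{1})}\cdots D_{y_{c}}^{(a_{c})}$ via the Vandermonde identity $\binom{|\alpha|}{k}=\sum_{|a|=k}\prod_{i}\binom{\alpha_{i}}{a_{i}}$. Invoking $\Diff_{V}^{i}(\mathcal{I}_{j})\subseteq\mathcal{I}_{i+j}$ then closes the containment.

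For the pointwise inequality, fix $i_{0}\in\{1,\ldots,q-1\}$ realizing $\eta_{\pi(x_{1})}(\mathcal{G})=\nu_{\pi(x_{1})}(\mathcal{I}_{i_{0}})+i_{0}$ and set $N:=aq+b-i_{0}=\nu_{\pi(x_{1})}(\mathcal{I}_{i_{0}})$. The hypothesis that $Z$ lies in the maximum locus, combined with upper semicontinuity of the $\nu(\mathcal{I}_{i_{0}})$ function, forces $\nu_{\eta_{Z}}(\mathcal{I}_{i_{0}})=N$; hence every $f\in\mathcal{I}_{i_{0}}$ with $\nu_{\pi(x_{1})}(f)=N$ has $\nu_{Z}(f)=N$, and its initial form $P=\on{in}_{\pi(x_{1})}(f)$ is a homogeneous polynomial of degree $N$ in $Y_{1},\ldots,Y_{c}$ alone (no $Z$-variables). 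Expanding $f$ in chart coordinates, one computes in the regular local ring $\mathcal{O}_{V_{1},x_{1}}$ that $\nu_{x_{1}}((f\mathcal{O}_{V_{1}}:\mathcal{L}^{qa}))=r+\max(N-qa,0)$, where $r$ is the vanishing order of $P(1,y_{2}',\ldots,y_{c}')$ at the image of $x_{1}$ in the exceptional fiber (using that $y_{1}$ is a non-zerodivisor modulo the weak transform). In the subcase $i_{0}>b$ (so $N<qa$), the bound $r\leq N$ is automatic from $\deg P=N$, and gives $\eta_{x_{1}}(\mathcal{G}_{1}^{(a)})\leq r+i_{0}\leq N+i_{0}=aq+b$.

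The main obstacle is the subcase $i_{0}\leq b$, where the required bound becomes $r\leq aq$ and the naive choice of $i_{0}$ may fail to achieve it (e.g.\ $P=Y_{2}^{N}$ with $N>aq$). Here the $q$-differential structure is essential: if $P$ vanishes to order $r>aq$ on the subspace $Y_{2}=\ldots=Y_{c}=0$ (shifted to the relevant point of the exceptional fiber), then $P\in(Y_{2},\ldots,Y_{c})^{r}$, and an iterated Hasse derivative of order $r-aq$ in the variables $y_{2},\ldots,y_{c}$ applied to $f$ yields an element of $\mathcal{I}_{i_{0}+r-aq}$ whose initial form vanishes to order exactly $aq$ on the same subspace. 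Because $r\leq N=aq+b-i_{0}$ one has $i_{0}+r-aq\leq b\leq q-1$, so the new index is legitimate and the $q$-differential inclusion $\Diff_{V}^{r-aq}(\mathcal{I}_{i_{0}})\subseteq\mathcal{I}_{i_{0}+r-aq}$ applies. Running the earlier computation at this higher index then gives $\eta_{x_{1}}(\mathcal{G}_{1}^{(a)})\leq aq+b$. The calibration of the colon exponent as $qa$ (rather than $aq+b$) is precisely what makes this bookkeeping close in both subcases.
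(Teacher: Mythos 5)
Your verification that $\mathcal{G}_{1}^{(a)}$ is $q$-differential is correct and essentially the same as the paper's: the colon by $\mathcal{L}^{qa}$ is handled by $\mathcal{O}_{V_1}^q$-linearity of operators of order $<q$, and the total transform is handled by pulling back differential operators (the paper does this via the module of principal parts in Lemma \ref{lemma_on_restriction_of_Diff} and Proposition \ref{comparison_between_Diff_of_two_regular_schemes}; your coordinate computation reproduces the same content, and your appeal to Lemma \ref{technical_lemma_3} could also be used).

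For the pointwise inequality, however, your route diverges from the paper's in a way that leaves a genuine gap. You take an arbitrary index $i_0$ realizing the minimum and push the whole computation onto the blowup chart; when $i_0\le b$ the inequality can fail for that index, and you propose to repair it by applying a Hasse derivative of order $r-aq$ in the transverse variables so that the transverse order drops to exactly $aq$. This last step is asserted, not proved, and it is precisely where the characteristic-$p$ difficulty lives. In positive characteristic a Hasse derivative of the wrong order can annihilate the initial form (e.g.\ $D_{y_2}^{(1)}(y_2^p)=0$), and showing that some $D_\gamma$ of total order $s=r-aq$ has $\binom{\alpha}{\gamma}\not\equiv 0\pmod p$ for a monomial $\alpha$ of the transverse initial form requires a Lucas/Kummer-type argument exploiting that $aq$ is a multiple of $q=p^e$ and $s<q$. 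That combinatorial lemma is true, but you do not supply it; without it the crucial ``$=aq$'' is unjustified.

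The paper avoids this difficulty entirely by proving Lemma \ref{technical_lemma_1} \emph{before} blowing up: there always exists an index $i\geq b$ with $\nu_x(\mathcal{I}_i)=\eta_x(\mathcal{G})-i$, and this is established at the point $x$ using the Taylor-operator machinery of Lemma \ref{Alternative_Lemma} (Case~1), which is where the Lucas-style argument is carried out once and for all. With such an index in hand, $\nu_x(\mathcal{I}_i)=aq+b-i\le aq$, the factorization $\mathcal{I}_i\mathcal{O}_{V_1}=\mathcal{I}(H_1)^{aq+b-i}\mathcal{J}_i$ is dominated by $\mathcal{I}(H_1)^{aq}$, so the colon simply recovers $\mathcal{J}_i$, and Proposition \ref{the_order_function_of_the_transform_of_an_ideal} finishes the proof in one line. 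In effect you have rediscovered the mechanism of Lemma \ref{technical_lemma_1}, but placed it downstream of the blowup where it is harder to make rigorous; moving your Hasse-derivative reasoning upstream to prove that the minimum is always attained at some $i\ge b$ would both fill the gap and considerably simplify your argument.
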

The collection of $\mathcal{O}_{V_{1}}$-ideals $\mathcal
{G}_{1}^{(a)}$ will
be called the $a$-transform of $\mathcal{G}$ by the blowup. The exponent
$qa$ of $\mathcal{L}$ appearing in its definition is somehow related with
a natural factorization in the definition of transformation of modules,
as we will further see. To get a flavor of this theorem, it is worth bearing
in mind the following classical example. Let $\mathcal{J}$ be a nonzero
$\mathcal{O}_{V}$-ideal, and let $Z\subset V$ be a closed regular subscheme
included in the maximum locus of the function
$x\mapsto\nu_{x}(\mathcal{J})$, say $\nu_{x}(\mathcal{J})=b$, for all
$x\in Z$. Let $V\xleftarrow{\pi} V_{1}$ be the blowup of $V$ along
$Z$, and let $H_{1}\subset V_{1}$ denote the exceptional hypersurface.
Then there is a factorization
$\mathcal{J}\mathcal{O}_{V_{1}}=\mathcal{I}(H_{1})^{b}\mathcal
{J}_{1}$ for
some $\mathcal{O}_{V_{1}}$-ideal $\mathcal{J}_{1}$ that does not
vanish along
$H_{1}$. Then we have the following pointwise inequality.
\begin{theorem}%
	\label{v_1(I)_is_at_most_v(I)}
	$ \nu_{\pi(x_{1})}(\mathcal{J}) \geq\nu_{x_{1}}(\mathcal{J}_{1})$ for
	all $x_{1}\in V_{1}$.
\end{theorem}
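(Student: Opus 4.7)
The plan is to localize around an arbitrary point $x_1\in V_1$ with image $x:=\pi(x_1)$ and split the analysis according to whether $x_1$ lies on the exceptional hypersurface $H_1$. The case $x_1\notin H_1$ is immediate from the fact that $\pi|_{V_1\setminus H_1}$ is an isomorphism, while the case $x_1\in H_1$ requires a coordinate computation in an affine chart of the blowup.

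If $x_1\notin H_1$, then $\pi$ induces a local isomorphism $\mathcal{O}_{V,x}\cong\mathcal{O}_{V_1,x_1}$ and $\mathcal{I}(H_1)\mathcal{O}_{V_1,x_1}=\mathcal{O}_{V_1,x_1}$. The factorization $\mathcal{J}\mathcal{O}_{V_1}=\mathcal{I}(H_1)^b\mathcal{J}_1$ then gives $\mathcal{J}_1\mathcal{O}_{V_1,x_1}=\mathcal{J}\mathcal{O}_{V_1,x_1}$, so $\nu_{x_1}(\mathcal{J}_1)=\nu_{\pi(x_1)}(\mathcal{J})$, and in particular the required inequality holds.

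For $x_1\in H_1$, so $x\in Z$, I choose a regular system of parameters $y_1,\ldots,y_r,z_1,\ldots,z_s$ of $\mathcal{O}_{V,x}$ with $\mathcal{I}(Z)_x=(y_1,\ldots,y_r)$. Since this ideal is generated by part of a regular system of parameters in a regular local ring, its symbolic and ordinary powers agree, and the hypothesis $\nu_y(\mathcal{J})\geq b$ along $Z$ (in particular at its generic point) forces $\mathcal{J}_x\subset(y_1,\ldots,y_r)^b$. Pick $f\in\mathcal{J}_x$ with $\nu_x(f)=b$ and write $f=\sum_{|\alpha|=b}g_\alpha y^\alpha$ with $g_\alpha\in\mathcal{O}_{V,x}$ and at least one $g_\alpha$ a unit. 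Passing to the $y_1$-chart of the blowup, with coordinates $y_1,\ y_i':=y_i/y_1\ (i=2,\ldots,r),\ z_1,\ldots,z_s$, the substitution $y_i=y_1y_i'$ produces
\begin{equation*}
f \;=\; y_1^{b}\,f_1, \qquad f_1 \;:=\; \sum_{|\alpha|=b} g_\alpha(y_1,y_1y_2',\ldots,y_1y_r',z)\,(y_2')^{\alpha_2}\cdots(y_r')^{\alpha_r},
\end{equation*}
and by the given factorization of $\mathcal{J}\mathcal{O}_{V_1}$, the element $f_1$ belongs to $\mathcal{J}_1\mathcal{O}_{V_1,x_1}$. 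Setting $c_i:=y_i'(x_1)\in\kappa(x_1)$, a regular system of parameters at $x_1$ is $y_1,\ y_2'-c_2,\ldots,y_r'-c_r,\ z_1,\ldots,z_s$; reducing $f_1$ modulo $(y_1,z_1,\ldots,z_s)$ yields the polynomial $P(y_2',\ldots,y_r')=\sum_{|\alpha|=b}g_\alpha(0)\,(y_2')^{\alpha_2}\cdots(y_r')^{\alpha_r}$ of total degree $\leq b$ over $\kappa(x_1)$. Any polynomial of degree $\leq b$ has order $\leq b$ at every $\kappa(x_1)$-point, so $\nu_{x_1}(f_1)\leq b=\nu_{\pi(x_1)}(\mathcal{J})$, whence $\nu_{x_1}(\mathcal{J}_1)\leq\nu_{\pi(x_1)}(\mathcal{J})$.

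The main obstacle I anticipate lies in the passage from the pointwise bound $\nu_y(\mathcal{J})\geq b$ on $Z$ to the ideal-level inclusion $\mathcal{J}_x\subset\mathcal{I}(Z)^b$, which rests on the equality of symbolic and ordinary powers for ideals generated by part of a regular system of parameters in a regular local ring. Once that is in hand, the coordinate argument is routine, and the analogous computation in the $y_i$-chart for each $i=1,\ldots,r$ follows by the symmetry of the affine charts of the blowup of $(y_1,\ldots,y_r)$.
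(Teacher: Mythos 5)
Your proposal is correct in substance but takes a genuinely different route from the paper. The paper argues intrinsically using the absolute Jacobian criterion and Giraud's lemma: it bounds $\nu_{x_1}(\mathcal{J}\mathcal{O}_{V_1})\leq 2b$ by exhibiting a unit inside $\Diff_{V_1}^{2b}(\mathcal{J}\mathcal{O}_{V_1})$, chaining $\Diff_{V_1}^b\circ\Diff_{V_1}^b$ through $\mathcal{I}(H_1)^b\Diff_V^b(\mathcal{J})\mathcal{O}_{V_1}$ and using $\nu_{x_1}(\mathcal{I}(H_1)^b)=b$. You instead pass to coordinates in a blowup chart, extract the strict transform of an element of order exactly $b$, and reduce modulo $(y_1,z_1,\ldots,z_s)$ to a polynomial of degree $\leq b$ over $\kappa(x)$ with a nonzero top-degree coefficient. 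Both work; your argument is the classical explicit one, the paper's is coordinate-free and fits the machinery it develops. You correctly flag the key ideal-theoretic input: $\mathcal{J}_x\subset\mathcal{I}(Z)_x^b$ via the coincidence of symbolic and ordinary powers of a regular prime, which the paper records as property (g) in its discussion of integral closure.

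Two points you should tighten. First, the assertion that $y_1,\,y_2'-c_2,\ldots,y_r'-c_r,\,z_1,\ldots,z_s$ is a regular system of parameters at $x_1$ is not accurate when $\kappa(x_1)\neq\kappa(x)$ (or when $x_1$ is not closed): the residue classes $c_i\in\kappa(x_1)$ need not lift to $\mathcal{O}_{V_1,x_1}$ in a way that keeps $y_i'-c_i$ in the ring, and for non-closed $x_1$ the dimension drops. Fortunately, this claim is never actually used: what carries the argument is the observation that $\mathfrak{m}_{V,x}\mathcal{O}_{V_1,x_1}=(y_1,z_1,\ldots,z_s)$, so that reducing modulo this ideal lands the strict transform $f_1$ in a localization of $\kappa(x)[y_2',\ldots,y_r']$, and that the order there bounds the order of $f_1$ from below. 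Second, the final assertion "any polynomial of degree $\leq b$ has order $\leq b$" deserves a sentence of justification in positive characteristic: since some $g_\alpha$ is a unit, $P$ has a nonzero coefficient $c_\alpha\in\kappa(x)^\times$ in degree $b$, and the Taylor operator $D_\alpha$ with $|\alpha|=b$ sends $P$ to $c_\alpha$ (a constant, hence a unit at any localization), so $\nu_\m(P)\leq b$. With these emendations the argument is complete.
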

We include a self-contained proof of this theorem in Proposition \ref{the_order_function_of_the_transform_of_an_ideal}.

We return to the consideration of $\mathcal{O}_{V}^{q}$-submodules
$\mathscr{M}\subset\mathcal{O}_{V}$. Part (1) of the following
theorem was
already mentioned in Proposition \ref{prop-intro-1} in a particular case.
Part (2) is somehow related with the outcome of Proposition \ref{prop2-intro}.
\begin{theorem}%
	\label{invariants_1}
	For an $\mathcal{O}_{V}^{q}$-module
	$\mathscr{M}\subseteq\mathcal{O}_{V}$, the following holds.
	\begin{enumerate}[(3)]
		\item[(1)] $\operatorname{Sing}(\mathscr{M},1)=\{x\in V:\eta
		_{x}(\mathscr{M})\geq q\}$.
		\item[(2)] Assume that $\operatorname{Sing}(\mathscr{M},1)$ is
		nonempty, and write
		the maximum possible value of $\eta_{x}(\mathscr{M})$ in the form
		$aq+b$ with $0\leq b<q$. Then a regular center $Z\subset V$ included in
		the maximum locus of $x\mapsto\eta_{x}(\mathscr{M})$ defines a sequence
		\begin{equation}
			\xymatrix@R=0pt @C=30pt { V & \ar[l]_-{\pi} V_{1} & \ar[l]_-{\cong}
				\ldots& \ar[l]_-{\cong} V_{1}%
				\\
				\mathscr{M} & \mathscr{M}_{1}^{(1)} & \ldots& \mathscr{M}_{1}^{(a)} }
		\end{equation}
		where
		\begin{enumerate}[(c)]
			\item[(a)]$V\leftarrow V_{1}$ is the blowup along $Z$, and
			$\mathscr{M}_{1}^{(1)}$ is the 1-transform of $\mathscr{M}$ by $\pi$.
			\item[(b)] For each index $1\leq i<a$, the exceptional divisor of $\pi$, say
			$H_{1}\subset V_{1}$, is included in
			$\operatorname{Sing}(\mathscr{M}_{1}^{(i)},1)$, the isomorphism
			$V_{i}\xleftarrow{\cong} V_{i+1}$ is the blowup of $V_{i}$ along
			$H_{1}$, and $\mathscr{M}_{1}^{(i+1)}$ is the $1$-transform of $
			\mathscr{M}_{1}^{(i)}$.
			\item[(c)]$H_{1}\not\subset\operatorname{Sing}(V_{1},\mathscr{M}_{1}^{(a)})$.
		\end{enumerate}
		\item[(3)] If $(\mathcal{G}(\mathscr{M}))_{1}^{(a)}$ denotes the $a$-transform
		of $\mathcal{G}(\mathscr{M})$, then there is a componentwise inclusion
		\begin{equation}
			\label{incl} \mathcal{G}(\mathscr{M}_{1}^{(a)})
			\subseteq(\mathcal{G}(\mathscr {M}))_{1}^{(a)}.
		\end{equation}
		In particular,
		$\eta_{x_{1}}(\mathscr{M}_{1}^{(a)})\geq\eta_{x_{1}}((\mathcal{G}(
		\mathscr{M}))_{1}^{(a)})$ for all $x_{1}\in V_{1}$, whence
		\begin{equation}
			\label{inc} \{x_{1}\in V_{1}:
			\eta_{x_{1}}((\mathcal{G}(\mathscr{M}))_{1}^{(a)})
			\geq q\}\subseteq\operatorname{Sing}(\mathscr{M}_{1}^{(a)},1).
		\end{equation}
	\end{enumerate}
\end{theorem}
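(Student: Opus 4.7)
Part~(1) is a direct corollary of Theorem~\ref{main_theorem_2}(1). The implication $\eta_x(\mathscr{M}) \geq q \Rightarrow x \in \operatorname{Sing}(\mathscr{M},1)$ is immediate by taking $i = q-1$ in the definition of $\eta$, which forces $\nu_x(\Diff_{V,+}^{q-1}(\mathscr{M})) \geq 1$. For the converse, Theorem~\ref{main_theorem_2}(1) lets me write any $f \in \mathscr{M}_x$ as $g^q + h$ with $h \in \mathfrak{m}_{V,x}^q$; since any $D \in \Diff_{V,+}^i$ with $i < q$ is $\mathcal{O}_V^q$-linear and satisfies $D(1)=0$, one has $D(g^q)=0$ and $D(h) \in \mathfrak{m}_{V,x}^{q-i}$, so $\nu_x(\Diff_{V,+}^i(\mathscr{M})) \geq q-i$ for every $i=1,\ldots,q-1$, and hence $\eta_x(\mathscr{M}) \geq q$.

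For Part~(2), I would reduce the incidence condition $H_1 \subseteq \operatorname{Sing}(\mathscr{M}_1^{(i)},1)$ to the pointwise statement $\eta_{\xi_{H_1}}(\mathscr{M}_1^{(i)}) \geq q$ at the generic point $\xi_{H_1}$ via Part~(1), using that $H_1$ is irreducible. The plan is then to work in the DVR $\mathcal{O}_{V_1,\xi_{H_1}}$, whose uniformizer is a local equation for $H_1$, and to trace the effect of the colon construction $\mathscr{M}_1^{(i+1)} = (\mathscr{M}_1^{(i)} + \mathcal{O}_{V_1}^q : F^e\mathcal{L})$ via a $q$-adic expansion. The hypothesis $\eta_{\xi_Z}(\mathscr{M}) = aq+b$ with $0 \leq b < q$ should translate, after the first (non-trivial) blowup, into the bounds $(a-1)q \leq \eta_{\xi_{H_1}}(\mathscr{M}_1^{(1)}) < aq$; thereafter the identity $\eta_{\xi_{H_1}}(\mathscr{M}_1^{(i+1)}) = \eta_{\xi_{H_1}}(\mathscr{M}_1^{(i)}) - q$ (since $F^e\mathcal{L} = \mathcal{L}^q$ strips off exactly a $q$-th-power factor at each colon step) yields $\eta \geq q$ for $i \leq a-1$ and $\eta < q$ at $i=a$, as required.

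For Part~(3), iterating the defining relation $F^e\mathcal{L} \cdot \mathscr{M}_1^{(i+1)} \subseteq \mathscr{M}_1^{(i)} + \mathcal{O}_{V_1}^q$ yields $\mathcal{L}^{qa} \cdot \mathscr{M}_1^{(a)} \subseteq \mathscr{M}\mathcal{O}_{V_1}^q + \mathcal{O}_{V_1}^q$. Applying any $D \in \Diff_{V_1,+}^j$ with $j<q$ to both sides, and using its $\mathcal{O}_{V_1}^q$-linearity together with $D(1)=0$ and $\mathcal{L}^{qa} \subseteq \mathcal{O}_{V_1}^q$, one reduces to $\mathcal{L}^{qa} \cdot \Diff_{V_1,+}^j(\mathscr{M}_1^{(a)}) \subseteq \mathcal{O}_{V_1}^q \cdot \Diff_{V_1,+}^j(\mathscr{M})$. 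The last ingredient is the functoriality $\Diff_{V_1,+}^j(\mathscr{M}) \subseteq \Diff_{V,+}^j(\mathscr{M}) \mathcal{O}_{V_1}$, obtained by expressing local differential operators on $V_1$ in terms of those on $V$ through the explicit blowup coordinate change; this delivers the componentwise inclusion~(\ref{incl}). The pointwise inequality of $\eta$'s is then immediate from the reverse behavior of $\nu$ under ideal inclusion, and the containment~(\ref{inc}) follows by combining it with Part~(1) applied on $V_1$. The hard part of the whole argument is the DVR analysis in Part~(2), especially the sharp strict inequality $\eta_{\xi_{H_1}}(\mathscr{M}_1^{(a)}) < q$, which requires care both with $q$-th powers in the residue field and with the interaction between $\mathcal{O}_{V_1}^q$ and the exceptional uniformizer.
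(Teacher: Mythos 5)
Parts~(1) and~(3) are correct and essentially reproduce the paper's arguments. For~(1) you give a direct two-direction check; the paper instead quotes Proposition~\ref{Describing_Sing(M,a)_with_eta(M)} (with $a=1$), which rests on the same $p$-basis lemma (Lemma~\ref{Alternative_Lemma}) that underlies the second equality in Theorem~\ref{main_theorem_2}(1) you invoke, so the mathematical content coincides. Your proof of~(3) is exactly Proposition~\ref{G(M_1)_and_(G_M)_1}, up to a harmless notational slip: $\mathcal{L}^{qa}$ is an $\mathcal{O}_{V_1}$-ideal, not a subsheaf of $\mathcal{O}_{V_1}^q$; what one actually uses is that $(F^e\mathcal{L})^a$ is generated by sections that are $q$-th powers, so that any $D\in\Diff_{V_1,+}^j$ with $j<q$ pulls $(F^e\mathcal{L})^a$ through and leaves the factor $\mathcal{L}^{qa}$.

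There is, however, a genuine gap in~(2). The recursion $\eta_{\xi_{H_1}}(\mathscr{M}_1^{(i+1)})=\eta_{\xi_{H_1}}(\mathscr{M}_1^{(i)})-q$ (valid as long as $H_1\subset\operatorname{Sing}(\mathscr{M}_1^{(i)},1)$), and the lower bound $\eta_{\xi_{H_1}}(\mathscr{M}_1^{(1)})\geq q(a-1)$, are both fine. The missing step is the strict upper bound $\eta_{\xi_{H_1}}(\mathscr{M}_1^{(1)})<qa$, equivalently that $H_1$ is \emph{not} permissible for $(\mathscr{M}\mathcal{O}_{V_1}^q,a+1)$. This is not a computation internal to the DVR $\mathcal{O}_{V_1,\xi_{H_1}}$, and the difficulty you flag (``$q$-th powers in the residue field'') is not the real one. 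What is needed is a comparison of differential operators \emph{on $V_1$} with those pushed forward from $V$ in the direction opposite to Proposition~\ref{comparison_between_Diff_of_two_regular_schemes}; this is precisely the global Giraud inclusion $\mathcal{I}(H_1)^i\bigl(\Diff_{V,+}^i(\mathscr{M})\mathcal{O}_{V_1}\bigr)\subseteq\Diff_{V_1,+}^i(\mathscr{M}\mathcal{O}_{V_1}^q)$ of Proposition~\ref{Giraud's_lemma:_Global_version}, used together with Proposition~\ref{the_largest_integer_n_such_that_Diff(M)_is_included_in_I(H)^n}. The paper packages this into Proposition~\ref{the_invariant_a_in_the_blow-up} (``$Z$ is permissible for $(\mathscr{M},a)$ iff $H_1$ is permissible for $(\mathscr{M}\mathcal{O}_{V_1}^q,a)$''), whose nontrivial direction is exactly the bound you need; once it is available, (2) is the conjunction of Propositions~\ref{Describing_Sing(M,a)_with_eta(M)}, \ref{the_invariant_a_in_the_blow-up}, and Corollary~\ref{a-transform_as_sequence_of_blow-ups}, as the remark after Proposition~\ref{Describing_Sing(M,a)_with_eta(M)} indicates. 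Without some version of Giraud's lemma, the ``$q$-adic expansion in the DVR'' alone has no way to make contact with the hypothesis $\eta_{\xi_Z}(\mathscr{M})=aq+b<q(a+1)$, which is a statement about $\Diff_{V,+}$ on $V$, not about $\Diff_{V_1,+}$ on $V_1$.
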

We call $\mathscr{M}_{1}^{(a)}$ \emph{the $a$-transform of
	$\mathscr{M}$} by the blowup.

\begin{remark}%
	\label{eta_does_not_satisfy_point-wise_inequality}
	\emph{The function $x\mapsto\eta_{x}(\mathscr{M})$ does not satisfy
		a pointwise
		inequality in general:} The inclusion (\ref{incl}) is in general strict,
	and therefore we cannot make use of Theorem \ref{point-wise_inequality_for_q-diff} to deduce a pointwise inequality
	$\eta_{\pi(x_{1})}(\mathscr{M})\geq\eta_{x_{1}}(\mathscr{M}_{1}^{(a)})$
	for $x_{1}\in V_{1}$, which would be desirable. For example, take
	$p=q=3$, $V=\Spec(k[x_{1},x_{2},x_{3},x_{4},x_{5}])$, and
	$\mathscr{M}=\mathcal{O}_{V}^{q}\cdot x_{1}x_{2}x_{3}x_{4}x_{5}$. The maximum
	of $\eta_{x}(\mathscr{M})$ is $5=3\cdot1+2$, whence $a=1$, and this maximum
	is attained only at the origin. However, after blowing up the origin, the
	restriction to the $x_{1}$-chart of the 1-transform
	$\mathscr{M}_{1}^{(1)}$ of $\mathscr{M}$ is generated by
	$x_{1}^{2}x_{2}'x_{3}'x_{4}'x_{5}'$, where
	$x_{i}'=\frac{x_{i}}{x_{1}}$. Thus the maximum of
	$\eta_{x}(\mathscr{M}_{1}^{(1)})$ is $\geq6=3\cdot2$. This example also
	shows that even $\lfloor\frac{\eta_{x}(\mathscr{M})}{q}\rfloor$ does
	not satisfy the pointwise inequality. Nevertheless, (\ref{inc}) implies
	that a sequence of permissible transformations of $q$-differential collections
	starting from $\mathcal{G}(\mathscr{M})$, in a way that we will specify,
	induces a sequence of permissible transformations of $\mathscr{M}$. This
	will be discussed in Remark \ref{rk429}.
\end{remark}

We end this presentation by illustrating how $q$-differential collections
define invariants of singularities satisfying the pointwise inequality
in a different manner. Given a sequence of monoidal transformations, it
is natural to define invariants that take into account the hypersurfaces
introduced in the previous blowups. Following this line, we consider 3-tuples
$(\mathscr{M},\Lambda,\mathcal{L})$, where
$\mathscr{M}\subset\mathcal{O}_{V}$ is an $\mathcal
{O}_{V}^{q}$-module on
an $F$-finite regular scheme $V$, $\Lambda$ is a finite collection of
hypersurfaces with only normal crossings, and $\mathcal{L}$ is an invertible
$\mathcal{O}_{V}$-ideal included in the ideal $\mathcal{I}(H)$ for each
$H\in\Lambda$. For example, we could take for $\Lambda$ the empty collection
and $\mathcal{L}=\mathcal{O}_{V}$. We attach to such 3-tuple the collection
of $\mathcal{O}_{V}$-ideals
\begin{align*}
	\mathcal{G}(\mathscr{M},\Lambda,\mathcal{L}):={}&((\Diff_{V,\Lambda,+}^{1}(
	\mathscr{M}):\mathcal{L}^{1}),(\Diff_{V,\Lambda,+}^{2}(
	\mathscr{M}): \mathcal{L}^{2}),\ldots,
	\\
	&{}(\Diff_{V,\Lambda,+}^{q-1}(
	\mathscr{M}): \mathcal{L}^{q-1})),
\end{align*}
where $\Diff_{V,\Lambda,+}^{i}\subseteq\Diff_{V,+}^{i}$ denotes the
subsheaf consisting of those differential operators that are logarithmic
with respect to $\mathcal{I}(H)$ for all $H\in\Lambda$; see
Section \ref{Section_on_Diff}.

\begin{theorem}%
	\label{invariants_2}
	Within the previous setting, the following properties hold:
	\begin{enumerate}[(2)]
		\item[(1)] $\mathcal{G}(\mathscr{M},\Lambda,\mathcal{L})$ is a
		$q$-differential
		collection, and there is a componentwise inclusion
		$\mathcal{G}(\mathscr{M})\subseteq\mathcal{G}(\mathscr{M},\Lambda,
		\mathcal{L})$. In particular,
		$\eta_{x}(\mathscr{M})\geq\eta_{x}(\mathcal{G}(\mathscr
		{M},\Lambda,
		\mathcal{L}))$ for $ x\in V$, whence
		\begin{equation*}
			\{x\in V: \eta_{x}(\mathcal{G}(\mathscr{M},\Lambda,\mathcal {L}))
			\geq q \}\subset\operatorname{Sing}(\mathscr{M},1).
		\end{equation*}
		\item[(2)] Let $Z\subset V$ be a regular center included in the
		maximum locus
		of the function
		$x\mapsto\eta_{x}(\mathcal{G}(\mathscr{M},\Lambda,\mathcal{L}))$, say
		$\eta_{x}(\mathcal{G}(\mathscr{M},\Lambda,\mathcal{L}))=aq+b$ for all
		$x\in Z$ ($a,b\in\mathbb{N}_{0}$, $0\leq b<q$), and suppose that
		$\Lambda$ has only normal crossings with $Z$. Let
		$V\xleftarrow{\pi}V_{1}$ be the blowup along $Z$, and let
		$H_{1}\subset V_{1}$ be the exceptional divisor. We set:
		\begin{enumerate}[(c)]
			\item[(a)] $\mathscr{M}_{1}^{(a)}:=$ the $a$-transform of $\mathscr
			{M}$ by the
			blowup.
			\item[(b)] $\Lambda_{1}:=$ the collection of the strict transforms of each
			$H\in\Lambda$, plus the exceptional divisor $H_{1}$.
			\item[(c)] $\mathcal{L}_{1}:=(\mathcal{L}\mathcal
			{O}_{V_{1}})\mathcal{I}(H_{1})$.
		\end{enumerate}
		Then $\Lambda_{1}$ is a collection of hypersurfaces with only normal crossings,
		$\mathcal{L}_{1}$ is included in $\mathcal{I}(H)$ for each
		$H\in\Lambda_{1}$, so that
		$\mathcal{G}(\mathscr{M}_{1},\Lambda_{1},\mathcal{L}_{1})$ is a
		$q$-differential
		collection on $V_{1}$, and the following pointwise inequality holds:
		\begin{align*}
			\eta_{\pi(x_{1})}(\mathcal{G}(\mathscr{M},\Lambda,\mathcal {L}))\geq
			\eta_{x_{1}}(\mathcal{G}(\mathscr{M}_{1},
			\Lambda_{1},\mathcal{L}_{1})),\quad  x_{1}\in
			V_{1}.
		\end{align*}
	\end{enumerate}
\end{theorem}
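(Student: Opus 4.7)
The plan is to deduce the pointwise inequality in~(2) from Theorem~\ref{point-wise_inequality_for_q-diff} by comparing the $a$-transform of $\mathcal{G}(\mathscr{M},\Lambda,\mathcal{L})$ as a $q$-differential collection with the collection $\mathcal{G}(\mathscr{M}_1^{(a)},\Lambda_1,\mathcal{L}_1)$ attached to the transformed triple.

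For part~(1), write $\mathcal{I}_k:=(\Diff_{V,\Lambda,+}^k(\mathscr{M}):\mathcal{L}^k)$. The $q$-differential property $\Diff_V^i(\mathcal{I}_j)\subseteq\mathcal{I}_{i+j}$ rests on two ingredients: (a)~the composition law $\Diff_{V,\Lambda,+}^i\circ\Diff_{V,\Lambda,+}^j\subseteq\Diff_{V,\Lambda,+}^{i+j}$, and (b)~the local fact that, if $H\in\Lambda$ has $\mathcal{I}(H)=(x)$ and $g\in\mathcal{L}^i\subseteq(x)^i$, then composing $g$ with any $D\in\Diff_{V,+}^i$ lands in $\Diff_{V,\Lambda,+}^i$ (since $x^i\partial_x^{[i]}$ is a polynomial in the logarithmic derivation $x\partial_x$). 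Working locally with divided-power generators of $\Diff_V^i$, a Leibniz manipulation using (a) and (b) yields $D(f)\cdot\mathcal{L}^{i+j}\subseteq\Diff_{V,\Lambda,+}^{i+j}(\mathscr{M})$ for $f\in\mathcal{I}_j$. Fact~(b) applied to sections of $\mathscr{M}$ directly gives $\mathcal{L}^i\cdot\Diff_{V,+}^i(\mathscr{M})\subseteq\Diff_{V,\Lambda,+}^i(\mathscr{M})$, which is the componentwise inclusion $\mathcal{G}(\mathscr{M})\subseteq\mathcal{G}(\mathscr{M},\Lambda,\mathcal{L})$. The remaining assertions of~(1) then follow by applying Theorem~\ref{invariants_1}(1) and comparing $\eta$-functions.

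For part~(2), the preliminaries are routine: since $Z$ has normal crossings with $\Lambda$, the strict transforms of the $H\in\Lambda$ together with $H_1$ form an nc-collection $\Lambda_1$, and $\mathcal{L}_1=(\mathcal{L}\mathcal{O}_{V_1})\mathcal{I}(H_1)$ is contained in $\mathcal{I}(H)$ for every $H\in\Lambda_1$ (for strict transforms this follows from $\mathcal{L}\mathcal{O}_{V_1}\subseteq\mathcal{I}(H)\mathcal{O}_{V_1}$ together with the fact that strict transforms divide total transforms; for $H_1$ it is immediate). Hence $(\mathscr{M}_1^{(a)},\Lambda_1,\mathcal{L}_1)$ defines a $q$-differential collection on $V_1$ by~(1). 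The core of the proof is the componentwise inclusion
\begin{equation*}
  (\mathcal{G}(\mathscr{M},\Lambda,\mathcal{L}))_1^{(a)}\subseteq\mathcal{G}(\mathscr{M}_1^{(a)},\Lambda_1,\mathcal{L}_1),
\end{equation*}
where the left-hand side is the $a$-transform in the sense of Theorem~\ref{point-wise_inequality_for_q-diff}. Unfolding the two colons, this reduces to the sheaf-level inclusion
\begin{equation*}
  \Diff_{V,\Lambda,+}^i(\mathscr{M})\cdot\mathcal{O}_{V_1}\subseteq\mathcal{I}(H_1)^{qa-i}\cdot\Diff_{V_1,\Lambda_1,+}^i(\mathscr{M}_1^{(a)}),\qquad 1\leq i\leq q-1.
\end{equation*}
This is where I expect the main technical obstacle to lie. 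My approach is to work in local coordinates simultaneously adapted to $Z$ and to $\Lambda$ (available by normal crossings), and to use two facts: logarithmic operators with respect to $\Lambda$ pull back under $\pi$ to logarithmic operators with respect to $\Lambda_1$ (because $\pi$ is an isomorphism away from $H_1$ and $H_1$ is itself in $\Lambda_1$); and the iterated $1$-transforms that assemble $\mathscr{M}_1^{(a)}$ factor out the maximal possible power $F^e\mathcal{I}(H_1)^a=\mathcal{I}(H_1)^{qa}$ from the pulled-back module, so that a logarithmic operator of order~$i$, which lowers the $\mathcal{I}(H_1)$-exponent by exactly~$i$, produces the factor $\mathcal{I}(H_1)^{qa-i}$ on the right.

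Once this inclusion is in place, the pointwise inequality follows immediately. Theorem~\ref{point-wise_inequality_for_q-diff} applied to the $q$-differential collection $\mathcal{G}(\mathscr{M},\Lambda,\mathcal{L})$ (using the hypothesis that $Z$ lies in the maximum locus with value $aq+b$) yields $\eta_{\pi(x_1)}(\mathcal{G}(\mathscr{M},\Lambda,\mathcal{L}))\geq\eta_{x_1}((\mathcal{G}(\mathscr{M},\Lambda,\mathcal{L}))_1^{(a)})$, and the displayed inclusion gives $\eta_{x_1}((\mathcal{G}(\mathscr{M},\Lambda,\mathcal{L}))_1^{(a)})\geq\eta_{x_1}(\mathcal{G}(\mathscr{M}_1^{(a)},\Lambda_1,\mathcal{L}_1))$. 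Concatenating these two inequalities yields the claim in~(2).
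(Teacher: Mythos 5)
Your overall strategy matches the paper's: reduce~(2) to Theorem~\ref{point-wise_inequality_for_q-diff} via the componentwise inclusion $(\mathcal{G}(\mathscr{M},\Lambda,\mathcal{L}))_1^{(a)}\subseteq\mathcal{G}(\mathscr{M}_1^{(a)},\Lambda_1,\mathcal{L}_1)$, and your unfolding to the target $\Diff_{V,\Lambda,+}^i(\mathscr{M})\cdot\mathcal{O}_{V_1}\subseteq\mathcal{I}(H_1)^{qa-i}\cdot\Diff_{V_1,\Lambda_1,+}^i(\mathscr{M}_1^{(a)})$ is a correct identification of what must be proved (and is equivalent to the paper's chain). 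However, the proposal hand-waves precisely the two points that carry all the technical weight, and the heuristics given for them are incorrect.

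For~(1), the $q$-differentiality of $\mathcal{G}(\mathscr{M},\Lambda,\mathcal{L})$ does not follow formally from facts (a) and (b): you cannot push $\mathcal{L}^{i+j}$ through $\Diff_V^j$, since only $q$-th powers of ideals commute with operators of order $<q$. The paper's Lemma~\ref{technical_lemma_3} multiplies and divides by $\mathcal{L}^q$ and then invokes Hironaka's Lemma~\ref{Hironaka's_lemma} to control the negative power $\mathcal{L}^{i+j-q}$ that arises; a bare Leibniz manipulation does not close this gap. Your justification of (b) --- that $x^i$ times the $i$-th divided-power derivative is a polynomial in $x\,\partial/\partial x$ --- is moreover false in characteristic $p$ (for $p=2$, $i=2$, the eigenvalue $\binom{n}{2}\bmod 2$ has period $4$ in $n$, so it is not a polynomial in $n\bmod 2$); the inclusion $\mathcal{I}(H)^i\Diff_{V,+}^i\subseteq\Diff_{V,\{H\},+}^i$ is true, but its proof is again Hironaka's lemma. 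For~(2), your two ``facts'' are both wrong, and the second even contradicts the first. A $\Lambda$-logarithmic operator need not pull back to any differential operator of $\mathcal{O}_{V_1}$: with $\Lambda=\emptyset$ and $Z$ the origin in $\mathbb{A}^2$, the operator $\partial/\partial y$ sends $y/x$ to $1/x\notin\mathcal{O}_{V_1}$; only $\Lambda\cup\{\mathcal{I}(Z)\}$-logarithmic operators descend (Lemma~\ref{local_Girauds_lemma} and the lemma following it). And a $\Lambda_1$-logarithmic operator, being in particular $\mathcal{I}(H_1)$-logarithmic, \emph{preserves} the $\mathcal{I}(H_1)$-exponent, so ``lowers the exponent by exactly $i$'' is incoherent with your first claim. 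The correct mechanism is Proposition~\ref{Giraud's_lemma,_logarithmic_version}: multiplying a $\Lambda$-logarithmic operator of order $i$ by $\mathcal{I}(H_1)^i$ makes it $\mathcal{I}(Z)$-logarithmic as well, hence it descends to a $\Lambda_1$-logarithmic operator, giving $\mathcal{I}(H_1)^i\Diff_{V,\Lambda,+}^i(\mathscr{M})\mathcal{O}_{V_1}\subseteq\Diff_{V_1,\Lambda_1,+}^i(\mathscr{M}\mathcal{O}_{V_1}^q)$; the net factor $\mathcal{I}(H_1)^{qa-i}$ then emerges from $\mathscr{M}\mathcal{O}_{V_1}^q\sim F^e(\mathcal{I}(H_1)^a)\mathscr{M}_1^{(a)}$ together with the $\mathcal{O}_{V_1}^q$-linearity of differential operators of order $<q$, not from any operator lowering an exponent.
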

We refer to Remark \ref{setting_for_logarithmic} and Example \ref{final_example1} for a discussion on how this theorem can be used to
define sequences of permissible transformations over
$(V,\mathscr{M})$.

\subsection{On the Organization of the Paper}
\label{sec1.2}

In the first part of Section \ref{Section_on_modules}, we introduce definitions
and some basic constructions related to $\mathcal{O}_{V}^{q}$-modules and
their transformations under blowups. In the second part, we discuss the
interplay between $\mathcal{O}_{V}^{q}$-modules and their associated
$V$-schemes. In particular, we will prove Theorem \ref{main_theorem_2} except for the second equality in (1).

In Section \ref{Section_on_Diff}, we revise some properties about
$p$-basis and differential operators of $F$-finite regular local rings
and apply them to the introduction of local invariants associated with
$\mathcal{O}_{V}^{q}$-modules, such as the function
$x\to\eta_{x}(\mathscr{M})$, which is related to the notion of Hironaka's
slope discussed in \cite{benitovillamayor}. The application of this function
to the study of $\mathcal{O}_{V}^{q}$-modules and transformations of
$\mathcal{O}_{V}^{q}$-modules will be treated in Section \ref{Section_on_Diff2}. At this point, we can prove the second equality
in Theorem \ref{main_theorem_2}(1), and also part (1) and (2) of Theorem \ref{invariants_1}.

Finally, in Section \ref{Section_on_q-Diff}, we study $q$-differential
collections (Definition \ref{q-diff}). In the first part, we prove Theorem \ref{point-wise_inequality_for_q-diff} and part (3) of Theorem \ref{invariants_1}. In the final part, we delve into the study of invariants
of singularities by using logarithmic differential operators. The main
objective of the section is the proof of Theorem \ref{invariants_2}.

The presentation of the paper is not limited to the proof of the results
presented in this introduction. Instead, we choose a detailed exposition
that, we hope, will serve as a reference for future works. We also tried
to keep the paper as self-contained as possible. In particular, we include
proofs of the results stated in Theorem \ref{finite_morphisms_and_closed_subsets} adapted to our setting.

\subsection{Historical Remarks}
\label{sec1.3}

Our discussion makes systematic use of differential operators and logarithmic
differential operators. So it relates strongly with some published works.
Let us first mention \cite{giraud1983condition}, where Giraud studies Jung
conditions for finite radicial coverings of regular varieties in positive
characteristic. The outcome of \cite{cossart1987thesis} is a first breakthrough
in the resolution of singularities of radicial extensions in positive
characteristic.
Cossart and Piltant \cite{cossart2008resolution} proved resolution of
singularities
for arbitrary three-dimensional schemes of positive characteristic, and
more recently for arithmetical threefolds \cite{cossartpiltant2019}. These
are proofs that introduce suitable invariants satisfying the pointwise
inequality. Other invariants in positive characteristic arise in the work
of Kawanoue and Matsuki in \cite{kawanoue2007toward} and
\cite{kawanouematsuki}. Also in this line, and related with our exposition,
is the joint work of the second author with Benito
\cite{benitovillamayor} (see also \cite{benito2013monoidal}). All these
cited papers also make use either of differential operators or of logarithmic
differential operators.

\section{$\mathcal{O}_{V}^{q}$-Modules and Transformations by Blowups}
\label{Section_on_modules}

Throughout this section, $V$ denotes a fixed irreducible $F$-finite regular
scheme of characteristic $p>0$, and $q=p^{e}$ denotes a fixed power of
$p$. In Section \ref{Basic_constructions}, we present an exposition on
$\mathcal{O}_{V}^{q}$-submodules of $\mathcal{O}_{V}$ with a view toward
their associated finite radicial morphisms. We will introduce two notions
of equivalence of $\mathcal{O}_{V}^{q}$-modules and discuss different notions
of transformations of modules by blowups. In Section \ref{Relation_with_finite_radicial_morphisms}, we discuss the interplay
between $\mathcal{O}_{V}^q$-submodules of $\mathcal{O}_{V}$ and finite radicial
morphisms $\delta:X\to V$, in connection with blowups. In particular,
we prove Theorem \ref{main_theorem_2} except for the second equality in
(1).

\subsection{Basic Constructions}
\label{Basic_constructions}

An $\mathcal{O}_{V}^{q}$-submodule $\mathscr{M}\subset\mathcal
{O}_{V}$ has
attached a finite radicial morphism $\delta:X\to V$. This is the
$V$-scheme defined by the $\mathcal{O}_{V}$-algebra
$(\mathcal{O}_{V}^{q}[\mathscr{M}])^{1/q}\subset\mathcal
{O}_{V}^{1/q}$, where
$\mathcal{O}_{V}^{q}[\mathscr{M}]\subset\mathcal{O}_{V}$ denotes the
$\mathcal{O}_{V}^{q}$-subalgebra of $\mathcal{O}_{V}$ spanned by
$\mathscr{M}$. Two $\mathcal{O}_{V}^{q}$-submodules that span the same
$\mathcal{O}_{V}^{q}$-subalgebra of $\mathcal{O}_{V}$ have attached
the same
finite radicial morphism, so they should be considered as equivalent. We
call this relation of submodules weak equivalence. We also introduce a
stronger relation, which is more suitable for most of the constructions
that we will introduce.
\begin{definition}%
	\label{defequi}
	Let $\mathscr{M}_{1}$ and $\mathscr{M}_{2}$ be two
	$\mathcal{O}_{V}^{q}$-submodules of $\mathcal{O}_{V}$.
	\begin{enumerate}[(3)]
		\item[(1)] $\mathscr{M}_{1}$ and $\mathscr{M}_{2}$ are said to be
		equivalent if
		$\mathscr{M}_{1}+\mathcal{O}_{V}^{q}=\mathscr{M}_{2}+\mathcal{O}_{V}^{q}$.
		In notation, $\mathscr{M}_{1}\sim\mathscr{M}_{2}$.
		\item[(2)] $\mathscr{M}_{1}$ and $\mathscr{M}_{2}$ are said to be
		weakly equivalent
		if
		$\mathcal{O}_{V}^{q}[\mathscr{M}_{1}]=\mathcal{O}_{V}^{q}[\mathscr{M}_{2}]$.
		\item[(3)] $\mathscr{M}_{1}$ is said to be trivial if $\mathscr
		{M}\sim0$.
	\end{enumerate}
\end{definition}
Here the sum of two submodules of $\mathcal{O}_{V}$ is the smallest submodule
containing both. The relations introduced in (1) and (2) are clearly equivalence
relations. Note that $\mathcal{O}_{V}^{q}+\mathscr{M}$ is the largest
$\mathcal{O}_{V}^{q}$-submodule in its equivalence class; in particular,
each equivalence class has a canonical representative. Since
$\mathcal{O}_{V}^{q}[\mathscr{M}]=\mathcal{O}_{V}^{q}[\mathscr
{M}+\mathcal{O}_{V}^{q}]$,
equivalent modules are weakly equivalent. The converse is not true: take
$V=\Spec(\mathbb{F}_{p}[t])$ and $\mathscr{M}$ the
$\mathcal{O}_{V}^{q}$-module spanned by $t$. Note that
$t^{2}\in\Gamma(V,\mathcal{O}_{V}^{q}[\mathscr{M}])$; however, it
is not
a global section of $\mathscr{M}$. Finally, $\mathscr{M}\sim0$ if
and only
if $\mathscr{M}\subseteq\mathcal{O}_{V}^{q}$.

\begin{parrafo}
	One important operation on $\mathcal{O}_{V}^{q}$-submodules of
	$\mathcal{O}_{V}$ is the extension defined by taking conductors with respect
	to a hypersurface in a sense we specify now. Let $\mathcal{L}$ be an invertible
	ideal of $\mathcal{O}_{V}$. Then $F^{e}\mathcal{L}$ is an invertible ideal
	of $\mathcal{O}_{V}^{q}$, and we have
	$\mathcal{L}^{q}=F^{e}\mathcal{L}\cdot\mathcal{O}_{V}$. We deduce
	from this
	equality that all the $\mathcal{O}_{V}^{q}$-submodules of
	$\mathcal{L}^{q}=F^{e}\mathcal{L}\cdot\mathcal{O}_{V}$ are of the form
	$(F^{e}\mathcal{L})\mathscr{N}$ for some $\mathcal{O}_{V}^{q}$-module
	$\mathscr{N}\subseteq\mathcal{O}_{V}$.
	
	Given an $\mathcal{O}_{V}^{q}$-module
	$\mathscr{M}\subseteq\mathcal{O}_{V}$, the conductor
	$(\mathscr{M}:F^{e}\mathcal{L})\subseteq\mathcal{O}_{V}$ (defined
	in the
	class of $\mathcal{O}_{V}^{q}$-submodules of $\mathcal{O}_{V}$) is,
	by definition,
	the largest $\mathcal{O}_{V}^{q}$-submodule
	$\mathscr{N}\subseteq\mathcal{O}_{V}$ such that
	$(F^{e}\mathcal{L})\mathscr{N}\subseteq\mathscr{M}$. Since
	$(F^{e}\mathcal{L})\mathscr{N}\subseteq\mathcal{L}^{q}$, it is
	clear that
	$(\mathscr{M}:F^{e}\mathcal{L})=(\mathscr{M}\cap\mathcal{L}^{q}:F^{e}
	\mathcal{L})$. As $\mathscr{M}\cap\mathcal{L}^{q}$ is a submodule of
	$\mathcal{L}^{q}$, the discussion in the previous paragraph shows that
	$\mathscr{M}\cap\mathcal{L}^{q}=(F^{e}\mathcal{L})\mathscr{N}$ for an
	$\mathcal{O}_{V}^{q}$-submodule $\mathscr{N}$, and this submodule has to
	be the conductor $(\mathscr{M}:F^{e}\mathcal{L})$. Thus
	\begin{align}
		\label{M_int_L^q} \mathscr{M}\cap\mathcal{L}^{q}=(F^{e}
		\mathcal{L}) (\mathscr{M}:F^{e} \mathcal{L}).
	\end{align}
	
	We remark that even though the assignment
	$\mathscr{M}\mapsto(\mathscr{M}:F^{e}\mathcal{L})$ is well-defined
	in the
	class of $\mathcal{O}_{V}^{q}$-submodules of $\mathcal{O}_{V}$, it is not
	compatible with any of our notions of equivalence. For instance, take
	$V=\Spec(\mathbb{F}_{p}[s,t])$,
	$\mathscr{M}=\mathcal{O}_{V}^{q}\cdot st^{q}$,
	$\mathscr{M}'=\mathcal{O}_{V}^{q}\cdot(st^{q}+1)$ and
	$\mathcal{L}=\mathcal{O}_{V}\cdot t$. Note that
	$\mathscr{M}\sim\mathscr{M}'$, yet
	$(\mathscr{M}:F^{e}\mathcal{L})=\mathcal{O}_{V}^{q}\cdot s$ and
	$(\mathscr{M}:F^{e}\mathcal{L})=\mathcal{O}_{V}^{q}\cdot(st^{q}+1)$
	are neither
	equivalent nor weakly equivalent. This presents a difficulty as we will
	consider $\mathcal{O}_{V}^{q}$-modules only up to equivalence (and up to
	weak equivalence). One way to overcome this difficulty is by making use
	of the canonical representative in each equivalence class.
\end{parrafo}

\begin{definition}%
	\label{M_L}
	Let $\mathscr{M}\subseteq\mathcal{O}_{V}$ be an $\mathcal
	{O}_{V}^{q}$-submodule,
	and let $\mathcal{L}\subseteq\mathcal{O}_{V}$ be an invertible
	ideal. The
	$\mathcal{L}$-conductor of $\mathscr{M}$ is the $\mathcal{O}_{V}^{q}$-module
	\begin{align*}
		\mathscr{M}_{\mathcal{L}}:=((\mathcal{O}_{V}^{q}+
		\mathscr{M}):F^{e} \mathcal{L}) \subseteq\mathcal{O}_{V}.
	\end{align*}
\end{definition}
It follows from the definition that
$\mathscr{N}_{\mathcal{L}}=\mathscr{M}_{\mathcal{L}}$ if
$\mathscr{N}\sim\mathscr{M}$, that is, the definition of $\mathcal
{L}$-conductor
is compatible with the notion of equivalence. We gather more properties
in the next lemma.
\begin{lemma}%
	\label{L^qM_L}
	Let $\mathscr{M}\subseteq\mathcal{O}_{V}$ be an $\mathcal
	{O}_{V}^{q}$-module,
	and let $\mathcal{L}\subset\mathcal{O}_{V}$ be an invertible ideal. Then
	the following statements hold:
	\begin{enumerate}[(3)]
		\item[(1)] $ (\mathcal{O}_{V}^{q}+\mathscr{M})\cap\mathcal
		{L}^{q}=(F^{e}\mathcal{L})
		\mathscr{M}_{\mathcal{L}}$.
		\item[(2)] $\mathscr{M}_\mathcal{L}=\mathscr{M}_\mathcal
		{L}+\mathcal{O}_{V}^{q}$.
		\item[(3)] If $\mathcal{L}'\subset\mathcal{O}_{V}$ is a second
		invertible ideal,
		then
		$\mathscr{M}_{\mathcal{L}\mathcal{L}'}=(\mathscr{M}_{\mathcal{L}})_{
			\mathcal{L}'}$.
	\end{enumerate}
\end{lemma}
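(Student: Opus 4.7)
My plan is to derive each of the three statements by directly unwinding the definition of $\mathscr{M}_{\mathcal{L}} = ((\mathcal{O}_V^q + \mathscr{M}) : F^e\mathcal{L})$ and combining it with the general identity (\ref{M_int_L^q}) together with the standard multiplicativity of conductors. Item (1) is essentially a restatement of (\ref{M_int_L^q}), item (2) is the observation that $F^e\mathcal{L}$ is contained in $\mathcal{O}_V^q$, and item (3) reduces to the iterated-conductor identity $(N:IJ)=((N:I):J)$ once (2) is in hand.

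For (1), I would apply the identity (\ref{M_int_L^q}) to the $\mathcal{O}_V^q$-submodule $\mathcal{O}_V^q + \mathscr{M}$ in place of $\mathscr{M}$. This immediately yields
\[
(\mathcal{O}_V^q + \mathscr{M}) \cap \mathcal{L}^q \;=\; (F^e\mathcal{L})\bigl((\mathcal{O}_V^q + \mathscr{M}) : F^e\mathcal{L}\bigr) \;=\; (F^e\mathcal{L})\,\mathscr{M}_{\mathcal{L}},
\]
which is exactly (1). For (2), I would argue that $F^e\mathcal{L} \subseteq \mathcal{O}_V^q$ (since $F^e\mathcal{L}$ is an ideal of $\mathcal{O}_V^q$), so $(F^e\mathcal{L}) \cdot \mathcal{O}_V^q \subseteq \mathcal{O}_V^q \subseteq \mathcal{O}_V^q + \mathscr{M}$; this shows that every section of $\mathcal{O}_V^q$ lies in $\mathscr{M}_{\mathcal{L}}$, whence $\mathcal{O}_V^q \subseteq \mathscr{M}_{\mathcal{L}}$ and therefore $\mathscr{M}_{\mathcal{L}} + \mathcal{O}_V^q = \mathscr{M}_{\mathcal{L}}$.

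For (3), I would first note that $F^e(\mathcal{L}\mathcal{L}') = F^e\mathcal{L} \cdot F^e\mathcal{L}'$, as $F^e$ is a ring homomorphism. Using the standard iterated conductor identity applied to the $\mathcal{O}_V^q$-submodule $N := \mathcal{O}_V^q + \mathscr{M}$ of $\mathcal{O}_V$ and the two invertible $\mathcal{O}_V^q$-ideals $F^e\mathcal{L}$ and $F^e\mathcal{L}'$, I obtain
\[
\mathscr{M}_{\mathcal{L}\mathcal{L}'} \;=\; \bigl(N : F^e\mathcal{L} \cdot F^e\mathcal{L}'\bigr) \;=\; \bigl((N : F^e\mathcal{L}) : F^e\mathcal{L}'\bigr) \;=\; (\mathscr{M}_{\mathcal{L}} : F^e\mathcal{L}').
\]
By (2), $\mathcal{O}_V^q + \mathscr{M}_{\mathcal{L}} = \mathscr{M}_{\mathcal{L}}$, so the right-hand side is precisely $((\mathcal{O}_V^q + \mathscr{M}_{\mathcal{L}}) : F^e\mathcal{L}') = (\mathscr{M}_{\mathcal{L}})_{\mathcal{L}'}$, and (3) follows. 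The only mild subtlety is justifying the iterated conductor identity for quasi-coherent $\mathcal{O}_V^q$-submodules of $\mathcal{O}_V$; since $F^e\mathcal{L}$ and $F^e\mathcal{L}'$ are invertible as $\mathcal{O}_V^q$-ideals, this can be checked stalkwise, where locally both ideals are principal and generated by nonzerodivisors and the identity is completely elementary. This is the only point requiring any care, but it is routine and is the only real ``obstacle'' in what is otherwise a direct unwinding of definitions.
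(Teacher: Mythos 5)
Your proof is correct and follows essentially the same approach as the paper: item (1) via applying (\ref{M_int_L^q}) to $\mathcal{O}_V^q+\mathscr{M}$, item (2) via $F^e\mathcal{L}\subseteq\mathcal{O}_V^q$, and item (3) via the iterated conductor identity combined with (2). The paper simply states the conductor manipulations without your closing remark about stalkwise verification, but the argument is the same.
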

\begin{proof}
	(1) follows from (\ref{M_int_L^q}) when applied to the submodule
	$\mathcal{O}_{V}^{q}+\mathscr{M}$. As for (2), we have
	$(F^{e}\mathcal{L})\mathcal{O}_{V}^{q}\subseteq\mathcal{O}_{V}^{q}
	\subseteq\mathcal{O}_{V}^{q}+\mathscr{M}$, so that
	$\mathcal{O}_{V}^{q}\subseteq\mathscr{M}_{\mathcal{L}}$, and hence
	$\mathscr{M}_{\mathcal{L}}=\mathcal{O}_{V}^{q}+\mathscr
	{M}_{\mathcal{L}}$. Note
	finally that
	\begin{eqnarray*}
		\mathscr{M}_{\mathcal{L}\mathcal{L}'}&=&(\mathcal{O}_{V}^{q}+
		\mathscr {M}:(F^{e} \mathcal{L}) (F^{e}
		\mathcal{L'}))
		\\
		&=&((\mathcal{O}_{V}^{q}+
		\mathscr{M}:F^{e} \mathcal{L}):F^{e}\mathcal{L'})
		\\
		&=&(
		\mathscr{M}_{\mathcal
			{L}}:F^{e}\mathcal{L'})=(
		\mathcal{O}_{V}^{q}+\mathscr{M}_{\mathcal{L}}:F^{e}
		\mathcal {L'})=(\mathscr{M}_{
			\mathcal{L}})_{\mathcal{L}'},
	\end{eqnarray*}
	which proves (3).
\end{proof}

Our definition of $\mathcal{L}$-conductor is not compatible with the notion
of weak equivalence in general. For instance, in the following example,
we have a strict inclusion
$\mathcal{O}_{V}^{q}[\mathscr{M}_\mathcal{L}]\subset(\mathcal{O}_{V}^{q}[
\mathscr{M}])_{\mathcal{L}}$, which implies that the weakly
equivalent modules
$\mathscr{M}$ and $\mathcal{O}_{V}^{q}[\mathscr{M}]$ have $\mathcal
{L}$-conductors
that are not weakly equivalent. Take
$V=\operatorname{Spec}(\mathbb{F}_{p}[t])$,
$\mathscr{M}=\mathcal{O}_{V}^{q}\cdot t^{q-1}$ and
$\mathcal{L}=\mathcal{O}_{V}\cdot t$. Note that
$\mathscr{M}_{\mathcal{L}}=\mathscr{M}$ and
$\Gamma(V,\mathcal{O}_{V}^{q}[\mathscr{M}])=\mathbb{F}_{p}[t^{q-1},t^{q}]$.
On the other hand,
$\Gamma(V,(\mathcal{O}_{V}^{q}[\mathscr{M}])_\mathcal{L})$ contains
$t^{q-2}$ since $t^{q}t^{q-2}=(t^{q-1})^{2}$.

In Lemma \ref{Well_definition_of_the_weak_equivalence_class_of_M_L}, we
introduce conditions under which the compatibility with the weak equivalence
holds. We first study, in Lemma \ref{When_M_is_generated_by_elements_in_I}, the condition
$\mathscr{M}\subseteq\mathcal{O}_{V}^{q}+\mathcal{I}$ for a general
$\mathcal{O}_{V}$-ideal $\mathcal{I}$. Note that
$\mathcal{O}_{V}^{q}+\mathcal{I}$ is an $\mathcal{O}_{V}^{q}$-subalgebra,
namely,
$\mathcal{O}_{V}^{q}+\mathcal{I}=\mathcal{O}_{V}^{q}[\mathcal{I}]$.
\begin{lemma}%
	\label{When_M_is_generated_by_elements_in_I}
	Given an $\mathcal{O}_{V}^{q}$-module
	$\mathscr{M}\subseteq\mathcal{O}_{V}$ and an $\mathcal{O}_{V}$-ideal
	$\mathcal{I}$, the following conditions are equivalent.
	\begin{enumerate}[(3)]
		\item[(1)] $\mathscr{M}\subseteq\mathcal{O}_{V}^{q}+\mathcal{I}$.
		\item[(2)] $\mathcal{O}_{V}^{q}[\mathscr{M}]\subseteq\mathcal
		{O}_{V}^{q}+\mathcal{I}$.
		\item[(3)] $\mathscr{M}\sim((\mathcal{O}_{V}^{q}+\mathscr{M})\cap
		\mathcal{I})$.
		\item[(4)] $\mathscr{M} \sim\mathscr{M'}$ for some $\mathcal
		{O}_{V}^{q}$-module
		$\mathscr{M'}$ included in $\mathcal{I}$.
	\end{enumerate}
\end{lemma}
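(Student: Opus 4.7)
The plan is to prove the cycle $(1) \Rightarrow (2) \Rightarrow (1)$ together with $(1) \Rightarrow (3) \Rightarrow (4) \Rightarrow (1)$; each implication should require only a few lines of definition-chasing.

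First I would handle $(1) \Leftrightarrow (2)$. The implication $(2) \Rightarrow (1)$ is immediate from $\mathscr{M} \subseteq \mathcal{O}_V^q[\mathscr{M}]$. For $(1) \Rightarrow (2)$, the crucial observation, which was already recorded in the paragraph preceding the lemma, is that $\mathcal{O}_V^q + \mathcal{I}$ coincides with the subalgebra $\mathcal{O}_V^q[\mathcal{I}]$ (since $\mathcal{I}$ is an ideal of $\mathcal{O}_V$, products of its elements stay in $\mathcal{I}$). Hence if $\mathscr{M} \subseteq \mathcal{O}_V^q + \mathcal{I}$, then taking $\mathcal{O}_V^q$-subalgebras preserves the inclusion, giving $\mathcal{O}_V^q[\mathscr{M}] \subseteq \mathcal{O}_V^q[\mathcal{O}_V^q + \mathcal{I}] = \mathcal{O}_V^q + \mathcal{I}$.

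Next I would prove $(1) \Rightarrow (3)$, which is the main computational step. Set $\mathscr{M}' := (\mathcal{O}_V^q + \mathscr{M}) \cap \mathcal{I}$. Locally, any section $m \in \mathscr{M}$ can, by (1), be written as $m = a + b$ with $a \in \mathcal{O}_V^q$ and $b \in \mathcal{I}$. Then $b = m - a$ lies simultaneously in $\mathcal{O}_V^q + \mathscr{M}$ and in $\mathcal{I}$, hence in $\mathscr{M}'$, giving $m \in \mathcal{O}_V^q + \mathscr{M}'$. This proves $\mathscr{M} + \mathcal{O}_V^q \subseteq \mathscr{M}' + \mathcal{O}_V^q$. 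The reverse inclusion is automatic from $\mathscr{M}' \subseteq \mathcal{O}_V^q + \mathscr{M}$. Thus $\mathscr{M} \sim \mathscr{M}'$.

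The remaining steps are immediate. $(3) \Rightarrow (4)$ holds by taking $\mathscr{M}' = (\mathcal{O}_V^q + \mathscr{M}) \cap \mathcal{I}$, which by construction is an $\mathcal{O}_V^q$-submodule contained in $\mathcal{I}$. For $(4) \Rightarrow (1)$, if $\mathscr{M} \sim \mathscr{M}'$ with $\mathscr{M}' \subseteq \mathcal{I}$, then $\mathscr{M} \subseteq \mathscr{M} + \mathcal{O}_V^q = \mathscr{M}' + \mathcal{O}_V^q \subseteq \mathcal{I} + \mathcal{O}_V^q$, which is exactly (1). There is no real obstacle in this argument; the only point to be careful about is that in $(1) \Rightarrow (3)$ one must use the decomposition of $m$ to produce an element of $\mathscr{M}'$ rather than merely of $\mathcal{I}$, which is why the intersection with $\mathcal{O}_V^q + \mathscr{M}$ appears in the definition of the canonical representative.
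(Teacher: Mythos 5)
Your proof is correct and follows essentially the same route as the paper: both use the observation that $\mathcal{O}_V^q + \mathcal{I} = \mathcal{O}_V^q[\mathcal{I}]$ to get (1) $\Leftrightarrow$ (2), and both establish (1) $\Rightarrow$ (3) via the identity $\mathcal{O}_V^q + \mathscr{M} = \mathcal{O}_V^q + \bigl((\mathcal{O}_V^q + \mathscr{M}) \cap \mathcal{I}\bigr)$, which the paper obtains from the modular law for submodules and which you reprove by the equivalent element-wise argument. The remaining implications (3) $\Rightarrow$ (4) $\Rightarrow$ (1) are handled identically.
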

\begin{proof}
	The equivalence (1) $\Leftrightarrow$ (2) is trivial. For the implication
	(1) $\Rightarrow$ (3), observe that (1) implies that
	$\mathcal{O}_{V}^{q}+\mathscr{M}\subseteq\mathcal
	{O}_{V}^{q}+\mathcal{I}$,
	and hence
	\begin{eqnarray*}
		\mathscr{M}\sim\mathcal{O}_{V}^{q}+\mathscr{M}&=&(
		\mathcal{O}_{V}^{q}+ \mathscr{M})\cap(
		\mathcal{O}_{V}^{q}+\mathcal{I})
		\\
		&=&\mathcal{O}_{V}^{q}+((
		\mathcal{O}_{V}^{q}+\mathscr{M})\cap\mathcal{I})\sim((
		\mathcal{O}_{V}^{q}+ \mathscr{M})\cap\mathcal{I}).
	\end{eqnarray*}
	Finally, the implications (3) $\Rightarrow$ (4) and (4) $\Rightarrow$ (1)
	are clear.
\end{proof}

\begin{lemma}%
	\label{Well_definition_of_the_weak_equivalence_class_of_M_L}
	Let $\mathscr{M}\subseteq\mathcal{O}_{V}$ be an $\mathcal
	{O}_{V}^{q}$-submodule,
	and let $\mathcal{L}\subset\mathcal{O}_{V}$ be an invertible ideal. Assume
	that $\mathscr{M}\subseteq\mathcal{O}_{V}^{q}+\mathcal{L}^{q}$. Then
	\begin{enumerate}[(2)]
		\item[(1)] $\mathscr{M}\sim(F^{e}\mathcal{L})\mathscr{M}_{\mathcal{L}}$.
		\item[(2)] Let $\tilde{\mathscr{M}}\subseteq\mathcal{O}_{V}$ be another
		$\mathcal{O}_{V}^{q}$-module, and assume that $\mathscr{M}$ and
		$\tilde{\mathscr{M}}$ are weakly equivalent. Then
		$\tilde{\mathscr{M}}\subseteq\mathcal{O}_{V}^{q}+\mathcal{L}^{q}$,
		and the
		$\mathcal{O}_{V}^{q}$-modules $\mathscr{M}_{\mathcal{L}}$ and
		$\tilde{\mathscr{M}}_{\mathcal{L}}$ are weakly equivalent.
	\end{enumerate}
\end{lemma}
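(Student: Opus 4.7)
The plan is to deduce (1) directly from Lemmas~\ref{When_M_is_generated_by_elements_in_I} and~\ref{L^qM_L}, to read the first assertion of (2) off Lemma~\ref{When_M_is_generated_by_elements_in_I}, and then to prove the second assertion of~(2) by establishing the stronger statement that the $\mathcal{O}_V^q$-subalgebra $\mathcal{O}_V^q[\mathscr{M}_{\mathcal{L}}]$ depends only on $A:=\mathcal{O}_V^q[\mathscr{M}]$ and on $\mathcal{L}$---specifically, by proving
\[
\mathcal{O}_V^q[\mathscr{M}_{\mathcal{L}}]=\mathcal{O}_V^q\bigl[(A:F^e\mathcal{L})\bigr].
\]
Applied both to $\mathscr{M}$ and to $\tilde{\mathscr{M}}$, which share the same $A$ by weak equivalence, this identity immediately yields $\mathcal{O}_V^q[\mathscr{M}_{\mathcal{L}}]=\mathcal{O}_V^q[\tilde{\mathscr{M}}_{\mathcal{L}}]$.

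For~(1), with $\mathcal{I}=\mathcal{L}^q$ the implication (1)$\Rightarrow$(3) of Lemma~\ref{When_M_is_generated_by_elements_in_I} gives $\mathscr{M}\sim(\mathcal{O}_V^q+\mathscr{M})\cap\mathcal{L}^q$, and Lemma~\ref{L^qM_L}(1) identifies this with $(F^e\mathcal{L})\mathscr{M}_{\mathcal{L}}$. The first assertion of~(2) is the equivalence (1)$\Leftrightarrow$(2) in Lemma~\ref{When_M_is_generated_by_elements_in_I}: $\mathscr{M}\subseteq\mathcal{O}_V^q+\mathcal{L}^q$ is the same condition as $A\subseteq\mathcal{O}_V^q+\mathcal{L}^q$, which depends only on $A=\mathcal{O}_V^q[\tilde{\mathscr{M}}]$. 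The inclusion ``$\subseteq$'' in the displayed identity is immediate from $\mathcal{O}_V^q+\mathscr{M}\subseteq A$.

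The opposite inclusion $(A:F^e\mathcal{L})\subseteq\mathcal{O}_V^q[\mathscr{M}_{\mathcal{L}}]$ will be checked on stalks; the nontrivial case is $\mathcal{L}_x\ne\mathcal{O}_{V,x}$. Picking a local generator $t$ of $\mathcal{L}_x$ and writing $B:=\mathcal{O}_{V,x}^q$, Lemma~\ref{L^qM_L}(1) applied stalkwise gives $(B+\mathscr{M}_x)\cap(t^q)=t^q\mathscr{M}_{\mathcal{L},x}$; combining with the hypothesis $\mathscr{M}_x\subseteq B+(t^q)$ yields $B+\mathscr{M}_x=B+t^q\mathscr{M}_{\mathcal{L},x}$, hence $A_x=B[\mathscr{M}_x]=B[t^q\mathscr{M}_{\mathcal{L},x}]$. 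Given $a\in(A_x:t^q)$, expanding $t^qa\in A_x$ as a polynomial in the generators,
\[
t^qa=b_0+\sum_{|\alpha|\geq1}b_\alpha\, t^{q|\alpha|}\,m^\alpha,\qquad b_\alpha\in B,\ m^\alpha\text{ a monomial in }\mathscr{M}_{\mathcal{L},x},
\]
the non-constant part lies in $(t^q)$, forcing $b_0\in(t^q)\cap B=t^qB$; writing $b_0=t^qb_0'$ with $b_0'\in B$ and dividing by $t^q$ produces $a=b_0'+\sum_{|\alpha|\geq1}b_\alpha t^{q(|\alpha|-1)}m^\alpha\in B[\mathscr{M}_{\mathcal{L},x}]$, as required.

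The main technical obstacle is the identity $(t^q)\cap B=t^qB$ used to divide $b_0$ by $t^q$ within $B$. Writing $b\in(t^q)\cap B$ as $b=c^q$ with $c\in\mathcal{O}_{V,x}$ and also as $b=t^qd$ with $d\in\mathcal{O}_{V,x}$, one has $(c/t)^q=d\in\mathcal{O}_{V,x}$, whence $c/t\in\mathcal{O}_{V,x}$ by normality of $\mathcal{O}_{V,x}$ (which holds because $V$ is regular); then $b=t^q(c/t)^q\in t^qB$. Everything else is a bookkeeping combination of the previously established lemmas.
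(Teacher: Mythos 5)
Your proof is correct. Part~(1), the first assertion of~(2), and the easy inclusion $\mathcal{O}_V^q[\mathscr{M}_{\mathcal{L}}]\subseteq\mathcal{O}_V^q[(A:F^e\mathcal{L})]$ match the paper's argument essentially verbatim, and like the paper you reduce~(2) to the identity $\mathcal{O}_V^q[\mathscr{M}_{\mathcal{L}}]=\mathcal{O}_V^q[(A:F^e\mathcal{L})]$ (which is the paper's formula~(\ref{O[M_L]}), since $(\mathcal{O}_V^q[\mathscr{M}])_{\mathcal{L}}=(A:F^e\mathcal{L})$). Where you diverge is in establishing the hard inclusion. The paper stays global and avoids choosing elements: after the reduction $(F^e\mathcal{L})(\mathcal{O}_V^q[\mathscr{M}])_{\mathcal{L}}=\mathcal{O}_V^q[\mathscr{M}]\cap\mathcal{L}^q$, it expands $\mathcal{O}_V^q[\mathscr{M}]=\mathcal{O}_V^q[(F^e\mathcal{L})\mathscr{M}_{\mathcal{L}}]$ into a sum of powers, intersects with $\mathcal{L}^q$, and uses $\mathcal{O}_V^q\cap\mathcal{L}^q=F^e\mathcal{L}$ to collapse everything to $(F^e\mathcal{L})\mathcal{O}_V^q[\mathscr{M}_{\mathcal{L}}]$. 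You instead pass to stalks, pick a local generator $t$ of $\mathcal{L}_x$, rewrite $A_x=B[t^q\mathscr{M}_{\mathcal{L},x}]$, expand $t^q a$ as a $B$-polynomial in the generators, and divide through by $t^q$ term by term; the step that makes division possible, $(t^q)\cap B=t^q B$ via normality of $\mathcal{O}_{V,x}$, is precisely the local form of the paper's $\mathcal{O}_V^q\cap\mathcal{L}^q=F^e\mathcal{L}$. Both rest on the same underlying fact; the paper's version is more compact and stays sheaf-theoretic, while yours is more explicit and perhaps easier to follow element by element, at the modest cost of invoking the compatibility of $\mathcal{O}_V^q[\,\cdot\,]$, $(\,\cdot\,:F^e\mathcal{L})$, and $\mathscr{M}\mapsto\mathscr{M}_{\mathcal{L}}$ with passing to stalks.
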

\begin{proof}
	By Lemma \ref{When_M_is_generated_by_elements_in_I} the condition
	$\mathscr{M}\subseteq\mathcal{O}_{V}^{q}+\mathcal{L}^{q}$ implies that
	$\mathscr{M}\sim((\mathcal{O}_{V}^{q}+\mathscr{M})\cap\mathcal{L}^{q})$,
	and the latter is $ (F^{e}\mathcal{L})\mathscr{M}_{\mathcal{L}}$ according
	to Lemma \ref{L^qM_L}(1). This proves (1).
	
	Before proving (2), as a preliminary step, we show that the condition
	$\mathscr{M}\subseteq\mathcal{O}_{V}^{q}+\mathcal{L}^{q}$ implies that
	\begin{align}
		\label{O[M_L]} \mathcal{O}_{V}^{q}[
		\mathscr{M}_{\mathcal{L}}]=\mathcal{O}_{V}^{q}[(
		\mathcal{O}_{V}^{q}[\mathscr{M}])_{\mathcal{L}}],
	\end{align}
	and then we will show that (2) easily follows from this equality.
	
	Clearly,
	$\mathscr{M}_{\mathcal{L}}\subseteq(\mathcal{O}_{V}^{q}[\mathscr{M}])_{
		\mathcal{L}}$, and hence
	$\mathcal{O}_{V}^{q}[\mathscr{M}_{\mathcal{L}}]\subseteq\mathcal
	{O}_{V}^{q}[(
	\mathcal{O}_{V}^{q}[\mathscr{M}])_{\mathcal{L}}]$. As for the
	reverse inclusion,
	we only need to prove that
	\begin{equation*}
		(\mathcal{O}_{V}^{q}[\mathscr{M}])_{\mathcal{L}}
		\subseteq\mathcal {O}_{V}^{q}[ \mathscr{M}_{\mathcal{L}}].
	\end{equation*}
	Since $F^{e}\mathcal{L}\subset\mathcal{O}_{V}^{q}$ is an invertible ideal,
	the previous inclusion is equivalent to
	\begin{equation*}
		(F^{e}\mathcal{L}) (\mathcal{O}_{V}^{q}[
		\mathscr{M}])_{\mathcal{L}} \subseteq(F^{e}\mathcal{L})
		\mathcal{O}_{V}^{q}[\mathscr {M}_{\mathcal{L}}].
	\end{equation*}
	To prove this last inclusion, note that, on the one hand, Lemma \ref{L^qM_L}(1), applied to
	$\mathcal{O}_{V}^{q}[\mathscr{M}]=\mathcal{O}_{V}^{q}+\mathcal{O}_{V}^{q}[
	\mathscr{M}]$ and $\mathcal{L}$, says that
	\begin{align}
		\label{eq1:O^q[M_L]} \mathcal{O}_{V}^{q}[\mathscr{M}]\cap
		\mathcal{L}^{q}= (F^{e}\mathcal{L}) (
		\mathcal{O}_{V}^{q}[\mathscr{M}])_{\mathcal{L}}.
	\end{align}
	On the other hand, by statement (1),
	$\mathscr{M}\sim(F^{e}\mathcal{L})\mathscr{M}_{\mathcal{L}}$,
	whence they
	are also weakly equivalent, and
	\begin{align}
		\label{eq2:O^q[M_L]} \mathcal{O}_{V}^{q}[\mathscr{M}]&=
		\mathcal{O}_{V}^{q}[(F^{e}\mathcal{L})
		\mathscr{M}_{\mathcal{L}}]\nonumber
		\\
		&= \mathcal{O}_{V}^{q}+\sum
		_{n=1}^{\infty}((F^{e}
		\mathcal{L})\mathscr{M}_{\mathcal{L}})^{n}\nonumber
		\\
		&\subseteq
		\mathcal{O}_{V}^{q}+ \sum_{n=1}^{\infty}(F^{e}
		\mathcal{L}) (\mathscr{M}_{\mathcal{L}})^{n}.
	\end{align}
	We finally claim that
	\begin{eqnarray*}
		(F^{e}\mathcal{L}) (\mathcal{O}_{V}^{q}[
		\mathscr{M}])_{\mathcal{L}} &=& \mathcal{O}_{V}^{q}[
		\mathscr{M}]\cap\mathcal{L}^{q}
		\\
		&\subseteq& \biggl(\mathcal{O}_{V}^{q}+
		\sum_{n=1}^{\infty}(F^{e}
		\mathcal{L}) (\mathscr{M}_{\mathcal{L}})^{n}\biggr) \cap
		\mathcal{L}^{q}
		\\
		&=&(\mathcal{O}_{V}^{q}\cap
		\mathcal{L}^{q})+\sum_{n=1}^{\infty}(F^{e}
		\mathcal{L}) (\mathscr{M}_{\mathcal{L}})^{n}
		\\
		&=& (F^{e}
		\mathcal{L}) \mathcal{O}_{V}^{q}[\mathscr{M}_{\mathcal{L}}].
	\end{eqnarray*}
	In fact, the first equality is (\ref{eq1:O^q[M_L]}), and the first inclusion
	follows from (\ref{eq2:O^q[M_L]}). As for the second equality, we just
	need to note that
	$\sum_{n=1}^{\infty}(F^{e}\mathcal{L})(\mathscr{M}_{\mathcal
		{L}})^{n}$ is
	already included in $\mathcal{L}^{q}$. Finally, the last equality is clear
	since $\mathcal{O}_{V}^{q}\cap\mathcal{L}^{q}=F^{e}\mathcal{L}$.
	This completes
	the proof of (\ref{O[M_L]}). We now turn to the proof of (2). Notice first
	that
	$\tilde{\mathscr{M}}\subseteq\mathcal{O}_{V}^{q}[\tilde{\mathscr{M}}]=
	\mathcal{O}_{V}^{q}[\mathscr{M}]\subseteq\mathcal
	{O}_{V}^{q}+\mathcal{L}^{q}$,
	whence equality (\ref{O[M_L]}) holds for $\tilde{\mathscr{M}}$ as well.
	Therefore
	\begin{equation*}
		\mathcal{O}_{V}^{q}[\tilde{\mathscr{M}}_{\mathcal{L}}]=
		\mathcal {O}_{V}^{q}[( \mathcal{O}_{V}^{q}[
		\tilde{\mathscr{M}}])_{\mathcal{L}}]=\mathcal {O}_{V}^{q}[(
		\mathcal{O}_{V}^{q}[\mathscr{M}])_{\mathcal{L}}]=
		\mathcal{O}_{V}^{q}[ \mathscr{M}_{\mathcal{L}}],
	\end{equation*}
	which completes the proof of (2).
\end{proof}

\begin{parrafo}%
	\label{pull_back_of_O^q-modules}
	We now discuss the notion of pull-back of $\mathcal{O}_{V}^{q}$-submodules
	of $\mathcal{O}_{V}$ by morphisms. Let $V_{1}$ be a second irreducible
	$F$-finite regular scheme, and let $\pi:V_{1}\rightarrow V$ be a morphism.
	As usual, we write
	$\pi^{\sharp}:\pi^{-1}\mathcal{O}_{V}\rightarrow\mathcal
	{O}_{V_{1}}$ for
	the underlying morphism of sheaves. Clearly,
	\begin{equation*}
		\pi^{-1}\mathcal{O}_{V}^{q}=(
		\pi^{-1}\mathcal{O}_{V})^{q},
	\end{equation*}
	and hence by restriction of $\pi^{\sharp}$ we obtain a commutative diagram
	\begin{equation*}
		\xymatrix{\pi^{-1}\mathcal{O}_{V}\ar[r]^{\pi^\sharp}&\mathcal
			{O}_{V_{1}}\\
			\pi^{-1}\mathcal{O}_{V}^{q}\ar[u] \ar[r]^{\pi^\sharp}&\mathcal
			{O}_{V_{1}}^{q}\ar[u],
		}
	\end{equation*}
	where the vertical arrows are inclusions. This commutative diagram yields
	a morphism of $\mathcal{O}_{V_{1}}^{q}$-algebras
	\begin{align}
		\label{theta_tensor_1} \Phi:\pi^{-1}\mathcal{O}_{V}
		\otimes_{\pi^{-1}\mathcal{O}_{V}^{q}} \mathcal{O}_{V_{1}}^{q}\rightarrow
		\mathcal{O}_{V_{1}}.
	\end{align}
\end{parrafo}
\begin{definition}%
	\label{total_transform}
	Within the previous setting, if $\mathscr{M}$ is an
	$\mathcal{O}_{V}^{q}$-submodule of $\mathcal{O}_{V}$, then we denote by
	$\mathscr{M}\mathcal{O}_{V_{1}}^{q}$ the image of the composition
	\begin{equation*}
		\pi^{-1}\mathscr{M}\otimes_{\pi^{-1}\mathcal{O}_{V}^{q}}\mathcal
		{O}_{V_{1}}^{q} \xrightarrow{\iota\otimes\operatorname{id}}
		\pi^{-1}\mathcal{O}_{V} \otimes_{\pi^{-1}\mathcal{O}_{V}^{q}}
		\mathcal{O}_{V_{1}}^{q} \xrightarrow{\Phi}
		\mathcal{O}_{V_{1}},
	\end{equation*}
	where $\iota$ denotes the inclusion
	$\pi^{-1}\mathscr{M}\subseteq\pi^{-1}\mathcal{O}_{V}$.
\end{definition}
We readily check that for any $x_{1}\in V$,
\begin{align}
	\label{stalk_of_MO_V1^q} (\mathscr{M}\mathcal{O}_{V_{1}}^{q})_{x_{1}}=
	\mathscr{M}_{\pi(x_{1})} \mathcal{O}_{V_{1},x_{1}}^{q},
\end{align}
where the term on the right is the $\mathcal{O}_{V_{1},x_{1}}^{q}$-submodule
of $\mathcal{O}_{V_{1},x_{1}}$ spanned by the image of the composition map
$\mathscr{M}_{\pi(x_{1})}\hookrightarrow\mathcal{O}_{V,\pi(x_{1})}
\xrightarrow{\pi_{x_{1}}^{\sharp}} \mathcal{O}_{V_{1},x_{1}}$. The following
proposition is a direct consequence of this equality.
\begin{proposition}%
	\label{the_equivalence_class_of_the_transform_of_a_module}
	Fix a morphism $\pi:V_{1}\rightarrow V$ of $F$-finite regular schemes.
	\begin{enumerate}[(2)]
		\item[(1)] If $\mathscr{B}\subseteq\mathcal{O}_{V}$ is an
		$\mathcal{O}_{V}^{q}$-subalgebra, then
		$\mathscr{B}\mathcal{O}_{V_{1}}^{q}\subseteq\mathcal{O}_{V_{1}}$ is an
		$\mathcal{O}_{V_{1}}^{q}$-subalgebra.
		\item[(2)] If $\mathscr{M},\mathscr{M}'\subseteq\mathcal{O}_{V}$
		are equivalent
		(resp., weakly equivalent) $\mathcal{O}_{V}^{q}$-modules, then
		$\mathscr{M}\mathcal{O}_{V_{1}}^{q}$ and
		$\mathscr{M}'\mathcal{O}_{V_{1}}^{q}$ are equivalent (resp., weakly
		equivalent)
		$\mathcal{O}_{V_{1}}^{q}$-modules.
	\end{enumerate}
\end{proposition}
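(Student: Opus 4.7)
My plan is to verify both statements stalkwise, using the identification $(\mathscr{M}\mathcal{O}_{V_{1}}^{q})_{x_{1}}=\mathscr{M}_{\pi(x_{1})}\mathcal{O}_{V_{1},x_{1}}^{q}$ from equation (\ref{stalk_of_MO_V1^q}). Since equality and inclusion of quasi-coherent subsheaves of $\mathcal{O}_{V_{1}}$ can be tested on stalks, each assertion reduces to a purely algebraic statement about submodules of the local ring $\mathcal{O}_{V_{1},x_{1}}$.

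For part (1), fix $x_{1}\in V_{1}$ and set $A:=\mathcal{O}_{V,\pi(x_{1})}^{q}$, $B:=\mathcal{O}_{V_{1},x_{1}}^{q}$, and $\mathscr{B}_{0}:=\mathscr{B}_{\pi(x_{1})}$, which is an $A$-subalgebra of $\mathcal{O}_{V,\pi(x_{1})}$. I will check that the $B$-submodule $\mathscr{B}_{0}B\subseteq\mathcal{O}_{V_{1},x_{1}}$ is a subalgebra: a product of elementary generators satisfies $(b_{1}c_{1})(b_{2}c_{2})=(b_{1}b_{2})(c_{1}c_{2})\in\mathscr{B}_{0}B$ because $b_{1}b_{2}\in\mathscr{B}_{0}$ and $c_{1}c_{2}\in B$; bilinearity extends this to all finite sums, and $1\in\mathscr{B}_{0}\cap B$. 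Thus $\mathscr{B}\mathcal{O}_{V_{1}}^{q}$ is an $\mathcal{O}_{V_{1}}^{q}$-subalgebra of $\mathcal{O}_{V_{1}}$.

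For part (2), the equivalence case is immediate: stalkwise,
\[
\mathscr{M}_{\pi(x_{1})}\mathcal{O}_{V_{1},x_{1}}^{q}+\mathcal{O}_{V_{1},x_{1}}^{q}=(\mathscr{M}_{\pi(x_{1})}+\mathcal{O}_{V,\pi(x_{1})}^{q})\mathcal{O}_{V_{1},x_{1}}^{q},
\]
and the right-hand side depends only on the class $\mathscr{M}_{\pi(x_{1})}+\mathcal{O}_{V,\pi(x_{1})}^{q}$, which coincides for $\mathscr{M}$ and $\mathscr{M}'$ by hypothesis. For the weak-equivalence case, the crucial identity at each stalk is
\[
\mathcal{O}_{V,\pi(x_{1})}^{q}[\mathscr{M}_{\pi(x_{1})}]\cdot\mathcal{O}_{V_{1},x_{1}}^{q}=\mathcal{O}_{V_{1},x_{1}}^{q}[\mathscr{M}_{\pi(x_{1})}\mathcal{O}_{V_{1},x_{1}}^{q}].
\]
Indeed, both sides consist of $B$-linear combinations of monomials $m_{1}\cdots m_{k}$ in $\mathscr{M}_{\pi(x_{1})}$: on the left, the $A$-coefficient inside the bracket can be absorbed into $B$ (via $A\hookrightarrow B$); on the right, a product $(m_{1}\beta_{1})\cdots(m_{k}\beta_{k})$ equals $(\beta_{1}\cdots\beta_{k})m_{1}\cdots m_{k}$ with $\beta_{1}\cdots\beta_{k}\in B$. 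Applying part (1) to $\mathscr{B}:=\mathcal{O}_{V}^{q}[\mathscr{M}]$ and invoking the hypothesis $\mathcal{O}_{V}^{q}[\mathscr{M}]=\mathcal{O}_{V}^{q}[\mathscr{M}']$ then yields the desired equality $\mathcal{O}_{V_{1}}^{q}[\mathscr{M}\mathcal{O}_{V_{1}}^{q}]=\mathcal{O}_{V_{1}}^{q}[\mathscr{M}'\mathcal{O}_{V_{1}}^{q}]$.

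There is no substantive obstacle; the whole argument is a formal unwinding of definitions once the stalk formula (\ref{stalk_of_MO_V1^q}) is in hand. The only point requiring attention is to keep track of which $\mathcal{O}^{q}$-algebra structure is in play at each step and not to conflate equivalence with weak equivalence.
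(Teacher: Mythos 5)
Your proof is correct and takes essentially the same approach as the paper: the paper simply declares the proposition ``a direct consequence'' of the stalk formula $(\mathscr{M}\mathcal{O}_{V_{1}}^{q})_{x_{1}}=\mathscr{M}_{\pi(x_{1})}\mathcal{O}_{V_{1},x_{1}}^{q}$, and your argument is the straightforward stalkwise unwinding that this remark has in mind, including the key identity $\mathcal{O}_{V}^{q}[\mathscr{M}]\mathcal{O}_{V_{1}}^{q}=\mathcal{O}_{V_{1}}^{q}[\mathscr{M}\mathcal{O}_{V_{1}}^{q}]$ that the paper later uses implicitly in Proposition~\ref{B_pi_is_the_total_transform_of_B}.
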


We now formulate different notions of transformation of modules when we
blow up along a regular center.
\begin{definition}%
	\label{the_a-transform_of_an_O^q-module}
	Let $Z\subset V$ be an irreducible regular subscheme, and let
	$V\xleftarrow{\pi} V_{1}\supset H_{1}$ be the blowup of $V$ along
	$Z$, where $H_{1}$ denotes the exceptional hypersurface. Let
	$\mathscr{M}\subseteq\mathcal{O}_{V}$ be an $\mathcal{O}_{V}^{q}$-submodule.
	\begin{enumerate}[(2)]
		\item[(1)] The \emph{total transform} of $\mathscr{M}$ by the blowup
		is the
		$\mathcal{O}_{V_{1}}^{q}$-submodule
		$\mathscr{M}\mathcal{O}_{V_{1}}^{q}\subseteq\mathcal{O}_{V_{1}}$.%
		\item[(2)] For each positive integer $a$, the \emph{$a$-transform} of
		$\mathscr{M}$ by the blowup is the $\mathcal{I}(H_{1})^{a}$-conductor of
		$\mathscr{M}\mathcal{O}_{V_{1}}^{q}\subset\mathcal{O}_{V_{1}}$ (see
		Definition \ref{M_L}(1)). We denote it $\mathscr{M}_{1}^{(a)}$.
	\end{enumerate}
\end{definition}
According to Proposition \ref{the_equivalence_class_of_the_transform_of_a_module}, if
$\mathscr{M}$ and $\mathscr{N}$ are equivalent $\mathcal{O}_{V}^{q}$-modules,
then $\mathscr{M}\mathcal{O}_{V_{1}}^{q}$ and
$\mathscr{N}\mathcal{O}_{V_{1}}^{q}$ are equivalent
$\mathcal{O}_{V_{1}}^{q}$-modules; therefore $\mathscr{M}$ and
$\mathscr{N}$ have equivalent $a$-transforms for all $a\geq1$.

The notion of $a$-transform of an $\mathcal{O}_{V}^{q}$-module
$\mathscr{M}\subseteq\mathcal{O}_{V}$ will be of particular interest when
$\mathscr{M}$ is equivalent to an $\mathcal{O}_{V}^{q}$-module
included in
$\mathcal{I}(Z)^{qa}$. In that case, if $X\rightarrow V$ denotes the
$V$-scheme attached to $\mathscr{M}$ (i.e., the one defined by
$\mathcal{O}_{V}\subset(\mathcal{O}_{V}[\mathscr{M}])^{1/q}$), then
we will
further see that the $V_{1}$-scheme attached to the $a$-transform of
$\mathscr{M}$ will be obtained from $\delta$ by an iteration of diagrams
like (\ref{eqdiac}).
\begin{definition}%
	\label{permissible_center_for_(M,a)}
	For an $\mathcal{O}_{V}^{q}$-module
	$\mathscr{M}\subseteq\mathcal{O}_{V}$ and a positive integer $a$, we set
	\begin{align*}
		\operatorname{Sing}(\mathscr{M},a):=\{x\in V: \mathscr {M}_{x}
		\subseteq \mathcal{O}_{V,x}^{q}+m_{V,x}^{qa}
		\}.
	\end{align*}
	An irreducible regular closed subscheme $Z\subset V$ is called a \emph
	{permissible
		center} for $(\mathscr{M},a)$ if the following equivalent conditions hold:
	\begin{enumerate}[(3)]
		\item[(1)] $\mathscr{M}\subseteq\mathcal{O}_{V}^{q}+\mathcal{I}(Z)^{qa}$.
		\item[(2)] $\mathscr{M}\sim\mathscr{M}'$ for some
		$\mathscr{M}'\subseteq\mathcal{I}(Z)^{qa}$.
		\item[(3)] $Z\subset\operatorname{Sing}(\mathscr{M},a)$.
	\end{enumerate}
	The equivalence of (1) and (2) follows from Lemma \ref{When_M_is_generated_by_elements_in_I}, and they clearly imply (3).
	The converse will be proved in Proposition \ref{characterization_of_permissible_centers_for_a_pair_(M,a)} using
	$p$-basis.
\end{definition}
\begin{remark}
	We will see in Proposition \ref{d(F(X))_is_closed} that each
	$\operatorname{Sing}(\mathscr{M},a)\subset V$ is a closed subset.
\end{remark}

\begin{proposition}%
	\label{Well_definition_of_the_transforms}
	Assume that $Z\subset V$ is permissible for $(\mathscr{M},a)$, and let
	$V\leftarrow V_{1}\supset H_{1}$ be the blowup along $Z$, where
	$H_{1}$ denotes the exceptional hypersurface. Then $H_{1}$ is permissible
	for $(\mathscr{M}\mathcal{O}_{V_{1}}^{q},a)$, and we have
	$\mathscr{M}\mathcal{O}_{V_{1}}^{q}\sim F^{e}(\mathcal{I}(H_{1})^{a})
	\mathscr{M}_{1}^{(a)}$, where $\mathscr{M}_{1}^{(a)}$ is the $a$-transform
	of $\mathscr{M}$ (Definition \ref{the_a-transform_of_an_O^q-module}(2)).
	
	Let $\mathscr{N}\subset\mathcal{O}_{V}$ be another
	$\mathcal{O}_{V}^{q}$-submodule of $\mathcal{O}_{V}$ that is weakly equivalent
	to $\mathscr{M}$ (i.e.,
	$\mathcal{O}_{V}^{q}[\mathscr{M}]=\mathcal{O}_{V}^{q}[\mathscr
	{N}]$). Then
	$Z$ is also permissible for $(\mathscr{N},a)$, and we have
	$\mathcal{O}_{V_{1}}^{q}[\mathscr{M}_{1}^{(a)}]=\mathcal{O}_{V_{1}}^{q}[
	\mathscr{N}_{1}^{(a)}]$.
\end{proposition}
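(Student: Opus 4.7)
The plan is to deduce the proposition entirely from the two lemmas proved just above (Lemmas~\ref{When_M_is_generated_by_elements_in_I} and \ref{Well_definition_of_the_weak_equivalence_class_of_M_L}) together with the compatibility of total transforms with weak equivalence (Proposition~\ref{the_equivalence_class_of_the_transform_of_a_module}(2)). The backbone is to verify that $H_1$ is permissible for $(\mathscr{M}\mathcal{O}_{V_1}^q,a)$; the equivalence $\mathscr{M}\mathcal{O}_{V_1}^q\sim F^{e}(\mathcal{I}(H_1)^a)\,\mathscr{M}_1^{(a)}$ will then be a direct application of Lemma~\ref{Well_definition_of_the_weak_equivalence_class_of_M_L}(1) with the invertible ideal $\mathcal{L}:=\mathcal{I}(H_1)^a$, and the weak-equivalence part will follow from part~(2) of the same lemma.

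By Definition~\ref{permissible_center_for_(M,a)}(1), permissibility of $Z$ amounts to $\mathscr{M}\subseteq\mathcal{O}_V^q+\mathcal{I}(Z)^{qa}$. Applying the assignment $(-)\mathcal{O}_{V_1}^q$ from Definition~\ref{total_transform}, which respects finite sums of $\mathcal{O}_V^q$-submodules of $\mathcal{O}_V$ and sends $\mathcal{O}_V^q$ into $\mathcal{O}_{V_1}^q$, I obtain
\[
\mathscr{M}\mathcal{O}_{V_1}^q\subseteq\mathcal{O}_{V_1}^q+\mathcal{I}(Z)^{qa}\mathcal{O}_{V_1}^q.
\]
The key point is then the inclusion $\mathcal{I}(Z)^{qa}\mathcal{O}_{V_1}^q\subseteq\mathcal{I}(H_1)^{qa}$ inside $\mathcal{O}_{V_1}$: via the stalk formula~(\ref{stalk_of_MO_V1^q}), the image under $\pi^{\sharp}$ of any section of $\mathcal{I}(Z)^{qa}$ lies in the $\mathcal{O}_{V_1}$-ideal $\mathcal{I}(H_1)^{qa}$, which is in particular an $\mathcal{O}_{V_1}^q$-submodule of $\mathcal{O}_{V_1}$; hence so is the $\mathcal{O}_{V_1}^q$-submodule generated by these images. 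Combining the two inclusions shows that $H_1$ is permissible for $(\mathscr{M}\mathcal{O}_{V_1}^q,a)$, and Lemma~\ref{Well_definition_of_the_weak_equivalence_class_of_M_L}(1) then yields the claimed $\mathscr{M}\mathcal{O}_{V_1}^q\sim F^{e}(\mathcal{I}(H_1)^a)\,\mathscr{M}_1^{(a)}$.

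For the weak-equivalence statement, suppose $\mathcal{O}_V^q[\mathscr{N}]=\mathcal{O}_V^q[\mathscr{M}]$. Lemma~\ref{When_M_is_generated_by_elements_in_I} shows that permissibility of $Z$ for $(\mathscr{M},a)$ depends only on this subalgebra, so $Z$ is also permissible for $(\mathscr{N},a)$. By Proposition~\ref{the_equivalence_class_of_the_transform_of_a_module}(2) the total transforms $\mathscr{M}\mathcal{O}_{V_1}^q$ and $\mathscr{N}\mathcal{O}_{V_1}^q$ remain weakly equivalent, and by the preceding paragraph both are contained in $\mathcal{O}_{V_1}^q+\mathcal{I}(H_1)^{qa}$. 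Lemma~\ref{Well_definition_of_the_weak_equivalence_class_of_M_L}(2) applied with $\mathcal{L}=\mathcal{I}(H_1)^a$ then gives that their $\mathcal{I}(H_1)^a$-conductors $\mathscr{M}_1^{(a)}$ and $\mathscr{N}_1^{(a)}$ are themselves weakly equivalent, i.e.\ $\mathcal{O}_{V_1}^q[\mathscr{M}_1^{(a)}]=\mathcal{O}_{V_1}^q[\mathscr{N}_1^{(a)}]$. The only nontrivial point in the whole argument---and what I would flag as the main (though mild) obstacle---is the inclusion $\mathcal{I}(Z)^{qa}\mathcal{O}_{V_1}^q\subseteq\mathcal{I}(H_1)^{qa}$: one must remember that the left-hand side is an $\mathcal{O}_{V_1}^q$-submodule of $\mathcal{O}_{V_1}$ defined via Definition~\ref{total_transform} rather than an $\mathcal{O}_{V_1}$-ideal, so the inclusion is not tautological but is immediate from the stalk formula.
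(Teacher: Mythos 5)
Your proposal is correct and follows essentially the same route as the paper: both arguments hinge on the pullback $\mathcal{I}(Z)^{qa}\mathcal{O}_{V_1}^q\subseteq\mathcal{I}(H_1)^{qa}$ to establish permissibility of $H_1$, and then apply Lemma~\ref{Well_definition_of_the_weak_equivalence_class_of_M_L}(1) and (2) for the equivalence and weak-equivalence conclusions, respectively. The only cosmetic difference is that the paper works via an equivalent module $\mathscr{M}'\subseteq\mathcal{I}(Z)^{qa}$ (condition (2) of Definition~\ref{permissible_center_for_(M,a)}) whereas you track the $\mathcal{O}_V^q$ summand directly through the pushforward (condition (1)); both turn on the same observation.
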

\begin{proof}
	By the definition of $Z$ to be permissible for $(\mathscr{M},a)$, we have
	$\mathscr{M}\sim\mathscr{M}'$ for some
	$\mathscr{M}'\subseteq\mathcal{I}(Z)^{qa}$. The last inclusion
	implies that
	$\mathscr{M}'\mathcal{O}_{V_{1}}^{q}\subseteq\mathcal
	{I}(H_{1})^{qa}$. Since
	$\mathscr{M}\mathcal{O}_{V_{1}}^{q}\sim\mathscr{M}'\mathcal{O}_{V_{1}}^{q}$
	(Proposition \ref{the_equivalence_class_of_the_transform_of_a_module}),
	we deduce that $H_{1}$ is permissible for
	$(\mathscr{M}\mathcal{O}_{V_{1}}^{q},a)$. This enables use to apply Lemma \ref{Well_definition_of_the_weak_equivalence_class_of_M_L} to
	$\mathscr{M}\mathcal{O}_{V_{1}}^{q}$ and $\mathcal{I}(H_{1})^{a}$
	and conclude
	that
	$\mathscr{M}\mathcal{O}_{V_{1}}^{q}\sim F^{e}(\mathcal{I}(H_{1})^{a})
	\mathscr{M}_{1}^{(a)}$. This proves the first part of the proposition.
	
	Assume now that
	$\mathcal{O}_{V}^{q}[\mathscr{M}]=\mathcal{O}_{V}^{q}[\mathscr{N}]$
	for a second
	$\mathcal{O}_{V}^{q}$-module $\mathscr{N}\subset\mathcal{O}_{V}$. Then
	$\mathscr{N}\subset\mathcal{O}_{V}^{q}[\mathscr{N}]=\mathcal{O}_{V}^{q}[
	\mathscr{M}]=\mathcal{O}_{V}^{q}[\mathscr{M}']\subset\mathcal{O}_{V}^{q}+
	\mathcal{I}(Z)^{qa}$, which implies that $Z$ is also permissible for
	$(\mathscr{N},a)$. On the other hand,
	$\mathcal{O}_{V_{1}}^{q}[\mathscr{M}\mathcal
	{O}_{V_{1}}^{q}]=\mathcal{O}_{V_{1}}^{q}[
	\mathscr{N}\mathcal{O}_{V_{1}}^{q}]$ by Proposition \ref{Well_definition_of_the_transforms}. Now the first part tells us that
	$H_{1}$ is permissible for $(\mathscr{M}\mathcal{O}_{V_{1}}^{q},a)$, which
	means that
	$\mathscr{M}\mathcal{O}_{V_{1}}^{q}\subseteq\mathcal
	{O}_{V}^{q}+\mathcal{I}(H_{1})^{qa}$.
	Therefore we can apply Lemma \ref{Well_definition_of_the_weak_equivalence_class_of_M_L}(2) to
	$\mathscr{M}:=\mathscr{M}\mathcal{O}_{V_{1}}^{q}$,
	$\mathcal{L}:=\mathcal{I}(H_{1})^{a}$ and
	$\tilde{\mathscr{M}}:=\mathscr{N}\mathcal{O}_{V_{1}}^{q}$. The conclusion
	is that
	$\mathcal{O}_{V_{1}}^{q}[\mathscr{M}_{1}^{(a)}]=\mathcal{O}_{V_{1}}^{q}[
	\mathscr{N}_{1}^{(a)}]$. This completes the proof of the proposition.
\end{proof}

\subsection{Relation with Finite Radicial Morphisms}
\label{Relation_with_finite_radicial_morphisms}

In this second part of the section, we discuss the interplay between
$\mathcal{O}_{V}^{q}$-submodules $\mathscr{M}\subset\mathcal
{O}_{V}$ and
certain class of finite radicial morphisms $\delta:X\to V$. We begin with
some remarks about this class of morphisms.
\begin{parrafo}%
	\label{equivalence_of_categories-first_step}
	We denote by $\mathscr{C}_{q}(V)$ the class of all finite surjective morphisms
	$\delta:X\rightarrow V$, where $X$ is an integral scheme, and there is
	an inclusion
	$\delta_{*}(\mathcal{O}_{X}^{q})\subseteq\mathcal{O}_{V}$. We can view
	$\mathscr{C}_{q}(V)$ as a full subcategory of the category of $V$-schemes.
	In this regard, the assignment
	$(X\xrightarrow{\delta} V)\mapsto\delta_{*}(\mathcal{O}_{X})$ defines
	an antiequivalence between $\mathscr{C}_{q}(V)$ and the category of
	$\mathcal{O}_{V}$-subalgebras of $\mathcal{O}_{V}^{1/q}$. Here
	$\mathcal{O}_{V}$ is seen as embedded in the function field $K(V)$ of
	$V$, and $\mathcal{O}_{V}^{1/q}$ is the $\mathcal{O}_{V}$-subalgebra of
	$\overline{K(V)}$ (a fixed algebraic closure of $K(V)$) whose sections on
	an open subset $U\subset V$ are given by
	$\Gamma(U,\mathcal{O}_{V}^{1/q})=\{s\in\overline{K(V)}: s^{q}\in
	\Gamma(U,\mathcal{O}_{V})\}$. The following lemma expresses a characteristic
	property of radicial morphisms.
\end{parrafo}
\begin{lemma}%
	\label{lemma_on_purely_inseparable_morphisms}
	Given $\delta:X\rightarrow V$ in $\mathscr{C}_{q}(V)$ and a reduced
	$V$-scheme $Y$, there is at most one $V$-morphism $f:Y\rightarrow X$.
\end{lemma}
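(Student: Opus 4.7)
The plan is to translate the question into commutative algebra via the antiequivalence mentioned in \ref{equivalence_of_categories-first_step} and then exploit the defining property $\delta_{*}(\mathcal{O}_{X}^{q})\subseteq\mathcal{O}_{V}$ together with the reducedness of $Y$.

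First, I would reduce to an affine statement. Suppose $f_{1},f_{2}:Y\to X$ are two $V$-morphisms, and denote by $g:Y\to V$ the common composition $\delta\circ f_{1}=\delta\circ f_{2}$. Giving $f_{i}$ is equivalent to giving a morphism of $\mathcal{O}_{V}$-algebras $\phi_{i}:\delta_{*}\mathcal{O}_{X}\to g_{*}\mathcal{O}_{Y}$. It suffices to show $\phi_{1}=\phi_{2}$; this can be checked on local sections, so we may work on an affine open $\Spec(A)\subseteq V$ with preimages $\Spec(B)\subseteq X$ and $\Spec(C)\subseteq Y$, where $A\subseteq B\subseteq A^{1/q}$ inside some fixed algebraic closure of $K(V)$, and $C$ is reduced. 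Then $\phi_{1},\phi_{2}:B\to C$ are two $A$-algebra homomorphisms, and the goal is $\phi_{1}=\phi_{2}$.

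The key step uses Frobenius. Let $b\in B$. Since $B\subseteq A^{1/q}$, we have $b^{q}\in A$, so $\phi_{i}(b^{q})=b^{q}$ for $i=1,2$ (as $\phi_{i}$ is $A$-linear). Because the characteristic is $p$ and $q=p^{e}$, the Frobenius $q$-th power is additive, so
\begin{equation*}
(\phi_{1}(b)-\phi_{2}(b))^{q}=\phi_{1}(b)^{q}-\phi_{2}(b)^{q}=\phi_{1}(b^{q})-\phi_{2}(b^{q})=b^{q}-b^{q}=0.
\end{equation*}
Thus $\phi_{1}(b)-\phi_{2}(b)$ is nilpotent in $C$; since $C$ is reduced, $\phi_{1}(b)=\phi_{2}(b)$. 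This holds for every local section $b$, so $\phi_{1}=\phi_{2}$, and hence $f_{1}=f_{2}$.

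I do not anticipate a serious obstacle: the argument is essentially the classical fact that a purely inseparable morphism is a monomorphism in the category of reduced schemes. The only mild care needed is in setting up the sheaf-level translation (to ensure that checking equality of $\phi_{1},\phi_{2}$ locally on $V$ is enough) and in invoking the hypothesis $\delta_{*}(\mathcal{O}_{X}^{q})\subseteq\mathcal{O}_{V}$ precisely once, namely to guarantee $b^{q}\in A$ for every local section $b$ of $\delta_{*}\mathcal{O}_{X}$.
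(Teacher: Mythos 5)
Your proof is correct and essentially identical to the paper's: both reduce to the affine case, observe that two $V$-morphisms yield $A$-algebra maps whose $q$-th powers agree on every element because $b^q$ lies in $A$, and conclude equality from additivity of Frobenius together with reducedness of the target. The only cosmetic difference is that the paper works with $\Gamma(Y,\mathcal{O}_Y)$ directly rather than passing to an affine open of $Y$.
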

\begin{proof}
	We may assume that both $V$ and $X$ are affine, say
	$X=\operatorname{Spec}(B)$ and $V=\operatorname{Spec}(S)$, so that
	$B^{q}\subseteq S\subseteq B$. We have to prove that there is at most one
	$S$-homomorphism $\varphi:B\rightarrow\Gamma(Y,\mathcal{O}_{Y})$. Now
	for any two such homomorphisms, say $\varphi$ and $\varphi'$, we have
	that for any $b\in B$,
	$(\varphi(b))^{q}=\varphi(b^{q})=\varphi'(b^{q})=(\varphi'(b))^{q}$,
	whence $\varphi(b)=\varphi'(b)$ since $\Gamma(Y,\mathcal{O}_{Y})$ is
	reduced.
\end{proof}

\begin{corollary}%
	\label{coro:equivalence_of_categories}
	The assignment
	$(X\xrightarrow{\delta} V)\mapsto\delta_{*}(\mathcal{O}_{X}^{q})$ defines
	an antiequivalence between $\mathscr{C}_{q}(V)$ and the category of
	$\mathcal{O}_{V}^{q}$-subalgebras of $\mathcal{O}_{V}$. The only possible
	morphisms in the latter category are inclusions. Therefore, given
	$\delta:X\rightarrow V$ with associated $\mathcal{O}_{V}^{q}$-subalgebra
	$\mathscr{B}\subseteq\mathcal{O}_{V}$ and $\delta':X'\rightarrow V$ with
	$\mathcal{O}_{V}^{q}$-subalgebra
	$\mathscr{B}'\subseteq\mathcal{O}_{V}$, there exists a $V$-morphism
	$X'\rightarrow X$ if and only if $\mathscr{B}\subseteq\mathscr{B}'$, and
	in that case the $V$-morphism is unique. In particular, $X$ and $X'$ are
	$V$-isomorphic if and only if $\mathscr{B}=\mathscr{B}'$.
\end{corollary}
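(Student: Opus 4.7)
The plan is to deduce this corollary from the antiequivalence already sketched in \ref{equivalence_of_categories-first_step} between $\mathscr{C}_{q}(V)$ and the category of $\mathcal{O}_{V}$-subalgebras of $\mathcal{O}_{V}^{1/q}$, via $\delta\mapsto\delta_{*}(\mathcal{O}_{X})$. The crucial first step is to exhibit a natural bijection between $\mathcal{O}_{V}$-subalgebras of $\mathcal{O}_{V}^{1/q}$ and $\mathcal{O}_{V}^{q}$-subalgebras of $\mathcal{O}_{V}$. I would use the $e$-fold absolute Frobenius, which restricts to a bijection $\mathcal{O}_{V}^{1/q}\to\mathcal{O}_{V}$: it is injective because $\overline{K(V)}$ is reduced, and surjective because every section $a\in\mathcal{O}_{V}$ has, by the very definition of $\mathcal{O}_{V}^{1/q}$, its unique $q$th root in $\overline{K(V)}$ sitting inside $\mathcal{O}_{V}^{1/q}$. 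Under this bijection, an $\mathcal{O}_{V}$-subalgebra $\mathscr{A}\subseteq\mathcal{O}_{V}^{1/q}$ corresponds to the $\mathcal{O}_{V}^{q}$-subalgebra $F^{e}(\mathscr{A})=\mathscr{A}^{q}\subseteq\mathcal{O}_{V}$, with inverse the formation of $q$th roots. Composing this with the antiequivalence from \ref{equivalence_of_categories-first_step}, and invoking the tautological identity $\delta_{*}(\mathcal{O}_{X}^{q})=(\delta_{*}\mathcal{O}_{X})^{q}$, delivers the antiequivalence asserted in the first sentence of the corollary.

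Next I would verify that the only possible $\mathcal{O}_{V}^{q}$-algebra morphisms between two $\mathcal{O}_{V}^{q}$-subalgebras $\mathscr{B}_{1},\mathscr{B}_{2}\subseteq\mathcal{O}_{V}$ are inclusions. This is a direct repetition of the argument in Lemma \ref{lemma_on_purely_inseparable_morphisms}: any $\mathcal{O}_{V}^{q}$-algebra map $\varphi\colon\mathscr{B}_{1}\to\mathscr{B}_{2}$ satisfies $\varphi(b)^{q}=\varphi(b^{q})=b^{q}$ for every local section $b\in\mathscr{B}_{1}$, because $b^{q}\in\mathcal{O}_{V}^{q}$ is fixed by the $\mathcal{O}_{V}^{q}$-linearity hypothesis; and since $\mathcal{O}_{V}$ is reduced, Frobenius is injective on it, whence $\varphi(b)=b$. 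Consequently $\varphi$ exists if and only if $\mathscr{B}_{1}\subseteq\mathscr{B}_{2}$, and when it does it coincides with the inclusion and is therefore unique.

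The remaining conclusions follow formally. The antiequivalence identifies $V$-morphisms $X'\to X$ with $\mathcal{O}_{V}^{q}$-algebra morphisms $\mathscr{B}\to\mathscr{B}'$ in the reverse direction; by the previous paragraph, such morphisms exist exactly when $\mathscr{B}\subseteq\mathscr{B}'$ and are then unique, recovering Lemma \ref{lemma_on_purely_inseparable_morphisms} a posteriori. The $V$-isomorphism statement is the symmetric case $\mathscr{B}\subseteq\mathscr{B}'\subseteq\mathscr{B}$. There is no genuine obstacle in this argument; the only point warranting care is the verification that the Frobenius-induced bijection between the two classes of subalgebras is compatible with the global sections functor $\delta\mapsto\delta_{*}(\mathcal{O}_{X})$, and this reduces to the identity $\delta_{*}(\mathcal{O}_{X}^{q})=(\delta_{*}\mathcal{O}_{X})^{q}$, which holds because $\delta_{*}$ preserves Frobenius on sheaves of rings of characteristic $p$.
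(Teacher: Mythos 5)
Your proposal is correct and follows essentially the same route as the paper: the Frobenius bijection between $\mathcal{O}_V$-subalgebras of $\mathcal{O}_V^{1/q}$ and $\mathcal{O}_V^q$-subalgebras of $\mathcal{O}_V$, composed with the antiequivalence of \ref{equivalence_of_categories-first_step}, plus the observation that radiciality forces algebra morphisms to be inclusions. The only difference is that where the paper applies Lemma \ref{lemma_on_purely_inseparable_morphisms} as a black box (via the two $V$-morphisms to $X$ it produces from the inclusion $\mathscr{B}\subset\mathcal{O}_V$ and the composite $\mathscr{B}\to\mathscr{B}'\subset\mathcal{O}_V$), you re-run the underlying Frobenius-injectivity argument directly on the $\mathcal{O}_V^q$-subalgebras, which is equally valid and perhaps marginally more self-contained.
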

\begin{proof}
	The first statement (on the antiequivalence of categories) follows from
	the discussion in \ref{equivalence_of_categories-first_step} and from the
	fact that the category of $\mathcal{O}_{V}$-subalgebras of
	$\mathcal{O}_{V}^{1/q}$ is equivalent, via Frobenius, to the category of
	$\mathcal{O}_{V}^{q}$-subalgebras of $\mathcal{O}_{V}$. Next, given two
	$\mathcal{O}_{V}^{q}$-subalgebras
	$\mathscr{B},\mathscr{B}'\subseteq\mathcal{O}_{V}$, assume that
	there exists
	a morphism of $\mathcal{O}_{V}^{q}$-algebras
	$\mathscr{B}\rightarrow\mathscr{B}'$. It follows from Lemma \ref{lemma_on_purely_inseparable_morphisms} that the inclusion
	$\mathscr{B}\subset\mathcal{O}_{V}$ and the composition
	$\mathscr{B}\rightarrow\mathscr{B}'\subset\mathcal{O}_{V}$
	coincide, whence
	$\mathscr{B}\rightarrow\mathscr{B}'$ has to be an inclusion. The
	last two
	conclusions in the corollary are now immediate.
\end{proof}

Before continuing with our discussion, we need to review some properties
of integral closure of ideals.
\begin{parrafo}%
	\label{integral_closure_of_ideals}
	Given a Noetherian ring $R$ and an ideal $I\subset R$, we denote by
	$\overline{I}$ the integral closure of $I$. This is an ideal of $R$ and
	consists of all $r\in R$ satisfying a relation of the form
	$r^{n}+a_{1}r^{n-1}+\cdots+a_{n}r^{0}=0$ for some $n\geq1$ and
	$a_{i}\in I^{i}$, $i=1,\ldots,n$. An ideal $I$ is said to be integrally
	closed if $\overline{I}=I$. Given an inclusion of ideals
	$J\supset I$, $J$ is said to be \emph{integral} over $I$ or that $I$ is
	a \emph{reduction} of $J$ if $J\subseteq\overline{I}$. In this case,
	$\overline{J}=\overline{I}$. We list some properties that will be used
	along this section. Properties (b), (c), and (d) can be found in the table
	of basic properties of \cite{hunekeswanson}, and (g) can be deduced from
	(23) of that table. In what follows, $I\subset J$ denotes ideals in the
	Noetherian ring $R$.
	\begin{enumerate}[(d)]
		\item[(a)] $J$ is integral over $I$ if and only if the Rees ring
		$R_{J}:=R\oplus J\oplus J^{2}\oplus\ldots$ is finite over
		$R_{I}:=R\oplus I\oplus I^{2}\oplus\ldots$. This follows from
		\cite[Proposition 5.2.1]{hunekeswanson}
		\item[(b)] If $J$ is integral over $I$, then for any $R$-algebra $S$,
		$JS$ is integral over $IS$ (as $S$-ideals).
		\item[(c)] Given an integral extension $R\subset S$, we have
		$\overline{I}=R\cap\overline{IS}$.
		\item[(d)] Given a multiplicative subset $T\subset R$, we have
		$\overline{T^{-1}I}=T^{-1}\overline{I}$ (in $T^{-1} R$).
		\item[(e)] If $J=(f)\subset R$ for a nonzero divisor $f\in R$, then
		$J$ has
		no proper reductions, that is, if $I\subset J$ is a reduction of $J$, then
		$I=J$.
		
		\emph{Proof:} Assume that $I$ is a reduction of $J$. This means that there
		is a relation of the form $f^{n}+a_{1}f^{n-1}+\cdots+a_{n}=0$ with
		$a_{i}\in I^{i}$ $n>0$. Assume that this relation is of minimal degree. We obtain that
		$a_{n}=a_{n}'f$ for some $a_{n}'\in R$. If $n>1$, then the relation of
		integral dependence becomes
		$f(f^{n-1}+a_{1}f^{n-2}+\cdots+(a_{n-1}+a_{n}'))=0$, which implies that
		$f^{n-1}+a_{1}f^{n-2}+\cdots+(a_{n-1}+a_{n}')=0$ since $f$ is not a
		zero-divisor.
		This contradicts the minimality of our initial relation. Therefore $n=1$ and
		$f=-a_{n}\in I$.
		\item[(f)] If $(R,\mathfrak{m})$ is a regular local ring, then $\m$
		has no proper
		reductions (see \cite[Section~6, Cor. 2]{northcott1954rees}).
		\item[(g)] If $R$ is a regular local ring, and $P\subset R$ is a prime ideal
		such that $R/P$ is regular, then $P^{n}=\overline{P^{[n]}}$, where
		$P^{[n]}$ is the ideal generated by $\{x^{n}:x\in P\}$; in particular,
		$P^{n}$ is integrally closed.
		\item[(h)] Corollary of (c) and (g): if $R$ is an $F$-finite regular
		local ring,
		and if $O\subset R$ is an $R^{q}$-subalgebra ($q=p^{e}$), then for any
		prime $P\subset R$ such that $R/P$ is regular, the contraction
		$O\cap P^{q}\subset O$ is the integral closure of
		$F^{e}P\cdot O \subset O$, the ideal of $O$ generated by
		$\{x^{q}: x\in P\}$; in particular, $O\cap P^{q}$ is an integrally closed
		ideal of $O$.%
		
		\emph{Proof:} By (c)
		$\overline{F^{e}P\cdot O}=O\cap\overline{F^{e}P\cdot R}=O\cap
		\overline{P^{[q]}}$, and by (g) this is equal to $O\cap P^{q}$.
	\end{enumerate}
	Property (d) enables us to extend the notion of integral closure to coherent
	ideals on schemes. Namely, given a scheme $W$ and an
	$\mathcal{O}_{W}$-ideal $\mathcal{I}$, there is an $\mathcal{O}_{W}$-ideal
	$\overline{\mathcal{I}}$, called the integral closure of $\mathcal
	{I}$, such
	that if $U\subset W$ is an affine open subset, then
	$\overline{\mathcal{I}}(U)$ is the integral closure of
	$\mathcal{I}(U)\subseteq\mathcal{O}_{V}(U)$. Given an inclusion of
	$\mathcal{O}_{W}$-ideals $\mathcal{I}\subset\mathcal{J}$, $\mathcal
	{J}$ is
	said to be integral over $\mathcal{I}$ if
	$\overline{\mathcal{J}}=\overline{\mathcal{I}}$. In this case,
	property (b)
	implies that $\mathcal{J}\cdot\mathcal{O}_{W'}$ is integral over
	$\mathcal{I}\cdot\mathcal{O}_{W'}$ for any morphism of schemes 
	$W'\rightarrow W$. Property (e) implies that if $\mathcal{L}$ is an invertible
	ideal of $\mathcal{O}_{W}$, then $\mathcal{L}$ is not integral over any
	$\mathcal{O}_{W}$-ideal properly included in $\mathcal{L}$. Finally, if
	$V$ is an irreducible $F$-finite regular scheme and $Z\subset V$ is an
	irreducible regular closed subscheme, then (g) implies that
	$\mathcal{I}(Z)^{q}\subseteq\mathcal{O}_{V}$ is integrally closed.
	In addition,
	if $\mathscr{B}\subset\mathcal{O}_{V}$ is an $\mathcal
	{O}_{V}^{q}$-subalgebra,
	then (h) implies that the contraction
	$\mathscr{B}\cap\mathcal{I}(Z)^{q}\subset\mathscr{B}$ is an
	integrally closed
	$\mathscr{B}$-ideal; indeed, it is the integral closure of the
	$\mathscr{B}$-ideal $F^{e}(\mathcal{I}(Z))\cdot\mathscr{B}$.
\end{parrafo}

We return to our general discussion and delve into the proof of the properties
stated in Theorem \ref{main_theorem_2}. We also include proofs of the
statements
given in Theorem \ref{finite_morphisms_and_closed_subsets} (in our particular
setting) to make the presentation self-contained.
\begin{proposition}%
	\label{0309}
	Given $\delta:X\to V$ in $\mathscr{C}_{q}(V)$, the maximal multiplicity
	along points of $X$ is $d:=[K(X):K(V)]$, the generic rank. Let
	$F_{d}(X)\subset X$ denote the set of points where the multiplicity is
	$d$, and let $\mathscr{M}\subset\mathcal{O}_{V}$ be any
	$\mathcal{O}_{V}^{q}$-submodule of $\mathcal{O}_{V}$ that defines
	$\delta$. Then
	\begin{equation*}
		\delta(F_{d}(X))=\operatorname{Sing}(\mathscr{M},1).
	\end{equation*}
\end{proposition}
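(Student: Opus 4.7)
The plan is to fix a point $x\in V$, reduce to the local situation at $x$, and characterize the equality $e(\mathcal{O}_{X,y})=d$ (for the unique $y\in\delta^{-1}(x)$, which exists and is unique because $\delta$ is radicial and surjective) by combining the multiplicity formula for a finite extension of a regular local ring, Rees's characterization of reductions, and the integral closure computation of (h) in \S\ref{integral_closure_of_ideals}. Set $R:=\mathcal{O}_{V,x}$, $B:=\mathcal{O}_{X,y}$, and $A:=R^{q}[\mathscr{M}_{x}]\subseteq R$. From the description of $\delta_{*}\mathcal{O}_{X}=(\mathcal{O}_{V}^{q}[\mathscr{M}])^{1/q}$, Frobenius induces a ring isomorphism $B\simeq A$ which sends $\mathfrak{m}_{B}$ to $\mathfrak{m}_{A}=A\cap\mathfrak{m}_{R}$, so $e(B)=e(A)$. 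Moreover, by Lemma \ref{When_M_is_generated_by_elements_in_I}, the condition $\mathscr{M}_{x}\subseteq R^{q}+\mathfrak{m}_{R}^{q}$ is equivalent to $A\subseteq R^{q}+\mathfrak{m}_{R}^{q}$ and is therefore invariant under weak equivalence of $\mathscr{M}$, hence intrinsic to $\delta$, as required.

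Next I analyze the multiplicity. The associativity formula applied to the regular domain $R$ and the finite $R$-module $B$ of generic rank $d$ yields $e(\mathfrak{m}_{R};B)=d$; comparing $R$- and $B$-lengths of $B/\mathfrak{m}_{R}^{n}B$ yields the identity $e(\mathfrak{m}_{R};B)=[\kappa(y):\kappa(x)]\cdot e(\mathfrak{m}_{R}B;B)$. Because $\mathfrak{m}_{R}B\subseteq\mathfrak{m}_{B}$ share the same radical, $e(\mathfrak{m}_{R}B;B)\geq e(\mathfrak{m}_{B};B)=e(B)$, with equality iff $\mathfrak{m}_{R}B$ is a reduction of $\mathfrak{m}_{B}$ (Rees). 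Combining these two facts, $e(B)\leq d$, with equality if and only if both (i)~$\kappa(y)=\kappa(x)$ and (ii)~$\mathfrak{m}_{R}B$ is a reduction of $\mathfrak{m}_{B}$. This recovers Theorem \ref{finite_morphisms_and_closed_subsets}(1) in the present setting and reduces the proposition to translating (i)+(ii) into a condition on $\mathscr{M}_{x}$.

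For the final step, I transport (i) and (ii) across the Frobenius isomorphism. Condition (ii) becomes ``$F^{e}(\mathfrak{m}_{R})A$ is a reduction of $\mathfrak{m}_{A}=A\cap\mathfrak{m}_{R}$ inside $A$''; by property (h) of \S\ref{integral_closure_of_ideals} the integral closure of $F^{e}(\mathfrak{m}_{R})A$ in $A$ is $A\cap\mathfrak{m}_{R}^{q}$, so (ii) reads $A\cap\mathfrak{m}_{R}\subseteq\mathfrak{m}_{R}^{q}$. For (i), Frobenius identifies $A/\mathfrak{m}_{A}$ with $\kappa(y)^{q}\subseteq\kappa(x)$; since this subring equals $\kappa(x)^{q}[\overline{\mathscr{M}_{x}}]$ (where $\overline{\mathscr{M}_{x}}$ denotes the image of $\mathscr{M}_{x}$ in $\kappa(x)$), condition $\kappa(y)=\kappa(x)$ is equivalent to $\overline{\mathscr{M}_{x}}\subseteq\kappa(x)^{q}$, that is, to $\mathscr{M}_{x}\subseteq R^{q}+\mathfrak{m}_{R}$. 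A short check, using that $R^{q}+\mathfrak{m}_{R}^{q}$ is an $R^{q}$-subalgebra of $R$ and that $F^{e}(\mathfrak{m}_{R})\subseteq\mathfrak{m}_{R}^{q}$, shows that (i) and (ii) together are equivalent to the single condition $\mathscr{M}_{x}\subseteq R^{q}+\mathfrak{m}_{R}^{q}$, completing the proof. The subtle step I expect to be the main obstacle is precisely this final combination: Dade's bound can fail to be attained either because $\mathfrak{m}_{R}B$ is not a reduction of $\mathfrak{m}_{B}$ or because $\kappa(y)/\kappa(x)$ is a nontrivial purely inseparable extension, and it is the simultaneous interplay of both failures that forces the stated condition rather than the weaker $A\cap\mathfrak{m}_{R}\subseteq\mathfrak{m}_{R}^{q}$.
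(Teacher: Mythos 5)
Your proof is correct and takes essentially the same route as the paper: reduce to the local Lemma~\ref{lem1}, characterize maximal multiplicity by the two simultaneous conditions $\kappa(y)=\kappa(x)$ and ``$\mathfrak{m}_{R}B$ is a reduction of $\mathfrak{m}_{B}$'' (the paper delegates this step to Zariski's formula and Rees's theorem, while you rederive it from the associativity formula plus Rees), and then transport across Frobenius using property (h) of \S\ref{integral_closure_of_ideals} to arrive at $\mathscr{M}_{x}\subseteq\mathcal{O}_{V,x}^{q}+\mathfrak{m}_{V,x}^{q}$. The only organizational difference is that the paper packages your (i) and (ii) into the single condition (2) of Lemma~\ref{lem1} and does (2)$\Leftrightarrow$(3) in one pass, whereas you keep them separate and reconcile them with your final ``short check''.
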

(By Proposition \ref{d(F(X))_is_closed} this implies that
$F_{d}(X)\subset X$ is a closed subset.)
\begin{proof}
	Since
	$\operatorname{Sing}(\mathscr{M},1)=\operatorname{Sing}(\mathcal
	{O}_{V}^{q}[\mathscr{M}],1)$,
	the proposition follows from the following local version.
\end{proof}
\begin{lemma}%
	\label{lem1}
	Let $(S,\m)\subset(B,M)$ be a finite extension of local domains of
	characteristic $p$ such that $(S,\m)$ is regular and $F$-finite, and
	assume that $B^{q}\subset S$. Then the multiplicity of $B$ is at most
	$d:=[\operatorname{Frac}(B):\operatorname{Frac}(S)]$. In addition,
	the following are
	equivalent.
	\begin{enumerate}[(3)]
		\item[(1)] The multiplicity of $B$ is $d$.
		\item[(2)] $B=S+M$ and $M=\overline{\m B} (\subset B)$.
		\item[(3)] $B^{q}\subseteq S^{q}+\m^{q} (\subset S)$; in
		particular, there
		is an inclusion $F^{e}M\subseteq\m^{q}$ (as subsets of $S$).
	\end{enumerate}
\end{lemma}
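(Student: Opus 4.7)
The plan is to first establish the bound $e(B)\leq d$ via the projection formula for multiplicities along the finite extension $S\subset B$, and then to characterize the equality case $e(B)=d$ as condition (2). The equivalence (2) $\Leftrightarrow$ (3) will then be a direct manipulation exploiting that in characteristic $p$ raising to the $q$-th power is a ring homomorphism. For the multiplicity bound, the associativity formula applied to the $\m$-primary ideal $\m\subset S$ gives
\[
[k(M):k(\m)]\cdot e(\m B,B)=d\cdot e(\m,S)=d,
\]
since $B$ is a domain and $e(\m,S)=1$ as $S$ is regular. Combining this with $\m B\subseteq M$, which forces $e(M,B)\leq e(\m B,B)$, yields $e(B)\leq d/[k(M):k(\m)]\leq d$. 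Equality $e(B)=d$ forces both $[k(M):k(\m)]=1$ (equivalent to $B=S+M$ by Nakayama) and $e(M,B)=e(\m B,B)$, which by Rees' theorem translates to $M=\overline{\m B}$; for the latter I would invoke that $B$ is formally equidimensional, as it is a finite extension of the excellent regular local ring $S$. This establishes (1) $\Leftrightarrow$ (2).

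For (2) $\Rightarrow$ (3), I would write $b\in B$ as $s+m$ with $s\in S$, $m\in M$; then $b^q=s^q+m^q$, so it suffices to show $m^q\in\m^q$. From $m\in M=\overline{\m B}$ there is an integral relation
\[
m^n+a_1m^{n-1}+\cdots+a_n=0,\qquad a_i\in\m^iB.
\]
Raising to the $q$-th power and using additivity of Frobenius yields
\[
(m^q)^n+a_1^q(m^q)^{n-1}+\cdots+a_n^q=0,
\]
with $a_i^q\in\m^{iq}B=(\m^qB)^i$, so $m^q\in\overline{\m^qB}^B$. Since $m^q\in B^q\subset S$, properties (c) and (g) of \ref{integral_closure_of_ideals} combine to give $m^q\in S\cap\overline{\m^qB}^B=\overline{\m^q}^S=\m^q$.

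For (3) $\Rightarrow$ (2), given $b\in B$ I would write $b^q=s^q+c$ with $s\in S$, $c\in\m^q$; then $(b-s)^q=c\in M$, and since $M$ is prime, $b-s\in M$, which proves $B=S+M$. For $M=\overline{\m B}$, take $m\in M$. Applying the same decomposition to $m^q\in B^q$ and using $m^q\in S\cap M=\m$ forces $s^q\in\m$, hence $s\in\m$, and thus $m^q=s^q+c\in\m^q$. Then the monic relation $x^q-m^q=0$ with constant term in $(\m B)^q$ exhibits $m$ as integral over $\m B$, so $M\subseteq\overline{\m B}$; the reverse inclusion holds because the maximal ideal $M$ is integrally closed (a unit cannot satisfy an integral relation with coefficients in $M$). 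The ``in particular'' inclusion $F^eM\subseteq\m^q$ is exactly the statement $m^q\in\m^q$ that appeared along the way.

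The main obstacle is the multiplicity bookkeeping: the associativity formula and the invocation of Rees' theorem both require care, notably the formal equidimensionality of $B$. Once those are in place, the Frobenius-based equivalences (2) $\Leftrightarrow$ (3) are routine, amounting to pushing an integral relation through Frobenius and using that $\m^q$ is integrally closed in $S$.
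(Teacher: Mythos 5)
Your proof is correct, and for the equivalence $(1)\Leftrightarrow(2)$ it follows the same route the paper takes, namely Zariski's projection formula combined with Rees' multiplicity theorem (these are precisely the references the paper cites, pointing to \cite[Section~4]{villamayor2014equimultiplicity} for the details). Your flag about quasi-unmixedness is apposite and correctly resolved: $B$ is a universally catenary local domain (being a finite extension of the excellent regular local ring $S$), so it is formally equidimensional by Ratliff's criterion, and Rees' theorem applies.

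For $(2)\Leftrightarrow(3)$ the route diverges slightly. The paper applies Frobenius once to the equality $B=S+\overline{\mvtex B}$ to obtain $B^{q}=S^{q}+\overline{F^{e}\mvtex\cdot B^{q}}$ (integral closure inside $B^{q}$), and then invokes property \ref{integral_closure_of_ideals}(h) in a single step to identify $\overline{F^{e}\mvtex\cdot B^{q}}$ with $\mvtex^{q}\cap B^{q}$, from which the conclusion is immediate. You instead work at the element level: you push an explicit integral-dependence relation through Frobenius, reduce to properties (c) and (g) directly, and for the converse decompose $b^{q}$ and exploit primeness of $M$ and of $\mvtex$ to produce the monic relation $T^{q}-m^{q}$ exhibiting $m$ as integral over $\mvtex B$. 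In effect you are re-deriving the special case of (h) you need rather than citing it. Both arguments rest on the same two pillars (Frobenius is a ring isomorphism; $\mvtex^{q}$ is integrally closed in $S$ and its contraction from $B$ controls integral closure of $\mvtex B$), so the mathematical content is the same; the paper's version is shorter because it has already packaged (c) and (g) into (h), while yours is self-contained and makes the role of each hypothesis visible. Both are valid.
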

\begin{proof}
	The first part of the lemma and the equivalence (1) $\Leftrightarrow$ (2)
	follows from Zariski's formula
	\cite[VIII, Cor. 1 of Thm. 24]{zariski1960commutative} and a result by
	Rees relating multiplicity with integral closure of ideals (\cite[Thm.
	3.2]{rees1961transforms}).
	We refer to \cite[Section~4]{villamayor2014equimultiplicity} for details.
	Since we are dealing only with local domains, and the extension
	$S\subset B$ is purely inseparable of finite exponent, all the required
	hypothesis in the references are satisfied.
	
	As for the equivalence (2) $\Leftrightarrow$ (3), clearly (2) is equivalent
	to the equality $B=S+\overline{\m B}$, which in turn is equivalent
	to the equality $B^{q}=S^{q}+\overline{F^{e}\m\cdot B^{q}}$ (the integral
	closure being taken inside $B^{q}$). By \ref{integral_closure_of_ideals}(h) the latter is translated as
	$B^{q}=S^{q}+(\m^{q}\cap B^{q})$, which is clearly equivalent to the
	inclusion $B^{q}\subseteq S^{q}+\m^{q}$.
\end{proof}

Part (2) and formula (\ref{identity_with_Frobenius}) of the next proposition
are of technical nature, and they are there for later use.
\begin{proposition}%
	\label{0310}
	Given $\delta:X\to V$ in $\mathscr{C}_{q}(V)$, let $d:=[K(X):K(V)]$ and
	assume that $F_{d}(X)\neq\emptyset$. Let $Y\subset X$ be an integral
	subscheme of $X$ and set $Z:=(\delta(Y))_{\mathrm{red}}\subset V$. If
	$\mathscr{M}\subset\mathcal{O}_{V}$ is any $\mathcal{O}_{V}^{q}$-submodule
	of $\mathcal{O}_{V}$ defining $\delta$, then the following conditions are
	equivalent.
	\begin{enumerate}[(3)]
		\item[(1)] $Y$ is regular and included in $F_{d}(X)$.
		\item[(2)] $Z$ is regular, $\delta$ induces an isomorphism $Y\cong
		Z$, and
		$\mathcal{I}(Y)=\overline{\mathcal{I}(Z)\cdot\mathcal{O}_{X}}$.
		\item[(3)] $Z\subset V$ is regular, and
		$\mathscr{M}\subseteq\mathcal{O}_{V}^{q}+\mathcal{I}(Z)^{q}$, or say
		$Z$ is a permissible center for $(\mathscr{M},1)$ (Definition \ref{permissible_center_for_(M,a)}).
	\end{enumerate}
	In addition, if these equivalent conditions hold, then
	\begin{align}
		\label{identity_with_Frobenius} F^{e}(\mathcal{I}(Y))&=\mathcal{I}(Z)^{q}
		\cap\mathcal{O}_{V}^{q}[ \mathscr{M}].
	\end{align}
\end{proposition}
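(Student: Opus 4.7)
The plan is to reduce the three conditions to a common stalk form at the generic points of $Y$ and $Z$ via Lemma \ref{lem1}, and then to globalize using the integral closure properties from \ref{integral_closure_of_ideals}. The Frobenius identity (\ref{identity_with_Frobenius}) appears as the key bridge between (2) and (3).

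Since $\delta$ is finite, radicial, and surjective, so is $\delta|_Y\colon Y\to Z$, and the generic point $\eta_Y$ of $Y$ is the unique preimage of the generic point $\eta_Z$ of $Z$. Set $S=\mathcal{O}_{V,\eta_Z}$ with maximal ideal $\mathfrak{p}=\mathcal{I}(Z)_{\eta_Z}$, $B=\mathcal{O}_{X,\eta_Y}$ with maximal ideal $M=\mathcal{I}(Y)_{\eta_Y}$, and $\mathscr{B}=\mathcal{O}_V^q[\mathscr{M}]$, so that $\mathscr{B}_{\eta_Z}=B^q$. Then Lemma \ref{lem1} applies. By upper semicontinuity of multiplicity and the maximality of $d$, the condition $Y\subseteq F_d(X)$ is equivalent to $\mathrm{mult}(B)=d$; since $S^q+\mathfrak{p}^q$ is already an $S^q$-algebra, the inclusion $\mathscr{M}_{\eta_Z}\subseteq S^q+\mathfrak{p}^q$ is equivalent to $B^q\subseteq S^q+\mathfrak{p}^q$. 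Hence Lemma \ref{lem1} yields the equivalence of (1), (2), (3) at the level of stalks at the generic points, together with the extra identifications $K(Y)=K(Z)$ (from $B=S+M$) and $M=\overline{\mathfrak{p}B}$.

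I would then prove (2)$\Leftrightarrow$(3) globally, using the characteristic-$p$ identity $(s+m)^q=s^q+m^q$. For (2)$\Rightarrow$(3): at $x\in Z$ with preimage $y$, the isomorphism $Y\cong Z$ gives $\mathcal{O}_{X,y}=\mathcal{O}_{V,x}+\mathcal{I}(Y)_y$; raising the integral relation for $m\in\mathcal{I}(Y)_y=\overline{\mathcal{I}(Z)_x\mathcal{O}_{X,y}}$ to the $q$th power shows $m^q$ is integral over $\mathcal{I}(Z)_x^q$ in $\mathcal{O}_{V,x}$, and by \ref{integral_closure_of_ideals}(g) the ideal $\mathcal{I}(Z)_x^q$ is integrally closed, so $m^q\in\mathcal{I}(Z)_x^q$; this yields $\mathscr{M}\subseteq\mathcal{O}_V^q+\mathcal{I}(Z)^q$. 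For (3)$\Rightarrow$(2), property \ref{integral_closure_of_ideals}(h) identifies $\mathscr{B}\cap\mathcal{I}(Z)^q$ with the integral closure of $F^e\mathcal{I}(Z)\cdot\mathscr{B}$ in $\mathscr{B}$, which under the Frobenius identification $\delta_*(\mathcal{O}_X)=\mathscr{B}^{1/q}$ corresponds to $\overline{\mathcal{I}(Z)\mathcal{O}_X}$; so (\ref{identity_with_Frobenius}) amounts to the assertion $\mathcal{I}(Y)=\overline{\mathcal{I}(Z)\mathcal{O}_X}$. The inclusion $\supseteq$ is immediate from the radiciality of $\delta$. For $\subseteq$, take $y\in\mathcal{I}(Y)$ and use (3) to write $y^q=s+z$ locally at $x$ with $s\in\mathcal{O}_{V,x}^q$ and $z\in\mathcal{I}(Z)_x^q$; reducing modulo $\mathcal{I}(Y)_y$ forces $s\in\mathcal{I}(Y)_y\cap\mathcal{O}_{V,x}=\mathcal{I}(Z)_x$ (the last equality by set-theoretic surjectivity of $\delta|_Y$), and since $s$ is a $q$th power and $\mathcal{I}(Z)_x$ is prime, $s\in\mathcal{I}(Z)_x^q$, so $y^q\in\mathcal{I}(Z)_x^q\cap\mathscr{B}_x$. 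Finally, the isomorphism $Y\cong Z$ in (2) follows from $K(Y)=K(Z)$ together with the normality of $Z$: $(\delta|_Y)_*(\mathcal{O}_Y)$ is a finite $\mathcal{O}_Z$-algebra sitting inside the common function field, and $\mathcal{O}_Z$ is integrally closed.

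The remaining equivalence with (1) then combines the generic-point analysis with (2)$\Leftrightarrow$(3); once $Y\cong Z$ is in hand, the regularity of $Y$ and $Z$ are interchangeable. The main obstacle is the implication (1)$\Rightarrow$(3), where one must deduce the regularity of $Z$ from the regularity of $Y$ together with $Y\subseteq F_d(X)$. This is subtle because a finite radicial surjection from a regular scheme need not have regular target; the hypothesis $B^q\subseteq S$, combined with the multiplicity condition at every closed point, is what allows the transfer of a regular system of parameters from $\mathcal{O}_{Y,y}$ back to $\mathcal{O}_{Z,x}$ via the Frobenius-compatible structure, closing the cycle of implications.
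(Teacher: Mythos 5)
Your (2)$\Leftrightarrow$(3) analysis is correct and actually gives a slicker direct argument than the paper's (which goes around the cycle of implications at the local level). Your (3)$\Rightarrow$(2) proof of $Y\cong Z$ via $K(Y)=K(Z)$ plus normality of $Z$, and your proof of the Frobenius identity via \ref{integral_closure_of_ideals}(h), also track the paper's reasoning closely. However, you have identified but not resolved the crucial implication: deducing the regularity of $Z$ from condition (1). You call it the ``main obstacle'' and then offer only a vague appeal to ``the Frobenius-compatible structure,'' which is not a proof. This is precisely where the content of the proposition lies.

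The paper closes this gap with a concrete argument in its local lemma, going (2)$\Rightarrow$(3) in the local numbering, and it turns on the Northcott--Rees fact that the maximal ideal of a regular local ring has no proper reductions (\ref{integral_closure_of_ideals}(f)). In the local notation $(S,\mvtex)\subset(B,M)$ with prime $Q\subset B$ over $\mathfrak{q}\subset S$: since $B$ has multiplicity $d$ at $M$, Lemma~\ref{lem1} gives $B=S+M$ and $M=\overline{\mvtex B}$. By \ref{integral_closure_of_ideals}(b) the latter implies that $M/Q$ is integral over $(\mvtex B+Q)/Q$ in $B/Q$; since $B/Q$ is regular, its maximal ideal $M/Q$ admits no proper reduction, so $M=\mvtex B+Q$. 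Then $B=S+M=S+Q+\mvtex B$, and Nakayama's lemma yields $B=S+Q$, whence $S/\mathfrak{q}\cong B/Q$ is regular. Nothing in your sketch substitutes for this step: the transfer of a regular system of parameters is not ``automatic'' from $B^q\subset S$; it genuinely requires the equimultiplicity input $M=\overline{\mvtex B}$ together with the reduction-free property of the maximal ideal. You should add this argument; as it stands the cycle of implications is not closed.
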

\begin{proof}
	Note that we can replace $\mathscr{M}$ by
	$\mathcal{O}_{V}^{q}[\mathscr{M}]$, and the outcome of the
	proposition does
	not change. It is clear therefore that everything follows from the following
	local version.
\end{proof}

\begin{lemma}
	Let the hypothesis and notation be as in Lemma \ref{lem1}, and assume that
	the multiplicity of $B$ is $d=[\operatorname{Frac}(B):\operatorname
	{Frac}(S)]$. Let
	$Q\subset B$ be a prime ideal and $\mathfrak{q}:=S\cap Q \subset S$. Then
	the following conditions are equivalent.
	\begin{enumerate}[(3)]
		\item[(1)] $B/Q$ is regular, and $B_{P}$ has multiplicity $d$ for
		every prime
		ideal $P\subset B$ that includes $Q$.
		\item[(2)] $B/Q$ is regular, and $B_{Q}$ has also multiplicity $d$.
		\item[(3)] $S/\mathfrak{q}$ is regular, $B=S+Q$, and
		$Q=\overline{\mathfrak{q}B}$.
		\item[(4)] $S/\mathfrak{q}$ is regular, and
		$B^{q}\subset S^{q}+\mathfrak{q}^{q}$.
	\end{enumerate}
	If these equivalent conditions hold, then
	$F^{e} Q=\mathfrak{q}^{q}\cap B^{q}$.
\end{lemma}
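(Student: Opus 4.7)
My plan is to reduce the lemma to Lemma \ref{lem1} by localizing at $Q$, and then lift the resulting local conditions to global statements, using the integral closure facts collected in \ref{integral_closure_of_ideals} together with a Frobenius-injectivity argument.

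First, since $S\subset B$ is finite and radicial (as $B^q\subset S$), $Q$ is the unique prime of $B$ lying over $\mathfrak{q}$, and hence the localization $B\otimes_S S_{\mathfrak{q}}$ is already local and coincides with $B_Q$. The inclusion $S_{\mathfrak{q}}\subset B_Q$ then again satisfies the hypotheses of Lemma \ref{lem1}: an inclusion of local domains with $B_Q^q\subset S_{\mathfrak{q}}$, $S_{\mathfrak{q}}$ regular and $F$-finite, and generic rank still $d$. Lemma \ref{lem1} therefore identifies ``$e(B_Q)=d$'' with the local versions of (3) and (4), namely $B_Q=S_{\mathfrak{q}}+QB_Q$, $QB_Q=\overline{\mathfrak{q} B_Q}$, and $B_Q^q\subseteq S_{\mathfrak{q}}^q+(\mathfrak{q} S_{\mathfrak{q}})^q$. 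The equivalence (1)$\Leftrightarrow$(2) then follows from upper-semicontinuity of the multiplicity function on $\operatorname{Spec} B$: the closed locus $\{P\in\operatorname{Spec}B: e(B_P)\geq d\}$ contains $Q$, hence contains every prime $P\supset Q$, and combined with $e(B_P)\leq d$ this yields (1).

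The heart of the proof is the passage between these local conditions at $Q$ and the global statements (3) and (4). The key device is a Frobenius-injectivity argument: the local identity $B_Q=S_{\mathfrak{q}}+QB_Q$ yields $B_Q/QB_Q\cong S_{\mathfrak{q}}/\mathfrak{q} S_{\mathfrak{q}}$, that is, $\operatorname{Frac}(B/Q)=\operatorname{Frac}(S/\mathfrak{q})$. Assuming (4), reducing $B^q\subseteq S^q+\mathfrak{q}^q$ modulo $\mathfrak{q}$ gives $(B/Q)^q\subseteq(S/\mathfrak{q})^q$ inside $S/\mathfrak{q}$; since the reverse containment is automatic from $S/\mathfrak{q}\subseteq B/Q$, injectivity of Frobenius on the domain $\operatorname{Frac}(S/\mathfrak{q})$ forces $B/Q=S/\mathfrak{q}$. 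This equality is the crucial bridge: it immediately yields $B=S+Q$, and since $S/\mathfrak{q}$ is assumed regular in (4), also gives $B/Q$ regular, so (4)$\Rightarrow$(3) and (4)$\Rightarrow$(2). For the reverse direction one upgrades the local inclusion $B_Q^q\subseteq S_{\mathfrak{q}}^q+(\mathfrak{q} S_{\mathfrak{q}})^q$ to the global $B^q\subseteq S^q+\mathfrak{q}^q$ via property \ref{integral_closure_of_ideals}(h): with $S/\mathfrak{q}$ regular, $B^q\cap\mathfrak{q}^q$ is the integral closure of $F^e\mathfrak{q}\cdot B^q$ in $B^q$, and this, together with $Q=\overline{\mathfrak{q} B}$, supplies the needed information at primes other than $Q$.

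For the final identity $F^eQ=\mathfrak{q}^q\cap B^q$, assuming the equivalent conditions hold, applying Frobenius to an integral dependence of $b\in Q=\overline{\mathfrak{q} B}$ over $\mathfrak{q} B$ produces an integral dependence of $b^q$ over $F^e\mathfrak{q}\cdot B^q$ in $B^q$, so $F^eQ$ is contained in the integral closure of $F^e\mathfrak{q}\cdot B^q$ in $B^q$, which by property \ref{integral_closure_of_ideals}(h) equals $\mathfrak{q}^q\cap B^q$. Conversely, $F^eQ$ is itself a prime ideal of $B^q$ (the image of $Q$ under the Frobenius isomorphism $F^e\colon B\to B^q$), hence integrally closed; since it contains $F^e\mathfrak{q}\cdot B^q$, it contains this integral closure too, giving the identity. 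The main obstacle will be the local-to-global step for (2)$\Rightarrow$(4): one has direct control only at the single prime $Q$, and property \ref{integral_closure_of_ideals}(h) together with the Frobenius argument forcing $B/Q=S/\mathfrak{q}$ is precisely the device that compensates for the absence of local information at other primes of $S$.
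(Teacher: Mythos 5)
Your overall plan — reduce to $Q$ via localization and Lemma \ref{lem1}, then pass between local and global using the integral-closure facts of \ref{integral_closure_of_ideals} and a Frobenius-injectivity argument — captures several of the right ingredients, but as written it has genuine gaps and never actually closes the cycle of implications.

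The main problem is that you never establish $Q=\overline{\mathfrak{q}B}$, which is an explicit part of condition (3). Your Frobenius-injectivity argument starting from (4) does correctly give $B/Q=S/\mathfrak{q}$ (hence $B=S+Q$ and $B/Q$ regular), but that alone is not (3). The paper obtains $Q=\overline{\mathfrak{q}B}$ in two ways depending on the direction: for (2)$\Rightarrow$(3) it localizes Lemma \ref{lem1} at $Q$ to get $x^{q}\in\mathfrak{q}^{q}S_{\mathfrak{q}}\cap S=\mathfrak{q}^{q}$ for every $x\in Q$ (using that symbolic and ordinary powers of a regular prime coincide), so that $T^{q}-x^{q}=0$ is an integral dependence of $x$ over $\mathfrak{q}B$; for (4)$\Rightarrow$(1) it uses (h) and Frobenius to obtain $B=S+\overline{\mathfrak{q}B}$, concludes that $\overline{\mathfrak{q}B}$ is prime, and then identifies it with $Q$ by a dimension count. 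Neither argument appears in your proposal. Relatedly, your ``reverse direction'' (upgrading the local inclusion $B_{Q}^{q}\subseteq S_{\mathfrak{q}}^{q}+(\mathfrak{q}S_{\mathfrak{q}})^{q}$ to the global (4)) explicitly invokes $Q=\overline{\mathfrak{q}B}$, but when proving (2)$\Rightarrow$(4) that equality is precisely what is not yet known — the reasoning is circular. The paper instead goes (2)$\Rightarrow$(3) directly, using the \emph{global} conclusion of Lemma \ref{lem1} that $M=\overline{\mathfrak{m}B}$, the fact (f) that the maximal ideal of the regular ring $B/Q$ has no proper reductions, and Nakayama's lemma; none of these appear in your plan and they are hard to avoid.

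A second, lesser issue: you deduce (2)$\Rightarrow$(1) from upper-semicontinuity of multiplicity. This is not wrong, but it relies on the closedness of $F_{d}(X)$ (Dade's theorem), a result the paper is at pains to reprove independently (Proposition \ref{d(F(X))_is_closed}), and which is in any case much heavier than necessary here. The paper simply observes that (4) localizes: for any prime $P\supseteq Q$ with $\mathfrak{p}=P\cap S$, the inclusion $B^{q}\subseteq S^{q}+\mathfrak{q}^{q}$ immediately gives $(B_{P})^{q}\subseteq S_{\mathfrak{p}}^{q}+(\mathfrak{p}S_{\mathfrak{p}})^{q}$ (using $\mathfrak{q}\subseteq\mathfrak{p}$ and that $\mathfrak{p}$ is a regular prime), so Lemma \ref{lem1} applies at every such $P$. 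This gives (4)$\Rightarrow$(1) directly and elementarily. Your treatment of the final identity $F^{e}Q=\mathfrak{q}^{q}\cap B^{q}$ via property (h) and the fact that the prime $F^{e}Q$ is integrally closed is essentially the paper's argument and is fine.
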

\begin{proof}
	The implication (1) $\Rightarrow$ (2) is trivial. We now prove (2)
	$\Rightarrow$ (3) $\Rightarrow$ (4) $\Rightarrow$ (1).
	
	By Lemma \ref{lem1} we have $B=S+M$ and $M=\overline{\m B}$. By \ref{integral_closure_of_ideals}(b) the latter equality implies that
	$M/Q$ is integral over $(\m B+Q)/Q$ (as an ideal of $B/Q$). Assume
	(2). As $B/Q$ is regular, its maximal ideal $M/Q$ has no proper reductions
	(we are using \ref{integral_closure_of_ideals}(f)); therefore
	$M/Q=(\m B+Q)/Q$, which means that $M=\m B+Q$. It follows that
	$B=S+M=S+Q+\m B$, and hence $B=S+Q$ by Nakayama's lemma. This implies
	that $S/\mathfrak{q}=B/Q$, and hence $S/\mathfrak{q}$ is regular. We now prove
	that $Q=\overline{\mathfrak{q}B}$. By applying the implication
	$(1)\Rightarrow(3)$ of \ref[Lemma]{lem1} to the inclusion
	$S_\mathfrak{q}\subseteq B_{Q}$ we obtain that for all $x\in Q$,
	$x^{q}\in\mathfrak{q}^{q}S_\mathfrak{q}\cap S=\mathfrak{q}^{q}$, where
	the last
	equality holds since usual and symbolic powers of a regular prime in a
	regular ring coincide. We have shown that any $x\in Q$ is integral over
	$\mathfrak{q}B$ since such $x$ satisfies the relation of integral dependence
	$T^{q}-x^{q}=0$; thus $Q=\overline{\mathfrak{q}B}$. This completes the proof
	of (2) $\Rightarrow$ (3).
	
	Assume (3). It follows that $B^{q}=S^{q}+F^{e}Q$ and that
	$F^{e}Q=\overline{F^{e}\mathfrak{q}\cdot B^{q}}$, and the latter is equal
	to $\mathfrak{q}^{q}\cap B^{q}$ by \ref{integral_closure_of_ideals}(h) when
	applied to $O:=B^{q}$ and $P:=\mathfrak{q}$. We have shown that (3) implies
	(4) and the last sentence of the lemma.
	
	Assume finally (4). On the one hand, we have that
	$B^{q}=S^{q}+\mathfrak{q}^{q}\cap B^{q}$, and by applying
	\ref{integral_closure_of_ideals}(h), as we did before, we obtain that
	$B^{q} =S^{q}+\overline{F^{e}\mathfrak{q}\cdot B^{q}}$. By applying the inverse
	of Frobenius we obtain $B=S+\overline{\mathfrak{q}B}$. This equality implies
	that $B/\overline{\mathfrak{q}B}\cong S/\mathfrak{q}$; in particular,
	$\overline{\mathfrak{q}B}$ is prime. Since $S/\mathfrak{q}\subset B/Q$
	is finite,
	by dimension reasons we have that $Q=\overline{\mathfrak{q}B}$. In particular,
	$B/Q\cong S/\mathfrak{q}$ is regular. On the other hand, let
	$P\subset B$ be any prime ideal including $Q$, and set
	$\mathfrak{p}:=P\cap S$. The inclusion
	$B^{q}\subseteq S^{q}+\mathfrak{q}^{q}$ implies clearly that
	$(B_{P})^{q}\subseteq S_\mathfrak{p}^{q}+(\mathfrak{p}S_\mathfrak{p})^{q}$. According
	to Lemma \ref{lem1}, the latter inclusion implies that $B_{P}$ has multiplicity
	$d$. This completes the proof of (4) $\Rightarrow$ (1) and hence also the
	proof of the lemma.
\end{proof}

Propositions \ref{0309} and \ref{0310} express $F_{d}(X)$ and the condition
of being a regular center included in $F_{d}(X)$ completely in terms of
the $\mathcal{O}_{V}^{q}$-submodule
$\mathscr{M}\subset\mathcal{O}_{V}$ which defines $\delta:X\to V$.
We now
express certain transformations of $X$ in terms of transformations of the
module $\mathscr{M}$. For instance, the next proposition shows that the
definition of pull-back of $\mathcal{O}_{V}^{q}$-modules given in Definition \ref{total_transform} corresponds to a notion of base change of finite
morphisms.
\begin{proposition}%
	\label{B_pi_is_the_total_transform_of_B}
	Let $\mathscr{M}\subset\mathcal{O}_{V}$ be an $\mathcal
	{O}_{V}^{q}$-submodule,
	and let $\delta:X\to V$ be the associated morphism in
	$\mathscr{C}_{q}(V)$. Given a morphism $V\xleftarrow{\pi}V_{1}$ of
	irreducible
	$F$-finite regular schemes, the reduced fiber product
	$(X\times_{V} V_{1})_{\textrm{red}}$ is integral, the projection
	$\delta_{\pi}:(X\times_{V} V_{1})_{\textrm{red}}\rightarrow V_{1}$ belongs
	to $\mathscr{C}_{q}(V_{1})$, and this is the $V_{1}$-scheme defined by the
	$\mathcal{O}_{V_{1}}^{q}$-submodule
	$\mathscr{M}\mathcal{O}_{V_{1}}^{q}\subseteq\mathcal{O}_{V_{1}}$.
	Diagrammatically,
	\begin{align*}
		&\xymatrix@C=50pt{ X\ar[d]_\delta&\ar[l]_{\pi_\delta} (X\times_{V}
			V_{1})_{\textrm{red}}\ar[d]^{\delta_\pi}%
			\\
			V&\ar[l]^\pi V_{1}}
		\\
		&\xymatrix@C=60pt{\mathscr{M}&\mathscr{M}\mathcal{O}_{V_{1}}^{q}}
	\end{align*}
\end{proposition}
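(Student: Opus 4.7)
The plan is to identify $\delta_\pi$ as the morphism in $\mathscr{C}_q(V_1)$ associated via Corollary \ref{coro:equivalence_of_categories} to the $\mathcal{O}_{V_1}^q$-subalgebra $\mathcal{O}_{V_1}^q[\mathscr{M}\mathcal{O}_{V_1}^q]$, which is exactly the algebra defining the $V_1$-scheme attached to $\mathscr{M}\mathcal{O}_{V_1}^q$. First I would show that $(X\times_V V_1)_{\mathrm{red}}$ is integral and that $\delta_\pi$ is finite and surjective. Since every $\delta\in\mathscr{C}_q(V)$ satisfies $(\delta_*\mathcal{O}_X)^q\subseteq\mathcal{O}_V$, all residue field extensions of $\delta$ are purely inseparable, so $\delta$ is radicial. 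Finiteness, surjectivity, and being radicial are preserved under base change, so $\delta_\pi$ is a universal homeomorphism onto $V_1$; since $V_1$ is irreducible, $X\times_V V_1$ is irreducible, and its reduction is therefore integral. Finiteness and surjectivity carry to the reduction, so it remains to verify $(\delta_\pi)_*\mathcal{O}^q\subseteq\mathcal{O}_{V_1}$, which will fall out of the local computation below.

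Next I would work on affine charts $V=\operatorname{Spec}(S)$, $V_1=\operatorname{Spec}(S_1)$, where $\mathscr{M}$ is given by an $S^q$-submodule $M\subset S$ with generators $\{m_i\}$. A key preliminary identification is $\delta_*\mathcal{O}_X=S[m_i^{1/q}]$: both $S[m_i^{1/q}]$ and $(S^q[M])^{1/q}$ sit in $\overline{K(V)}$ and have the same $q$-th power $S^q[M]$, while the $q$-th power map is injective on a domain of characteristic $p$. Consequently,
\begin{equation*}
R:=\delta_*\mathcal{O}_X\otimes_S S_1\cong S_1[T_i]/(T_i^q-\pi^\sharp(m_i)),
\end{equation*}
which can carry nilpotents. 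Using $(a+b)^q=a^q+b^q$ in characteristic $p$ together with the relations $T_i^q=\pi^\sharp(m_i)\in S_1$, one checks directly that $R^q\subseteq S_1$ and more precisely $R^q=S_1^q[\pi^\sharp(m_i)]$, since Frobenius is a ring homomorphism whose image contains $S_1^q=F(S_1)$ and $\pi^\sharp(m_i)=F(T_i)$. Because $\delta_\pi$ is surjective and $V_1$ is integral, $S_1\hookrightarrow R$ is injective; since $S_1$ is reduced, $S_1\cap\mathrm{nil}(R)=0$, so $S_1\hookrightarrow R_{\mathrm{red}}$, and the surjection $R^q\twoheadrightarrow R_{\mathrm{red}}^q$ (obtained by restriction of $R\twoheadrightarrow R_{\mathrm{red}}$) is an isomorphism. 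Hence $R_{\mathrm{red}}^q=S_1^q[\pi^\sharp(m_i)]$, which by (\ref{stalk_of_MO_V1^q}) is precisely the sheaf of sections of $\mathcal{O}_{V_1}^q[\mathscr{M}\mathcal{O}_{V_1}^q]$.

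This simultaneously establishes that $\delta_\pi\in\mathscr{C}_q(V_1)$, since the inclusion $(\delta_\pi)_*\mathcal{O}^q\subseteq\mathcal{O}_{V_1}$ is exactly $R_{\mathrm{red}}^q\subseteq S_1$, and that the associated $\mathcal{O}_{V_1}^q$-subalgebra coincides with that of the $V_1$-scheme attached to $\mathscr{M}\mathcal{O}_{V_1}^q$. Applying Corollary \ref{coro:equivalence_of_categories} then identifies the two $V_1$-schemes. I expect the principal obstacle to be the control of nilpotents in $R$: one must show that passing to $q$-th powers defines an isomorphism between $R^q$ and $R_{\mathrm{red}}^q$, which rests on the fact that $R^q$ already sits inside the reduced subring $S_1\subset R$. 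The other delicate point is the identification $\delta_*\mathcal{O}_X=S[m_i^{1/q}]$, which one must make rigorous using the injectivity of Frobenius on the ambient domain $\overline{K(V)}$.
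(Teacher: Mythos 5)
Your proposal is correct in substance and pursues essentially the same route as the paper: both arguments reduce to observing that $\mathcal{O}_{X\times_V V_1}^q\subseteq\mathcal{O}_{V_1}$, so the reduced structure sheaf of $X\times_V V_1$ is the image of $\mathcal{O}_{X\times_V V_1}$ inside the domain $\mathcal{O}_{V_1}^{1/q}$, and its $q$-th power is $\mathcal{O}_X^q\mathcal{O}_{V_1}^q=\mathcal{O}_{V_1}^q[\mathscr{M}\mathcal{O}_{V_1}^q]$. You spell this out in local coordinates, while the paper states it directly at the level of sheaves; your detour through radicial morphisms and universal homeomorphisms to get integrality is sound but unnecessary, since integrality falls out for free from the embedding into $\mathcal{O}_{V_1}^{1/q}$.

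One claim is stated more strongly than it holds. The displayed isomorphism
\begin{equation*}
R:=\delta_*\mathcal{O}_X\otimes_S S_1\cong S_1[T_i]/(T_i^q-\pi^\sharp(m_i))
\end{equation*}
is in general only a surjection from the right side onto $R$. Indeed $\delta_*\mathcal{O}_X=S[m_i^{1/q}]$ is the quotient of $S[T_i]/(T_i^q-m_i)$ by its nilradical, and that ring can carry nilpotents (e.g.\ if $m_1=m_2$, then $T_1-T_2$ is a nonzero nilpotent while $m_1^{1/q}-m_2^{1/q}=0$). After tensoring with $S_1$ one therefore gets $S_1[T_i]/(T_i^q-\pi^\sharp(m_i))\twoheadrightarrow R$, not an isomorphism. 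This does not damage the rest: every element of $R$ is still the image of a polynomial in the $T_i$ with coefficients in $S_1$, so the computation $R^q=S_1^q[\pi^\sharp(m_i)]\subset S_1$, the injectivity of $S_1\hookrightarrow R$, and the identification $R^q\xrightarrow{\sim}R_{\mathrm{red}}^q$ all go through as you describe. But the isomorphism as written should be replaced by the surjection.
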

\begin{proof}
	Since $\delta:X\rightarrow V$ is finite and
	$\mathcal{O}_{X}^{q}\subset\mathcal{O}_{V}$, the base change
	$X\times_{V} V_{1}\rightarrow V_{1}$ is also finite and satisfies
	$\mathcal{O}_{X\times_{V} V_{1}}^{q}\subset\mathcal{O}_{V_{1}}$. Therefore
	$\mathcal{O}_{(X\times_{V} V_{1})_{\textrm{red}}}=(\mathcal{O}_{X
		\times_{V} V_{1}})_{\textrm{red}}$ is the natural image of
	$\mathcal{O}_{X\times_{V} V_{1}}=\pi^{-1}\mathcal{O}_{X}\otimes_{
		\pi^{-1}\mathcal{O}_{V}}\mathcal{O}_{V_{1}}$ in
	$\mathcal{O}_{V_{1}}^{1/q}$; in particular,
	$(X\times_{V} V_{1})_{\textrm{red}}$ is integral. Equivalently, by applying
	$F^{e}$ we obtain that
	$(\mathcal{O}_{(X\times_{V} V_{1})_{\textrm{red}}})^{q}$ is the natural
	image of
	$\pi^{-1}\mathcal{O}_{X}^{q}\otimes_{\pi^{-1}\mathcal{O}_{V}^{q}}
	\mathcal{O}_{V_{1}}^{q}$ in $\mathcal{O}_{V_{1}}$, which is
	$\mathcal{O}_{X}^{q}\mathcal{O}_{V_{1}}^{q}=\mathcal
	{O}_{V}^{q}[\mathscr{M}]
	\mathcal{O}_{V_{1}}^{q}$; see Definition \ref{total_transform}. The latter
	is equal to
	$\mathcal{O}_{V_{1}}^{q}[\mathscr{M}\mathcal{O}_{V_{1}}^{q}]$ by Proposition \ref{the_equivalence_class_of_the_transform_of_a_module}(2). This shows
	that the projection
	$\delta_{\pi}:(X\times_{V} V_{1})_{\textrm{red}}\to V_{1}$ is the
	$V_{1}$-scheme in $\mathscr{C}_{q}(V_{1})$ defined by
	$\mathscr{M}\mathcal{O}_{V_{1}}^{q}$.
\end{proof}

For completeness, we include the proof of Theorem \ref{finite_morphisms_and_closed_subsets}(2). We formulate it in our particular
setting of purely inseparable morphisms, though the same proof we give
here applies to the general case.
\begin{proposition}%
	\label{blow-ups_of_radicial_morphisms}
	Let the setting be as in \ref[Proposition]{0310}, and assume that the equivalent
	conditions given there are satisfied. Let $f:X_{1}\to X$ denote the blowup
	of $X$ along $Y$, and let $\pi:V_{1}\to V$ denote the blowup of $V$ along
	$Z$.
	\begin{enumerate}[(2)]
		\item[(1)] There exists a unique morphism $\delta_{1}:X_{1}\to V_{1}$ making
		the diagram
		\begin{align*}
			\xymatrix{X\ar[d]_\delta& X_{1}\ar[l]_f\ar[d]^{\delta_{1}}\\ V&
				V_{1} \ar[l]^\pi}
		\end{align*}
		commutative. In addition, $\delta_{1}$ belongs to
		$\mathscr{C}_{q}(V_{1})$, and
		$\mathcal{I}(Y)\cdot\mathcal{O}_{X_{1}}=\mathcal{I}(H_{1})\cdot
		\mathcal{O}_{X_{1}}$,
		where $H_{1}\subset V_{1}$ is the exceptional divisor of $\pi$.
		\item[(2)] We now consider the commutative diagram
		\begin{align*}
			\xymatrix{X\ar[d]_{\delta}& X\times_{V} V_{1}\ar[l]_{\pi_\delta
				}\ar[d]^{\delta_\pi}& X_{1}\ar[l]_{f_{1}}\ar[ld]^{\delta_{1}}\\
				V& V_{1} \ar[l]^\pi}
		\end{align*}
		deduced from the previous one and the universal property of fiber products.
		Then $\delta_{\pi}$ is in $\mathscr{C}_{q}(V_{1})$, has generic rank
		$d$, and is defined by the $\mathcal{O}_{V_{1}}^{q}$-submodule
		$\mathscr{M}\mathcal{O}_{V_{1}}^{q}\subset\mathcal{O}_{V_{1}}$. In addition,
		the equivalent conditions of \ref[Proposition]{0310} hold for
		$\delta_{\pi}$ and the inverse image subscheme
		$Y_{1}:=\pi_{\delta}^{-1}(Y)\subset X\times_{V} V_{1}$, which in turn
		satisfies $(\delta_{\pi}(Y_{1}))_{\textrm{red}}=H_{1}$. Finally,
		$f_{1}$ is the blowup of $X\times_{V} V_{1}$ along $Y_{1}$.
	\end{enumerate}
\end{proposition}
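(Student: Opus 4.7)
My plan for part (1) is to exploit the universal property of the blowup $\pi$ along $Z$. The key input is the identity $\mathcal{I}(Y) = \overline{\mathcal{I}(Z)\cdot\mathcal{O}_{X}}$ from Proposition~\ref{0310}(2). Pulling back to $X_{1}$ and applying property (b) of \ref{integral_closure_of_ideals}, the ideal $\mathcal{I}(Y)\mathcal{O}_{X_{1}}$ is integral over $\mathcal{I}(Z)\mathcal{O}_{X_{1}}$. Since the former is invertible by definition of the blowup $f$ along $Y$, and invertible ideals admit no proper reductions (property (e), applied locally), we obtain
\begin{equation*}
\mathcal{I}(Z)\mathcal{O}_{X_{1}} = \mathcal{I}(Y)\mathcal{O}_{X_{1}},
\end{equation*}
which is invertible. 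The universal property of $\pi$ then yields a unique $\delta_{1}:X_{1}\to V_{1}$ with $\pi\circ\delta_{1}=\delta\circ f$, and the identity $\mathcal{I}(H_{1})\mathcal{O}_{X_{1}}=\mathcal{I}(Z)\mathcal{O}_{X_{1}}=\mathcal{I}(Y)\mathcal{O}_{X_{1}}$ follows.

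To place $\delta_{1}$ in $\mathscr{C}_{q}(V_{1})$, I would show that $X_{1}$ is actually isomorphic to $(X\times_{V}V_{1})_{\mathrm{red}}$. The universal property of the fiber product gives a closed immersion $X_{1}\hookrightarrow X\times_{V}V_{1}$, and since $X_{1}$ is integral (blowup of an integral scheme along a nonzero ideal) this factors through $(X\times_{V}V_{1})_{\mathrm{red}}$, which Proposition~\ref{B_pi_is_the_total_transform_of_B} asserts to be integral. Both sides have function field $K(X)$ because $\pi$ is birational and $\delta_{1}$ is generically finite of rank $d$; hence the closed immersion of integral schemes is birational and therefore an isomorphism. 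The conclusion $\delta_{1}\in\mathscr{C}_{q}(V_{1})$ is then exactly the content of Proposition~\ref{B_pi_is_the_total_transform_of_B}.

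For part (2), the identification of $\delta_{\pi}$ as the element of $\mathscr{C}_{q}(V_{1})$ attached to $\mathscr{M}\mathcal{O}_{V_{1}}^{q}$, with generic rank $d$, is Proposition~\ref{B_pi_is_the_total_transform_of_B} applied to the birational morphism $\pi$. To verify the equivalent conditions of Proposition~\ref{0310} for $\delta_{\pi}$ and $Y_{1}:=\pi_{\delta}^{-1}(Y)$, I would check condition (3) of that proposition: the permissibility of $Z$ provides $\mathscr{M}\subseteq\mathcal{O}_{V}^{q}+\mathcal{I}(Z)^{q}$, and pulling back together with $\mathcal{I}(Z)\mathcal{O}_{V_{1}}=\mathcal{I}(H_{1})$ gives $\mathscr{M}\mathcal{O}_{V_{1}}^{q}\subseteq\mathcal{O}_{V_{1}}^{q}+\mathcal{I}(H_{1})^{q}$, so that $H_{1}$ is permissible for $(\mathscr{M}\mathcal{O}_{V_{1}}^{q},1)$. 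The set-theoretic equality $(\delta_{\pi}(Y_{1}))_{\mathrm{red}}=H_{1}$ follows from $\delta(Y)=Z$ (up to reduction) and the definition of the base change. Finally, to see that $f_{1}$ is the blowup of $X\times_{V}V_{1}$ along $Y_{1}$, I would apply the universal property of blowup twice: on the one hand, $\mathcal{I}(Y_{1})\mathcal{O}_{X_{1}}=\mathcal{I}(Y)\mathcal{O}_{X_{1}}$ is invertible (by part (1)), so $f_{1}$ factors canonically through $\mathrm{Bl}_{Y_{1}}(X\times_{V}V_{1})$; on the other hand, the composition $\mathrm{Bl}_{Y_{1}}(X\times_{V}V_{1})\to X\times_{V}V_{1}\to X$ renders $\mathcal{I}(Y)$ invertible (because $\mathcal{I}(Y)\mathcal{O}_{X\times_{V}V_{1}}=\mathcal{I}(Y_{1})$), producing the inverse morphism by the universal property of $X_{1}=\mathrm{Bl}_{Y}(X)$.

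I expect the main technical obstacle to lie in the identification $X_{1}\cong(X\times_{V}V_{1})_{\mathrm{red}}$: one must carefully verify that the closed immersion sends generic point to generic point, which rests on the computation of function fields of the reduced fiber product and on $\pi$ being birational. Once this identification is in place, the passage from integral closure data to invertibility data, which underlies both parts, reduces to standard universal-property arguments combined with the reduction tools of \ref{integral_closure_of_ideals}.
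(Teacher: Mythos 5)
Your part (1) contains a genuine gap. You assert that the universal property of the fiber product gives a closed immersion $X_1\hookrightarrow X\times_V V_1$ and conclude $X_1\cong (X\times_V V_1)_{\mathrm{red}}$. Neither step is valid. The universal property of the fiber product produces \emph{a} morphism $X_1\to X\times_V V_1$, not a closed immersion, and in fact these two schemes are not isomorphic in general: $X\times_V V_1$ is the blowup of $X$ along $\mathcal{I}(Z)\mathcal{O}_X$, while $X_1$ is the blowup of $X$ along $\mathcal{I}(Y)=\overline{\mathcal{I}(Z)\mathcal{O}_X}$, and blowing up an ideal versus its integral closure give different (though birational) schemes. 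Part (2) of the very proposition you are proving says that $X_1\to X\times_V V_1$ is itself a non-trivial blowup along $Y_1$; it is a finite birational morphism, not an isomorphism. Your argument ``proper $+$ birational $\Rightarrow$ isomorphism'' is false (any non-trivial blowup is a counterexample). Consequently you have no proof that $\delta_1$ is finite, nor that $\delta_1\in\mathscr{C}_q(V_1)$.

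The substantive input your proposal sidesteps is the Rees-algebra finiteness, property (a) of \ref{integral_closure_of_ideals}. The paper's proof observes that $\mathcal{O}_V[\mathcal{I}(Z)T]\subset\mathcal{O}_X[\mathcal{I}(Z)\mathcal{O}_X T]\subset\mathcal{O}_X[\mathcal{I}(Y)T]$ is a chain of finite extensions (the first because $\mathcal{O}_V\subset\mathcal{O}_X$ is finite, the second by (a) since $\mathcal{I}(Y)$ is integral over $\mathcal{I}(Z)\mathcal{O}_X$), and takes $\operatorname{Proj}$ to get $\delta_1:X_1\to V_1$ finite and in $\mathscr{C}_q(V_1)$ directly. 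Your derivation of $\mathcal{I}(Z)\mathcal{O}_{X_1}=\mathcal{I}(Y)\mathcal{O}_{X_1}$ via (b) and (e), followed by the universal property of $\pi$, is a correct and somewhat cleaner way to \emph{construct} $\delta_1$, but it does not by itself establish finiteness. To close the gap you could argue that $\delta_1$ factors through the finite morphism $\delta_\pi:X\times_V V_1\to V_1$ and that $f_1:X_1\to X\times_V V_1$ is finite (again by (a) applied to the relevant Rees algebras); but that is precisely what the paper does. A further smaller issue: in part (2) your two applications of the universal property of blowup produce morphisms over different bases ($X\times_V V_1$ in one direction, $X$ in the other), and you must still show the second one is an $(X\times_V V_1)$-morphism before concluding they are mutually inverse; the paper handles this using the universal property of $\pi$ and integrality of $X_1'$.
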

\begin{proof}
	(1) Let $T$ denote a variable. In the following chain of extensions of
	sheaves over $V$:
	\begin{equation*}
		\mathcal{O}_{V}[(\mathcal{I}(Z)) T]\subset\mathcal{O}_{X}[(
		\mathcal{I}(Z)) T]= \mathcal{O}_{X}[(\mathcal{I}(Z)\cdot
		\mathcal{O}_{X})T]\subset \mathcal{O}_{X}[(
		\mathcal{I}(Y))T]
	\end{equation*}
	the first one is finite since $\mathcal{O}_{V}\subset\mathcal
	{O}_{X}$ is
	finite, and the last one is also finite by
	\ref{integral_closure_of_ideals}(a). Indeed, the $\mathcal{O}_{X}$-ideal
	$\mathcal{I}(Y)$ is integral over $\mathcal{I}(Z)\cdot\mathcal
	{O}_{X}$ by
	Proposition \ref{0310}. It follows that the extension of sheaves of graded
	rings
	$\mathcal{O}_{V}[(\mathcal{I}(Z)) T]\subset\mathcal
	{O}_{X}[(\mathcal{I}(Y))T]$
	over $V$ is finite, and hence this extension induces a morphism
	$X_{1}:=\operatorname{Proj}(\mathcal{O}_{X}[(\mathcal{I}(Y))T])
	\xrightarrow{\delta_{1}} V_{1}:=\operatorname{Proj}(\mathcal{O}_{V}[(
	\mathcal{I}(Z)) T])$, which is compatible with the blowups
	$X\xleftarrow{f} X_{1}$ and $V\xleftarrow{\pi} V_{1}$. Note that
	$V_{1}$ is irreducible, $F$-finite, and regular, and by construction
	$\delta_{1}$ belongs to $\mathscr{C}_{q}(V_{1})$. The uniqueness of
	$\delta_{1}$ making the diagram commutative follows from the universal
	property of the blow-up $V_{1}\to V$. Finally,
	$\mathcal{I}(Y)\cdot\mathcal{O}_{X_{1}}$ is invertible and integral over
	$(\mathcal{I}(Z)\cdot\mathcal{O}_{X})\cdot\mathcal
	{O}_{X_{1}}=\mathcal{I}(H_{1})
	\cdot\mathcal{O}_{X_{1}}$ by \ref{integral_closure_of_ideals}(b), whence
	$\mathcal{I}(Y)\cdot\mathcal{O}_{X_{1}}=\mathcal{I}(H_{1})\cdot
	\mathcal{O}_{X_{1}}$;
	see \ref{integral_closure_of_ideals}(d).
	
	(2) Note that $X\times_{V}V_{1}$ is the blowup of $X$ along
	$\mathcal{I}(Z)\mathcal{O}_{X}$, and in particular it is integral. It clearly
	follows that $X\times_{V} V_{1}\to V_{1}$ belongs to
	$\mathscr{C}_{q}(V_{1})$ and has generic rank $d$. Proposition \ref{B_pi_is_the_total_transform_of_B} tells us that $\delta_{\pi}$ is
	defined by the $\mathcal{O}_{V_{1}}^{q}$-submodule
	$\mathscr{M}\mathcal{O}_{V_{1}}^{q}\subset\mathcal{O}_{V}$. Next,
	$\pi_{\delta}^{-1}(Y)\subset X\times_{V} V_{1}$ is naturally isomorphic
	to $Y\times_{V} V_{1}\cong Y\times_{Z} H_{1}$, which in turn is isomorphic
	to $H_{1}$ via the projection $\delta_{\pi}$ since $Y\cong Z$ (recall that
	we assume the equivalent conditions of Proposition \ref{0310}). By Proposition \ref{Well_definition_of_the_transforms}, $H_{1}\subset V_{1}$ is permissible
	for $(\mathscr{M}\mathcal{O}_{V_{1}}^{q},1)$. Therefore the
	equivalent conditions
	of Proposition \ref{0310} hold for $\delta_{\pi}$ and
	$Y_{1}\subset X\times_{V} V_{1}$.
	
	We finally check that $f_{1}$ is the blowup of $X\times_{V} V_{1}$ along
	$Y_{1}$. Let $f_{1}':X_{1}'\to X\times_{V}V_{1}$ be the blowup along $Y_{1}$.
	We aim to prove that $X_{1}$ and $X_{1}'$ are $(X\times_{V} V_{1})$-isomorphic.
	This will be done by proving that there exist a unique
	$(X\times_{V} V_{1})$-morphism $h:X_{1}\to X_{1}'$ and a unique
	$(X\times_{V} V_{1})$-morphism $u:X_{1}'\to X_{1}$. On the one hand,
	$(\mathcal{I}(Y)\cdot\mathcal{O}_{X\times_{V} V_{1}})\cdot\mathcal
	{O}_{X_{1}}=
	\mathcal{I}(Y)\cdot\mathcal{O}_{X_{1}}$ is an invertible
	$\mathcal{O}_{X_{1}}$-ideal since $f=\pi_{\delta}f_{1}$ is the
	blowup along $Y$;
	thus by the universal property of $f_{1}'$ there exists a unique
	$(X\times_{V} V_{1})$-morphism $X_{1}\xrightarrow{h} X_{1}'$. On the other
	hand,
	$\mathcal{I}(Y)\cdot\mathcal{O}_{X_{1}'}=(\mathcal{I}(Y)\cdot
	\mathcal{O}_{X
		\times_{V} V_{1}})\cdot\mathcal{O}_{X_{1}'}$ is invertible since
	$f_{1}'$ is the blowup along $Y_{1}$; thus by the universal property of
	$f$ there exists a unique $X$-morphism $X_{1}'\xrightarrow{u} X_{1}$. It
	is only left to prove that $u$ is an $(X\times_{V} V_{1})$-morphism, that
	is, $f_{1} u=f_{1}'$. Since $u$ is an $X$-morphism, we have that
	$\pi\delta_{\pi}f_{1} u=\delta\pi_{\delta}f_{1} u=\delta\pi
	_{\delta}f_{1}'=\pi\delta_{\pi}f_{1}'$. Looking at the extremes of this
	equality and using the universal property of $\pi$ and the fact that
	$X_{1}'$ is integral, we conclude that
	$\delta_{\pi}f_{1} u=\delta_{\pi}f_{1}'$, that is, $u$ is also a
	$V_{1}$-morphism. Therefore $u$ is an $(X\times_{V}V_{1})$-morphism, as
	was aimed to prove. This completes the proof of (2).
\end{proof}
\begin{proposition}%
	\label{blow-up_along_codimension_one_subscheme}
	Let the setting be as in Proposition \ref{0310}, and assume that the equivalent
	conditions given there are satisfied. Assume, in addition, that
	$H:=Z\subset V$ is one-codimensional and set
	$\mathscr{M}_{1}:=\mathscr{M}_{\mathcal{I}(H)}$. Then the $V$-morphism
	$X_{1}\to X$ defined by the inclusion
	$\mathcal{O}_{V}^{q}[\mathscr{M}]\subseteq\mathcal
	{O}_{V}^{q}[\mathscr{M}_{1}]$
	is the blowup of $X$ along $Y$.
\end{proposition}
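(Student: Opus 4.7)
The plan is to apply Proposition \ref{blow-ups_of_radicial_morphisms}(1) with $Z := H$ and then identify the resulting $V$-scheme with the one attached to $\mathcal{O}_V^q[\mathscr{M}_1]$. Since $H$ is a regular Cartier divisor in $V$, the blowup of $V$ along $H$ is an isomorphism, so that proposition produces a morphism $\delta_1 : X_1 \to V$ in $\mathscr{C}_q(V)$ factoring $\delta_1 = \delta\circ f$, where $f : X_1\to X$ is the blowup of $X$ along $Y$, together with the key identity $\mathcal{I}(Y)\mathcal{O}_{X_1} = \mathcal{I}(H)\mathcal{O}_{X_1}$. By the antiequivalence of Corollary \ref{coro:equivalence_of_categories}, the proposition reduces to showing $(\delta_1)_*(\mathcal{O}_{X_1}^q) = \mathcal{O}_V^q[\mathscr{M}_1]$ as $\mathcal{O}_V^q$-subalgebras of $\mathcal{O}_V$.

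Work affinely on $V = \Spec(S)$ with $H = V(h)$, $X = \Spec(B)$, $\mathscr{B} := B^q = \mathcal{O}_V^q[\mathscr{M}]$, and $Y = V(Q)$ with $Q = \overline{hB}$ and $F^eQ = h^qS\cap\mathscr{B}$ from (\ref{identity_with_Frobenius}). Because $\mathcal{I}(Y)\mathcal{O}_{X_1} = h\mathcal{O}_{X_1}$ globally, the element $h\in Q$ generates the tautological invertible sheaf on $X_1 = \operatorname{Proj}(\bigoplus_n Q^n)$ everywhere, so $X_1 = D_+(h) = \Spec(C)$ with $C := B[Q/h]$. The equivalence $\mathscr{M}\sim h^q\mathscr{M}_1$ furnished by Lemma \ref{Well_definition_of_the_weak_equivalence_class_of_M_L}(1) yields $\mathscr{B} = S^q[h^q\mathscr{M}_1]$ and hence $B = S[h\cdot\mathscr{M}_1^{1/q}]$, while the target ring is $B_1 := (\mathcal{O}_V^q[\mathscr{M}_1])^{1/q} = S[\mathscr{M}_1^{1/q}]$. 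The task reduces to proving the equality $C = B_1$ as subrings of $S^{1/q}$.

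For the inclusion $B_1\subseteq C$, take $m_1\in\mathscr{M}_1$ and write $h^qm_1 = s_0^q + m$ with $s_0\in S$ and $m\in\mathscr{M}\subset B^q$; then $m^{1/q}\in B$, and $(s_0+m^{1/q})^q = h^qm_1\in (hB)^q$ exhibits $s_0+m^{1/q}$ as a root of $T^q - h^qm_1$, so $s_0+m^{1/q}\in\overline{hB} = Q$. Hence $m_1^{1/q} = (s_0+m^{1/q})/h\in B[Q/h] = C$, giving $B_1 = S[\mathscr{M}_1^{1/q}]\subseteq C$.

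The reverse inclusion $C\subseteq B_1$ is the main obstacle; since $C = B[Q/h]$ and $B\subseteq B_1$, it suffices to show $Q/h\subseteq B_1$. Given $q_0\in Q$, $q_0^q\in F^eQ\subseteq h^qS$, so $q_0^q = h^q\tilde{s}$ with $\tilde{s}\in S$ and $h^q\tilde{s}\in\mathscr{B} = S^q[h^q\mathscr{M}_1]$. Pick any presentation $h^q\tilde{s} = c_0 + \sum_{k\geq 1}h^{kq}N_k$ with $c_0\in S^q$ and $N_k$ an $S^q$-linear combination of $k$-fold products of elements of $\mathscr{M}_1$. Rearranging gives $c_0\in h^qS\cap S^q$, which equals $h^qS^q$ because $S$ is regular and hence normal (from $y^q\in h^qS$ one gets $(y/h)^q\in S$, so $y/h$ is integral over $S$ and thus in $S$); writing $c_0 = h^qd^q$ yields
\begin{equation*}
\tilde{s} = d^q + N_1 + h^qN_2 + h^{2q}N_3 + \cdots \in S^q[\mathscr{M}_1] = B_1^q,
\end{equation*}
so $(q_0/h)^q\in B_1^q$ and therefore $q_0/h\in B_1$. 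The delicate point is precisely that $\tilde{s}$ need not lie in the submodule $\mathscr{M}_1$: the tail contributions $h^{(k-1)q}N_k$ for $k\geq 2$ involve products of several elements of $\mathscr{M}_1$, which is exactly why the statement of the proposition must be formulated through the $\mathcal{O}_V^q$-subalgebra $\mathcal{O}_V^q[\mathscr{M}_1]$ rather than through the module $\mathscr{M}_1$ alone.
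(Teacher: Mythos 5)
Your proof is correct, and it establishes the same two inclusions of $\mathcal{O}_V^q$-subalgebras as the paper does, but it presents them from a different angle. You first use $\mathcal{I}(Y)\mathcal{O}_{X_1}=\mathcal{I}(H)\mathcal{O}_{X_1}$ (from Proposition \ref{blow-ups_of_radicial_morphisms}(1)) to pin down the blowup as a single affine chart, $X_1=D_+(h)=\Spec(B[Q/h])$, and then argue by element-chasing inside $S^{1/q}$. For $B_1\subseteq C$, where the paper factors the invertible ideal $F^e\mathcal{I}(H)$ out of $F^e\mathcal{I}(Y)\mathcal{O}_{X_1'}^q$, you instead exhibit for each $m_1\in\mathscr{M}_1$ a concrete $q$-th root $s_0+m^{1/q}\in B$ satisfying $T^q-h^qm_1=0$, hence integral over $hB$ and lying in $Q=\overline{hB}$; this is a nice, very tangible alternative. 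For $C\subseteq B_1$, the paper first proves invertibility of $\mathcal{I}(Y)\mathcal{O}_{X_1}$ (the $\mathscr{M}_1$-scheme) and then appeals to the universal property of the blowup to obtain the inclusion of structure sheaves; your computation $h^q\tilde{s}=c_0+\sum_kh^{kq}N_k$ followed by $c_0\in h^qS\cap S^q=h^qS^q$ is precisely the algebra the paper packs into its inclusion $\mathcal{O}_V^q[F^e\mathcal{I}(H)\mathscr{M}_1]\cap\mathcal{I}(H)^q\subseteq F^e\mathcal{I}(H)\cdot\mathcal{O}_V^q[\mathscr{M}_1]$, made explicit. Both routes rest on the same lemmas (identity (\ref{identity_with_Frobenius}), Lemma \ref{Well_definition_of_the_weak_equivalence_class_of_M_L}, and normality of $S$); the paper's version is sheaf-theoretic and globalizes without mention of charts, while yours is more elementary and self-contained. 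Your closing observation — that the tail $h^{(k-1)q}N_k$, $k\geq 2$, forces the statement to be phrased via the subalgebra $\mathcal{O}_V^q[\mathscr{M}_1]$ rather than the module $\mathscr{M}_1$ — is a worthwhile clarifying remark.
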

\begin{proof}
	We first show that $X\xleftarrow{f} X_{1}$ factorizes as
	$X\xleftarrow{f'} X_{1}'\xleftarrow{u} X_{1}$, where
	$X\xleftarrow{f'}X_{1}'$ is the blowup of $X$ along $Y$, by showing that
	$\mathcal{I}(Y)\mathcal{O}_{X_{1}}=\mathcal{I}(H)\mathcal
	{O}_{X_{1}}$, an invertible
	$\mathcal{O}_{X_{1}}$-ideal. Since one inclusion is clear, this is equivalent
	to showing that
	$F^{e}(\mathcal{I}(Y))\cdot\mathcal{O}_{V}^{q}[\mathscr{M}_{1}]
	\subseteq F^{e}(\mathcal{I}(H))\cdot\mathcal{O}_{V}^{q}[\mathscr{M}_{1}]$,
	or simply
	$F^{e}(\mathcal{I}(Y))\subseteq F^{e}(\mathcal{I}(H))\cdot\mathcal
	{O}_{V}^{q}[
	\mathscr{M}_{1}]$.
	
	We now prove this inclusion of subsheaves of $\mathcal{O}_{V}$. By Proposition \ref{0310},
	$F^{e}(\mathcal{I}(Y))=\mathcal{O}_{V}^{q}[\mathscr{M}]\cap\mathcal
	{I}(H)^{q}$,
	and this is equal to
	$\mathcal{O}_{V}^{q}[F^{e}(\mathcal{I}(H))\cdot\mathscr{M}_{1}]\cap
	\mathcal{I}(H)^{q}$ by Lemma \ref{Well_definition_of_the_weak_equivalence_class_of_M_L} when applied
	to $\mathcal{L}=\mathcal{I}(H)$. Finally, this intersection is
	included in
	$\mathcal{O}_{V}^{q}\cap\mathcal{I}(H)^{q}+F^{e}(\mathcal
	{I}(H))\cdot
	\mathcal{O}_{V}^{q}[\mathscr{M}_{1}]=F^{e}(\mathcal
	{I}(H))+F^{e}(\mathcal{I}(H))
	\cdot\mathcal{O}_{V}^{q}[\mathscr{M}_{1}]=F^{e}(\mathcal
	{I}(H))\cdot
	\mathcal{O}_{V}^{q}[\mathscr{M}_{1}]$.
	
	We now show that $u$ is an isomorphism. By Proposition \ref{blow-ups_of_radicial_morphisms}(1)
	$\delta f'\in\mathscr{C}_{q}(V)$ and
	$\mathcal{I}(Y)\cdot\mathcal{O}_{X_{1}'}=\mathcal{I}(H)\cdot
	\mathcal{O}_{X_{1}'}$
	(since the blowup of $V$ along $H$ is an isomorphism). Since $u$ is a morphism
	of $V$-scheme, it induces an inclusion
	$\mathcal{O}_{X_{1}'}^{q}\subseteq\mathcal{O}_{V}^{q}[\mathscr{M}_{1}]$,
	and it is only left to show the reverse inclusion.
	
	Now from
	$\mathcal{I}(Y)\cdot\mathcal{O}_{X_{1}'}=\mathcal{I}(H)\cdot
	\mathcal{O}_{X_{1}'}$
	we obtain
	$F^{e}(\mathcal{I}(Y))\mathcal{O}_{X_{1}'}^{q}=F^{e}(\mathcal{I}(H))
	\cdot\mathcal{O}_{X_{1}'}$. On the other hand,
	$F^{e}(\mathcal{I}(Y))=\mathcal{O}_{V}^{q}[\mathscr{M}]\cap\mathcal
	{I}(H)^{q}$,
	and this clearly includes $F^{e}(\mathcal{I}(H))\cdot\mathscr
	{M}_{1}$ by
	the definition of $\mathscr{M}_{1}$. We deduce that
	$F^{e}(\mathcal{I}(H))\cdot\mathscr{M}_{1}\subseteq F^{e}(\mathcal{I}(H))
	\cdot\mathcal{O}_{X_{1}'}^{q}$, whence
	$\mathscr{M}_{1}\subseteq\mathcal{O}_{X_{1}'}^{q}$. Thus
	$\mathcal{O}_{V}^{q}[\mathscr{M}_{1}]\subseteq\mathcal{O}_{X_{1}'}^{q}$,
	as was to be proved.
\end{proof}

\begin{corollary}%
	\label{a-transform_as_sequence_of_blow-ups}
	Let the setting be as in Proposition \ref{0310}, and assume that the equivalent
	conditions given there are satisfied. Assume in addition that $d>1$
	(equivalently,
	$\mathscr{M}\nsubseteq\mathcal{O}_{V}^{q}$). Let $f:X_{1}\to X$ and
	$\pi:V_{1}\to V$ be the blowups of $X$ along $Y$ and of $V$ along
	$Z$, respectively, and let $\delta_{1}:X_{1}\to V_{1}$ be the morphism
	in $\mathscr{C}_{q}(V_{1})$ given in Proposition \ref{blow-ups_of_radicial_morphisms}. Let $H_{1}\subset V_{1}$ be the
	exceptional
	divisor of $\pi$. Then the following holds.
	\begin{enumerate}[(2)]
		\item[(1)] $\delta_{1}:X_{1}\to V_{1}$ is defined by the 1-transform
		$\mathscr{M}_{1}^{(1)}\subseteq\mathcal{O}_{V_{1}}$ of $\mathscr
		{M}$ under
		$\pi$ (Definition \ref{the_a-transform_of_an_O^q-module}).
		\item[(2)] More generally, let $a$ denote the largest integer such that
		$H_{1}\subset V_{1}$ is permissible for
		$(\mathscr{M}\mathcal{O}_{V_{1}}^{q},a)$, and for each $1\leq i\leq
		a$, we
		consider the $i$-transform
		$\mathscr{M}_{1}^{(i)}\subseteq\mathcal{O}_{V_{1}}$ of $\mathscr
		{M}$ under
		$\pi$ and its associated morphism $\delta_{i}:X_{i}\to V_{1}$ in
		$\mathscr{C}_{q}(V_{1})$. Then $a\geq1$, and there is a commutative diagram
		\begin{align*}
			& \xymatrix@C=57pt {X\ar[d]^{\delta}& X_{0}
				\ar[l]_{\pi_\delta}\ar[d]^{\delta_{0}}& X_{1}
				\ar[l]_{f_{1}}\ar[d]^{\delta_{1}}& \cdots \ar[l]_{f_{2}}&X_{a}
				\ar[l]_{f_{a}}\ar[d]^{\delta_{a}}%
				\\
				V& V_{1}\ar[l]_\pi& V_{1}\ar[l]_{=}& \cdots \ar[l]_{=} & V_{a}
				\ar[l]_{=}}
			\\
			& \xymatrix@C=49pt { \mathscr{M}&\mathscr{M}\mathcal
				{O}_{V_{1}}^{q}&\mathscr{M}_{1}^{(1)}&
				& \mathscr{M}_{1}^{(a)}}
		\end{align*}
		where the first square is Cartesian (so that $\delta_{0}$ is defined by
		the $\mathcal{O}_{V_{1}}^{q}$-module
		$\mathscr{M}\mathcal{O}_{V_{1}}\subseteq\mathcal{O}_{V_{1}}$ by Proposition \ref{B_pi_is_the_total_transform_of_B}), and $f_{i}$ is the blowup of
		$X_{i-1}$ along
		$(\delta_{i-1}^{-1}(H_{1}))_{\textrm{red}}\subset X_{i-1}$. In addition,
		$H_{1}$ is permissible for $(\mathscr{M}^{(i)},1)$, $i =1,\ldots,a-1$, and
		not for $(\mathscr{M}_{1}^{(a)},1)$.
	\end{enumerate}
\end{corollary}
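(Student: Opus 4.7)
The plan is to cascade Propositions \ref{blow-ups_of_radicial_morphisms} and \ref{blow-up_along_codimension_one_subscheme}, converting the $a$-transform into a telescope of $1$-transforms via Lemma \ref{L^qM_L}(3).

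For (1), I would first apply Proposition \ref{blow-ups_of_radicial_morphisms}(2) to factor $f$ as $X\xleftarrow{\pi_\delta} X\times_V V_1 \xleftarrow{f_1} X_1$, where $\delta_\pi: X\times_V V_1 \to V_1$ lies in $\mathscr{C}_q(V_1)$ and is defined by the total transform $\mathscr{M}\mathcal{O}_{V_1}^q$, and where $f_1$ is the blowup along $Y_1:=\pi_\delta^{-1}(Y)$, which satisfies $(\delta_\pi(Y_1))_{\mathrm{red}}=H_1$, a regular codimension-one subscheme of $V_1$. Proposition \ref{blow-up_along_codimension_one_subscheme} then identifies the composite with the $V_1$-scheme defined by the $\mathcal{I}(H_1)$-conductor of $\mathscr{M}\mathcal{O}_{V_1}^q$, which by Definition \ref{the_a-transform_of_an_O^q-module} is precisely $\mathscr{M}_1^{(1)}$.

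For (2), first $a\geq 1$ because $Z$ is permissible for $(\mathscr{M},1)$ and Proposition \ref{Well_definition_of_the_transforms} then gives that $H_1$ is permissible for $(\mathscr{M}\mathcal{O}_{V_1}^q,1)$; moreover $a$ is finite since $d>1$ forces $\mathscr{M}\mathcal{O}_{V_1}^q\not\subseteq\mathcal{O}_{V_1}^q$ by generic rank, and Krull's intersection theorem applied chart-by-chart bounds the powers of $\mathcal{I}(H_1)^q$ into which $\mathscr{M}\mathcal{O}_{V_1}^q$ can be absorbed modulo $\mathcal{O}_{V_1}^q$. By Lemma \ref{L^qM_L}(3), the $a$-transform decomposes as an iteration of $\mathcal{I}(H_1)$-conductors, so (setting $\mathscr{M}_1^{(0)}:=\mathscr{M}\mathcal{O}_{V_1}^q$) one has $\mathscr{M}_1^{(i+1)}=(\mathscr{M}_1^{(i)})_{\mathcal{I}(H_1)}$ for each $i$. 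Combined with Lemma \ref{Well_definition_of_the_weak_equivalence_class_of_M_L}(1), the definition of $a$ translates to: $H_1$ is permissible for $(\mathscr{M}_1^{(i)},1)$ for $i=0,\ldots,a-1$, but not for $i=a$. Now iterate the argument of (1): assuming inductively that $\delta_i:X_i\to V_1$ is the morphism in $\mathscr{C}_q(V_1)$ defined by $\mathscr{M}_1^{(i)}$ with $0\leq i\leq a-1$, the permissibility of $H_1$ for $(\mathscr{M}_1^{(i)},1)$ ensures, via Proposition \ref{0310} applied to $\delta_i$, that $Y_i:=(\delta_i^{-1}(H_1))_{\mathrm{red}}$ is a regular subscheme of $X_i$ mapping isomorphically onto $H_1$ and included in $F_d(X_i)$. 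Applying Proposition \ref{blow-up_along_codimension_one_subscheme} to the pair $(\delta_i,H_1)$ identifies the blowup $X_i\leftarrow X_{i+1}$ along $Y_i$ with the $V_1$-scheme defined by $(\mathscr{M}_1^{(i)})_{\mathcal{I}(H_1)}=\mathscr{M}_1^{(i+1)}$. The process terminates at step $a$ precisely because $H_1$ fails to be permissible for $(\mathscr{M}_1^{(a)},1)$.

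The principal technical obstacle is keeping the bookkeeping straight between the conductor operation (applied to modules in their canonical representative $\mathcal{O}_{V_1}^q+\mathscr{M}$) and the successive factorizations of powers of $F^e(\mathcal{I}(H_1))$; this is exactly what Lemmas \ref{L^qM_L} and \ref{Well_definition_of_the_weak_equivalence_class_of_M_L} were engineered to handle, and once the telescope $\mathscr{M}_1^{(i+1)}=(\mathscr{M}_1^{(i)})_{\mathcal{I}(H_1)}$ is established, the geometric content is supplied entirely by the codimension-one Proposition \ref{blow-up_along_codimension_one_subscheme}, used inductively.
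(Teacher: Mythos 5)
Your proof is correct and follows essentially the same route as the paper: part (1) is obtained by composing Proposition \ref{blow-ups_of_radicial_morphisms} with Proposition \ref{blow-up_along_codimension_one_subscheme}, and part (2) iterates this after establishing the telescope $\mathscr{M}_{1}^{(i+1)}=(\mathscr{M}_{1}^{(i)})_{\mathcal{I}(H_{1})}$ via Lemma \ref{L^qM_L}(3) and deriving the permissibility bookkeeping from Lemma \ref{Well_definition_of_the_weak_equivalence_class_of_M_L}(1). Your remark on the finiteness of $a$ is a harmless extra check that the paper leaves implicit.
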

\begin{proof}
	By Proposition \ref{blow-ups_of_radicial_morphisms}, $\delta_{1}$ can
	be expressed as $\delta_{0} f_{1}$, where $f_{1}$ is the blowup along
	$(\delta_{0}^{-1}(H_{1}))_{\textrm{red}}\subset X_{0}$. Applying Proposition \ref{blow-up_along_codimension_one_subscheme} to $\delta_{0}$ and
	$H_{1}$, we see that $\delta_{1}$ is defined by
	$(\mathscr{M}\mathcal{O}_{V_{1}}^{q})_{\mathcal{I}(H_{1})}$, which
	is the
	definition of $\mathscr{M}_{1}^{(1)}$. This proves (1).
	
	The proof of (2) follows by iteration of (1) or of Proposition \ref{blow-up_along_codimension_one_subscheme}. We only need to check that
	for $1\leq i<a$,
	\begin{enumerate}[(c)]
		\item[(a)] $H_{1}$ is permissible for $(\mathscr{M}_{1}^{(i)},1)$,
		\item[(b)] $\mathscr{M}_{1}^{(i+1)}=(\mathscr
		{M}_{1}^{(i)})_{\mathcal{I}(H_{1})}$, and
		\item[(c)] $H_{1}$ is not permissible for $(\mathscr{M}_{1}^{(a)},1)$.
	\end{enumerate}
	
	(b) follows from Lemma \ref{L^qM_L}(3) when applied to
	$\mathscr{M}\mathcal{O}_{V_{1}}^{q}\subseteq\mathcal{O}_{V_{1}}$,
	$\mathcal{L}=\mathcal{I}(H_{1})^{i}$, and
	$\mathcal{L}'=\mathcal{I}(H_{1})$. By the definition of $a$ and Lemma \ref{Well_definition_of_the_weak_equivalence_class_of_M_L}(1),
	$\mathscr{M}{\mathcal{O}_{V_{1}}^{q}}\subseteq\mathcal{O}_{V}^{q}+F^{e}(
	\mathcal{I}(H_{1})^{a})\mathscr{M}_{1}^{(a)}$, but the same does not hold
	with $a$ replaced by $a+1$. This implies (c), and also that for
	$i<a$,
	$\mathscr{M}_{1}^{(i)}=(\mathcal{O}_{V_{1}}^{q}+\mathscr{M}\mathcal
	{O}_{V_{1}}^{q}:F^{e}(
	\mathcal{I}(H_{1})^{i}))=\mathcal{O}_{V_{1}}^{q}+F^{e}(\mathcal
	{I}(H_{1})^{a-i})
	\mathscr{M}_{1}^{(a)}$. Note that (a) follows from this.
\end{proof}
\begin{remark}
	Proposition \ref{the_invariant_a_in_the_blow-up} shows that the number
	$a$ in the above corollary coincides with the largest integer such that
	$Z$ is permissible for $(\mathscr{M},a)$.
\end{remark}

\begin{remark}%
	\label{proof_of_the_first_version_of_main_theorem2}
	\emph{On the proof of Theorem \ref{finite_morphisms_and_closed_subsets} (for
		$\delta\in\mathscr{C}_{q}(V)$) and Theorem \ref{main_theorem_2}:}
	Proposition \ref{0309} covers the proof of the first part and (1) of Theorem \ref{finite_morphisms_and_closed_subsets}, except for the closeness of
	$F_{d}(X)$, whose proof will be postponed until Proposition \ref{d(F(X))_is_closed}. It also covers the proof of the first equality
	in (1) of Theorem \ref{main_theorem_2}, the second one being also included
	in Proposition \ref{d(F(X))_is_closed}. Propositions \ref{0310} and 
	\ref{blow-ups_of_radicial_morphisms} cover the proof of (2) of Theorem \ref{finite_morphisms_and_closed_subsets} and of (2) of Theorem \ref{main_theorem_2}. Finally, Corollary \ref{a-transform_as_sequence_of_blow-ups}(1) covers the proof of (3) of
	Theorem \ref{main_theorem_2}.
\end{remark}

\section{Differential Operators and the Definition of Local Invariants
	for Points $x\in F_{d}(X)$}
\label{Section_on_Diff}

Let $V$ denote an irreducible $F$-finite regular scheme, let
$\mathscr{M}\subset\mathcal{O}_{V}$ be an $\mathcal
{O}_{V}^{q}$-module, and
let $\delta:X\to V$ be the finite morphism attached to $\mathscr{M}$. We
set $d:=[K(X):K(V)]$. One of the main purposes of this paper is defining
invariants of singularities for points $x\in F_{d}(X)$. Proposition \ref{0309} provides the description
$\delta(F_{d}(X))=\{x\in V:\mathscr{M}_{x}\subseteq\mathcal
{O}_{V,x}^{q}+m_{V,x}^{q}
\}$. Our definition of invariants will arise of course in studying the
inclusion $\mathscr{M}_{x}\subseteq\mathcal
{O}_{V,x}^{q}+m_{V,x}^{q}$ and
similar conditions. Differential operators will play a relevant role in
this vein.

We formulate our discussion in this section at the local level. We fix
an $F$-finite regular local ring $(R,\m)$ and an $R^{q}$-submodule
$M\subset R$. We shall assign to $M$ two different numerical invariants.
Both invariants are compatible with our notions of equivalence of Definition \ref{defequi}. The first one, denoted by $\nu_\m^{(q)}(M)$, is the
highest integer $n$ such that
$M\subset R^{q}+\m^{n} (\subset R)$. This will be, to some extend,
an analog to the notion of order of an ideal but applied here to
$R^{q}$-submodules of $R$. The second invariant, denoted by
$\eta_\m(M)$, will be defined in terms of differential operators;
see \ref{Definition_of_eta}.

We will make use of $p$-basis to show that these invariants are closely
related. For instance, $\nu_\m^{(q)}(M)=\eta_\m(M)$ if
$R/\m$ is perfect. A~$p$-basis of $R$ provides, for each element
$f\in R$ and each power $q=p^{e}$, a natural notion of $q$-expansion, which,
in some way to be clarified, can be interpreted as a Taylor expansion of~$f$. This approach will lead us, for instance, to a simple proof of a classical
result expressing the order of an element at a local regular ring in terms
of differential operators (see Corollary \ref{local_Jacobian_criterion}).

Given $V$ and an $\mathcal{O}_{V}^{q}$-module
$\mathscr{M}\subset\mathcal{O}_{V}$ as above, we get for each point
$x\in V$ an $F$-finite regular local ring $\mathcal{O}_{V,x}$ and an
$\mathcal{O}_{V,x}^{q}$-module
$\mathscr{M}_{x}\subset\mathcal{O}_{V,x}$. Hence we obtain two functions
$x\mapsto\nu_{x}^{(q)}(\mathscr{M}):=\nu_{m_{V,x}}^{(q)}(\mathscr{M}_{x})$
and
$x\mapsto\eta_{x}(\mathscr{M}):=\eta_{m_{V,x}}(\mathscr{M}_{x})$.

Both functions appear as an attempt to extend to modules properties known
for ideals. If we fix an ideal $\mathcal{J}$ over $V$, the order function:
$x\mapsto\nu_{x}(\mathcal{J})$ is upper-semicontinuous. However, of the
two functions defined by $\mathcal{O}_{V}^{q}$-modules, only the second
one has this property. In fact, we introduce the function
$x\mapsto\eta_{x}(\mathscr{M})$ to remedy the fact that
$\nu_{x}^{(q)}(\mathscr{M})$ is not upper-semicontinuous. We will discuss
this in the next section (see \ref{par416}).

\subsection{Differential Operators}
\label{Diff_operators}

We summarize some well-known facts about differential operators. We refer
the reader to \cite[Chapter~3]{giraud1972etude} and \cite
[16.8]{EGAIV} for
details of the definitions and facts that will be stated here without proofs.
\begin{parrafo}%
	\label{Differential_operators}
	Given a ring $R$, we denote
	$T_{R}:=\langle1\otimes r-r\otimes1:r\in R\rangle\subset R\otimes R$.
	For an integer $i\geq0$, we denote by $\operatorname{Diff}_{R}^{i}$
	the (left)
	$R$-submodule of $\operatorname{End}_{\mathbb{Z}}(R,+)$ consisting of those
	$\mathbb{Z}$-linear maps $D:R\to R$ whose (left) $R$-linear extension
	$\varphi_{D}:R\otimes R\to R$ ($\varphi_{D}(r_{1}\otimes r_{2})=r_{1}
	D(r_{2})$ annihilates the ideal $T_{R}^{i+1}\subset R\otimes R$ and hence
	factors through the quotient
	$\operatorname{P}_{R}^{i}:=(R\otimes R)/{T_{R}^{i+1}}$. This quotient
	is called
	the $R$-module of principal parts of order $i$.
	
	There is a chain of $R$-modules
	$\operatorname{Diff}_{R}^{0}\subseteq\operatorname
	{Diff}_{R}^{1}\subseteq
	\operatorname{Diff}_{R}^{2}\subseteq\cdots$. An element $D:R\to R$
	in any of
	these modules is called an \emph{absolute differential operator} or simply
	a \emph{differential operator} of the ring $R$. The order of $D$ is the
	first integer $i$ such that $D\in\operatorname{Diff}_{R}^{i}$, and it
	will be
	denoted $|D|$. So $\operatorname{Diff}_{R}^{i}$ consists of the differential
	operators of order at most $i$.
	
	We denote by $\operatorname{Diff}_{R,+}^{i}$ the $R$-submodule of
	$\operatorname{Diff}_{R}^{i}$ consisting of the operators $D$ such that
	$D(1)=0$. For $i=0$, we have
	$\operatorname{Diff}_{R}^{0}=\operatorname{End}_{R}(R,+)\cong R$.
	This identification
	leads to a decomposition
	\begin{align}
		\label{Decomposition_of_Diff=R+Diff_+} \operatorname{Diff}_{R}^{i}=R\oplus
		\operatorname{Diff}_{R,+}^{i}.
	\end{align}
	
	The composite of two differential operators is again a differential operator.
	More precisely, if $D\in\operatorname{Diff}_{R}^{i}$ and
	$D'\in\operatorname{Diff}_{R}^{j}$, then
	$D\circ D'\in\operatorname{Diff}_{R}^{i+j}$. In addition, if
	$D'\in\operatorname{Diff}_{R,+}^{j}$, then
	$D\circ D'\in\operatorname{Diff}_{R,+}^{i+j}$.
	
	Given an $R$-submodule $\mathscr{D}\subset\operatorname
	{Diff}_{R}^{i}$ and a
	nonempty subset $\Phi\subset R$, we denote by $\mathscr{D}(\Phi)$ the
	abelian subgroup of $R$ generated by the evaluations $D(r)$ with
	$D\in\mathscr{D}$ and $r\in\Phi$, which is clearly an ideal of $R$. Note
	that for two nonempty subsets $\Phi,\Phi'\subseteq R$, we have
	\begin{align}
		\label{Diff(A+B)=Diff(A)+Diff(B)} \mathscr{D}(\Phi\cup\Phi')=\mathscr{D}(\Phi)+
		\mathscr{D}(\Phi').
	\end{align}
	
	Given an ideal $I\subset R$ and $D\in\operatorname{Diff}_{R}^{i}$, we
	will often
	use the following property:
	\begin{align}
		\label{evaluating_differential_operators_at_powers_of_ideals} D(I^{n})\subseteq I^{n-i}
		\quad \text{for all integers }i: 0\leq i\leq n.
	\end{align}
\end{parrafo}
\begin{parrafo}%
	\label{Differential_operators_of_order_<q_are_R^q-linear}
	If the ring $R$ has characteristic $p>0$, and if $q$ denotes a power of
	$p$, then any $D\in\operatorname{Diff}_{R}^{q-1}$ is $R^{q}$-linear.
	In fact,
	if $\varphi:R\otimes R\to R$ denotes the morphism of left $R$-modules
	extending $D$, then by definition $\varphi(T_{R}^{q})=0$. Therefore
	$D(x^{q} r)=\varphi(1\otimes x^{q} r)= \varphi(x^{q} \otimes r)+
	\varphi(1\otimes x^{q}r-x^{q}\otimes r)=x^{q}\varphi(1\otimes r)+
	\varphi((1\otimes x^{q}-x^{q}\otimes1)(1\otimes r))= x^{q}D(r)+
	\varphi((1\otimes x-x\otimes1)^{q}(1\otimes r))=x^{q} D(r)$. In particular,
	\begin{align}
		\label{Diff(R^q)=0} \operatorname{Diff}_{R,+}^{q-1}(R^{q})=0.
	\end{align}
\end{parrafo}
\begin{parrafo}%
	\label{logarithmic_differential_operators}%
	\emph{Logarithmic Differential Operators.} Given an ideal
	$I\subset R$, a differential operator $D:R\to R$ is said to be $I$-logarithmic
	if $D(I^{k})\subseteq I^{k}$ for all $k\geq0$. We denote by
	$\operatorname{Diff}_{R,I}^{i}$ the $R$-submodule of
	$\operatorname{Diff}_{R}^{i}$ consisting of those differential
	operators that
	are $I$-logarithmic. We also denote
	$\operatorname{Diff}_{R,I,+}^{i}:=\operatorname{Diff}_{R,I}^{i}\cap
	\operatorname{Diff}_{R,+}^{i}$.
	According to (\ref{evaluating_differential_operators_at_powers_of_ideals}),
	there are inclusions
	\begin{align}
		\label{IDiff_is_logarithmic} I^{i}\operatorname{Diff}_{R}^{i}
		\subset\operatorname{Diff}_{R,I}^{i} \quad \text{and}\quad
		I^{i}\operatorname{Diff}_{R,+}^{i}\subset
		\operatorname {Diff}_{R,I,+}^{i}.
	\end{align}
	More generally, if $\Lambda=\{I_{1},\ldots,I_{r}\}$ is a finite collection
	of ideals of $R$, then we denote
	$\operatorname{Diff}_{R,\Lambda}^{i}:=\operatorname
	{Diff}_{R,I_{1}}^{i}\mathrel{\cap}
	\cdots\cap\operatorname{Diff}_{R,I_{r}}^{i}$. The elements of this $R$-module
	are called $\Lambda$-logarithmic differential operators. We also define
	$\operatorname{Diff}_{R,\Lambda,+}^{i}:=\operatorname
	{Diff}_{R,\Lambda}^{i}\cap
	\operatorname{Diff}_{R,+}^{i}$.
	
	If $D\in\operatorname{Diff}_{R,\Lambda}^{i}$ and
	$D'\in\operatorname{Diff}_{R,\Lambda}^{j}$, then, as we already mentioned,
	$D\circ D'$ is a differential operator of order $\leq i+j$, and clearly
	$D\circ D'\in\operatorname{Diff}_{R,\Lambda}^{i+j}$.
	
	As a consequence of (\ref{IDiff_is_logarithmic}), if $L\subset R$ is an
	ideal included in $I_{1}\mathrel{\cap}\cdots\cap I_{r}$, then
	\begin{align}
		\label{LDiff_is_Lambda_logarithmic} L^{i}\operatorname{Diff}_{R}^{i}
		\subseteq\operatorname {Diff}_{R,\Lambda}^{i} \quad \text{and}\quad
		L^{i}\operatorname {Diff}_{R,+}^{i}
		\subseteq\operatorname{Diff}_{R,\Lambda,+}^{i} .
	\end{align}
	
	Note that $\operatorname{Diff}_{R,\Lambda}^{i}=\operatorname
	{Diff}_{R}^{i}$ if
	$\Lambda$ is either the empty family or if $\Lambda=\{R\}$, so everything
	we say in the logarithmic setting applies also to the original case.
\end{parrafo}
\begin{parrafo}%
	\label{compatibility_with_localization}%
	\emph{Compatibility with Localizations.} If $S$ is a multiplicative subset
	of $R$ such that $R\to S^{-1}R$ is injective, then a differential operator
	$D:R\rightarrow R$ can be extended uniquely to a differential operator
	$D_{S}:S^{-1}R\rightarrow S^{-1}R$ (of the same order as $D$). This follows
	easily from the good localization property of the module of principal parts,
	namely, $S^{-1}\operatorname{P}_{R}^{i}=\operatorname
	{P}_{S^{-1}R}^{i}$. In the case
	that $R$ is a ring of characteristic $p>0$ (as we will always assume in
	these notes), there is a handy description of the action of $D_{S}$ on
	fractions. Let $q$ be a power of $p$ strictly greater than the order of
	$D$. By \ref{Differential_operators_of_order_<q_are_R^q-linear}, $D$ is
	$R^{q}$-linear, and similarly $D_{S}$ is $(S^{-1}R)^{q}$-linear. Therefore,
	given $a/f\in S^{-1}R$ with $f\in S$, we have
	\begin{align}
		\label{handy_description_of_D_S} D_{S}(a/f)=D_{S}(af^{q-1}/f^{q})=(1/f^{q})D_{S}(af^{q-1}/1)=D(af^{q-1})/f^{q}.
	\end{align}
	
	If $\Lambda$ is a finite collection of ideals of $R$, then we denote
	$S^{-1}\Lambda=\{S^{-1}I:I\in\Lambda\}$. The above description of
	$D_{S}$ shows easily that if $D$ is $\Lambda$-logarithmic, then
	$D_{S}$ is $S^{-1}\Lambda$-logarithmic.
	
	The assignment $D\mapsto D_{S}$ extends naturally to a morphism of
	$S^{-1}R$-modules
	\begin{align}
		\label{localization_of_Diff} S^{-1}(\operatorname{Diff}_{R,\Lambda}^{i})
		\rightarrow\operatorname {Diff}_{S^{-1}R,S^{-1}
			\Lambda}^{i}
	\end{align}
	When $R$ is a domain, then this map is clearly injective. We now show that
	if $R$ is an $F$-finite Noetherian domain, then the map (\ref
	{localization_of_Diff})
	is bijective, giving a natural isomorphism
	\begin{align}
		\label{S^-1Diff(R)=Diff(S^-1R)} S^{-1}(\operatorname{Diff}_{R,\Lambda}^{i})
		\cong\operatorname {Diff}_{S^{-1}R,S^{-1}
			\Lambda}^{i}.
	\end{align}
	In fact, it is only left to prove that given
	$\tilde{D}\in\operatorname{Diff}_{S^{-1}R,S^{-1}\Lambda}^{i}$,
	there exist
	$D\in\operatorname{Diff}_{R,\Lambda}^{i}$ and $f\in S$ such that
	$\tilde{D}=\frac{1}{f}D_{S}$. Let $q$ be a power of $p$ such that
	$q>i$. Then the restriction of $\tilde{D}$ to $R$ is $R^{q}$-linear, and
	we have $\tilde{D}(I^{k})\subset S^{-1}I^{k}$ for all $k\geq0$ and
	$I\in\Lambda$. Now, since $R$ is an $F$-finite Noetherian domain,
	$R$ and each of its ideals are finite $R^{q}$-modules. This observation
	and the fact that $\Lambda$ is finite shows that there is an element
	$f\in S$ such that $ f\tilde{D}(R)\subseteq R$, which implies that
	$f\tilde{D}\in\operatorname{Diff}_{R}^{i}$, and such that
	$f\tilde{D}(I^{k})\subseteq I^{k}$ for all $I\in\Lambda$ and
	$k\leq i$. Now it is easy to show that this already implies that
	$f\tilde{D}(I^{k})\subseteq I^{k}$ for all $k\geq0$. Therefore
	$f\tilde{D}\in\operatorname{Diff}_{R,\Lambda}^{i}$. Note finally that
	$\tilde{D}=\frac{1}{f}D_{S}$ for $D:=f\tilde{D}$. This completes the proof
	of our isomorphism.
	
	An analogous discussions also holds for the module
	$\operatorname{Diff}^{i}_{R,\Lambda,+}$.
	
	Finally, let $R$ be an $F$-finite Noetherian domain, let $\Lambda$ be
	a finite collection of ideals, and let $q=p^{e}$ be a power of $p$. Let
	$M\subset R$ be an $R^{q}$-submodule, and let $J\subset R$ be an ideal.
	We deduce from (\ref{S^-1Diff(R)=Diff(S^-1R)}) and from the explicit
	description
	of this isomorphism given in (\ref{handy_description_of_D_S}) that the
	following equalities hold:
	\begin{align}
		\label{localization} S^{-1}(\operatorname{Diff}_{R,\Lambda}^{i}(J))={}&
		\operatorname {Diff}_{S^{-1}R,S^{-1}
			\Lambda}^{i}(J)\nonumber
		\\
		={}&\operatorname{Diff}_{S^{-1} R,S^{-1}\Lambda}^{i}(S^{-1}J),\quad
		\forall i\geq0,\nonumber
		\\
		S^{-1}(\operatorname{Diff}_{R,\Lambda,+}^{i}(M))={}&
		\operatorname {Diff}_{S^{-1}R,S^{-1}
			\Lambda,+}^{i}(M)
		\\
		={}&\operatorname{Diff}_{S^{-1}R,S^{-1}\Lambda
			,+}^{i}((F^{e}(S))^{-1}M),\nonumber\\
		&{}
		\forall i=1,\ldots,q-1.\nonumber
	\end{align}
\end{parrafo}

\subsection{$p$-Basis and Taylor Operators}
\label{sec3.2}

We recall the general notion of $p$-basis and the associated Taylor operators
for $F$-finite regular local rings. Later we will focus only on those
$p$-basis that extend a regular system of parameters. These are the
$p$-basis that we will use to study the invariants that will be assigned
to $R^{q}$-modules.
\begin{parrafo}%
	\label{p-basis}%
	\emph{$p$-Basis.} Let $R$ be an $F$-finite domain of characteristic
	$p$. An ordered set $\{z_{1},\ldots,z_{n}\}\subset R$ such that the monomials
	$\z^{\alpha}:=z_{1}^{\alpha_{1}}\ldots z_{n}^{\alpha_{n}}$, with
	$0\leq\alpha_{i}<p$, form a basis for $R$ over $R^{p}$ is called a
	\emph{$p$-basis}. Note that if $q$ is a power of $p$ and
	\begin{equation*}
		\mathcal{A}_{q}:=\{\alpha\in\mb{N}_{0}^{n}:
		0\leq\alpha_{i}<q \},
	\end{equation*}
	then $R$ is an $R^{q}$-free module with basis
	$\{\z^{\alpha}: \alpha\in\mathcal{A}_{q}\}$. Therefore any
	$f\in R$ has a unique expression of the form
	\begin{equation*}
		f=\sum_{\mathcal{A}_{q}}c_{\alpha}^{q}
		\z^{\alpha} \quad \text{with
		} c_{\alpha}=c_{\alpha}(f)
		\in R.
	\end{equation*}
	This expression will be referred to as the \emph{$q$-expansion of $f$ with
		respect to the $p$-basis $(z_{1},\ldots,z_{n})$} or simply as \emph{the
		$q$-expansion of $f$} if there is no risk of confusion.
	
	A well-known result by Kunz states that if a local ring admits a $p$-basis,
	then it must be regular. Conversely, Proposition \ref{special_p-basis} recalls the fact that $F$-finite regular local rings
	admit $p$-basis.
\end{parrafo}
\begin{parrafo}%
	\emph{Taylor Differential Operators.} A $p$-basis
	$\{z_{1},\ldots,z_{n}\}$ is also a differential basis for $R$, that is,
	$\Omega_{R}^{1}$ is free with basis $\{dz_{1},\ldots,dz_{n}\}$. In fact,
	in the context of $F$-finite rings, these two concepts are the same; see
	\cite{kimuraniitsuma1984}. Thus $R$ is differentially smooth over the prime
	field in the sense of Grothendieck \cite[16.10]{EGAIV}, and for each
	$\gamma\in\mb{N}_{0}^{n}$, there exists a unique differential operator
	$D_{\gamma}:R\to R$ such that
	\begin{align}
		\label{evaluation_of_Taylor_operators_in_monomials} D_{\gamma}(\z^{\alpha})=
		\binom{\alpha}{\gamma}\z^{\alpha-
			\gamma},\quad  \forall\alpha\in
		\mb{N}_{0}^{n}.
	\end{align}
	(Here $\binom{\alpha}{\gamma}$ denotes
	$\prod_{k=1}^{n}\binom{\alpha_{k}}{\gamma_{k}}$.) In addition,
	$D_{\gamma}$ has order $|\gamma|$, and for each $i\geq0$,
	$\operatorname{Diff}_{R}^{i}$ is a free $R$-module with basis
	$\{D_{\gamma}:|\gamma|\leq i\}$, and $\operatorname{Diff}_{R,+}^{i}$
	is free
	with basis $\{D_{\gamma}:0<|\gamma|\leq i\}$ (observe that $D_{0}$ is the
	identity).
	
	The operators $D_{\gamma}$ are called Taylor operators (with respect to
	the given $p$-basis). Note that from (\ref
	{evaluation_of_Taylor_operators_in_monomials})
	we deduce that
	\begin{align}
		\label{composition_of_Taylor_operators} D_{\gamma}\circ D_{\gamma'}=
		\binom{\gamma+\gamma'}{\gamma}D_{
			\gamma+\gamma'}, \quad \gamma,\gamma'\in
		\mb{N}_{0}^{n}.
	\end{align}
	In particular, a Taylor operator decomposes as
	\begin{align}
		\label{decomposition_of_a_Taylor_operator} D_{\gamma}=D_{\gamma_{1}\e_{1}}\circ\cdots\circ
		D_{\gamma_{n}
			\e_{n}},
	\end{align}
	where $\e_{1},\ldots,\e_{n}\in\mb{N}_{0}^{n}$ are the canonical
	vectors.
	
	Since the characteristic is $p>0$, it is worth mentioning that a binomial
	coefficient of integers might be zero when viewed as an element of
	$R$. In fact, $\binom{a}{b}$ is zero in $R$ if and only if
	$\binom{a}{b}\equiv0 \mod p$. Lucas's theorem states that if
	$a=\sum a_{i}p^{i}$ and $b=\sum b_{i}p^{i}$ are the $p$-adic expansions
	of nonnegative integers $a$ and $b$, then
	$\binom{a}{b}\equiv\prod\binom{a_{i}}{b_{i}}\mod p$; in particular,
	\begin{align}
		\label{Lucas_theorem} \binom{a}{b}\not\equiv0 \mod p \quad \text{if and only if}\quad
		b_{i} \leq a_{i},\quad  \forall i.
	\end{align}
\end{parrafo}
\begin{parrafo}%
	\label{Taylor_operators_are_respectfull_with_the_p-basis}%
	\emph{Compatibility of the action of Taylor operators with the $q$-expansions.}
	Let $q$ be a power of $p$. There is a compatibility of the action of Taylor
	operators $D_{\gamma}$, when $\gamma\in\mathcal{A}_{q}$, with $q$-expansions
	of elements of $R$. To express this, note first that $D_{\gamma}$ is
	$R^{q}$-linear, for it is the composition of differential operators of
	order $\leq q-1$ by (\ref{decomposition_of_a_Taylor_operator}), and these
	are $R^{q}$-linear by
	\ref{Differential_operators_of_order_<q_are_R^q-linear}. Therefore, if
	$f=\sum_{\mathcal{A}_{q}} c_{\alpha}^{q}\z^{\alpha}$ is the
	$q$-expansion
	of an element $f\in R$, then we have
	\begin{equation*}
		D_{\gamma}(f)=\sum_{\mathcal{A}_{q}}
		c_{\alpha}^{q} \binom{\alpha}{\gamma}\z^{\alpha-\gamma}.
	\end{equation*}
	The important observation is that this is the $q$-expansion of
	$D_{\gamma}(f)$ with respect to the given $p$-basis. Indeed,
	$\binom{\alpha}{\gamma}=\binom{\alpha}{\gamma}^{q}$ in $R$, and if
	$\binom{\alpha}{\gamma}\z^{\alpha-\gamma}=
	\binom{\alpha'}{\gamma}\z^{\alpha'-\gamma}\neq 0$, then we have
	$\alpha=\alpha'$.
\end{parrafo}

The following proposition recalls the fact that $F$-finite regular local
rings admit $p$-basis.
\begin{proposition}%
	\label{special_p-basis}
	Let $(R,\m)$ be an $F$-finite regular local ring, and let
	$\{x_{1},\ldots,x_{r}\}$ be a regular system of parameters. Then
	$\{x_{1},\ldots,x_{r}\}$ can be extended to a $p$-basis for $R$. More
	precisely, given a subset $\{y_{1},\ldots,y_{s}\}\subset R$, the following
	conditions are equivalent.
	\begin{enumerate}[(2)]
		\item[(1)] The classes of $y_{1},\ldots,y_{s}$ modulo $\m$ form a
		$p$-basis for the residue field $R/\m$.
		\item[(2)] $\{x_{1},\ldots,x_{r},y_{1},\ldots,y_{s}\}$ is a
		$p$-basis for
		$R$.
	\end{enumerate}
\end{proposition}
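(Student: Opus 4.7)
The strategy is to combine Kunz's theorem with Nakayama to reduce the equivalence to a residue-field computation, then propagate the $p$-basis condition from $k$ up to $R$ by induction on the $\mathfrak{m}$-adic filtration.

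First, I would recall that since $R$ is $F$-finite regular, Frobenius is flat (Kunz), so in the local setting $R$ is a free $R^p$-module. A standard count gives its rank as $p^{r+s}$, where $r=\dim R$ and $[k:k^p]=p^s$, with $k=R/\mathfrak{m}$. The candidate set $\{\mathbf{x}^{\alpha}\mathbf{y}^{\beta}:0\le\alpha_i,\beta_j<p\}$ has exactly $p^{r+s}$ elements, and for a free module of finite rank any generating set of cardinality equal to the rank is automatically a basis. Hence to prove $(1)\Rightarrow(2)$ it is enough to show generation, and by Nakayama applied to the finite $R^p$-module $R$ with respect to the maximal ideal $\mathfrak{m}_{R^p}=\{a^p:a\in\mathfrak{m}\}$ of the local ring $R^p$, this reduces to proving that the candidate monomials span $R/\mathfrak{m}^{[p]}$ as a vector space over $R^p/\mathfrak{m}_{R^p}\cong k$; here $\mathfrak{m}^{[p]}:=\mathfrak{m}_{R^p}\cdot R=(x_1^p,\ldots,x_r^p)$.

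The core of the proof is then an induction on the $\mathfrak{m}$-adic filtration, based on the observation that $R/\mathfrak{m}^{[p]}$ has finite length since $\mathfrak{m}^{r(p-1)+1}\subseteq\mathfrak{m}^{[p]}$ by pigeonhole. For $f\in R$, hypothesis $(1)$ provides $\bar f=\sum_\beta\bar c_\beta^p\bar{\mathbf y}^\beta$ in $k$, which lifts to $f\equiv\sum_\beta c_\beta^p\mathbf y^\beta\pmod{\mathfrak{m}}$ with $c_\beta\in R$. Writing the remainder as $\sum_i x_i h_i$ with $h_i\in R$ and applying the same residue-field decomposition to each $h_i$ yields a congruence modulo $\mathfrak{m}^2$ involving monomials $x_i\mathbf y^\beta$ with $R^p$-coefficients, and so on. After $r(p-1)+1$ iterations the residual term lies in $\mathfrak{m}^{[p]}$, and we obtain $f\equiv\sum_{\alpha,\beta}c_{\alpha,\beta}^p\mathbf x^\alpha\mathbf y^\beta\pmod{\mathfrak{m}^{[p]}}$ for multi-indices $\alpha$ of unbounded but finite range; exponents $\alpha_i\geq p$ are finally absorbed into the $R^p$-coefficients via $x_i^p\in R^p$, bringing $\alpha$ into the required range $0\le\alpha_i<p$. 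For $(2)\Rightarrow(1)$, given $\bar f\in k$, lift to $f\in R$, expand in the $R^p$-basis as $f=\sum c_{\alpha,\beta}^p\mathbf x^\alpha\mathbf y^\beta$, and reduce modulo $\mathfrak{m}$: only terms with $\alpha=0$ survive since $x_i\in\mathfrak{m}$, giving $\bar f=\sum_\beta\bar c_{0,\beta}^p\bar{\mathbf y}^\beta$; linear independence of $\{\bar{\mathbf y}^\beta\}$ over $k^p$ then follows from the uniqueness of the $R^p$-expansion, by lifting a putative relation and observing that the only $R^p$-expansion of $0\in R$ is the zero one.

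The main obstacle is the iterative lifting in the central paragraph: since the $y_j$ need not lie in $\mathfrak{m}$, multiplying an $R^p$-coefficient by $\mathbf y^\beta$ does not shift $\mathfrak{m}$-adic order in an obvious way, so the \emph{horizontal} $\mathbf y$-expansion (from the $p$-basis of $k$) and the \emph{vertical} $\mathbf x$-expansion (from the regular system of parameters) must be carefully separated at each stage, ensuring that the new $\mathbf y$-terms produced by successive lifts do not spoil the order estimates that drive the induction.
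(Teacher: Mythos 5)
Your proof is essentially correct, but it takes a genuinely different route from the paper's. The paper dispatches this proposition in three lines: it cites Kimura--Niitsuma for the existence of a $p$-basis for an $F$-finite regular local ring, and again for the equivalence of ``$p$-basis'' and ``differential basis'' in the $F$-finite setting, and then reads the equivalence of (1) and (2) off Berger's exact sequence $0\to\m/\m^2\to\Omega_R^1\otimes R/\m\to\Omega_{R/\m}^1\to0$. Your argument is instead an explicit, self-contained computation that avoids the theory of differential bases entirely: Kunz's theorem gives freeness of $R$ over $R^p$; the rank $p^r[k:k^p]$ (with $k=R/\m$) is obtained from the length of $R/\m^{[p]}$; Nakayama reduces generation to spanning in the Artinian quotient $R/\m^{[p]}$; and a finite induction on the $\m$-adic filtration, stopping once one lands inside $\m^{[p]}$, produces the required $R^p$-expansion. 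What your approach buys is transparency about which two facts carry the argument --- Kunz flatness and the length of the Frobenius power of $\m$ --- at the cost of the bookkeeping in the lifting step, which you rightly identify as the main point requiring care; the paper's approach buys brevity by outsourcing all of this to the structure theory of $\Omega_R^1$.

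One imprecision worth flagging in the $(2)\Rightarrow(1)$ direction: the sentence ``lifting a putative relation and observing that the only $R^p$-expansion of $0\in R$ is the zero one'' does not close the gap on its own, because a $k^p$-linear relation $\sum_\beta\bar{\lambda}_\beta\bar{\y}^\beta=0$ in $k$ lifts only to an element of $\m$, not to $0$ in $R$, and membership in $\m$ is not read off the $R^p$-expansion coefficients in any obvious way. But your own setup already supplies the repair: (2) gives $\on{rank}_{R^p}R=p^{r+s}$, which combined with the independently computed rank $p^r[k:k^p]$ yields $[k:k^p]=p^s$; since the $p^s$ monomials $\bar{\y}^\beta$ span $k$ over $k^p$ by your reduction-mod-$\m$ argument, by cardinality they must form a basis. (Alternatively, one checks directly that $\m/\m^{[p]}$ lies in the $k^p$-span of the basis monomials $\x^\alpha\y^\beta$ with $\alpha\neq0$, since multiplying $x_i$ into a basis monomial with $\alpha_i=p-1$ produces a factor $x_i^p\in\m^{[p]}$; the lifted relation then contradicts the uniqueness of expansion in $R/\m^{[p]}$.)
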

\begin{proof}
	The existence of a $p$-basis for $(R,\m)$ follows from the main result
	of \cite{kimuraniitsuma1980}. The result of \cite{kimuraniitsuma1984} ensures
	that a $p$-basis for $R$ is the same thing as a differential basis. Thus
	the proposition follows from the exact sequence
	$0\rightarrow\m/\m^{2}\rightarrow\Omega_{R}^{1}\otimes R/
	\mvtex\rightarrow\Omega_{R/\mvtex}^{1}\rightarrow0$ proved in
	\cite[Satz 1, (b)]{berger1961struktur}.
\end{proof}
\begin{definition}
	An ordered $p$-basis $(x_{1},\ldots,x_{r},y_{1},\ldots,y_{s})$ for
	$(R,\mvtex)$ as in Proposition \ref{special_p-basis} is called an
	\emph{adapted
		$p$-basis}.
\end{definition}
We denote by $D_{\alpha,\beta}$,
$(\alpha,\beta)\in\mathbb{N}_{0}^{r+s}$, the Taylor operators associated
with an adapted $p$-basis
$(x_{1},\ldots,x_{r},y_{1},\ldots,y_{s})$. The set
$\mathcal{A}_{q}$ takes the form
\begin{equation*}
	\mathcal{A}_{q}=\{(\alpha,\beta)\in\mbvtex{N}_{0}^{r+s}:
	0\leq \alpha_{i}, \beta_{j}<q, 1\leq i\leq r, 1\leq j
	\leq s\}.
\end{equation*}
We also denote
$\mathcal{A}_{q}^{+}:=\mathcal{A}_{q}\setminus\{(0,0)\}$. Given
$f\in R$, its $q$-expansion has the form
\begin{equation*}
	f=\sum_{\mathcal{A}_{q}} c_{\alpha,\beta}^{q}
	\xvtex^{\alpha
	}\yvtex^{
		\beta}=c_{0,0}^{q}+
	\sum_{\mathcal{A}_{q}^{+}} c_{\alpha,\beta}^{q}
	\xvtex^{\alpha}\yvtex^{\beta}.
\end{equation*}

The first part of the following proposition expresses the order of an element
$f$ in $(R,m)$ in terms of the $q$-expansion with respect to an adapted
$p$-basis. The second part concerns the condition
$f\in R^{q}+\mvtex^{n}$, which will be particularly useful in the study
of $R^{q}$ submodules of $R$.
\begin{proposition}%
	\label{order_in_terms_of_q-atoms}
	Let $(R,\mvtex)$ be an $F$-finite regular local ring, and let
	$(x_{1},\ldots,x_{r},y_{1},\ldots,y_{s})$ be an adapted $p$-basis. Given
	$f\in R$ with $q$-expansion
	$f=\sum_{(\alpha,\beta)\in\mathcal{A}_{q}}c_{\alpha,\beta}^{q}
	\xvtex^{\alpha}\yvtex^{\beta}$, for any integer $n$, the following holds.
	\begin{enumerate}[(2)]
		\item[(1)] $f\in\mvtex^{n}$ if and only if
		$c_{\alpha,\beta}^{q}\xvtex^{\alpha}\yvtex^{\beta}\in\mvtex
		^{n}$ for
		all $(\alpha,\beta)\in\mathcal{A}_{q}$.
		\item[(2)] $f\in R^{q}+\mvtex^{n}$ if and only if
		$c_{\alpha,\beta}^{q}\xvtex^{\alpha}\yvtex^{\beta}\in\mvtex
		^{n}$ for
		all $(\alpha,\beta)\in\mathcal{A}_{q}^{+}$.
	\end{enumerate}
\end{proposition}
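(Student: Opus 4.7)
The plan is to deduce (2) from (1) and then prove (1) by passing to the associated graded ring $G:=\operatorname{gr}_{\m}(R)$. The ``if'' directions of both parts are immediate (a finite sum of elements of $\m^n$ lies in $\m^n$, and the $(0,0)$-term $c_{0,0}^q$ already belongs to $R^q$), so all the work lies in the ``only if'' direction of (1). First I would observe that each $y_j$ is a unit of $R$: its class $\bar y_j$ belongs to a $p$-basis of $k:=R/\m$, so in particular $y_j\notin\m$. Consequently $y^\beta\in R^\times$ for every $\beta$, and the order of the typical term is
\[
\operatorname{ord}(c_{\alpha,\beta}^q\xvtex^\alpha\yvtex^\beta)=q\operatorname{ord}(c_{\alpha,\beta})+|\alpha|.
\]

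The crucial device is a direct sum decomposition of $G$ compatible with $q$-expansions. Since $R$ is regular, $G\cong k[X_1,\dots,X_r]$, with $X_i$ the initial form of $x_i$; and since $k$ is $F$-finite with $p$-basis $\{\bar y_1,\dots,\bar y_s\}$, applying Frobenius gives $G^q=k^q[X_1^q,\dots,X_r^q]$, over which $G$ is free with basis the monomials $X^\alpha\bar y^\beta$ indexed by $(\alpha,\beta)\in\mathcal{A}_q$. Thus
\[
G=\bigoplus_{(\alpha,\beta)\in\mathcal{A}_q} G^q\cdot X^\alpha\bar y^\beta.
\]
Because $R$ is a domain (so $G$ is a polynomial ring over a field, hence a domain), one has $\operatorname{in}(c^q)=\operatorname{in}(c)^q\in G^q$ and $\operatorname{ord}(c^q)=q\operatorname{ord}(c)$ for every $c\in R$. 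Therefore the initial form of any nonzero term $c_{\alpha,\beta}^q\xvtex^\alpha\yvtex^\beta$ equals $\operatorname{in}(c_{\alpha,\beta})^q X^\alpha\bar y^\beta$ and lies entirely inside the summand $G^q\cdot X^\alpha\bar y^\beta$. Distinct indices $(\alpha,\beta)\in\mathcal{A}_q$ land in distinct summands, so the nonzero initial forms of the terms of the $q$-expansion cannot cancel, and hence
\[
\operatorname{ord}(f)=\min_{(\alpha,\beta)\in\mathcal{A}_q}\operatorname{ord}(c_{\alpha,\beta}^q\xvtex^\alpha\yvtex^\beta).
\]
This is exactly (1).

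For (2), suppose $f\in R^q+\m^n$. Since $R$ is a domain of characteristic $p$, every element of $R^q$ is a $q$-th power, so write $f=e^q+g$ with $e\in R$ and $g\in\m^n$. Using $c_{0,0}^q-e^q=(c_{0,0}-e)^q$ (valid in characteristic $p$), the $q$-expansion of $f-e^q\in\m^n$ reads
\[
(c_{0,0}-e)^q+\sum_{(\alpha,\beta)\in\mathcal{A}_q^+} c_{\alpha,\beta}^q\xvtex^\alpha\yvtex^\beta,
\]
and applying (1) to this element yields $c_{\alpha,\beta}^q\xvtex^\alpha\yvtex^\beta\in\m^n$ for every $(\alpha,\beta)\in\mathcal{A}_q^+$, as required. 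The main point that requires care is the non-cancellation step: one must identify the right decomposition of $G$ over $G^q$ coming from the adapted $p$-basis, so that contributions from different $(\alpha,\beta)$ are automatically placed in distinct summands and cannot interfere. Once this decomposition is in place the rest of the argument is bookkeeping.
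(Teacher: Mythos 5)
Your proof is correct, and part~(2) matches the paper's own argument almost verbatim. Part~(1), however, takes a genuinely different route. The paper stays inside $R$: it introduces abelian subgroups $\mathcal{M}_t\subseteq\m^t$ spanned by the $q$-atoms $c^q\xvtex^\alpha\yvtex^\beta$ of order $\geq t$, observes $\mathcal{M}_t\mathcal{M}_u\subseteq\mathcal{M}_{t+u}$, and reduces everything to the base case $\m=\mathcal{M}_1$, which is settled by reducing the $q$-expansion modulo $\m$ and invoking the $p$-basis property of $\bar y_1,\dots,\bar y_s$ in $R/\m$. You instead pass to the associated graded ring $G=\operatorname{gr}_{\m}(R)\cong k[X_1,\dots,X_r]$ and exploit the free graded decomposition of $G$ over $G^q=k^q[X_1^q,\dots,X_r^q]$ with basis $\{X^\alpha\bar y^\beta\}_{(\alpha,\beta)\in\mathcal{A}_q}$, so that the initial forms of the terms of minimal order land in distinct summands and therefore cannot cancel. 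Both proofs turn on exactly the same underlying fact --- that $(\bar y_j)$ being a $p$-basis of $k$ forbids cancellation among the $\alpha=0$ terms --- but yours isolates the relevant graded structure once and for all and yields the clean formula $\operatorname{ord}(f)=\min_{(\alpha,\beta)\in\mathcal{A}_q}\operatorname{ord}(c_{\alpha,\beta}^q\xvtex^\alpha\yvtex^\beta)$ directly, whereas the paper's filtration argument is more elementary and self-contained, never mentioning $\operatorname{gr}_{\m}(R)$. One minor caveat in your write-up: the non-cancellation claim should be phrased as applying to the terms of \emph{minimal} order; terms of strictly higher order simply contribute zero to the class of $f$ in $\m^m/\m^{m+1}$, and the precise conclusion is that $\operatorname{in}(f)$ is the (nonzero) sum of the initial forms of the minimal-order terms, which is what gives $\operatorname{ord}(f)=m$.
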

\begin{proof}
	(1) For each integer $t\geq0$ we denote by $\mathcal{M}_{t}$ the Abelian
	subgroup of $R$ generated by the elements of $\mvtex^{t}$ that are of
	the form $c^{q}\xvtex^{\alpha}\yvtex^{\beta}$ with $c\in R$ and
	$(\alpha,\beta)\in\mathcal{A}_{q}$. By definition we have that
	$\mathcal{M}_{t}\subseteq\mvtex^{t}$ for all $t\geq0$. We claim that
	these are all equalities. This will imply (1) by the uniqueness of the
	$q$-expansion. To show the claim, we first observe that
	$\mathcal{M}_{t}\mathcal{M}_{u}\subseteq\mathcal{M}_{t+u}\subseteq
	\mvtex^{t+u}$. In fact, given
	$c^{q}\xvtex^{\alpha}\yvtex^{\beta}\in\mvtex^{t}$ and
	$c^{\prime q}\xvtex^{\alpha'}\yvtex^{\beta'}\in\mvtex^{u}$, with
	$(\alpha,\beta), (\alpha',\beta')\in\mathcal{A}_{q}$, the product
	$(cc')^{q}\xvtex^{\alpha+\alpha'}\yvtex^{\beta+\beta'}$ belongs to
	$\mvtex^{t+u}$, and it can be clearly expressed in the form
	$c^{\prime\prime q}\xvtex^{\alpha''}\yvtex^{\beta''}$ with
	$(\alpha'',\beta'')\in\mathcal{A}_{q}$. In view of this observation,
	our claim will follow if we show that $\mathcal{M}_{1}=\mvtex$, or say
	$\mvtex\subseteq\mathcal{M}_{1}$. Fix $g\in\mvtex$, say with $q$-expansion
	$g=\sum_{\mathcal{A}_{q}}d_{\alpha,\beta}^{q}\xvtex^{\alpha
	}\yvtex^{\beta}$. If we reduce this equality modulo $\mvtex$, we get
	$0=\sum_{(0,\beta)\in\mathcal{A}_{q}}\bar{d}_{0,\beta}^{q}
	\bar{y_{1}}^{\beta_{1}}\cdots\bar{y_{s}}^{\beta_{s}}$, where the bar
	stands for reduction modulo $\mvtex$. Given that
	$\bar{y_{1}},\ldots,\bar{y_{s}}$ is a $p$-basis of $R/\mvtex$, the
	coefficients
	$\bar{d}_{0,\beta}^{q}$ have to be zero, that is,
	$d_{0,\beta}\in\mvtex$. Therefore $g\in\mathcal{M}_{1}$. This proves
	the inclusion $\mvtex\subset\mathcal{M}_{1}$, and hence the proof of our
	claim is complete.
	
	(2) For the nontrivial direction, given $f\in R^{q}+\mvtex^{n}$, we have
	$f=\lambda^{q}+g$ with $g\in\mvtex^{m}$. We write $g$ in its $q$-expansion,
	say
	$g=\sum_{\mathcal{A}_{q}}d_{\alpha,\beta}^{q}\xvtex^{\alpha
	}\yvtex^{\beta}$. By (1)
	$d_{\alpha,\beta}^{q}\xvtex^{\alpha}\yvtex^{\beta}\in\mvtex
	^{n}$ for
	all $(\alpha,\beta)\in\mathcal{A}_{q}$. The conclusion follows as clearly
	$c_{\alpha,\beta}=d_{\alpha,\beta}$ for all
	$(\alpha,\beta)\in\mathcal{A}_{q}^{+}$.
\end{proof}

For the following corollary, we recall that if $R$ is a regular local ring,
and if $\pvtex\subset R$ is a regular prime, that is, if
$R/\pvtex$ is regular, then the symbolic powers and the usual powers of
$\pvtex$ coincide. In other words,
$\pvtex^{n}=(\pvtex^{n} R_\pvtex)\cap R$.
\begin{corollary}%
	\label{R^q+p^n_and_localization}
	Let $(R,\mvtex)$ be an $F$-finite regular local ring, and let
	$\pvtex\subset R$ be a regular prime. Then
	$R^{q}+\pvtex^{n}=(R_\pvtex^{q}+\pvtex^{n} R_\pvtex)\cap R$ for all
	$n\geq0$.
\end{corollary}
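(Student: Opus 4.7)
The plan is to prove the nontrivial inclusion $(R_\pvtex^{q}+\pvtex^{n}R_\pvtex)\cap R\subseteq R^{q}+\pvtex^{n}$ by transplanting the $q$-expansion argument of Proposition \ref{order_in_terms_of_q-atoms} from $(R,\mvtex)$ to the (possibly non-$F$-finite) localization $(R_\pvtex,\pvtex R_\pvtex)$, using an adapted $p$-basis that is compatible with $\pvtex$. Since $R/\pvtex$ is regular, I would first choose a regular system of parameters $(x_{1},\ldots,x_{r})$ of $R$ with $\pvtex=(x_{1},\ldots,x_{t})$, and by Proposition \ref{special_p-basis} extend it with a lift $(y_{1},\ldots,y_{s})$ of a $p$-basis of $R/\mvtex$ to an adapted $p$-basis of $R$.

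The first technical step is to show that the freeness $R=\bigoplus_{\mathcal{A}_{q}}R^{q}\xvtex^{\alpha}\yvtex^{\beta}$ persists in $R_\pvtex$. The key point is that $R_\pvtex^{q}=F^{e}(R\setminus\pvtex)^{-1}R^{q}$, and because $s=s^{q}/s^{q-1}$ becomes invertible the moment $s^{q}$ is inverted, the localization $F^{e}(R\setminus\pvtex)^{-1}R$ coincides with $R_\pvtex$; localizing the direct sum decomposition of $R$ therefore yields $R_\pvtex=\bigoplus_{\mathcal{A}_{q}}R_\pvtex^{q}\xvtex^{\alpha}\yvtex^{\beta}$. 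Consequently every element of $R_\pvtex$ admits a unique $q$-expansion, and for an element of $R$ this expansion agrees with its $q$-expansion in $R$.

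Next, I would upgrade Proposition \ref{order_in_terms_of_q-atoms}(1) to $R_\pvtex$ with $\pvtex R_\pvtex$ in the role of $\mvtex$. The only ingredient of its proof requiring attention is that $(\bar{x}_{t+1},\ldots,\bar{x}_{r},\bar{y}_{1},\ldots,\bar{y}_{s})$ forms a $p$-basis of the residue field $R_\pvtex/\pvtex R_\pvtex=\operatorname{Frac}(R/\pvtex)$; since $R/\pvtex$ is itself an $F$-finite regular local ring with regular parameters $\bar{x}_{t+1},\ldots,\bar{x}_{r}$ and residue field $R/\mvtex$ already carrying the $p$-basis $\bar{y}_{1},\ldots,\bar{y}_{s}$, a second application of Proposition \ref{special_p-basis} makes these elements an adapted $p$-basis of $R/\pvtex$, and the same localization trick used above upgrades it to a $p$-basis of the fraction field. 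Granted this, the proof of Proposition \ref{order_in_terms_of_q-atoms}(1) transfers verbatim to $R_\pvtex$, yielding: for any $g\in R_\pvtex$ with $q$-expansion $g=\sum d_{\alpha,\beta}^{q}\xvtex^{\alpha}\yvtex^{\beta}$, one has $g\in\pvtex^{n}R_\pvtex$ if and only if every summand $d_{\alpha,\beta}^{q}\xvtex^{\alpha}\yvtex^{\beta}$ lies in $\pvtex^{n}R_\pvtex$.

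To conclude, take $f\in(R_\pvtex^{q}+\pvtex^{n}R_\pvtex)\cap R$ and write $f=\lambda^{q}+g$ with $\lambda\in R_\pvtex$, $g\in\pvtex^{n}R_\pvtex$. Since the $q$-expansion of $\lambda^{q}$ is concentrated in the $(0,0)$-slot, comparing with the $q$-expansion $f=\sum c_{\alpha,\beta}^{q}\xvtex^{\alpha}\yvtex^{\beta}$ (whose coefficients live in $R$) forces $c_{\alpha,\beta}=d_{\alpha,\beta}$ in $R_\pvtex$ for every $(\alpha,\beta)\in\mathcal{A}_{q}^{+}$. By the transferred proposition, each term $c_{\alpha,\beta}^{q}\xvtex^{\alpha}\yvtex^{\beta}$ with $(\alpha,\beta)\in\mathcal{A}_{q}^{+}$ lies in $\pvtex^{n}R_\pvtex$, and since it already belongs to $R$ and $\pvtex^{n}R_\pvtex\cap R=\pvtex^{n}$ (the equality of usual and symbolic powers recalled just before the corollary), it lies in $\pvtex^{n}$. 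Summing over $\mathcal{A}_{q}^{+}$ gives $f-c_{0,0}^{q}\in\pvtex^{n}$, so $f\in R^{q}+\pvtex^{n}$. The main delicate point, as I expect, is precisely the transfer of the $q$-expansion machinery and Proposition \ref{order_in_terms_of_q-atoms}(1) to the non-$F$-finite ring $R_\pvtex$, which is handled entirely by the two localization arguments outlined above.
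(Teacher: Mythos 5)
Your proof is correct and follows essentially the same route as the paper: localize an adapted $p$-basis of $(R,\mvtex)$ whose first block generates $\pvtex$ to an adapted $p$-basis of $(R_\pvtex,\pvtex R_\pvtex)$, and then invoke the $q$-expansion criterion of Proposition~\ref{order_in_terms_of_q-atoms} there, together with $\pvtex^{n}R_\pvtex\cap R=\pvtex^{n}$. The paper simply asserts that the $p$-basis localizes to an adapted one and quotes Proposition~\ref{order_in_terms_of_q-atoms}(2) directly, whereas you fill in the two localization verifications and re-derive (2) from (1); one small inaccuracy in your framing is the parenthetical ``possibly non-$F$-finite,'' since a localization of an $F$-finite Noetherian ring is again $F$-finite (your own freeness argument makes this explicit), but this does not affect the proof.
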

\begin{proof}
	The inclusion $\subseteq$ in the equality is clear. We now prove the reverse
	inclusion. Choose an adapted $p$-basis
	$(x_{1},\ldots,x_{r},y_{1},\ldots,y_{s})$ for $R$ so that the first
	$x_{1},\ldots,x_{r'}$ (say) generate $\pvtex$. Fix
	$f\in(R_\pvtex^{q}+\pvtex^{n} R_\pvtex)\cap R$, say with $q$-expansion
	$f=\sum_{\mathcal{A}_{q}}c_{\alpha,\beta}^{q} \xvtex^{\alpha
	}\yvtex^{\beta}$. It follows from the definition of $p$-basis that
	$(x_{1},\ldots,x_{r},y_{1},\ldots,y_{s})$ is also a $p$-basis for
	$R_\pvtex$, and it is in fact an adapted $p$-basis for
	$(R_\pvtex,\pvtex R_\pvtex)$. The $q$-expansion of $f$ (viewed in
	$R_\pvtex$) remains the same. Now by Proposition \ref{order_in_terms_of_q-atoms}(2)
	$c_{\alpha,\beta}^{q}\xvtex^{\alpha}\yvtex^{\beta}\in\pvtex^{n}
	R_\pvtex$ for all $(\alpha,\beta)\in\mathcal{A}_{q}^{+}$. Thus
	$c_{\alpha,\beta}^{q}\xvtex^{\alpha}\yvtex^{\beta}\in(\pvtex
	^{n} R_\pvtex)\cap R=\pvtex^{n}$ if
	$(\alpha,\beta)\in\mathcal{A}_{q}^{+}$, and therefore
	$f\in R^{q}+\pvtex^{n}$. This proves the inclusion $\supseteq$ and ends
	the proof of the corollary.
\end{proof}

\subsection{On the Definition of Invariants for Points $x\in\delta
	(F_{d}(X))$}
\label{On_the_definition_of_invariants_for_points_in_F(X)}

We now consider the two numerical values $\nu_\mvtex^{(q)}(f)$ and
$\eta_\mvtex(f)$ mentioned at the introduction of the paper (and of this
section). These numerical values can be interpreted, to some extent, as
analogues of the order of an ideal, but applied to modules. Lemma \ref{Alternative_Lemma} will establish the connection between these two
invariants. The proof of this lemma relies strongly on adapted $p$-basis
and Taylor differential operators. %
\begin{parrafo}%
	\label{Definition_of_q-order}
	\emph{The $q$-Order of an $R^{q}$-Module}. Let $(R,\mvtex)$ be an
	$F$-finite regular local ring and fix $q=p^{e}$, a power of $p$. For
	$f\in R$, we define the $q$-order of $f$ as
	\begin{align*}
		\nu_\mvtex^{(q)}(f):=\sup\{n\in\mbvtex{N}_{0}:
		f\in R^{q}+\mvtex^{n} \}.
	\end{align*}
	Note that
	$\nu_\mvtex^{(q)}(f)=\sup\{\nu_\mvtex(g): g-f\in R^{q}\}$. In particular,
	$\nu_\mvtex(f)\leq\nu_\mvtex^{(q)}(f)$. As a matter of fact, the equality
	holds if $q\nmid\nu_\mvtex(f)$: Suppose that
	$q\nmid\nu_\mvtex(f)$ and $\nu_\mvtex(f)<\nu_\mvtex^{(q)}(f)$; then
	$f=\lambda^{q}+g$ for some $\lambda, g\in R$ with
	$\nu_\mvtex(g)>\nu_\mvtex(f)$, so
	$\nu_\mvtex(f)=\nu_\mvtex(\lambda^{q})=q\nu_\mvtex(\lambda)$, which
	is a contradiction.
	
	We extend the above definition to any subset $S\subset R$ by setting
	$\nu_\mvtex^{(q)}(S):=\sup\{n\in\mbvtex{N}_{0}: S\subseteq R^{q}+
	\mvtex^{n}\}$. Note that
	$\nu_\mvtex^{(q)}(S)=\min\{\nu_\mvtex^{(q)}(f): f\in S\}$. Since
	$R^{q}+\mvtex^{n}$ is an $R^{q}$-subalgebra of $R$, we obtain the following:
\end{parrafo}
\begin{lemma}%
	\label{compatibility_of_the_q-order_with_the_weak_equivalence}
	$\nu_\mvtex^{(q)}(S)=\nu_\mvtex^{(q)}(R^{q}[S])$, where
	$R^{q}[S]$ is the $R^{q}$-subalgebra of $R$ generated by $S$.
\end{lemma}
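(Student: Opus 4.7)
The plan is to deduce the equality from the monotonicity of $\nu_\m^{(q)}(\cdot)$ under inclusion of subsets of $R$ together with the observation explicitly recorded in the lemma's lead-in, namely that each $R^{q}+\m^{n}$ is an $R^{q}$-subalgebra of $R$.

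First, one inequality is essentially tautological. From the definition of $\nu_\m^{(q)}$ on a subset, it is clear that if $S\subseteq S'\subseteq R$, then $\nu_\m^{(q)}(S')\leq\nu_\m^{(q)}(S)$. Applying this to the inclusion $S\subseteq R^{q}[S]$ immediately gives
\[
\nu_\m^{(q)}(R^{q}[S])\leq\nu_\m^{(q)}(S).
\]

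For the reverse inequality, I would first verify that, for every $n\geq0$, the subset $R^{q}+\m^{n}\subseteq R$ is in fact an $R^{q}$-subalgebra: it is obviously an $R^{q}$-submodule, and it is closed under multiplication because for elements $r_{1}^{q}+a_{1},\, r_{2}^{q}+a_{2}\in R^{q}+\m^{n}$ (with $a_{1},a_{2}\in\m^{n}$), one has
\[
(r_{1}^{q}+a_{1})(r_{2}^{q}+a_{2})=(r_{1}r_{2})^{q}+r_{1}^{q}a_{2}+a_{1}r_{2}^{q}+a_{1}a_{2}\in R^{q}+\m^{n},
\]
since $r_{1}^{q}a_{2},\,a_{1}r_{2}^{q}\in\m^{n}$ and $a_{1}a_{2}\in\m^{2n}\subseteq\m^{n}$.

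Now set $n:=\nu_\m^{(q)}(S)$, so that $S\subseteq R^{q}+\m^{n}$. Since $R^{q}+\m^{n}$ is an $R^{q}$-subalgebra of $R$ containing $S$, it must also contain the $R^{q}$-subalgebra generated by $S$, i.e.\ $R^{q}[S]\subseteq R^{q}+\m^{n}$. This gives $\nu_\m^{(q)}(R^{q}[S])\geq n=\nu_\m^{(q)}(S)$, which together with the previous inequality yields the desired equality. There is no real obstacle here; the only substantive point is the subalgebra property of $R^{q}+\m^{n}$, and even that is immediate from $\m^{n}\cdot\m^{n}\subseteq\m^{n}$.
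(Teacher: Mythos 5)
Your proof is correct and matches the paper's (implicit) argument: the paper gives exactly this one-line justification, noting that $R^{q}+\mvtex^{n}$ is an $R^{q}$-subalgebra of $R$ just before stating the lemma, and your write-up simply spells out the monotonicity and subalgebra verification in full.
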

\begin{parrafo}%
	\label{formula_for_the_q-order}
	Fix now an adapted $p$-basis
	$(x_{1},\ldots,x_{r},y_{1},\ldots,y_{s})$ for $(R,\mvtex)$. Given
	$f\in R$, we consider its $q$-expansion, say
	$f=\sum_{\mathcal{A}_{q}}c_{\alpha,\beta}^{q}\xvtex^{\alpha
	}\yvtex^{\beta}$. Note that by Proposition \ref{order_in_terms_of_q-atoms}(2)
	\begin{align}
		\label{formula_for_the_q-order-equation} \nu_\mvtex^{(q)}(f)=\min\{
		\nu_\mvtex(c_{\alpha,\beta}^{q} \xvtex^{\alpha}
		\yvtex^{\beta}): (\alpha,\beta)\in\mathcal{A}_{q}^{+}
		\}.
	\end{align}
	This implies that $\nu_\mvtex^{(q)}(f)=\infty$ if and only if
	$f\in R^{q}$. It also shows that
	\begin{equation*}
		f^{+}:=\sum_{\mathcal{A}_{q}^{+}}c_{\alpha,\beta}^{q}
		\xvtex ^{\alpha
		}\yvtex^{\beta},
	\end{equation*}
	which is an element whose definition depends on the choice of the adapted
	$p$-basis, has intrinsically defined order. In fact, by Proposition \ref{order_in_terms_of_q-atoms}(1)
	\begin{align*}
		\nu_\mvtex(f^{+})=\min\{\nu_\mvtex(c_{\alpha,\beta}^{q}
		\xvtex ^{\alpha}\yvtex^{\beta}: (\alpha,\beta)\in
		\mathcal{A}_{q}^{+}\} =\nu_\mvtex^{(q)}(f).
	\end{align*}
\end{parrafo}
\begin{parrafo}%
	\label{Definition_of_eta}%
	\emph{Definition of $\eta_\mvtex(M)$}. We have introduced the $q$-order
	$\nu_\mvtex^{(q)}(f)$ of an element $f\in R$. This definition has some
	similarities with the usual order of $f$ at the local ring, although both
	notions are different. For example, as noted previously,
	$\nu_\mvtex^{(q)}(f)=\nu_\mvtex(f)$ whenever
	$q\nmid\nu_\mvtex(f)$. However, $\nu_\mvtex^{(q)}(1)=\infty$.
	
	The notion of $q$-order was extended to $R^{q}$-submodules
	$M\subset R$. Lemma \ref{compatibility_of_the_q-order_with_the_weak_equivalence} expresses
	the compatibility of this definition with the two notion of equivalence
	of $R^{q}$-modules introduced in Definition \ref{defequi}. As was indicated
	in Example  \ref{the_q-order_is_not_upper-semicontinuous}, the drawback
	of the $q$-order is that it does not define an upper-semicontinuous function
	along a scheme. To deal with this issue, we introduce now a new notion
	of order for $R^{q}$-modules, also compatible with equivalences, which
	will define an upper-semicontinuos function. We will discuss this latter
	fact in the next section.
	
	Given $f\in R$, we define
	\begin{align*}
		\eta_\mvtex(f):=\min\{\nu_\mvtex(\operatorname{Diff}_{R,+}^{i}(f))+i:i=1,
		\ldots,q-1\}.
	\end{align*}
	Note that by (\ref{Diff(R^q)=0}) $\eta_\mvtex(f)=\eta_\mvtex(g)$ whenever
	$f-g\in R^{q}$. We extend the above definition to any subset
	$S\subset R$ by setting
	$\eta_\mvtex(S):= \min\{\nu_\mvtex(\operatorname
	{Diff}_{R,+}^{i}(S))+i:i=1,
	\ldots,q-1\}$. Note that if $S=\{f_{1},\ldots,f_{r}\}$, then clearly
	$\eta_\mvtex(S)=\min\{\eta_\mvtex(f_{i}): i=1,\ldots,r\}$. The next
	proposition, which is formulated in a broader context, shows in particular
	that $\eta_\mvtex(S)=\eta_\mvtex(R^{q}[S])$. This implies that
	$\eta_\mvtex(M)$, viewed as an invariant for $R^{q}$-submodules
	$M\subset R$, is also compatible with the two notions of equivalence for
	$R^{q}$-submodules.
\end{parrafo}
\begin{proposition}%
	\label{Diff(M)=Diff(O^q[M])}
	Fix a ring $R$ of characteristic $p$, a finite collection of ideals
	$\Lambda$, and $q$, a power of $p$. Then for any $i=1,\ldots,q-1$, and
	for any nonempty subset $S\subset R$,
	\begin{equation*}
		\operatorname{Diff}_{R,\Lambda,+}^{i}(S)=\operatorname
		{Diff}_{R,\Lambda,+}^{i}(R^{q}[S]).
	\end{equation*}
\end{proposition}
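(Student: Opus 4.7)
The plan is to establish the nontrivial inclusion $\operatorname{Diff}_{R,\Lambda,+}^{i}(R^{q}[S]) \subseteq \operatorname{Diff}_{R,\Lambda,+}^{i}(S)$; the reverse inclusion is immediate from $S \subseteq R^{q}[S]$. Fix $D \in \operatorname{Diff}_{R,\Lambda,+}^{i}$ with $1 \leq i \leq q-1$. By \ref{Differential_operators_of_order_<q_are_R^q-linear}, $D$ is $R^{q}$-linear. Since a typical element of $R^{q}[S]$ is an $R^{q}$-linear combination of monomials $s_{1}\cdots s_{m}$ with $s_{j}\in S$, the $R^{q}$-linearity of $D$ reduces the problem to showing that $D(s_{1}\cdots s_{m}) \in \operatorname{Diff}_{R,\Lambda,+}^{i}(S)$ for every such monomial.

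I would prove this by induction on $m$. The case $m=1$ is trivial. For the induction step, write $\mu_{s_{m}}\colon R\to R$ for multiplication by $s_{m}$ and consider the operator
\begin{equation*}
E := D\circ\mu_{s_{m}} - \mu_{D(s_{m})}.
\end{equation*}
The composition $D\circ\mu_{s_{m}}$ has order $\leq i$, so $E$ has order $\leq i$. Moreover $E(1) = D(s_{m}) - D(s_{m}) = 0$, so $E \in \operatorname{Diff}_{R,+}^{i}$. The key verification is that $E$ remains $\Lambda$-logarithmic: for any $I\in\Lambda$ and any $k\geq 0$, one has $(D\circ\mu_{s_{m}})(I^{k}) = D(s_{m}I^{k})\subseteq D(I^{k})\subseteq I^{k}$ (since $s_{m}I^{k}\subseteq I^{k}$ and $D$ is $I$-logarithmic), while $\mu_{D(s_{m})}(I^{k}) = D(s_{m})\cdot I^{k}\subseteq I^{k}$. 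Thus $E \in \operatorname{Diff}_{R,\Lambda,+}^{i}$.

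Evaluating at $s_{1}\cdots s_{m-1}$ gives the identity
\begin{equation*}
D(s_{1}\cdots s_{m}) = D(s_{m})\cdot(s_{1}\cdots s_{m-1}) + E(s_{1}\cdots s_{m-1}).
\end{equation*}
The first summand lies in $\operatorname{Diff}_{R,\Lambda,+}^{i}(S)$ because it is a multiple of $D(s_{m})$ with $D\in\operatorname{Diff}_{R,\Lambda,+}^{i}$ and $s_{m}\in S$; the second summand lies in $\operatorname{Diff}_{R,\Lambda,+}^{i}(S)$ by the induction hypothesis applied to $E$ and the shorter product $s_{1}\cdots s_{m-1}$. This closes the induction and hence establishes the desired inclusion.

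The only subtle point, and what I expect to be the main obstacle, is the bookkeeping that keeps the induction inside $\operatorname{Diff}_{R,\Lambda,+}^{i}$ rather than spilling out into $\operatorname{Diff}_{R,\Lambda}^{i}$: one must both subtract off the constant term $\mu_{D(s_{m})}$ to preserve the condition $E(1)=0$ and simultaneously check that this subtraction does not destroy the $\Lambda$-logarithmic property. Once this is observed, the argument is driven entirely by the identity $D\circ\mu_{s_{m}} = \mu_{D(s_{m})} + E$ and the $R^{q}$-linearity furnished by the hypothesis $i<q$.
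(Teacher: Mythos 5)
Your proof is correct and takes essentially the same route as the paper: the operator $E = D\circ\mu_{s_m} - \mu_{D(s_m)}$ is exactly the paper's $Ds_m - D(s_m)1_R$ (right-module notation), the same $\Lambda$-logarithmic verification is made, and the same appeal to $R^q$-linearity of order-$<q$ operators reduces from $R^q[S]$ to monomials in $S$.
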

\begin{proof}
	We only need to show that
	$\operatorname{Diff}_{R,\Lambda,+}^{i}(R^{q}[S])\subseteq
	\operatorname{Diff}_{R,
		\Lambda,+}^{i}(S)$. This follows from the following argument. Given
	$r\in R$ and $D\in\operatorname{Diff}_{R,\Lambda}^{i}$, we denote by
	$D r$ the element in $\operatorname{Diff}_{R,\Lambda}^{i}$ obtained
	from the
	structure of right $R$-module, whereas $D(r)(\in R)$ denotes the evaluation.
	Given now elements $f,g\in R$ and
	$D\in\operatorname{Diff}_{R,\Lambda,+}^{i}$, we obtain
	$D(fg)=((Df-D(f)1_{R})(g)+gD(f)$. Note that $Df-D(f)1_{R}$ and $gD$ are
	clearly elements of $\operatorname{Diff}_{R,\Lambda,+}^{i}$; therefore
	$\operatorname{Diff}_{R,\Lambda,+}^{i}(fg)\subseteq\operatorname{Diff}_{R,
		\Lambda,+}^{i}(f)+\operatorname{Diff}_{R,\Lambda
		,+}^{i}(g)=:\operatorname{Diff}_{R,
		\lambda,+}^{i}(\{f,g\})$. An inductive argument then shows that
	$\operatorname{Diff}_{R,\Lambda,+}^{i}(f_{1}\cdots f_{r})\subseteq
	\operatorname{Diff}_{R,
		\Lambda,+}^{i}(\{f_{1},\ldots,f_{r}\})$. Since the elements of
	$\operatorname{Diff}_{R,\Lambda,+}^{i}$ are $R^{q}$-linear, we
	conclude that
	$\operatorname{Diff}_{R,\Lambda,+}^{i}(R^{q}[S])\subseteq
	\operatorname{Diff}_{R,
		\Lambda,+}^{i}(S)$.
\end{proof}

The following lemma shows the connection between the two numerical values
that we have introduced.
\begin{lemma}%
	\label{Alternative_Lemma}
	Fix $f\in R$, and fix an adapted $p$-basis
	$(x_{1},\ldots,x_{r},y_{1},\ldots,y_{s})$ for $(R,\mvtex)$. We consider
	the $q$-expansion of $f$, say
	$f=\sum_{\mathcal{A}_{q}}c_{\alpha,\beta}^{q}\xvtex^{\alpha
	}\yvtex^{\beta}$. Then we have the inequality
	$\nu_\mvtex^{(q)}(f)\leq\eta_\mvtex(f)$.
	
	Assume now that $\nu_\mvtex^{(q)}(f)<\infty$, say
	\begin{equation*}
		\nu_\mvtex^{(q)}(f)=qa+b,\quad  0\leq b<q.
	\end{equation*}
	Then the conclusions in each of the following three cases hold:
	\begin{enumerate}[Case 3:]
		\item[Case 1:] $b>0$. In this case,
		$\nu_\mvtex(\operatorname{Diff}_{R,+}^{b}(f))=qa$, and hence we have
		the equality
		$\nu_\mvtex^{(q)}(f)=\eta_\mvtex(f)$.
		\item[Case 2:] $b=0$, and there exists a term
		$c_{\alpha,\beta}^{q}\xvtex^{\alpha}\yvtex^{\beta}$ of order
		$\nu_\mvtex^{(q)}(f)$ with $\alpha\neq0$. In this case,
		$\nu_\mvtex(\operatorname{Diff}_{R,+}^{q-1}(f))<qa$, and we also
		have the equality
		$\nu_\mvtex^{(q)}(f)=\eta_\mvtex(f)$.
		\item[Case 3:] $b=0$, and the only terms of the $q$-expansion of $f$ different
		from $c_{0,0}^{q}$ and of order $\nu_\mvtex^{(q)}(f)$ are all of the
		form $c_{0,\beta}^{q}\yvtex^{\beta}$. In this case,
		$\nu_\mvtex(\operatorname{Diff}_{R,+}^{q-1}(f))\leq qa$, and
		$qa:=\nu_\mvtex^{(q)}(f)<\eta_\mvtex(f)<q(a+1)$.
	\end{enumerate}
	In particular,
	$qa+b:=\nu_\mvtex^{(q)}(f)\leq\eta_\mvtex(f)<q(a+1)$, and the inequality
	in the middle is an equality if $q\nmid\nu_\mvtex^{(q)}(f)$.
\end{lemma}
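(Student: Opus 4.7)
The plan is to first dispatch the inequality $\nu_{\mathfrak{m}}^{(q)}(f) \leq \eta_{\mathfrak{m}}(f)$ and then exploit the adapted $p$-basis to explicitly construct Taylor operators that realize (or almost realize) the minimum defining $\eta_{\mathfrak{m}}(f)$. For the preliminary inequality, every $D \in \operatorname{Diff}_{R,+}^{i}$ with $1 \leq i \leq q-1$ is $R^{q}$-linear by (\ref{Diff(R^q)=0}), so $D(f) = D(f^{+})$, where $f^{+}$ is defined in \ref{formula_for_the_q-order} and has order $\nu_{\mathfrak{m}}^{(q)}(f)$; property (\ref{evaluating_differential_operators_at_powers_of_ideals}) then gives $\nu_{\mathfrak{m}}(D(f)) \geq \nu_{\mathfrak{m}}^{(q)}(f) - i$, and taking the minimum over $i$ yields the claim.

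For the three-case refinement I would use the compatibility in \ref{Taylor_operators_are_respectfull_with_the_p-basis}: applying a Taylor operator $D_{\gamma,\delta}$ to $f = \sum c_{\alpha',\beta'}^{q}\mathbf{x}^{\alpha'}\mathbf{y}^{\beta'}$ again yields a $q$-expansion, term by term equal to $c_{\alpha',\beta'}^{q}\binom{\alpha'}{\gamma}\binom{\beta'}{\delta}\mathbf{x}^{\alpha'-\gamma}\mathbf{y}^{\beta'-\delta}$, whose order can be read off via Proposition \ref{order_in_terms_of_q-atoms}(1) using that each $y_{j}$ is a unit in $R$. Fix a minimal-order term $c_{\alpha,\beta}^{q}\mathbf{x}^{\alpha}\mathbf{y}^{\beta}$ of $f^{+}$; then $q\nu_{\mathfrak{m}}(c_{\alpha,\beta}) + |\alpha| = qa + b$, so writing $|\alpha| = b + qk$ forces $\nu_{\mathfrak{m}}(c_{\alpha,\beta}) = a - k$. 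In Case 1 ($b > 0$), I would produce $\alpha'' \in \mathbb{N}_{0}^{r}$ with $\alpha''_{i,j} \leq \alpha_{i,j}$ at every base-$p$ digit position and $|\alpha''| = b$; this is an elementary induction on $e$ that reduces $|\alpha| \geq b + q$ to $|\alpha| - q$ by extracting a digit sub-selection of total value $q$ (using $p$ tokens at level $e-1$ when $s_{e-1} := \sum_{i}\alpha_{i,e-1} \geq p$, and recursing to the lower levels otherwise). Lucas' theorem (\ref{Lucas_theorem}) then ensures $\binom{\alpha}{\alpha''} \not\equiv 0 \pmod{p}$, so the image of the chosen term under $D_{\alpha'',0} \in \operatorname{Diff}_{R,+}^{b}$ is a nonzero $q$-atom of order $q(a-k) + (|\alpha| - b) = qa$; all other surviving $q$-atoms have order at least $qa$ by the preliminary inequality, so $\nu_{\mathfrak{m}}(D_{\alpha'',0}(f)) = qa$, forcing $\eta_{\mathfrak{m}}(f) = qa + b$. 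In Case 2 ($b = 0$, minimal-order term with $\alpha \neq 0$), pick $(i,j)$ with $\alpha_{i,j} > 0$ and apply $D_{p^{j}\mathbf{e}_{i},0}$ of order $p^{j} \leq p^{e-1} < q$; the image of the chosen term is a nonzero $q$-atom of order $qa - p^{j} < qa$, giving $\eta_{\mathfrak{m}}(f) \leq qa$ and, combined with the preliminary inequality, equality.

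Case 3 ($b = 0$, with every minimal-order term of $f^{+}$ of the form $c_{0,\beta}^{q}\mathbf{y}^{\beta}$) is the main obstacle, because here we must produce the \emph{strict} inequality $\eta_{\mathfrak{m}}(f) > qa$, which the preliminary argument alone does not deliver. I would decompose $f^{+} = g + h$ with $g = \sum_{\beta \neq 0} c_{0,\beta}^{q}\mathbf{y}^{\beta}$ and $h$ the remaining terms, so by hypothesis $\nu_{\mathfrak{m}}(h) \geq qa + 1$. For any $D \in \operatorname{Diff}_{R,+}^{i}$ with $1 \leq i \leq q-1$, the $R^{q}$-linearity of $D$ gives $D(g) = \sum c_{0,\beta}^{q}D(\mathbf{y}^{\beta})$, and since each $D(\mathbf{y}^{\beta}) \in R$ has nonnegative order, $\nu_{\mathfrak{m}}(D(g)) \geq qa$; combined with $\nu_{\mathfrak{m}}(D(h)) \geq qa + 1 - i$ this gives $\nu_{\mathfrak{m}}(D(f)) + i \geq qa + 1$, hence $\eta_{\mathfrak{m}}(f) \geq qa + 1$. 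For the upper bound, among the minimal-order $\mathbf{y}$-terms pick some $\beta \neq 0$ and a digit position $\beta_{j,k} > 0$; the Taylor operator $D_{0,p^{k}\mathbf{e}_{j}}$ of order $p^{k} \leq p^{e-1}$ maps the chosen term to $\binom{\beta_{j}}{p^{k}} c_{0,\beta}^{q}\mathbf{y}^{\beta - p^{k}\mathbf{e}_{j}}$, which is nonzero by Lucas' theorem and has order $qa$, yielding $\nu_{\mathfrak{m}}(\operatorname{Diff}_{R,+}^{q-1}(f)) \leq qa$ and $\eta_{\mathfrak{m}}(f) \leq qa + p^{e-1} < q(a+1)$. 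The concluding ``in particular'' statement then follows by combining the three cases and observing that $q \nmid \nu_{\mathfrak{m}}^{(q)}(f)$ is equivalent to being in Case 1.
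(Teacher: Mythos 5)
Your proposal is correct and follows essentially the same overall structure as the paper's proof (preliminary inequality via $R^{q}$-linearity, then three cases analyzed through Taylor operators and the $q$-expansion compatibility of \ref{Taylor_operators_are_respectfull_with_the_p-basis}). Cases 2 and 3 match the paper's argument almost verbatim, modulo the choice of which Taylor operator to apply (single digit $D_{0,p^{k}\mathbf{e}_{j}}$ versus the paper's $D_{0,\beta'}$ with $\beta'=(\beta_{1},0,\ldots,0)$ — both work). The genuine difference is in Case 1: you aim for a single Taylor operator $D_{\alpha'',0}$ of order exactly $b$ by constructing a digit-wise subselection $\alpha''\leq_{\mathrm{digit}}\alpha$ with $|\alpha''|=b$, whereas the paper iterates, at each step peeling off $p^{t}\mathbf{e}_{i_{0}}$ where $p^{t}$ is the largest common $p$-power factor of the current $\alpha$, and never needs the digit-subselection lemma. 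Your route buys a conceptually cleaner one-shot operator, but it relies on a combinatorial fact whose justification you sketch only loosely: when $s_{e-1}<p$, ``recursing to the lower levels'' is not a single recursive call for value $p^{e-1}$ — after using $s_{e-1}$ tokens at level $e-1$ you still owe $(p-s_{e-1})p^{e-1}$, which requires $p-s_{e-1}$ successive extractions of value $p^{e-1}$ from levels $<e-1$; the budget inequality $\sum_{j<e-1}s_{j}p^{j}\geq(p-s_{e-1})p^{e-1}$ does guarantee these extractions succeed, so the lemma is true, but as written your induction is not quite complete. The paper's peel-off iteration avoids this subtlety entirely by only ever subtracting one $p$-power per step, and it is worth noting that the composite operator it produces need not itself equal any $D_{\gamma,0}$ (the binomial coefficient in (\ref{composition_of_Taylor_operators}) may vanish), which is why the paper tracks the order of the image at each stage rather than the operator.
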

\begin{proof}
	If $f\in R^{q}+\mvtex^{n}$, then for any $i=1,\ldots,q-1$,
	$\operatorname{Diff}_{R,+}^{i}(f)\subseteq\operatorname
	{Diff}_{R,+}^{i}(\mvtex^{n})
	\subseteq \mvtex^{n-i}$ by (\ref{Diff(R^q)=0}) and (\ref
	{evaluating_differential_operators_at_powers_of_ideals}),
	whence $n\leq\nu_\mvtex(\operatorname{Diff}_{R,+}^{i}(f))+i$. This
	shows that
	$n\leq\eta_\mvtex(f)$ by the definition of $\eta_\mvtex(f)$. We conclude
	that $\nu_\mvtex^{(q)}(f)\leq\eta_\mvtex(f)$ by the definition of
	$\nu_\mvtex^{(q)}(f)$.
	
	Suppose now that $\nu_\mvtex^{(q)}(f)<\infty$, that is,
	$f\in R^{q}+\mvtex$ and $f\notin R^{q}$.
	
	Assume that we are in Case 1, that is, $b>0$. Then by (\ref
	{formula_for_the_q-order-equation})
	there exists a term
	$c_{\alpha,\beta}^{q}\xvtex^{\alpha}\yvtex^{\beta}$ with
	$(\alpha,\beta)\in\mathcal{A}_{q}^{+}$ such that
	$qa+b=\nu_\mvtex(c_{\alpha,\beta}^{q}\xvtex^{\alpha}\yvtex
	^{\beta})=q
	\nu_\mvtex(c_{\alpha,\beta})+|\alpha|$. Since
	$|\alpha|\equiv b\mod q$, not all entries of $\alpha$ are zero. Let
	$p^{t}<q$ be the highest power of $p$ that divides every
	$\alpha_{i}$. Note that $p^{t}$ divides $b$ since
	$\sum\alpha_{i}\equiv b\mod q$, whence $p^{t}\leq b<q$. Note also that
	by the maximal property of $p^{t}$ there is some index $i_{0}$ such that
	$p^{t}$ is the highest power of $p$ dividing $\alpha_{i_{0}}$. Let
	$\gamma^{(1)}\in\mbvtex{N}_{0}^{r}$ be the multiindex that has
	$p^{t}$ at its $i_{0}$-entry and zeros at the other entries. The term
	$D_{\gamma^{(1)},0}(c_{\alpha,\beta}^{q}\xvtex^{\alpha}\yvtex
	^{\beta})=\binom{\alpha_{i_{0}}}{p^{t}}c_{\alpha,\beta}^{q}\xvtex^{
		\alpha-\gamma^{(1)}}\yvtex^{\beta}$ is a nonzero term in the $q$-expansion
	of $D_{\gamma^{(1)},0}(f)$ by
	\ref{Taylor_operators_are_respectfull_with_the_p-basis} and since
	$\binom{\alpha_{i_{0}}}{p^{t}}\not\equiv0\mod p$ (see (\ref
	{Lucas_theorem})).
	The order of this term is $qa+b-p^{t}$. Therefore
	$D_{\gamma^{(1)},0}(f)$ has the order $qa+b-p^{t}$ by (\ref
	{evaluating_differential_operators_at_powers_of_ideals})
	and Proposition \ref{order_in_terms_of_q-atoms}(1). If $p^{t}=b$, then
	we are done. Otherwise, $0<p^{t}<b$, and we apply the same reasoning with
	$f$ replaced by $D_{\gamma^{(1)},0}(f)$,
	$c_{\alpha,\beta}^{q}\xvtex^{\alpha}\yvtex^{\beta}$ by
	$\binom{\alpha_{i_{0}}}{p^{t}}c_{\alpha,\beta}^{q}\xvtex^{\alpha-
		\gamma^{(1)}}\yvtex^{\beta}$, and $b$ by $b-p^{t}$, and we continue in
	this way. At the end, we will obtain a differential operator of the form
	$D_{\gamma^{(t)},0}\circ\cdots\circ D_{\gamma^{(1)},0}$ such that
	$\nu_\mvtex(D_{\gamma^{(t)},0}\cdots D_{\gamma^{(1)},0}(f))=qa$ and
	$|\gamma^{(t)}|+\cdots+|\gamma^{(1)}|=b$. Since this operator has order
	$b$, by (\ref{evaluating_differential_operators_at_powers_of_ideals}) we
	obtain $\nu_\mvtex(\operatorname{Diff}_{R,+}^{b}(f))=qa$, whence
	$\eta_\mvtex(f)\leq\nu_\mvtex(\operatorname
	{Diff}_{R,+}^{b}(f))+b=\nu_\mvtex^{(q)}(f)$. Combining this with the
	conclusion of the first paragraph,
	we finally obtain that $\eta_\mvtex(f)=\nu_\mvtex^{(q)}(f)$.
	
	Assume now that we are in Case 2, so $\nu_\mvtex^{(q)}(f)=qa$, and there
	exists a term $c_{\alpha,\beta}^{q}\xvtex^{\alpha}\yvtex^{\beta}$ with
	$(\alpha,\beta)\in\mathcal{A}_{q}^{+}$ such that
	$\nu_\mvtex(c_{\alpha,\beta}^{q}\xvtex^{\alpha}\yvtex^{\beta})=
	\nu_\mvtex^{(q)}(f)$ and $\alpha\neq0$. We may assume that
	$\alpha_{1}\neq0$. Set
	$\alpha':=(\alpha_{1},0,\ldots,0)\in\mbvtex{N}_{0}^{r}$. Note that
	$D_{\alpha',0}$ has order $0<|\alpha'|=\alpha_{1}<q$, and hence
	$D_{\alpha',0}(f)$ has the nonzero term
	$D_{\alpha',0}(c_{\alpha,\beta}^{q}\xvtex^{\alpha}\yvtex^{\beta})=c_{
		\alpha,\beta}^{q}x_{2}^{\alpha_{2}}\cdots x_{r}^{\alpha_{r}}
	\yvtex^{\beta}$ in its $q$-expansion by
	\ref{Taylor_operators_are_respectfull_with_the_p-basis}. Since this term
	has the order $\nu_\mvtex^{(q)}(f)-\alpha_{1}$, we obtain that
	$\nu_\mvtex(D_{\alpha',0}(f))=\nu_\mvtex^{(q)}(f)-|\alpha'|$ by
	(\ref{evaluating_differential_operators_at_powers_of_ideals})
	and Proposition \ref{order_in_terms_of_q-atoms}(1). It follows that
	$\nu_\mvtex(\operatorname{Diff}_{R,+}^{|\alpha'|}(f))+|\alpha
	'|=qa=\nu_\mvtex^{(q)}(f)$, hence
	$\eta_\mvtex(f)\leq\nu_\mvtex^{(q)}(f)$. This has to be an equality
	by the conclusion of the first paragraph.
	
	We finally assume that we are in Case 3, so again
	$\nu_\mvtex^{(q)}(f)=qa$, but now we can decomposes
	$f^{+}:=f-c_{0,0}^{q}$ as $f_{m}+f_{h}$, where $f_{m}\neq0$ is the sum
	of those terms $c_{0,\beta}^{q}\yvtex^{\beta}$ of order
	$\nu_\mvtex^{(q)}(f)=\nu_\mvtex(f^{+})$, and $f_{h}$ is the sum of
	higher-order terms. Any $D\in\operatorname{Diff}_{R}^{q-1}$ is $R^{q}$-linear,
	and hence $\nu_\mvtex(D(f_{m}))\geq\nu_\mvtex^{(q)}(f)$, as we easily
	check. We conclude that for any $i=1,\ldots,q-1$ and for any
	$D\in\operatorname{Diff}_{R,+}^{i}$, $D(f)$ is the sum of $D(f_{m})$, which
	is an element of order $\geq\nu_\mvtex^{(q)}(f)$, and $D(f_{h})$, which
	is an element of order
	$\geq\nu_\mvtex(f_{h})-i>\nu_\mvtex^{(q)}(f)-i$ by (\ref
	{evaluating_differential_operators_at_powers_of_ideals}).
	Therefore $D(f)$ has the order $>\nu_\mvtex^{(q)}(f)-i$. This proves
	that $\eta_\mvtex(f)>\nu_\mvtex^{(q)}(f)$. Finally, there exists at
	least one nonzero term $c_{0,\beta}^{q}\yvtex^{\beta}$ of $f_{m}$, and
	we may assume that $\beta_{1}\neq0$. We define
	$\beta'=(\beta_{1},0,\ldots,0)\in\mbvtex{N}_{0}^{s}$. Since
	$D_{0,\beta'}(f)$ has the term
	$D_{0,\beta'}(c_{0,\beta}^{q}\yvtex^{\beta})=c_{0,\beta}^{q}y_{2}^{
		\beta_{2}}\cdots y_{s}^{\beta_{s}}$ in its $q$-expansion (again by
	\ref{Taylor_operators_are_respectfull_with_the_p-basis}), we conclude by
Proposition	\ref{order_in_terms_of_q-atoms} that $D_{0,\beta'}(f)$ has
	the order $\leq\nu_\mvtex^{(q)}(f)$. Therefore
	$\nu_\mvtex(\operatorname{Diff}_{R,+}^{q-1}(f))\leq qa$, and so
	$\eta_\mvtex(f)\leq qa+(q-1)<q(a+1)$. This proves the conclusion in Case
	3.
\end{proof}

\subsection{Applications of Lemma \protect\ref{Alternative_Lemma}}
\label{Applications_of_Lemma_Alt}

We present some corollaries of Lemma \ref{Alternative_Lemma}. The first
corollary states a particular case of a more general and classical result
of commutative algebra, namely, which expresses the order of an element
at a regular local ring in terms of differential operators. In the second
corollary, we characterize the subring $R^{q}$ in terms of differential
operators. The last two corollaries are technical, and they will be used
in Section \ref{Applications_to_O_V^q-submodules_of_O_V} to give a
characterization
of permissible centers for a pair $(\mathscr{M},a)$. This characterization
will be useful for the study of transformation of
$\mathcal{O}_{V}^{q}$-modules.%
\begin{corollary}%
	\label{local_Jacobian_criterion}
	Let $(R,\mvtex)$ be an $F$-finite regular local ring, and let
	$f\in R$. Then for $n\geq1$, $f\in\mvtex^{n}$ if and only if
	$\operatorname{Diff}_{R}^{n-1}(f)\subseteq\mvtex$.
\end{corollary}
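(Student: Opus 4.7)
The forward implication, $f\in\mathfrak{m}^n\Rightarrow\operatorname{Diff}_R^{n-1}(f)\subseteq\mathfrak{m}$, is immediate from property~(\ref{evaluating_differential_operators_at_powers_of_ideals}) applied with $i=n-1$. The substantive direction is the converse, and I plan to establish it by contrapositive, using an adapted $p$-basis and the calculus of $q$-expansions developed in the previous subsection.

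So assume $f\notin\mathfrak{m}^n$; the goal is to produce $D\in\operatorname{Diff}_R^{n-1}$ with $D(f)\notin\mathfrak{m}$. Invoking Proposition~\ref{special_p-basis}, extend any regular system of parameters $x_1,\ldots,x_r$ to an adapted $p$-basis $(x_1,\ldots,x_r,y_1,\ldots,y_s)$; observe that each $y_j$ must be a unit, since its class is part of a $p$-basis of the residue field $R/\mathfrak{m}$ and is in particular nonzero. Fix a power $q=p^e>n$, and expand
\begin{equation*}
f=\sum_{(\alpha,\beta)\in\mathcal{A}_q} c_{\alpha,\beta}^q\,\mathbf{x}^{\alpha}\mathbf{y}^{\beta}.
\end{equation*}
By Proposition~\ref{order_in_terms_of_q-atoms}(1), the hypothesis $f\notin\mathfrak{m}^n$ forces some monomial $c_{\alpha_0,\beta_0}^q\mathbf{x}^{\alpha_0}\mathbf{y}^{\beta_0}$ to lie outside $\mathfrak{m}^n$. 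Since $\mathbf{y}^{\beta_0}$ is a unit, this reads $q\,\nu_\mathfrak{m}(c_{\alpha_0,\beta_0})+|\alpha_0|\leq n-1$; combined with $q>n$, it compels $\nu_\mathfrak{m}(c_{\alpha_0,\beta_0})=0$ (so $c_{\alpha_0,\beta_0}$ is a unit) and $|\alpha_0|\leq n-1$.

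Now I apply the Taylor operator $D_{\alpha_0,0}$, which belongs to $\operatorname{Diff}_R^{|\alpha_0|}\subseteq\operatorname{Diff}_R^{n-1}$. By the compatibility recalled in~\ref{Taylor_operators_are_respectfull_with_the_p-basis}, the sum $\sum\binom{\alpha}{\alpha_0}c_{\alpha,\beta}^q\mathbf{x}^{\alpha-\alpha_0}\mathbf{y}^{\beta}$ is again a genuine $q$-expansion of $D_{\alpha_0,0}(f)$, and its term at index $(0,\beta_0)$ is the unit $c_{\alpha_0,\beta_0}^q\mathbf{y}^{\beta_0}$. Proposition~\ref{order_in_terms_of_q-atoms}(1) then yields $D_{\alpha_0,0}(f)\notin\mathfrak{m}$, contradicting $\operatorname{Diff}_R^{n-1}(f)\subseteq\mathfrak{m}$ and completing the proof.

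The only delicate point is the a~priori possibility that distinct summands of the $q$-expansion of $f$ collapse onto the same monomial after applying $D_{\alpha_0,0}$ and cancel out the unit term. This is precisely what the observation in~\ref{Taylor_operators_are_respectfull_with_the_p-basis} precludes: because $(\alpha_0,0)\in\mathcal{A}_q$, the Taylor operator $D_{\alpha_0,0}$ transforms a $q$-expansion termwise into another $q$-expansion, the map $(\alpha,\beta)\mapsto(\alpha-\alpha_0,\beta)$ is injective on the indices for which $\binom{\alpha}{\alpha_0}\neq 0$, and the coefficient $\binom{\alpha_0}{\alpha_0}=1$ is nonzero, so the unit monomial $c_{\alpha_0,\beta_0}^q\mathbf{y}^{\beta_0}$ survives intact.
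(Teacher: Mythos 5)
Your proof is correct, and it takes a genuinely different route from the paper's. The paper deduces the corollary by invoking Lemma~\ref{Alternative_Lemma}: after disposing of the trivial case $f\notin\mathfrak{m}$, it chooses $q>n$, observes $\nu_\mathfrak{m}^{(q)}(f)=\nu_\mathfrak{m}(f)$, and reads off from Case~1 of that lemma that $\nu_\mathfrak{m}(\operatorname{Diff}_{R,+}^{b}(f))=0$ with $b=\nu_\mathfrak{m}(f)\leq n-1$. You instead inline the $q$-expansion argument: you extract a term $c_{\alpha_0,\beta_0}^q\mathbf{x}^{\alpha_0}\mathbf{y}^{\beta_0}$ of order $<n$, force $c_{\alpha_0,\beta_0}$ to be a unit via $q>n$, and hit $f$ with the single Taylor operator $D_{\alpha_0,0}$. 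This is more elementary in a concrete sense: the proof of Lemma~\ref{Alternative_Lemma}'s Case~1 has to produce an operator of order \emph{exactly} $b$, and for this it iteratively peels off $p$-powers of the exponent, invoking Lucas's theorem (\ref{Lucas_theorem}) at each step; here you only need an operator of order $\leq n-1$, so the full multi-index $\alpha_0$ works and the binomial coefficient $\binom{\alpha_0}{\alpha_0}=1$ is trivially nonzero, avoiding Lucas entirely. The trade-off is that the paper's proof is shorter in text once Lemma~\ref{Alternative_Lemma} is available, which it must be anyway for the rest of the paper. One implicit step common to both proofs, worth flagging: the identity $\nu_\mathfrak{m}(c^q\mathbf{x}^{\alpha}\mathbf{y}^{\beta})=q\nu_\mathfrak{m}(c)+|\alpha|$ relies on the additivity of order in a regular local ring (i.e., that $\operatorname{gr}_\mathfrak{m}R$ is a domain); neither you nor the paper says this explicitly, but it is standard.
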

\begin{proof}
	If $f\in\mvtex^{n}$, then
	$\operatorname{Diff}_{R}^{n-1}(f)\subset\mvtex$ by (\ref
	{evaluating_differential_operators_at_powers_of_ideals}).
	Suppose now that $f\notin\mvtex^{n}$. If $f\notin\mvtex$, then
	$\operatorname{Diff}_{R}^{n-1}(f)\nsubseteq\mvtex$ since
	$\operatorname{Diff}_{R}^{n-1}$ contains the identity. If $f\in\mvtex
	$, then
	we choose $q$, a power of $p$, which is greater than $n$. Note that
	$0<\nu_\mvtex(f)<q$, whence
	$\nu_\mvtex^{(q)}(f)=\nu_\mvtex(f)$, as was noted in
	\ref{Definition_of_q-order}. Note that we are in Case 1 of Lemma \ref{Alternative_Lemma} with $a=0$ and $b:=\nu_\mvtex^{(q)}(f)>0$; therefore
	$\nu_\mvtex(\operatorname{Diff}_{R,+}^{b}(f))=0$. Since $b\leq n-1$,
	we conclude
	that $\operatorname{Diff}_{R}^{n-1}(f)\nsubseteq\mvtex$.
\end{proof}
\begin{corollary}%
	\label{characterization_of_R^q}
	Let $R$ be an $F$-finite regular local ring. Then
	\begin{equation*}
		R^{q}=\{f\in R:\operatorname{Diff}_{R,+}^{q-1}(f)=0
		\}.
	\end{equation*}
\end{corollary}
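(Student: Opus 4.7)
The plan is to prove both inclusions directly, with the nontrivial direction following from Lemma \ref{Alternative_Lemma}.

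First I would handle the inclusion $R^q \subseteq \{f \in R : \operatorname{Diff}_{R,+}^{q-1}(f) = 0\}$. This is immediate from equation (\ref{Diff(R^q)=0}) in \ref{Differential_operators_of_order_<q_are_R^q-linear}, which states that every operator $D \in \operatorname{Diff}_{R,+}^{q-1}$ is $R^q$-linear and annihilates $1$, so $D(R^q) = 0$.

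For the reverse inclusion, I would argue by contrapositive: if $f \notin R^q$, then $\operatorname{Diff}_{R,+}^{q-1}(f) \neq 0$. The observation in \ref{formula_for_the_q-order} tells us that $\nu_\mathfrak{m}^{(q)}(f) = \infty$ precisely when $f \in R^q$, so $f \notin R^q$ forces $\nu_\mathfrak{m}^{(q)}(f) < \infty$. Writing $\nu_\mathfrak{m}^{(q)}(f) = qa + b$ with $0 \leq b < q$, Lemma \ref{Alternative_Lemma} yields the bound
\begin{equation*}
    qa + b \leq \eta_\mathfrak{m}(f) < q(a+1),
\end{equation*}
and in particular $\eta_\mathfrak{m}(f) < \infty$. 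Unwinding the definition of $\eta_\mathfrak{m}(f)$ from \ref{Definition_of_eta}, this means there exists some $i \in \{1, \ldots, q-1\}$ with $\nu_\mathfrak{m}(\operatorname{Diff}_{R,+}^i(f)) < \infty$, i.e. $\operatorname{Diff}_{R,+}^i(f) \neq 0$. Since $\operatorname{Diff}_{R,+}^i \subseteq \operatorname{Diff}_{R,+}^{q-1}$, this yields $\operatorname{Diff}_{R,+}^{q-1}(f) \neq 0$, as required.

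There is no real obstacle: the whole content is packaged in Lemma \ref{Alternative_Lemma}, whose proof already did the work of exhibiting an explicit Taylor operator $D_\gamma$ of order $<q$ that does not annihilate $f$ whenever $f \notin R^q$. The only minor point to be careful about is ensuring that the case $b = 0$ (where Case 3 of Lemma \ref{Alternative_Lemma} gives only the strict inequality $\eta_\mathfrak{m}(f) < q(a+1)$ rather than an equality with $\nu_\mathfrak{m}^{(q)}(f)$) still provides the finite bound we need, which it does.
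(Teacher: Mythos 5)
Your proof is correct and is essentially the paper's own argument presented in contrapositive form: the paper goes from $\operatorname{Diff}_{R,+}^{q-1}(f)=0$ to $\eta_\mathfrak{m}(f)=\infty$ to $\nu_\mathfrak{m}^{(q)}(f)=\infty$ via Lemma \ref{Alternative_Lemma}, while you go from $f\notin R^q$ to $\nu_\mathfrak{m}^{(q)}(f)<\infty$ to $\eta_\mathfrak{m}(f)<\infty$ using the very same bound $\eta_\mathfrak{m}(f)<q(a+1)$ from that lemma. The two proofs use identical ingredients in the identical order; there is no substantive difference.
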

\begin{proof}
	The inclusion $\subseteq$ follows from (\ref{Diff(R^q)=0}). Conversely,
	if $\operatorname{Diff}_{R,+}^{q-1}(f)=0$, then
	$\operatorname{Diff}_{R,+}^{i}(f)=0$ for all $i=1,\ldots,q-1$, whence
	$\eta_\mvtex(f)=\infty$. Therefore by Lemma \ref{Alternative_Lemma} $\nu_\mvtex^{(q)}(f)=\infty$. This implies that
	$f\in R^{q}$, as was noted in \ref{formula_for_the_q-order}.
\end{proof}
\begin{corollary}%
	\label{characterization_of_R^q+m^qa}
	Let $(R,\mvtex)$ be an $F$-finite regular local ring. Given an integer
	$a\geq1$, $f\in R^{q}+\mvtex^{qa}$ if and only if
	$\operatorname{Diff}_{R,+}^{q-1}(f)\subseteq\mvtex^{q(a-1)+1}$.
\end{corollary}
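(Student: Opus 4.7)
The plan is to prove the two implications separately. The forward direction will follow immediately from the standard properties of differential operators in positive characteristic recalled in \ref{Differential_operators} and \ref{Differential_operators_of_order_<q_are_R^q-linear}, while for the reverse direction I will reason by contrapositive and invoke the trichotomy of Lemma \ref{Alternative_Lemma}.

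For the direction $(\Rightarrow)$ I would write $f = \lambda^q + g$ with $g \in \mathfrak{m}^{qa}$ and pick an arbitrary $D \in \operatorname{Diff}_{R,+}^{q-1}$. By \ref{Differential_operators_of_order_<q_are_R^q-linear}, $D$ is $R^q$-linear, and since $D(1) = 0$, this forces $D(\lambda^q) = \lambda^q D(1) = 0$; hence $D(f) = D(g)$. The elementary bound (\ref{evaluating_differential_operators_at_powers_of_ideals}) then places $D(g)$ in $\mathfrak{m}^{qa - (q-1)} = \mathfrak{m}^{q(a-1)+1}$.

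For $(\Leftarrow)$, assuming the contrapositive $f \notin R^q + \mathfrak{m}^{qa}$, I first note that $f \in R^q$ would trivially give $f \in R^q + \mathfrak{m}^{qa}$, so $\nu_\mathfrak{m}^{(q)}(f)$ is a finite integer, and by hypothesis strictly less than $qa$. Writing $\nu_\mathfrak{m}^{(q)}(f) = qa' + b$ with $0 \leq b < q$ then forces $a' \leq a-1$. Lemma \ref{Alternative_Lemma} next supplies in each of its three cases an element of $\operatorname{Diff}_{R,+}^{q-1}(f)$ whose $\mathfrak{m}$-order is at most $qa'$: Case 1 gives it via $\nu_\mathfrak{m}(\operatorname{Diff}_{R,+}^b(f)) = qa'$ together with the inclusion $\operatorname{Diff}_{R,+}^b \subseteq \operatorname{Diff}_{R,+}^{q-1}$, while Cases 2 and 3 furnish it directly from $\nu_\mathfrak{m}(\operatorname{Diff}_{R,+}^{q-1}(f)) \leq qa'$. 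Since $qa' \leq q(a-1) < q(a-1)+1$, this element lies outside $\mathfrak{m}^{q(a-1)+1}$, contradicting the hypothesis.

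The genuinely technical content has already been absorbed into Lemma \ref{Alternative_Lemma}, so I do not expect a real obstacle. The only bookkeeping step is to notice that the slack $a' \leq a-1$ yields the uniform inequality $qa' + 1 \leq q(a-1)+1$, which is what lets all three cases of the lemma defeat the hypothesis simultaneously.
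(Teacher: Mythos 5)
Your proof is correct and follows essentially the same strategy as the paper: the forward direction reduces to the bound $\nu_{\mathfrak{m}}^{(q)}(f)\leq\eta_{\mathfrak{m}}(f)$ (you just inline the elementary computation rather than quoting it as the first conclusion of Lemma~\ref{Alternative_Lemma}), and the reverse direction is the paper's contrapositive via the trichotomy of Lemma~\ref{Alternative_Lemma}, made slightly more explicit by spelling out how Case~1 feeds into $\operatorname{Diff}_{R,+}^{q-1}(f)$ through the inclusion $\operatorname{Diff}_{R,+}^{b}\subseteq\operatorname{Diff}_{R,+}^{q-1}$.
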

\begin{proof}
	If $f\in R^{q}+\mvtex^{qa}$, then
	$qa\leq\nu_\mvtex^{(q)}(f)\leq\eta_\mvtex(f)\leq q-1+\nu_\mvtex
	(\operatorname{Diff}_{R,+}^{q-1}(f))$ by Lemma \ref{Alternative_Lemma}; therefore
	$\nu_\mvtex(\operatorname{Diff}_{R,+}^{q-1}(f))\geq
	qa-(q-1)=q(a-1)+1$, that
	is, $\operatorname{Diff}_{R,+}^{q-1}(f)\subseteq\mvtex^{q(a-1)+1}$. If
	$f\notin R^{q}+\mvtex^{qa}$, then $\nu_\mvtex^{(q)}(f)<qa$, say
	$\nu_\mvtex^{(q)}(f)=qa'+b$ with $a'<a$ and $0\leq b<q$. Thus by Lemma \ref{Alternative_Lemma} we have the inequality
	$\nu_\mvtex(\operatorname{Diff}_{R,+}^{q-1}(f))\leq qa'$, whence
	$\nu_\mvtex(\operatorname{Diff}_{R,+}^{q-1}(f))+q-1<qa$, that is,
	$\operatorname{Diff}_{R,+}^{q-1}(f)\nsubseteq\mvtex^{q(a-1)+1}$.
	This completes
	the proof.
\end{proof}
\begin{corollary}%
	\label{n=qa_in_the_local_case}
	Let $(R,\mvtex)$ be an $F$-finite regular local ring, and let
	$x\in R$ be a nonzero element such that $R/xR$ is regular. Given
	$f\in R\setminus R^{q}$, let $n$ denote the largest integer such that
	$\operatorname{Diff}_{R,+}^{q-1}(f)\subseteq x^{n}R$, and let $a$
	denote the
	largest integer such that $f\in R^{q}+x^{qa}R$. Then $n=qa$.
\end{corollary}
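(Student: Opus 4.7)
My plan is to reduce to the case where $R$ is a DVR with uniformizer $x$, and then apply Lemma~\ref{Alternative_Lemma} there.

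For the reduction, set $\p:=xR$, which is a regular prime of $R$ since $R/xR$ is regular. Because symbolic and ordinary powers coincide for regular primes of a regular local ring, one has $x^nR=\p^n=(x^nR_\p)\cap R$ for every $n\geq 0$. Combined with Corollary~\ref{R^q+p^n_and_localization}, this yields ``$f\in R^q+x^{qa}R$ iff $f\in R_\p^q+(xR_\p)^{qa}$'', so the invariant $a$ is the same whether computed in $R$ or in $R_\p$. Combined with the localization formula~(\ref{localization}), namely $\operatorname{Diff}_{R,+}^{q-1}(f)\,R_\p=\operatorname{Diff}_{R_\p,+}^{q-1}(f)$, the same equality $x^nR=(x^nR_\p)\cap R$ also yields ``$\operatorname{Diff}_{R,+}^{q-1}(f)\subseteq x^nR$ iff $\operatorname{Diff}_{R_\p,+}^{q-1}(f)\subseteq (xR_\p)^n$'', so the invariant $n$ is likewise unchanged. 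I may therefore assume that $R$ is a DVR with maximal ideal $(x)$ and valuation $\nu$.

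In this DVR I would choose an adapted $p$-basis $(x,y_1,\ldots,y_s)$ via Proposition~\ref{special_p-basis} and write the $q$-expansion $f=\sum_{(\alpha,\beta)\in\mathcal{A}_q}c_{\alpha,\beta}^{q}\,x^\alpha y^\beta$. The $y_i$'s are units, since their images form a $p$-basis of the residue field $R/(x)$, so each monomial $x^\alpha y^\beta$ has valuation $\alpha$ for $0\leq \alpha<q$. Using Proposition~\ref{order_in_terms_of_q-atoms}(2), the defining property of $a$ translates into $q\nu(c_{\alpha,\beta})+\alpha\geq qa$ for every $(\alpha,\beta)\in\mathcal{A}_q^+$, and since $0\leq\alpha<q$ and $\nu$ is integer-valued, this forces $\nu(c_{\alpha,\beta})\geq a$ for all such pairs.

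The lower bound $n\geq qa$ then follows immediately: every $D\in\operatorname{Diff}_{R,+}^{q-1}$ is $R^q$-linear and annihilates $1$, hence $D(f)=\sum_{(\alpha,\beta)\in\mathcal{A}_q^+}c_{\alpha,\beta}^{q}\,D(x^\alpha y^\beta)$, and each summand has valuation at least $q\nu(c_{\alpha,\beta})+0\geq qa$. For the upper bound $n\leq qa$ I write $\nu^{(q)}(f)=qa+b$ and invoke Lemma~\ref{Alternative_Lemma}. The crucial observation is that its Case~2 cannot occur in a DVR: a minimum-order term $c_{\alpha,\beta}^{q}\,x^\alpha y^\beta$ with $b=0$ would satisfy $q\nu(c_{\alpha,\beta})+\alpha=qa$ with $0\leq\alpha<q$, which forces $\alpha=0$. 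So only Cases~1 and~3 occur, and each of them records $\nu(\operatorname{Diff}_{R,+}^{q-1}(f))\leq qa$. Combining both bounds gives $n=qa$. The step requiring the most care is the reduction to the DVR, where one must chase both conditions through the localization map by means of the equality $x^nR=(x^nR_\p)\cap R$; once that is in place, the rest is just the observation that in a DVR the lone $x$-exponent cannot absorb a nonzero residue $b$, which eliminates the otherwise problematic Case~2 of Lemma~\ref{Alternative_Lemma}.
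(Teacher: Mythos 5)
Your proof is correct, but it is organized differently from the paper's. The paper establishes the lower bound $n\geq qa$ directly in $R$ by writing $f=\lambda^q+x^{qa}g$ and using $R^q$-linearity, and only localizes at $\p=xR$ for the upper bound, where it invokes Corollary~\ref{characterization_of_R^q+m^qa} (applied to the DVR $(R_\p,xR_\p)$) and then descends via Corollary~\ref{R^q+p^n_and_localization}. You instead perform the DVR reduction up front, carefully chasing \emph{both} invariants $a$ and $n$ through the localization, and then argue both bounds inside the DVR by going back to Lemma~\ref{Alternative_Lemma} rather than citing Corollary~\ref{characterization_of_R^q+m^qa}. Your observation that Case~2 of Lemma~\ref{Alternative_Lemma} cannot occur in a one-dimensional ring — because a single $x$-exponent $\alpha$ with $0\leq\alpha<q$ cannot satisfy $q\nu(c_{\alpha,\beta})+\alpha\equiv 0\pmod q$ unless $\alpha=0$ — is a genuine simplification specific to the DVR setting and is not made in the paper. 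The paper's argument is shorter because it reuses an already-established corollary; yours is more self-contained and makes explicit \emph{why} the DVR case is especially clean. One small stylistic point: your lower bound via the $q$-expansion and valuation estimate $\nu(c_{\alpha,\beta})\geq a$ is correct, but the paper's one-line factorization $f=\lambda^q+x^{qa}g$ achieves the same without the DVR reduction and is worth keeping in mind as the more economical route for that half.
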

\begin{proof}
	We can write $f=\lambda^{q}+x^{qa}g$ for some $\lambda, g\in R$. Then
	$\operatorname{Diff}_{R,+}^{q-1}(f)=\operatorname
	{Diff}_{R,+}^{q-1}(x^{qa}g)=x^{qa}
	\operatorname{Diff}_{R,+}^{q-1}(g)\subseteq x^{qa}R$. This shows that
	$qa\leq n$. If $qa<n$, then
	$\operatorname{Diff}_{R,+}^{q-1}(f)\subseteq x^{qa+1}R$, and by
	Corollary \ref{characterization_of_R^q+m^qa}, using
	\ref{compatibility_with_localization}, we obtain
	$f\in R_{\pvtex}^{q}+x^{q(a+1)}R_{\pvtex}$ with $\pvtex=Rx$. It follows
	from Corollary \ref{R^q+p^n_and_localization} that
	$f\in R^{q}+x^{q(a+1)} R$, which contradicts the definition of $a$. We
	conclude that $qa=n$.
\end{proof}	

\section{Differential Operator Techniques Applied to the Study of
	$\mathcal{O}_{V}^{q}$-Modules and Morphisms}
\label{Section_on_Diff2}

Let $(R,\mvtex)$ be an $F$-finite regular local ring, and let
$\operatorname{Spec}(R) \leftarrow V_{1}\supset H_{1}$ be the blowup
at the closed
point, where $H_{1}$ denotes the exceptional hypersurface. Here
$V_{1}$ is regular, and if $\xi\in H_{1}$ denotes the generic point of
$H_{1}$, then $\mathcal{O}_{V_{1}, \xi}$ is a valuation ring. Given an
ideal $J$ in $R$, $J\subset\mvtex^{n}$ for a positive integer $n$ if
and only if
$J\cdot\mathcal{O}_{V_{1}}=\mathcal{I}(H_{1})^{n} \mathcal{J}_{1}$
for some
$\mathcal{O}_{V_{1}}$-ideal $\mathcal{J}_{1}$. In particular, $J$ has order
$n$ if $n$ is the valuation of $J$ at $\mathcal{O}_{V_{1}, \xi}$. So the
order of an ideal $J$ can be characterized either in terms of differential
operators at $(R,\mvtex)$ (see Corollary \ref{local_Jacobian_criterion}) or by the order of vanishing of the total
transform $J\cdot\mathcal{O}_{V_{1}}$ along the exceptional hypersurface
$H_{1}$.

Here $V$ is an irreducible $F$-finite regular scheme, and we aim to study
the $\mathcal{O}_{V}^{q}$-submodules of $\mathcal{O}_{V}$, a task initiated
in Section \ref{Section_on_modules}, always intending to draw parallels
with the realm of ideals on $\mathcal{O}_{V}$. The notion of order of an
ideal at a local regular ring $\mathcal{O}_{V,x}$ is replaced here by that
of $\eta_{x}(\mathscr{M})$ for a given $\mathcal{O}_{V}^{q}$-submodule
$\mathscr{M}$ (see \ref{par416}).

Once we fix a regular center $Z\subset V$ and an
$\mathcal{O}_{V}^{q}$-submodule $\mathscr{M}$ of $\mathcal{O}_{V}$,
we study
an analog to the order of vanishing for the total transform
$\mathscr{M}\mathcal{O}^{q}_{V_{1}}$ (Definition \ref{the_a-transform_of_an_O^q-module}) along the exceptional hypersurface
$H_{1}$ at the blowup, say $V\leftarrow V_{1}\subset H_{1}$. The function
$\eta_{x}(\mathscr{M})$ will ultimately enable us, at the end of this section,
to characterize $a$-transforms and the notion of permissible center for
$(\mathscr{M},a)$ in Definition \ref{permissible_center_for_(M,a)} (see
Proposition \ref{Describing_Sing(M,a)_with_eta(M)}).

The section begins by reviewing some results concerning the behavior of
differential operators when blowing up along suitable centers, results
which are essential in our forthcoming discussions.

\subsection{Behavior of Differential Operators with Morphisms}
\label{Behavior_of_differential_operators_with_morphisms}

We recall some facts about the behavior of differential operators with
respect to homomorphisms of rings, which include those that arise when
taking monoidal transformations along regular centers.
\begin{lemma}%
	\label{lemma_on_restriction_of_Diff}
	Let $R$ be an $F$-finite regular local ring, and let
	$\varphi:R\rightarrow R'$ be any ring homomorphism from $R$. Given
	$D'\in\operatorname{Diff}_{R'}^{i}$ (resp., $\operatorname
	{Diff}_{R',+}^{i}$), there
	are differential operators
	$D_{1},\ldots,D_{n}\in\operatorname{Diff}_{R}^{i}$ (resp.,
	$\operatorname{Diff}_{R,+}^{i}$) and scalars $r_{1}',\ldots,r_{n}'\in
	R'$ such
	that
	\begin{equation*}
		D'\circ\varphi=r_{1}'(\varphi\circ
		D_{1})+\cdots+r_{n}'(\varphi \circ
		D_{n}).
	\end{equation*}
\end{lemma}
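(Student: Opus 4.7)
The plan is to interpret the composition $A := D' \circ \varphi : R \to R'$ as a differential operator from $R$ into the $R$-module $R'$ (with $R$-action given by $\varphi$), and then exploit that, because $R$ is $F$-finite regular local, the module of principal parts $P_R^i := (R \otimes R)/T_R^{i+1}$ is finitely generated free over $R$. The latter is reflected in the fact that any $p$-basis of $R$, which exists by Proposition~\ref{special_p-basis}, yields a Taylor $R$-basis $\{D_\gamma : |\gamma|\leq i\}$ of $\operatorname{Diff}_R^i$, and dually an $R$-basis of $P_R^i$.

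First, I would check that $A$ is indeed a differential operator of order $\leq i$ from $R$ into the $R$-module $R'$. Its $R$-linear extension $A_R : R \otimes R \to R'$, $r_1 \otimes r_2 \mapsto \varphi(r_1)\, D'(\varphi(r_2))$, factors as $\varphi_{D'} \circ (\varphi \otimes \varphi)$; since $(\varphi \otimes \varphi)(T_R) \subseteq T_{R'}$ and $\varphi_{D'}$ vanishes on $T_{R'}^{i+1}$, the map $A_R$ vanishes on $T_R^{i+1}$.

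By the universal property of $P_R^i$, $A$ corresponds to an $R$-linear map $\tilde A : P_R^i \to R'$. The finite generation and freeness of $P_R^i$ yield the canonical identification
\[
\operatorname{Hom}_R(P_R^i, R') \cong \operatorname{Hom}_R(P_R^i, R) \otimes_R R' = \operatorname{Diff}_R^i \otimes_R R',
\]
under which $D \otimes r'$ corresponds to the operator $f \mapsto r' \varphi(D(f))$. Writing $\tilde A = \sum_k D_k \otimes r_k'$ as a finite sum of elementary tensors and translating back gives the desired expression $D' \circ \varphi = \sum_k r_k'(\varphi \circ D_k)$ with $D_k \in \operatorname{Diff}_R^i$ and $r_k' \in R'$.

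For the ``$+$'' version, decompose each $D_k = c_k + D_{k,+}$ using the splitting~(\ref{Decomposition_of_Diff=R+Diff_+}), with $c_k \in R$ and $D_{k,+} \in \operatorname{Diff}_{R,+}^i$. Evaluating the identity $D' \circ \varphi = \sum_k r_k'(\varphi \circ D_k)$ at $1$ and using $D'(1) = 0$ forces $\sum_k r_k' \varphi(c_k) = 0$, so the constant contributions cancel and we obtain $D' \circ \varphi = \sum_k r_k'(\varphi \circ D_{k,+})$. The main obstacle, in my view, is justifying cleanly the tensor-Hom identity above; this rests on $P_R^i$ being finitely generated projective, which is precisely where the $F$-finite regular hypothesis on $R$ enters in an essential way.
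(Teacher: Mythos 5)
Your proof is correct and follows essentially the same route as the paper: interpret $D'\circ\varphi$ as an element of $\operatorname{Hom}_R(\operatorname{P}_R^i, R')$, exploit that $\operatorname{P}_R^i$ is finitely generated free (which is where $F$-finiteness enters), and subtract off constant terms to handle the ``$+$'' case. The paper phrases the finite-freeness step concretely via a basis of $\operatorname{P}_R^i$ and its dual basis in $\operatorname{Diff}_R^i$, while you invoke the tensor-Hom isomorphism $\operatorname{Hom}_R(\operatorname{P}_R^i,R')\cong\operatorname{Diff}_R^i\otimes_R R'$; these are equivalent, and your version also has the merit of explicitly verifying that $D'\circ\varphi$ factors through $\operatorname{P}_R^i$, which the paper takes for granted.
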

\begin{proof}
	We view differential operators of order $\leq i$ from $R$ to an $R$-module
	$M$ as elements of $\operatorname{Hom}_{R}(\operatorname
	{P}_{R}^{i},M)$, where
	$\operatorname{P}_{R}^{i}$ denotes the $R$-module of principal parts
	of order
	$\leq i$. In this regard,
	$D'\circ\varphi\in\operatorname{Hom}_{R}(\operatorname
	{P}_{R}^{i},R')$. The existence
	of a finite $p$-basis for $R$ implies that $\operatorname{P}_{R}^{i}$
	is free
	of finite rank; therefore $D'\circ\varphi$ is a linear combination, say
	$D'\circ\varphi=\sum D'(\varphi(u_{k}))(\varphi\circ D_{k})$, where
	$\{u_{1},\ldots,u_{n}\}$ is a basis for $\operatorname{P}_{R}^{i}$, and
	$D_{1},\ldots, D_{n}$ is the dual basis. In the case that $D'(1)=0$, we
	have $0=D'(1)=\sum D'(\varphi(u_{k}))\varphi(D_{k}(1))$, whence
	$D'\circ\varphi=\sum D'(\varphi(u_{k})) (\varphi\circ D_{k})=
	\sum D'(\varphi(u_{k}))(\varphi\circ D_{k})-\sum D'(\varphi(u_{k}))
	\varphi(D_{k}(1)) \varphi=\sum D'(\varphi(u_{k}))(\varphi\circ(D_{k}-
	D_{k}(1)\operatorname{id}_{R}))$. Notice that
	$D_{k}-D_{k}(1)\operatorname{id}_{R}\in\operatorname
	{Diff}_{R,+}^{i}$. This completes
	the proof of the lemma.
\end{proof}

Given a morphism $\varphi:R\rightarrow R'$ of rings of characteristic
$p$, and given an ideal $I\subset R$, we denote by $IR'$, as usual, the
ideal of $R'$ generated by $\varphi(I)$. If $M$ is an $R^{q}$-submodule
of $R$, then $MR^{\prime q}$ denotes the ${R'}^{q}$-submodule
generated by
$\varphi(M)$.
\begin{corollary}%
	\label{behavoir_of_Diff_with_morphisms_of_local_rings}
	Let $R$ be an $F$-finite regular local ring, and let
	$\varphi:R\rightarrow R'$ be a homomorphism of rings. Let
	$M\subset R$ be an $R^{q}$-submodule, and let $I\subset R$ be an ideal.
	Then the following inclusions hold.
	\begin{enumerate}[(2)]
		\item[(1)] $\operatorname{Diff}_{R',+}^{i}(M{R'}^{q})\subseteq
		\operatorname{Diff}_{R,+}^{i}(M)
		R'$ for $i=1,\ldots, q-1$.
		\item[(2)] $\operatorname{Diff}_{R'}^{i}(I R')\subseteq\operatorname
		{Diff}_{R}^{i}(I)R'$,
		$\forall i\geq0$.
	\end{enumerate}
\end{corollary}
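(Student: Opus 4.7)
The plan is to deduce both inclusions from Lemma \ref{lemma_on_restriction_of_Diff}, which expresses a differential operator on $R'$ precomposed with $\varphi$ as an $R'$-linear combination of compositions $\varphi\circ D_k$, with $D_k\in\operatorname{Diff}_R^i$ (respectively in $\operatorname{Diff}_{R,+}^i$). The two items will then follow by evaluating such a composition on a generator of the relevant subobject and observing that each term lies, by construction, in the target $R'$-ideal.

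For (2), fix $D'\in\operatorname{Diff}_{R'}^i$ and an element $y\in IR'$. Writing $y=\sum_j s_j'\varphi(r_j)$ with $r_j\in I$ and $s_j'\in R'$, I would absorb the scalar $s_j'$ into the operator: since multiplication by $s_j'$ is a differential operator of order $0$ on $R'$, the composition $D'_j:=D'\circ(\text{mult.\ by }s_j')$ is again in $\operatorname{Diff}_{R'}^i$, and $D'(y)=\sum_j D'_j(\varphi(r_j))$. Now Lemma \ref{lemma_on_restriction_of_Diff} applied to each $D'_j$ gives $D'_j\circ\varphi=\sum_k r_{k,j}'(\varphi\circ D_{k,j})$ with $D_{k,j}\in\operatorname{Diff}_R^i$, whence
\[
D'(y)=\sum_{j,k}r_{k,j}'\,\varphi(D_{k,j}(r_j))\in \operatorname{Diff}_R^i(I)\cdot R',
\]
as desired.

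For (1), the previous absorbing trick is not available, because composing with multiplication by an arbitrary element of $R'$ typically pushes us out of $\operatorname{Diff}_{R',+}^i$. Instead I would exploit the hypothesis $i<q$: by \ref{Differential_operators_of_order_<q_are_R^q-linear}, every $D'\in\operatorname{Diff}_{R',+}^i$ is $R'^q$-linear. Since a generic element of $MR'^q$ has the form $y=\sum_j s_j'^{\,q}\varphi(m_j)$ with $m_j\in M$ and $s_j'\in R'$, this gives $D'(y)=\sum_j s_j'^{\,q}\,D'(\varphi(m_j))$. Applying the "+" version of Lemma \ref{lemma_on_restriction_of_Diff} to $D'$, we obtain $D'\circ\varphi=\sum_k r_k'(\varphi\circ D_k)$ with $D_k\in\operatorname{Diff}_{R,+}^i$, and hence
\[
D'(y)=\sum_{j,k}s_j'^{\,q}\,r_k'\,\varphi(D_k(m_j))\in \operatorname{Diff}_{R,+}^i(M)\cdot R'.
\]

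The only delicate point, and what could be called the main obstacle, is the need to stay inside the "+" subspace in (1); this is precisely why $i<q$ enters essentially, and why the scalar-absorbing trick used in (2) is replaced by $R'^q$-linearity of the operator. Once this is recognized, both statements reduce to a single application of Lemma \ref{lemma_on_restriction_of_Diff} together with the observation that the $R'$-ideal $\operatorname{Diff}_R^i(I)R'$ (resp.\ $\operatorname{Diff}_{R,+}^i(M)R'$) is stable under multiplication by arbitrary elements of $R'$.
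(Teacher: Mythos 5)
Your proposal is correct and follows the paper's argument: both parts rest on Lemma \ref{lemma_on_restriction_of_Diff}, with (1) using $R'^{q}$-linearity of operators of order $<q$ to reduce $MR'^{q}$ to $\varphi(M)$, and (2) absorbing the $R'$-scalars into the operator via composition with multiplication (an order-zero differential operator). Your write-up is somewhat more explicit in unwinding the generic element of $IR'$ and $MR'^{q}$, but the underlying mechanism is the same as the paper's; in fact your version of (1) cleanly exhibits the bound with $\operatorname{Diff}_{R,+}^{i}(M)$ rather than the looser $\operatorname{Diff}_{R,+}^{q-1}(M)$ appearing in the paper's phrasing.
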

\begin{proof}
	(1) Since $i<q$, the elements of $\operatorname{Diff}_{R',+}^{i}$ are
	$R^{\prime q}$-linear, and hence
	$\operatorname{Diff}_{R',+}^{i}(M R^{\prime q})=\operatorname
	{Diff}_{R',+}^{i}(\varphi(M))$,
	and by the previous lemma this set is included in the $R'$-module generated
	by $\varphi(\operatorname{Diff}_{R,+}^{q-1}(M))$.
	
	(2) Using that
	$\operatorname{Diff}_{R'}^{i}(\varphi(I)R')=\operatorname{Diff}_{R'}^{i}(
	\varphi(I))$, which follows, for instance, from the statement right below
	(\ref{Decomposition_of_Diff=R+Diff_+}), the inclusions in (2) follow
	immediately
	from Lemma \ref{lemma_on_restriction_of_Diff}.
\end{proof}
\begin{parrafo}%
	\label{blow-up}
	Let $R$ be a domain, and let $P$ be any prime ideal, say generated by
	$x_{1},\ldots,x_{n}$. Then the blowup of $\operatorname{Spec}(R)$ along
	$\operatorname{Spec}(R/P)\subset\operatorname{Spec}(R)$ is obtained
	by gluing, in
	the natural way, the affine schemes $\operatorname{Spec}(R_{i})$,
	$i=1,\ldots,n$, where $R_{i}$ denotes
	$\{\frac{z}{x_{i}^{t}}\in R_{x_{i}}: z\in P^{t}, t\geq0\}$, which is an
	$R$-subalgebra of $R_{x_{i}}$. Note that for any integer $k\geq1$, we
	have the following equalities:
	\begin{align}
		\label{power_of_the_exceptional} P^{k} R_{i}=x_{i}^{k}
		R_{i}=\biggl\{\frac{z}{x_{i}^{t}}\in R_{x_{i}}: z\in
		P^{k+t}, t\geq0\biggr\}\subset R_{i}.
	\end{align}
	
	As was mentioned in \ref{compatibility_with_localization}, a differential
	operator $D:R\to R$ can be extended uniquely to a differential operator
	of $R_{x_{i}}$ (which we still denote $D$); however, this extension might
	not restrict to a differential operator of $R_{i}$, that is,
	$D(R_{i})$ might not be included in $R_{i}$. For example, take
	$R=k[x_{1},x_{2}]$ and $P=(x_{1},x_{2})$. Then
	$\frac{\partial}{\partial x_{1}}(\frac{x_{2}}{x_{1}})=-
	\frac{x_{2}}{x_{1}^{2}}$, which is not in
	$R_{1}:=k[x_{1},\frac{x_{2}}{x_{1}}]$. The following lemma shows that this
	does not happen when $D$ is $P$-logarithmic (in our example, we would take
	$x_{1}\frac{\partial}{\partial x_{1}}$).
\end{parrafo}
\begin{lemma}%
	\label{local_Girauds_lemma}
	Within the setting of \ref{blow-up}, given a $P$-logarithmic differential
	operator $D:R\to R$, its extension to a differential operator of
	$R_{x_{i}}$ restricts to $R_{i}$, that is, $D(R_{i})\subset R_{i}$. Moreover,
	$D$ is $x_{i} R_{i}$-logarithmic.
\end{lemma}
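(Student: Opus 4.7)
The plan is to proceed by a direct computation that exploits the $R^{q}$-linearity of $D$. Fix a section $z/x_{i}^{t}\in R_{i}$, where by definition $z\in P^{t}$. Choose a power $q=p^{e}$ of $p$ strictly larger than both the order of $D$ and $t$; such a choice makes $D$ into an $R^{q}$-linear operator on $R_{x_{i}}$ by \ref{Differential_operators_of_order_<q_are_R^q-linear}. Rewriting the section with a $q$-th power in the denominator as $z/x_{i}^{t}=z x_{i}^{q-t}/x_{i}^{q}$ and applying $R^{q}$-linearity (equivalently, formula (\ref{handy_description_of_D_S})), we obtain the identity
\[
D(z/x_{i}^{t})=\frac{D(z x_{i}^{q-t})}{x_{i}^{q}}.
\]
The key observation is that $z\in P^{t}$ and $x_{i}^{q-t}\in P^{q-t}$ together force $z x_{i}^{q-t}\in P^{q}$, so the $P$-logarithmic hypothesis on $D$ gives $D(z x_{i}^{q-t})\in P^{q}$. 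Reading the defining description of $R_{i}$ in \ref{blow-up}, the quotient $D(z x_{i}^{q-t})/x_{i}^{q}$ then belongs to $R_{i}$, which proves $D(R_{i})\subset R_{i}$.

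The second assertion is verified by the same technique at the level of powers of the exceptional ideal. It suffices to show $D((x_{i} R_{i})^{k})\subset (x_{i} R_{i})^{k}$ for each $k\geq 1$. By (\ref{power_of_the_exceptional}), $(x_{i} R_{i})^{k}=x_{i}^{k} R_{i}$, and every element of this ideal can be written in the form $z/x_{i}^{t}$ with $z\in P^{k+t}$. Repeating the computation above with this strengthened hypothesis on $z$ gives $z x_{i}^{q-t}\in P^{k+q}$, hence $D(z x_{i}^{q-t})\in P^{k+q}$ by $P$-logarithmicity, and then (\ref{power_of_the_exceptional}) yields
\[
\frac{D(z x_{i}^{q-t})}{x_{i}^{q}}\in x_{i}^{k} R_{i},
\]
which is exactly the desired inclusion.

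The main obstacle is purely one of bookkeeping: the auxiliary integer $q$ must be chosen large enough to accommodate both the $R^{q}$-linearity of $D$ and the inequality $q\geq t$ needed to clear the denominator into a $q$-th power. Since we argue pointwise on sections, this choice is harmless, and the proof reduces to a single clean manipulation of the formula for extending $D$ to $R_{x_{i}}$, combined with the two explicit descriptions of $R_{i}$ and of $x_{i}^{k}R_{i}$ collected in \ref{blow-up}.
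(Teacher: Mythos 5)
Your proof is correct and follows essentially the same approach as the paper's: pick $q$ a power of $p$ exceeding the order of $D$, use $(R_{x_i})^q$-linearity to clear the denominator into a $q$-th power, and conclude from $P$-logarithmicity via the explicit description (\ref{power_of_the_exceptional}) of $x_i^k R_i$. The only cosmetic difference is that you multiply numerator and denominator by $x_i^{q-t}$ (imposing the harmless extra constraint $q\geq t$), whereas the paper multiplies by $x_i^{t(q-1)}$, which makes the denominator $x_i^{tq}$ and thereby avoids any condition on $q$ beyond $q>|D|$; both manipulations reach the same conclusion by the same argument.
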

\begin{proof}
	We may assume that $i=1$. Fix
	$\frac{z}{x_{1}^{t}}\in x_{1}^{k}R_{1}$ with $z\in P^{t+k}$ (\ref
	{power_of_the_exceptional}).
	Let $q$ be a power of $p$ that is strictly greater than the order of
	$D$, so that $D$ is $R_{x_{1}}^{q}$-linear. Then
	$D(\frac{z}{x_{1}^{t}})=D(\frac{zx_{1}^{t(q-1)}}{x_{1}^{tq}})=
	\frac{D(zx_{1}^{t(q-1)})}{x_{1}^{tq}}$. Notice that
	$zx_{1}^{t(q-1)}\in P^{k+tq}$, and since $D$ is $P$-logarithmic, we have
	that $D(zx_{1}^{t(q-1)})\in P^{k+tq}$. In conclusion,
	$D(\frac{z}{x_{1}^{t}})\in x_{1}^{k}R_{1}$ by (\ref
	{power_of_the_exceptional}).
	
	We have proved that $D(x_{1}^{k} R_{1})\subseteq x_{1}^{k} R_{1}$ for all
	$k\in\mbvtex{N}_{0}$; in particular, $D(R_{1})\subset R_{1}$. Hence
	$D$ is a differential operator of $R_{1}$ that is $x_{1} R_{1}$-logarithmic
	(and of the same order as $D$).
\end{proof}
\begin{corollary}%
	\label{corollary_of_Giraud_lemma}
	Within the setting of \ref{blow-up}, if we identify differential operators
	of a domain with their extension to the fraction field, then for each
	$i\geq0$, there are inclusions
	\begin{align}
		\label{inclusions_in_Giraud_lemma} x_{1}^{i}\operatorname{Diff}_{R}^{i}
		\subseteq\operatorname {Diff}_{R_{1}}^{i} \quad \text{and}\quad
		x_{1}^{i}\operatorname {Diff}_{R,+}^{i}
		\subseteq\operatorname{Diff}_{R_{1},+}^{i}.
	\end{align}
	In particular, if $M\subseteq R$ is an $R^{q}$-submodule, and
	$I\subset R$ is an ideal, then there are inclusions
	\begin{align*}
		x_{1}^{i}\operatorname{Diff}_{R,+}^{i}(M)
		R_{1}&\subseteq \operatorname{Diff}_{R_{1},+}^{i}(M
		R_{1}^{q}),\quad \forall i=1,\ldots,q-1,
		\\
		x_{1}^{i}\operatorname{Diff}_{R}^{i}(I)
		R_{1}&\subseteq\operatorname {Diff}_{R_{1}}^{i}(IR_{1}),\quad
		\forall i\geq0.
	\end{align*}
\end{corollary}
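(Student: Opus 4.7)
\medskip
\noindent\emph{Proof proposal.} The plan is to deduce both displays of the corollary from a single observation: multiplication by $x_{1}^{i}$ turns an arbitrary order-$i$ operator on $R$ into a $P$-logarithmic one, so Lemma~\ref{local_Girauds_lemma} applies and forces it to restrict to $R_{1}$.

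First I would prove the two inclusions in (\ref{inclusions_in_Giraud_lemma}). Fix $D\in\operatorname{Diff}_{R}^{i}$ and view $x_{1}^{i}D$ as an operator of order~$i$ on $R$ (left multiplication by a scalar does not raise the order). Since $x_{1}^{i}\in P^{i}$, property~(\ref{IDiff_is_logarithmic}) of~\ref{logarithmic_differential_operators} shows that $x_{1}^{i}D\in P^{i}\operatorname{Diff}_{R}^{i}\subset\operatorname{Diff}_{R,P}^{i}$; equivalently, one checks directly using (\ref{evaluating_differential_operators_at_powers_of_ideals}) that $x_{1}^{i}D(P^{k})\subseteq x_{1}^{i}P^{\max(0,k-i)}\subseteq P^{k}$ for all $k\geq 0$. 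Lemma~\ref{local_Girauds_lemma} then says that the canonical extension of $x_{1}^{i}D$ to $\operatorname{Frac}(R)$ restricts to an operator of $R_{1}$ of the same order, that is, $x_{1}^{i}D\in\operatorname{Diff}_{R_{1}}^{i}$. This proves the first inclusion; the second follows because $(x_{1}^{i}D)(1)=x_{1}^{i}D(1)=0$ whenever $D(1)=0$.

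Next I would derive the two module/ideal inclusions from this. By \ref{Differential_operators} the set $\operatorname{Diff}_{R_{1},+}^{i}(MR_{1}^{q})$ is an $R_{1}$-ideal, hence in particular an $R_{1}$-module, so it suffices to show that every generator $x_{1}^{i}D(f)$ with $D\in\operatorname{Diff}_{R,+}^{i}$ and $f\in M$ lies in it. But $x_{1}^{i}D(f)=(x_{1}^{i}D)(f)$, and by the first part $x_{1}^{i}D\in\operatorname{Diff}_{R_{1},+}^{i}$, while $f\in M\subseteq MR_{1}^{q}$; hence $x_{1}^{i}D(f)\in\operatorname{Diff}_{R_{1},+}^{i}(MR_{1}^{q})$. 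The ideal-theoretic inclusion
$x_{1}^{i}\operatorname{Diff}_{R}^{i}(I)R_{1}\subseteq\operatorname{Diff}_{R_{1}}^{i}(IR_{1})$
is established by the same argument, dropping the subscript ``$+$''.

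There is really no serious obstacle here; the only delicate point is keeping the two possible meanings of ``$x_{1}^{i}D$'' aligned---namely the operator obtained by left multiplication in $\operatorname{Diff}_{R}^{i}$ versus post-multiplication of evaluations---which coincide by the definition of the $R$-module structure on $\operatorname{Diff}_{R}^{i}$ recalled in~\ref{Differential_operators}. Once this identification is fixed, everything reduces to invoking (\ref{IDiff_is_logarithmic}) to make $x_{1}^{i}D$ logarithmic and Lemma~\ref{local_Girauds_lemma} to push it across to $R_{1}$.
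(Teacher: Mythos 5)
Your proposal is correct and follows essentially the same route as the paper: multiply by $x_1^i$ to land in $\operatorname{Diff}_{R,P}^i$ via~(\ref{IDiff_is_logarithmic}), then apply Lemma~\ref{local_Girauds_lemma} to cross over to $R_1$, and deduce the evaluated versions from the $R_1$-module structure. The only cosmetic difference is that the paper invokes the equality $\operatorname{Diff}_{R_1,+}^i(MR_1^q)=\operatorname{Diff}_{R_1,+}^i(M)$ (for $i<q$) whereas you use only the trivial inclusion $M\subseteq MR_1^q$, which is all that is needed here.
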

\begin{proof}
Lemma	\ref{local_Girauds_lemma} shows that
	$\operatorname{Diff}_{R,P}^{i}\subseteq\operatorname
	{Diff}_{R_{1},x_{1} R_{1}}^{i}$,
	so the inclusions in (\ref{inclusions_in_Giraud_lemma}) are consequences
	of the inclusion
	$x_{1}^{i}\operatorname{Diff}_{R}^{i}\subset\operatorname
	{Diff}_{R,P}^{i}$ observed
	in (\ref{IDiff_is_logarithmic}).
	
	The last inclusions in the corollary follow from (\ref
	{inclusions_in_Giraud_lemma}).
	We only need to take into account that
	$\operatorname{Diff}_{R_{1},+}^{i}(M R_{1}^{q})=\operatorname
	{Diff}_{R_{1},+}^{i}(M)$
	if $i<q$ and
	$\operatorname{Diff}_{R_{1}}^{i}(IR_{1})=\operatorname
	{Diff}_{R_{1}}^{i}(I)$ for any
	$i\geq0$.
\end{proof}

\subsection{Applications to $\mathcal{O}_{V}^{q}$-Submodules of
	$\mathcal{O}_{V}$}
\label{Applications_to_O_V^q-submodules_of_O_V}

In what follows, $V$ denotes a connected $F$-finite regular scheme, and
$q=p^{e}$ is a fixed power of $p$. We make use of the properties of localization
of differential operators established in (\ref{S^-1Diff(R)=Diff(S^-1R)})
to translate the results from \S\ref{Applications_of_Lemma_Alt} and
\S\ref{Behavior_of_differential_operators_with_morphisms}, formulated
there at a local level, into the global setting of schemes and sheaves
of modules. We then discuss the role played by the function
$x\mapsto\eta_{x}(\mathscr{M})$ in the characterization of permissible
centers and in the definition of the ``right'' notion of transformation
of modules.

\begin{parrafo}
	Let $\mathcal{I}$ be an $\mathcal{O}_{V}$ ideal, and let $\mathscr
	{M}$ be
	an $\mathcal{O}_{V}^{q}$-submodule of $\mathcal{O}_{V}$. Then by
	(\ref{localization})
	there are $\mathcal{O}_{V}$-ideals $\Diff_{V}^{i}(\mathcal{I})$ for all
	$i\geq0$ and $\Diff_{V,+}^{i}(\mathscr{M})$ for all
	$i=1,\ldots,q-1$ such that for any $x\in V$,
	\begin{align}
		\begin{split}\label{good_localization}(\Diff_{V}^{i}(
			\mathcal{I}))_{x}&=\operatorname{Diff}_{\mathcal
				{O}_{V,x}}^{i}(
			\mathcal{I}_{x}), \quad \forall i\geq0,
			\\
			(\Diff_{V,+}^{i}(\mathscr{M}))_{x}&=
			\operatorname{Diff}_{\mathcal
				{O}_{V,x},+}^{i}( \mathscr{M}_{x}),
			\quad \forall i=1,\ldots,q-1.
	\end{split}\end{align}
	
	Propositions \ref{Jacobian_criterion:_global_version}--\ref
	{the_largest_integer_n_such_that_Diff(M)_is_included_in_I(H)^n}
	below are just translations of the results of \S\ref{Applications_of_Lemma_Alt} into the language of sheaves. This translation
	is possible thanks to (\ref{good_localization}). For instance, the following
	proposition, which we will call the \emph{absolute Jacobian criterion},
	is a consequence of its local version stated in Corollary \ref{local_Jacobian_criterion}. It is at the core of most arguments in
	our discussion.
\end{parrafo}
\begin{proposition}%
	\label{Jacobian_criterion:_global_version}
	Let $\mathcal{J}$ be an $\mathcal{O}_{V}$-ideal. Then for any integer
	$n\geq0$,
	\begin{equation*}
		\{x\in V:\nu_{x}(\mathcal{J})\geq n\}=\{x\in V:
		\nu_{x}(\Diff_{V}^{n-1}( \mathcal{J}))\geq1
		\}.
	\end{equation*}
	In particular, the assignment $x\mapsto\nu_{x}(\mathcal{J})$ defines
	an upper semicontinuous function $V\to\mbvtex{N}_{0}$.
\end{proposition}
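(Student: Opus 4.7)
The plan is to reduce the assertion to the local statement of Corollary \ref{local_Jacobian_criterion} by means of the good localization identity in (\ref{good_localization}), and then read off upper-semicontinuity from the fact that the support of a quasi-coherent ideal is closed. Fix an integer $n \geq 0$ and a point $x \in V$. I would first translate both sides of the claimed equality into conditions on the stalk $\mathcal{O}_{V,x}$: the condition $\nu_x(\mathcal{J}) \geq n$ is by definition $\mathcal{J}_x \subseteq \mathfrak{m}_{V,x}^n$, while $\nu_x(\Diff_V^{n-1}(\mathcal{J})) \geq 1$ reads $(\Diff_V^{n-1}(\mathcal{J}))_x \subseteq \mathfrak{m}_{V,x}$.

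Next I would apply (\ref{good_localization}) to identify the stalk $(\Diff_V^{n-1}(\mathcal{J}))_x$ with $\operatorname{Diff}_{\mathcal{O}_{V,x}}^{n-1}(\mathcal{J}_x)$. Unraveling the definition, the latter is the $\mathcal{O}_{V,x}$-ideal generated by the evaluations $D(f)$ with $D \in \operatorname{Diff}_{\mathcal{O}_{V,x}}^{n-1}$ and $f \in \mathcal{J}_x$, so that the inclusion $\operatorname{Diff}_{\mathcal{O}_{V,x}}^{n-1}(\mathcal{J}_x) \subseteq \mathfrak{m}_{V,x}$ is equivalent to $\operatorname{Diff}_{\mathcal{O}_{V,x}}^{n-1}(f) \subseteq \mathfrak{m}_{V,x}$ for every $f \in \mathcal{J}_x$. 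Now $(\mathcal{O}_{V,x}, \mathfrak{m}_{V,x})$ is an $F$-finite regular local ring (a localization of an $F$-finite regular scheme), so Corollary \ref{local_Jacobian_criterion} applies and gives, for each such $f$, the equivalence $\operatorname{Diff}_{\mathcal{O}_{V,x}}^{n-1}(f) \subseteq \mathfrak{m}_{V,x} \iff f \in \mathfrak{m}_{V,x}^n$. Quantifying over $f \in \mathcal{J}_x$ yields $\mathcal{J}_x \subseteq \mathfrak{m}_{V,x}^n$, closing the chain of equivalences and proving the set-theoretic equality.

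For the upper-semicontinuity claim, the right-hand description exhibits $\{x \in V : \nu_x(\mathcal{J}) \geq n\}$ as $\mathcal{V}(\Diff_V^{n-1}(\mathcal{J}))$, the support of the quasi-coherent sheaf $\mathcal{O}_V/\Diff_V^{n-1}(\mathcal{J})$, which is closed in $V$; as this holds for every $n$, the function $x \mapsto \nu_x(\mathcal{J})$ is upper-semicontinuous by definition. The only nonformal ingredient is the good localization of the sheaves $\Diff_V^i$ under evaluation on $\mathcal{J}$, which the paper has already secured through the discussion around (\ref{S^-1Diff(R)=Diff(S^-1R)}) and (\ref{localization}); once this is in hand, the argument is essentially mechanical, with no genuine obstacle beyond bookkeeping.
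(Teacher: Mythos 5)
Your proof is correct and follows exactly the route the paper intends: the paper itself states only that Proposition \ref{Jacobian_criterion:_global_version} ``is a consequence of its local version stated in Corollary \ref{local_Jacobian_criterion},'' and you have supplied precisely the bookkeeping --- translating both conditions to stalk-level inclusions, invoking the localization identity $(\Diff_{V}^{n-1}(\mathcal{J}))_{x}=\operatorname{Diff}_{\mathcal{O}_{V,x}}^{n-1}(\mathcal{J}_{x})$ from (\ref{good_localization}), applying the local corollary elementwise, and reading closedness off the support of the quasi-coherent ideal. Nothing to correct.
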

We now characterize the $\mathcal{O}_{V}^{q}$-submodules of
$\subseteq\mathcal{O}_{V}^{q}$.
\begin{proposition}%
	\label{characterization_of_O^q}
	For an $\mathcal{O}_{V}^{q}$-module
	$\mathscr{M}\subseteq\mathcal{O}_{V}$, the following conditions are
	equivalent.
	\begin{enumerate}[(3)]
		\item[(1)] $\Diff_{V,+}^{q-1}(\mathscr{M})=0$.
		\item[(2)] $\operatorname{Diff}_{\mathcal{O}_{V,x},+}^{q-1}(\mathscr
		{M}_{x})=0$ for
		some $x\in V$.
		\item[(3)] $\mathscr{M}_{x}\subseteq\mathcal{O}_{V,x}^{q}$ for some point
		$x\in V$.
		\item[(4)] $\mathscr{M}\subseteq\mathcal{O}_{V}^{q}$.
	\end{enumerate}
\end{proposition}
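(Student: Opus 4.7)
The plan is to verify the cycle $(4) \Rightarrow (1) \Rightarrow (2) \Rightarrow (3) \Rightarrow (4)$, since the first three implications are essentially immediate from material already assembled in the paper, while the last one requires a normality argument that I expect to be the only nontrivial step.

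First, I would dispatch the easy implications. The implication $(4) \Rightarrow (1)$ is precisely formula (\ref{Diff(R^q)=0}), applied sheafwise. The implication $(1) \Rightarrow (2)$ is trivial: if the sheaf $\Diff_{V,+}^{q-1}(\mathscr{M})$ is zero, then so is each of its stalks, and by the localization identity (\ref{good_localization}) this stalk coincides with $\operatorname{Diff}_{\mathcal{O}_{V,x},+}^{q-1}(\mathscr{M}_x)$. For $(2) \Rightarrow (3)$ I would apply Corollary \ref{characterization_of_R^q} directly at the stalk $\mathcal{O}_{V,x}$, which is an $F$-finite regular local ring: the vanishing of $\operatorname{Diff}_{\mathcal{O}_{V,x},+}^{q-1}$ on every element of $\mathscr{M}_x$ forces every such element to lie in $\mathcal{O}_{V,x}^q$.

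The interesting implication is $(3) \Rightarrow (4)$. The hard point is that (3) only asserts inclusion in $\mathcal{O}_{V,x}^q$ at \emph{one} point $x$, and we must propagate this to the whole scheme $V$. I would proceed in two steps. First, since $V$ is irreducible, the generic point $\xi$ lies in every nonempty open set, so $\mathcal{O}_{V,\xi} = K(V)$ is a localization of $\mathcal{O}_{V,x}$, whence $\mathscr{M}_\xi \subseteq \mathcal{O}_{V,x}^q \cdot K(V) \subseteq K(V)^q$. In particular, every section of $\mathscr{M}$ viewed inside $K(V)$ is a $q$-th power in $K(V)$. Second, I would spread this out: for any other point $y \in V$ and any $m \in \mathscr{M}_y \subseteq \mathcal{O}_{V,y}$, write $m = n^q$ with $n \in K(V)$. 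Then $n$ satisfies the monic polynomial $T^q - m$ with coefficients in $\mathcal{O}_{V,y}$, and because the regular local ring $\mathcal{O}_{V,y}$ is integrally closed in $K(V)$, this forces $n \in \mathcal{O}_{V,y}$. Thus $m \in \mathcal{O}_{V,y}^q$, and $\mathscr{M}_y \subseteq \mathcal{O}_{V,y}^q$ for every $y \in V$, which is (4).

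The main obstacle, as indicated, is this last step, and it hinges on two features of the setting that are in force throughout the paper: $V$ is irreducible (so the generic point controls everything), and $V$ is regular (so every stalk is normal and the monic equation $T^q - m = 0$ has its root in $\mathcal{O}_{V,y}$ as soon as it has one in $K(V)$). If one weakened either hypothesis the implication would fail, so the proof really uses the standing assumptions on $V$.
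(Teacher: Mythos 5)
Your proposal is correct and takes essentially the same route as the paper: the easy implications are handled identically, and the key step $(3)\Rightarrow(4)$ passes to the generic point $\xi$ to get $\mathscr{M}_\xi\subseteq K(V)^q$ and then uses normality of each stalk $\mathcal{O}_{V,y}$ to conclude $K(V)^q\cap\mathcal{O}_{V,y}=\mathcal{O}_{V,y}^q$, exactly as in the paper. The only blemish is the intermediate expression $\mathcal{O}_{V,x}^q\cdot K(V)$, which as written equals all of $K(V)$; you mean the $K(V)^q$-span of $\mathcal{O}_{V,x}^q$, i.e.\ $\mathcal{O}_{V,x}^q\cdot K(V)^q=K(V)^q$, and with that correction the argument is fine.
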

\begin{proof}
	The implication (1) $\Rightarrow$ (2) is trivial, and (2) $\Rightarrow
	$ (3)
	follows from Corollary \ref{characterization_of_R^q}. If (3) holds, then
	certainly $\mathscr{M}_{\xi}\subseteq\mathcal{O}_{V,\xi
	}^{q}=K^{q}$, where
	$\xi\in V$ is the generic point, and $K$ denotes the function field of
	$V$. Next, given $y\in V$, clearly
	$\mathscr{M}_{y}\subseteq\mathscr{M}_{\xi}$, and hence
	$\mathscr{M}_{y}\subseteq K^{q}\cap\mathcal{O}_{V,y}=\mathcal{O}_{V,y}^{q}$,
	where the last equality holds since $\mathcal{O}_{V,y}$ is regular and hence
	normal. This proves (3) $\Rightarrow$ (4). Finally, (4) $\Rightarrow$ (1)
	follows from (\ref{Diff(R^q)=0}).
\end{proof}

We now describe
$\operatorname{Sing}(\mathscr{M},a)=\{x\in V:\mathscr
{M}_{x}\subseteq\mathcal{O}_{V,x}^{q}+m_{V,x}^{qa}
\}$ (Definition \ref{permissible_center_for_(M,a)}) as a closed subset
of $V$.
\begin{proposition}%
	\label{d(F(X))_is_closed}
	For an $\mathcal{O}_{V}^{q}$-module
	$\mathscr{M}\subseteq\mathcal{O}_{V}$ and an integer $a\geq1$, we have
	\begin{equation*}
		\operatorname{Sing}(\mathscr{M},a)=\{x\in V:\nu_{x}(
		\Diff_{V,+}^{q-1}( \mathscr{M}))\geq q(a-1)+1\}.
	\end{equation*}
	In particular, $\operatorname{Sing}(\mathscr{M},1)\subset V$ is the
	closed subset
	defined by $\Diff_{V,+}^{q-1}(\mathscr{M})$, and each
	$\operatorname{Sing}(\mathscr{M},a)$ is also closed.
\end{proposition}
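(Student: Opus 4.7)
The plan is to reduce the statement, point by point, to the local characterization given by Corollary \ref{characterization_of_R^q+m^qa}, and then to invoke the absolute Jacobian criterion (Proposition \ref{Jacobian_criterion:_global_version}) to deduce closedness.

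First I would fix a point $x \in V$ and rewrite the membership condition defining $\operatorname{Sing}(\mathscr{M},a)$ elementwise: since $\mathcal{O}_{V,x}^{q}+m_{V,x}^{qa}$ is an additive subgroup (in fact an $\mathcal{O}_{V,x}^{q}$-submodule) of $\mathcal{O}_{V,x}$, the inclusion $\mathscr{M}_{x}\subseteq\mathcal{O}_{V,x}^{q}+m_{V,x}^{qa}$ holds if and only if every section $f \in \mathscr{M}_{x}$ satisfies $f \in \mathcal{O}_{V,x}^{q}+m_{V,x}^{qa}$. By Corollary \ref{characterization_of_R^q+m^qa}, applied to the $F$-finite regular local ring $(\mathcal{O}_{V,x}, m_{V,x})$, this is equivalent to $\operatorname{Diff}^{q-1}_{\mathcal{O}_{V,x},+}(f) \subseteq m_{V,x}^{q(a-1)+1}$ for every $f \in \mathscr{M}_{x}$, which in turn is the same as the single containment
\begin{equation*}
\operatorname{Diff}^{q-1}_{\mathcal{O}_{V,x},+}(\mathscr{M}_{x}) \subseteq m_{V,x}^{q(a-1)+1}.
\end{equation*}

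Next I would invoke the good localization formula (\ref{good_localization}), which identifies the stalk $(\Diff^{q-1}_{V,+}(\mathscr{M}))_{x}$ with $\operatorname{Diff}^{q-1}_{\mathcal{O}_{V,x},+}(\mathscr{M}_{x})$. Therefore the previous containment is equivalent to $\nu_{x}(\Diff^{q-1}_{V,+}(\mathscr{M})) \geq q(a-1)+1$. Combining these equivalences gives the desired equality of sets
\begin{equation*}
\operatorname{Sing}(\mathscr{M},a) = \{x \in V : \nu_{x}(\Diff^{q-1}_{V,+}(\mathscr{M})) \geq q(a-1)+1\}.
\end{equation*}

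Finally, the closedness of each $\operatorname{Sing}(\mathscr{M},a)$ follows immediately: by Proposition \ref{Jacobian_criterion:_global_version}, the function $x \mapsto \nu_{x}(\Diff^{q-1}_{V,+}(\mathscr{M}))$ is upper-semicontinuous on $V$, hence the set on the right-hand side is a closed subscheme. In particular, taking $a=1$ recovers that $\operatorname{Sing}(\mathscr{M},1) = \mathcal{V}(\Diff^{q-1}_{V,+}(\mathscr{M}))$, which also completes the second equality in part (1) of Theorem \ref{main_theorem_2}. There is no substantial obstacle here; the only thing to check carefully is that the elementwise reformulation of the condition $\mathscr{M}_{x} \subseteq \mathcal{O}_{V,x}^{q}+m_{V,x}^{qa}$ is valid, and this is immediate from the fact that the right-hand side is closed under $\mathcal{O}_{V,x}^{q}$-linear combinations.
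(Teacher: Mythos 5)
Your proof is correct and follows the same route the paper takes: apply Corollary \ref{characterization_of_R^q+m^qa} at each stalk (via the identification (\ref{good_localization})) to translate the membership condition into the order bound on $\Diff_{V,+}^{q-1}(\mathscr{M})$, then cite Proposition \ref{Jacobian_criterion:_global_version} for closedness. The only cosmetic slip is calling the right-hand side a ``closed subscheme'' --- it is a closed subset; the argument is otherwise exactly the paper's, just spelled out in more detail.
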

\begin{proof}
	The equality follows from Corollary \ref{characterization_of_R^q+m^qa}, and the last statement follows from
	Proposition \ref{Jacobian_criterion:_global_version} when applied to
	$\mathcal{J}=\Diff_{V,+}^{q-1}(\mathscr{M})$ and $n=q(a-1)+1$.
\end{proof}
\begin{remark}%
	\label{final_part_of_main_theorem2}
	In view of Remark \ref{proof_of_the_first_version_of_main_theorem2},
	Proposition \ref{d(F(X))_is_closed} completes the proof of
	theorems \ref{finite_morphisms_and_closed_subsets} and \ref{main_theorem_2}.
\end{remark}

In Definition \ref{permissible_center_for_(M,a)}, we introduced the notion
of permissible center for $(\mathscr{M},a)$ with three conditions, which
were claimed to be equivalent. We proved (1) $\Leftrightarrow$ (2)
$\Rightarrow$ (3). We now complete the proof of that claim and add another
equivalent condition ((4) or (5) further) using differential operators.
\begin{proposition}%
	\label{characterization_of_permissible_centers_for_a_pair_(M,a)}
	Let $\mathscr{M}\subseteq\mathcal{O}_{V}$ be an $\mathcal
	{O}_{V}^{q}$-module,
	and let $a$ be a positive integer. Given an irreducible regular subscheme
	$Z\subset V$, the following conditions on $Z$ are equivalent.
	\begin{enumerate}[(3)]
		\item[(1)] $\mathscr{M}\subseteq\mathcal{O}_{V}^{q}+\mathcal{I}(Z)^{qa}$.
		\item[(2)] $Z$ is included in $\operatorname{Sing}(\mathscr{M},a)$.
		\item[(3)] $\xi\in\operatorname{Sing}(\mathscr{M},a)$, where $\xi
		$ denotes the generic
		point of $Z$.
		\item[(4)] $\nu_{\xi}(\Diff_{V,+}^{q-1}(\mathscr{M}))\geq q(a-1)+1$.
		\item[(5)] $\Diff_{V,+}^{q-1}(\mathscr{M})\subseteq\mathcal
		{I}(Z)^{q(a-1)+1}$.
	\end{enumerate}
\end{proposition}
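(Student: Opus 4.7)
The plan is to prove the implications $(1)\Rightarrow(2)\Rightarrow(3)\Leftrightarrow(4)\Leftrightarrow(5)\Rightarrow(1)$, with the only nontrivial implication being $(4)\Rightarrow(1)$, which is what was deferred in Definition~\ref{permissible_center_for_(M,a)}.

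The implications $(1)\Rightarrow(2)\Rightarrow(3)$ are immediate: if $\mathscr{M}\subseteq\mathcal{O}_V^{q}+\mathcal{I}(Z)^{qa}$, then localizing at any $x\in Z$ and using $\mathcal{I}(Z)_x\subseteq m_{V,x}$ gives $\mathscr{M}_x\subseteq\mathcal{O}_{V,x}^{q}+m_{V,x}^{qa}$; then (3) is a special case of (2) (taking $x=\xi$). The equivalence $(3)\Leftrightarrow(4)$ is the description of $\operatorname{Sing}(\mathscr{M},a)$ provided by Proposition~\ref{d(F(X))_is_closed}, applied at the single point $\xi$. For $(4)\Leftrightarrow(5)$: set $\mathcal{J}:=\Diff_{V,+}^{q-1}(\mathscr{M})$. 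Because $Z$ is regular, the symbolic and ordinary powers of $\mathcal{I}(Z)$ coincide, i.e.\ $\mathcal{I}(Z)^n\mathcal{O}_{V,\xi}\cap\mathcal{O}_V=\mathcal{I}(Z)^n$. Thus the global inclusion $\mathcal{J}\subseteq\mathcal{I}(Z)^{q(a-1)+1}$ is equivalent to its localized version $\mathcal{J}_\xi\subseteq(\mathcal{I}(Z)\mathcal{O}_{V,\xi})^{q(a-1)+1}$, which is precisely condition (4).

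The heart of the argument is $(4)\Rightarrow(1)$. The plan is to combine three ingredients: the local characterization of $R^{q}+\m^{qa}$ in terms of differential operators (Corollary~\ref{characterization_of_R^q+m^qa}), the compatibility of this condition with localization at a regular prime (Corollary~\ref{R^q+p^n_and_localization}), and the good localization of differential operators recorded in~\eqref{good_localization}. Fix $x\in Z$ and set $R:=\mathcal{O}_{V,x}$, $\mathfrak{p}:=\mathcal{I}(Z)_x$, so that $R_\mathfrak{p}=\mathcal{O}_{V,\xi}$ has maximal ideal $\mathfrak{p}R_\mathfrak{p}$. Condition (4) translates, via~\eqref{good_localization} and \ref{compatibility_with_localization}, into $\operatorname{Diff}_{R_\mathfrak{p},+}^{q-1}(\mathscr{M}_\xi)\subseteq(\mathfrak{p}R_\mathfrak{p})^{q(a-1)+1}$. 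Corollary~\ref{characterization_of_R^q+m^qa}, applied to the $F$-finite regular local ring $R_\mathfrak{p}$, then gives $\mathscr{M}_\xi\subseteq R_\mathfrak{p}^{q}+(\mathfrak{p}R_\mathfrak{p})^{qa}$. Intersecting with $R$ and applying Corollary~\ref{R^q+p^n_and_localization}, we conclude $\mathscr{M}_x\subseteq(R_\mathfrak{p}^{q}+(\mathfrak{p}R_\mathfrak{p})^{qa})\cap R=R^{q}+\mathfrak{p}^{qa}$; for points $x\notin Z$ the inclusion is automatic since $\mathcal{I}(Z)_x=\mathcal{O}_{V,x}$. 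Sheafifying yields (1).

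The main obstacle is exactly this last implication: moving from a pointwise differential condition at the (non-closed) generic point $\xi$ of $Z$ to a structural inclusion of $\mathscr{M}$ in $\mathcal{O}_V^{q}+\mathcal{I}(Z)^{qa}$. The subtlety is that Corollary~\ref{characterization_of_R^q+m^qa} applies only to the maximal ideal of a local ring, so the passage through $R_\mathfrak{p}$ and the descent back to $R$ via Corollary~\ref{R^q+p^n_and_localization} is essential; without the $F$-finiteness hypothesis (which guarantees existence of an adapted $p$-basis used in the proofs of those corollaries), the argument would break down.
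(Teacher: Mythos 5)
Your proposal is correct and follows essentially the same route as the paper: the only real content is the passage from a condition at the generic point $\xi$ to the global inclusion $\mathscr{M}\subseteq\mathcal{O}_V^{q}+\mathcal{I}(Z)^{qa}$, and both you and the paper hinge this on Corollary~\ref{R^q+p^n_and_localization}, with $(3)\Leftrightarrow(4)$ supplied by Proposition~\ref{d(F(X))_is_closed} and $(4)\Leftrightarrow(5)$ from regularity of $Z$. The only cosmetic difference is that the paper proves $(3)\Rightarrow(1)$ directly, while you prove $(4)\Rightarrow(1)$ by first invoking Corollary~\ref{characterization_of_R^q+m^qa} at $R_{\mathfrak{p}}$ to recover $(3)$ and then descending — a slight redundancy since $(3)\Leftrightarrow(4)$ was already established, but not an error.
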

\begin{proof}
	The implications (1) $\Rightarrow$ (2) and (2) $\Rightarrow$ (3) are clear,
	and (3) $\Rightarrow$ (1) is a consequence of Corollary \ref{R^q+p^n_and_localization} with $n=qa$. The equivalence of (3) and
	(4) follows from Proposition \ref{d(F(X))_is_closed}, and the equivalence
	of (4) and (5) holds because $Z$ is a regular subscheme in a regular scheme.
\end{proof}

The following proposition expresses a peculiarity of regular centers of
codimension one. It is a consequence of its local version stated in Corollary \ref{n=qa_in_the_local_case}.
\begin{proposition}%
	\label{the_largest_integer_n_such_that_Diff(M)_is_included_in_I(H)^n}
	Let $\mathscr{M}$ be a nontrivial $\mathcal{O}_{V}^{q}$-module, and let
	$H\subset V$ be an irreducible regular hypersurface. Let $n$ be the largest
	integer such that
	$\Diff_{V,+}^{q-1}(\mathscr{M})\subseteq\mathcal{I}(H)^{n}$, and let
	$a$ be the largest integer such that $H$ is permissible for
	$(\mathscr{M},a)$. Then $n=qa$. In particular, $n$ is a multiple of
	$q$.
\end{proposition}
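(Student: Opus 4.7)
My plan is to reduce the statement to its local version, Corollary \ref{n=qa_in_the_local_case}, by passing to the DVR at the generic point $\xi$ of $H$, and then to convert the module‐level quantities $n$ and $a$ into minima of the corresponding quantities attached to individual elements of $\mathscr{M}_\xi$. Set $R:=\mathcal{O}_{V,\xi}$; since $H$ is an irreducible regular hypersurface in the $F$-finite regular scheme $V$, $R$ is an $F$-finite DVR with uniformizer $x$ generating $\mathcal{I}(H)_\xi$. Because $H$ is a regular subscheme of $V$, symbolic and ordinary powers of the prime $\mathcal{I}(H)$ coincide, so for every $\mathcal{O}_V$-ideal $\mathcal{J}$ the inclusion $\mathcal{J}\subseteq\mathcal{I}(H)^n$ is equivalent to $\nu_\xi(\mathcal{J}_\xi)\geq n$. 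Applied to $\mathcal{J}=\Diff_{V,+}^{q-1}(\mathscr{M})$ together with the localization identity (\ref{localization}), this identifies $n$ with $\nu_\xi\bigl(\operatorname{Diff}_{R,+}^{q-1}(\mathscr{M}_\xi)\bigr)$. On the other hand, the equivalence (1) $\Leftrightarrow$ (3) of Proposition \ref{characterization_of_permissible_centers_for_a_pair_(M,a)} tells me that $H$ is permissible for $(\mathscr{M},a)$ precisely when $\mathscr{M}_\xi\subseteq R^q+x^{qa}R$, so $a$ is the largest integer with this last property.

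Next I will translate both $n$ and $a$ into minima over individual $f\in\mathscr{M}_\xi$. The ideal $\operatorname{Diff}_{R,+}^{q-1}(\mathscr{M}_\xi)$ is the sum of the sub-ideals $\operatorname{Diff}_{R,+}^{q-1}(f)$; since $R$ is a DVR, the valuation of a sum of ideals equals the minimum of the valuations, giving
\[
n \;=\; \min_{f\in\mathscr{M}_\xi}\nu_\xi\bigl(\operatorname{Diff}_{R,+}^{q-1}(f)\bigr).
\]
Analogously, $\mathscr{M}_\xi\subseteq R^q+x^{qa}R$ holds iff it holds for each element, so $a=\min_{f\in\mathscr{M}_\xi}a(f)$, where $a(f)$ is the largest integer with $f\in R^q+x^{qa(f)}R$.

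Finally I will apply Corollary \ref{n=qa_in_the_local_case} to $R$, $x$, and each $f\in\mathscr{M}_\xi\setminus R^q$, obtaining the pointwise equality $\nu_\xi(\operatorname{Diff}_{R,+}^{q-1}(f))=q\,a(f)$. The hypothesis that $\mathscr{M}$ is nontrivial, combined with Proposition \ref{characterization_of_O^q}, gives $\mathscr{M}_\xi\not\subseteq R^q$, so both minima are attained at a finite value (elements lying in $R^q$ contribute $\infty$ on both sides by Corollary \ref{characterization_of_R^q} and therefore do not affect the minima). Passing to the minimum in $f$ on both sides of $n(f)=q\,a(f)$ yields $n=qa$, which proves the statement and, in particular, that $n$ is always a multiple of $q$. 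The argument is essentially bookkeeping once the reduction to $\xi$ is in place; the only point that deserves care is the identification of the stalk of $\Diff_{V,+}^{q-1}(\mathscr{M})$ at $\xi$ with $\operatorname{Diff}_{R,+}^{q-1}(\mathscr{M}_\xi)$, which is handled by (\ref{localization}).
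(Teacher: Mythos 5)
Your proof is correct and follows the route the paper intends: it reduces the global proposition to Corollary~\ref{n=qa_in_the_local_case} by passing to the DVR $\mathcal{O}_{V,\xi}$ at the generic point of $H$ (using that symbolic and ordinary powers of the regular prime $\mathcal{I}(H)$ coincide, together with the localization identity (\ref{localization}) and Proposition~\ref{characterization_of_permissible_centers_for_a_pair_(M,a)}), and then translating both module-level quantities into minima over elements $f\in\mathscr{M}_\xi\setminus R^q$ so that the elementwise equality $n(f)=q\,a(f)$ from the corollary can be applied and the minimum taken. This is exactly the bookkeeping the paper's terse proof (``a consequence of its local version'') is deferring.
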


The next two propositions present general properties of differential operators
to be used in our study of transformations of modules and of ideals under
morphisms. They are just consequences of Corollary \ref{behavoir_of_Diff_with_morphisms_of_local_rings} and Corollary \ref{corollary_of_Giraud_lemma}.
\begin{proposition}%
	\label{comparison_between_Diff_of_two_regular_schemes}
	Let $\pi:V_{1}\rightarrow V$ be a morphism between $F$-finite regular
	schemes, let $\mathcal{I}$ be an $\mathcal{O}_{V}$-ideal, and let
	$\mathscr{M}\subseteq\mathcal{O}_{V}$ be an $\mathcal{O}_{V}^{q}$-submodule.
	Then we have the following inclusions:
	\begin{enumerate}[(2)]
		\item[(1)] $\Diff_{V_{1},+}^{i}(\mathscr{M}\mathcal
		{O}_{V_{1}}^{q})\subseteq
		\Diff_{V,+}^{i}(\mathscr{M})\mathcal{O}_{V_{1}}$ for
		$i=1,\ldots,q-1$.
		\item[(2)] $\Diff_{V_{1}}^{i}(\mathcal{I}\mathcal
		{O}_{V_{1}})\subseteq\Diff_{V}^{i}(
		\mathcal{I})\mathcal{O}_{V_{1}}$, $\forall i\geq0$.
	\end{enumerate}
\end{proposition}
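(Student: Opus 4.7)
The plan is to reduce both inclusions to their stalk-level counterparts, which are exactly the local statements of Corollary~\ref{behavoir_of_Diff_with_morphisms_of_local_rings}. Since an inclusion of quasi-coherent $\mathcal{O}_{V_1}$-submodules of $\mathcal{O}_{V_1}$ can be checked stalkwise, the proof reduces to the purely algebraic situation at each point.

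Concretely, to prove (1) I would fix a point $x_1 \in V_1$ with image $x := \pi(x_1) \in V$, which gives a local homomorphism $\pi^{\sharp}_{x_1}\colon \mathcal{O}_{V,x}\to\mathcal{O}_{V_1,x_1}$. Using (\ref{good_localization}) applied on $V_1$ and the stalk identity (\ref{stalk_of_MO_V1^q}), the stalk at $x_1$ of the left-hand side is
\begin{equation*}
\bigl(\Diff_{V_1,+}^i(\mathscr{M}\mathcal{O}_{V_1}^q)\bigr)_{x_1} = \operatorname{Diff}^i_{\mathcal{O}_{V_1,x_1},+}\!\bigl(\mathscr{M}_x\,\mathcal{O}_{V_1,x_1}^q\bigr),
\end{equation*}
while the stalk at $x_1$ of the right-hand side, by (\ref{good_localization}) on $V$ and the standard behaviour of the extension of an ideal along $\pi^{\sharp}_{x_1}$, is
\begin{equation*}
\bigl(\Diff_{V,+}^i(\mathscr{M})\,\mathcal{O}_{V_1}\bigr)_{x_1} = \operatorname{Diff}^i_{\mathcal{O}_{V,x},+}(\mathscr{M}_x)\cdot\mathcal{O}_{V_1,x_1}.
\end{equation*}
An application of Corollary~\ref{behavoir_of_Diff_with_morphisms_of_local_rings}(1) to the homomorphism $\pi^{\sharp}_{x_1}$, the submodule $\mathscr{M}_x\subseteq\mathcal{O}_{V,x}$, and the index $i\in\{1,\ldots,q-1\}$ then yields the desired stalk inclusion.

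For (2), I would proceed identically: the stalk at $x_1$ of the left-hand side is $\operatorname{Diff}^i_{\mathcal{O}_{V_1,x_1}}(\mathcal{I}_x\cdot\mathcal{O}_{V_1,x_1})$, the stalk of the right-hand side is $\operatorname{Diff}^i_{\mathcal{O}_{V,x}}(\mathcal{I}_x)\cdot\mathcal{O}_{V_1,x_1}$, and the inclusion between them is precisely Corollary~\ref{behavoir_of_Diff_with_morphisms_of_local_rings}(2).

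There is essentially no obstacle in this argument since both ingredients are already at hand: the local inclusions of Corollary~\ref{behavoir_of_Diff_with_morphisms_of_local_rings}, and the good localization behaviour of the sheaves $\Diff_{V,+}^i(-)$, $\Diff_V^i(-)$ and of the total transform $\mathscr{M}\mathcal{O}_{V_1}^q$ recorded in (\ref{good_localization}) and (\ref{stalk_of_MO_V1^q}). The only mildly delicate point is the unambiguous identification of the stalk of an ideal of the form $\mathcal{J}\,\mathcal{O}_{V_1}$ at $x_1$ with $\mathcal{J}_{\pi(x_1)}\cdot\mathcal{O}_{V_1,x_1}$, but this is the standard formula for the extension of a quasi-coherent ideal along a morphism of schemes.
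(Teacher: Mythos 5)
Your proof is correct and follows exactly the route the paper takes: the paper simply states that this proposition is a consequence of Corollary~\ref{behavoir_of_Diff_with_morphisms_of_local_rings}, and you have correctly filled in the stalkwise globalization argument using the local formulas (\ref{good_localization}) and (\ref{stalk_of_MO_V1^q}).
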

\begin{proposition}%
	\label{Giraud's_lemma:_Global_version}
	Let $V\xleftarrow{\pi} V_{1}\supset H_{1}$ be the blowup of $V$ along an
	irreducible regular center $Z\subset V$. Given an
	$\mathcal{O}_{V}^{q}$-module $\mathscr{M}$ and an $\mathcal{O}_{V}$-ideal
	$\mathcal{J}$, we have the following inclusions:
	\begin{enumerate}[(2)]
		\item[(1)] $\mathcal{I}(H_{1})^{i}(\Diff_{V,+}^{i}(\mathscr
		{M})\mathcal{O}_{V_{1}})
		\subseteq\Diff_{V_{1},+}^{i}(\mathscr{M}\mathcal{O}_{V_{1}}^{q})$ for
		$i=1,\ldots,q-1$.
		\item[(2)] $\mathcal{I}(H_{1})^{i}(\Diff_{V}^{i}(\mathcal
		{J})\mathcal{O}_{V_{1}})
		\subseteq\Diff_{V_{1}}^{i}(\mathcal{J}\mathcal{O}_{V_{1}})$,
		$\forall i\geq0$.
	\end{enumerate}
\end{proposition}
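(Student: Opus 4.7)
The plan is to verify both inclusions stalk-wise on $V_1$ and then invoke the local computations of Corollary \ref{corollary_of_Giraud_lemma}. Since all the objects involved in (1) and (2) are quasi-coherent $\mathcal{O}_{V_1}$-ideals, it suffices to check the inclusions at each stalk $\xi \in V_1$. If $\xi \notin H_1$, then $\mathcal{I}(H_1)_{\xi} = \mathcal{O}_{V_1, \xi}$ and $\pi$ is a local isomorphism at $\xi$, so both inclusions degenerate to (trivial) equalities. Thus I only need to treat the case $\xi \in H_1$.

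Fix $\xi \in H_1$ and set $x := \pi(\xi) \in Z$. Using Proposition \ref{special_p-basis}, I choose an adapted $p$-basis $(z_1, \ldots, z_r, y_1, \ldots, y_s)$ of $\mathcal{O}_{V,x}$ such that the first $k$ elements $z_1, \ldots, z_k$ generate $\mathcal{I}(Z)_x$, where $k = \operatorname{codim}(Z, V)$. Writing $R := \mathcal{O}_{V,x}$ and $P := \mathcal{I}(Z)_x = (z_1, \ldots, z_k)$, the stalk $\mathcal{O}_{V_1, \xi}$ is a localization of one of the affine rings $R_j$ of the blowup of $\operatorname{Spec}(R)$ along $\operatorname{Spec}(R/P)$ described in \ref{blow-up}. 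After reordering, we may assume $j = 1$, so $\mathcal{O}_{V_1, \xi}$ is a localization of $R_1 = R[\tfrac{z_2}{z_1}, \ldots, \tfrac{z_k}{z_1}]$ and $\mathcal{I}(H_1)_{\xi} = z_1 \mathcal{O}_{V_1,\xi}$.

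Corollary \ref{corollary_of_Giraud_lemma} applied to this $R$, $P$, and $z_1$ (playing the role of $x_1$ in that corollary) yields exactly the inclusions
\begin{align*}
z_1^{i}\, \operatorname{Diff}_{R,+}^{i}(\mathscr{M}_x)\, R_1 &\subseteq \operatorname{Diff}_{R_1,+}^{i}(\mathscr{M}_x R_1^{q}), \quad i = 1, \ldots, q-1,\\
z_1^{i}\, \operatorname{Diff}_{R}^{i}(\mathcal{J}_x)\, R_1 &\subseteq \operatorname{Diff}_{R_1}^{i}(\mathcal{J}_x R_1), \quad i \geq 0.
\end{align*}
Localizing these inclusions at the prime of $R_1$ corresponding to $\xi$ and invoking the compatibility (\ref{localization}) — which identifies the localization of $\operatorname{Diff}$ with the $\operatorname{Diff}$ of the localization, and similarly for the evaluations on $\mathscr{M}_x$ and $\mathcal{J}_x$ — gives precisely the required stalk inclusions, since the stalks of $\mathscr{M}\mathcal{O}_{V_1}^{q}$ and $\mathcal{J}\mathcal{O}_{V_1}$ at $\xi$ are $\mathscr{M}_x \mathcal{O}_{V_1,\xi}^{q}$ and $\mathcal{J}_x \mathcal{O}_{V_1,\xi}$, respectively (see (\ref{stalk_of_MO_V1^q}) for the module case).

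The only nontrivial point is the reduction to the local chart: one must know that $\mathcal{O}_{V_1, \xi}$ is a localization of some $R_j$ with $\mathcal{I}(H_1)_{\xi}$ generated by the corresponding $z_j$. This is routine from the description of the blowup as $\operatorname{Proj}(\mathcal{O}_V[\mathcal{I}(Z) T])$, but it relies crucially on the existence of an adapted $p$-basis whose first entries generate $\mathcal{I}(Z)_x$, which is what Proposition \ref{special_p-basis} provides. Once that reduction is set up, both parts of the proposition follow from Corollary \ref{corollary_of_Giraud_lemma} essentially without further computation.
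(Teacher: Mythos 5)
Your proof is correct and follows essentially the route the paper intends: the paper offers no explicit argument, stating only that the proposition is "just a consequence of Corollary \ref{corollary_of_Giraud_lemma}", and your proposal fills in exactly the expected stalk-wise reduction to that corollary via (\ref{good_localization}), (\ref{localization}), and (\ref{stalk_of_MO_V1^q}). One minor overstatement: you call the adapted $p$-basis from Proposition \ref{special_p-basis} ``crucial'' for the chart reduction, but what is actually used there is only that $\mathcal{I}(Z)_x$ is generated by part of a regular system of parameters of $\mathcal{O}_{V,x}$ (a standard fact for a regular closed subscheme of a regular scheme), which is precisely the hypothesis of \ref{blow-up} and Corollary \ref{corollary_of_Giraud_lemma}; the $p$-basis structure plays no role at that step, though it is harmless to have it.
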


As a first application of the previous result, we show how they lead to
a proof of Theorem \ref{v_1(I)_is_at_most_v(I)} concerning the behavior
of the order of an ideal by blowups along regular centers.
\begin{proposition}%
	\label{the_order_function_of_the_transform_of_an_ideal}
	Let $V\xleftarrow{\pi} V_{1}\supset H_{1}$ be the blowup of $V$ along an
	irreducible regular center $Z\subset V$. Assume that the mapping
	$x\mapsto\nu_{x}(\mathcal{J})$ is constant along $Z$, say
	$\nu_{z}(\mathcal{J})=b$ for $z\in Z$. Let $\mathcal{J}_{1}$ be the
	$\mathcal{O}_{V_{1}}$-ideal such that
	$\mathcal{J}\mathcal{O}_{V_{1}}=\mathcal{I}(H_{1})^{b}\mathcal
	{J}_{1}$. Then
	for any $x_{1}\in V_{1}$,
	\begin{equation*}
		\nu_{x_{1}}(\mathcal{J}_{1})\leq\nu_{\pi(x_{1})}(
		\mathcal{J}).
	\end{equation*}
\end{proposition}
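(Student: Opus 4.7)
The plan is to combine two tools developed in the excerpt: the absolute Jacobian criterion (Proposition~\ref{Jacobian_criterion:_global_version}), which detects the order of an ideal via differential operators, and Giraud's inclusion under blowups (Proposition~\ref{Giraud's_lemma:_Global_version}(2)), which transports such differential witnesses from $V$ to $V_{1}$. Fix $x_{1}\in V_{1}$, set $x:=\pi(x_{1})$, $R:=\mathcal{O}_{V,x}$, $R_{1}:=\mathcal{O}_{V_{1},x_{1}}$, $\nu:=\nu_{x}(\mathcal{J})$, $m:=\nu_{x_{1}}(\mathcal{J}_{1})$, and let $\ell\in R_{1}$ be a local generator of $\mathcal{I}(H_{1})$ at $x_{1}$ (so $\ell$ is a unit if $x_{1}\notin H_{1}$ and part of a regular system of parameters if $x_{1}\in H_{1}$). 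The goal is to prove $m\leq\nu$.

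First, I would apply Proposition~\ref{Jacobian_criterion:_global_version} at $x$ with $n=\nu+1$: since $\nu_{x}(\mathcal{J})=\nu<\nu+1$, we have $\nu_{x}(\Diff_{V}^{\nu}(\mathcal{J}))=0$, equivalently $\Diff_{V}^{\nu}(\mathcal{J})\cdot R=R$ as an ideal, and in particular $1\in\Diff_{V}^{\nu}(\mathcal{J})\cdot R_{1}$. Applying Proposition~\ref{Giraud's_lemma:_Global_version}(2) with $i=\nu$ and passing to stalks at $x_{1}$ then yields
\begin{equation*}
\ell^{\nu}\,R_{1}\;=\;\ell^{\nu}\,\bigl(\Diff_{V}^{\nu}(\mathcal{J})\,R_{1}\bigr)\;\subseteq\;\Diff_{V_{1}}^{\nu}(\mathcal{J}\mathcal{O}_{V_{1}})_{x_{1}}.
\end{equation*}
On the other hand, the factorization $\mathcal{J}\mathcal{O}_{V_{1}}=\mathcal{I}(H_{1})^{b}\mathcal{J}_{1}$ together with $\mathcal{J}_{1}\subseteq\mathfrak{m}_{V_{1},x_{1}}^{\,m}$ gives $\mathcal{J} R_{1}\subseteq\mathfrak{m}_{V_{1},x_{1}}^{\,b\,\nu_{x_{1}}(\ell)+m}$, and property~\eqref{evaluating_differential_operators_at_powers_of_ideals} then produces
\begin{equation*}
\Diff_{V_{1}}^{\nu}(\mathcal{J}\mathcal{O}_{V_{1}})_{x_{1}}\;\subseteq\;\mathfrak{m}_{V_{1},x_{1}}^{\,b\,\nu_{x_{1}}(\ell)+m-\nu}.
\end{equation*}
Combining the two inclusions places $\ell^{\nu}\in\mathfrak{m}_{V_{1},x_{1}}^{\,b\,\nu_{x_{1}}(\ell)+m-\nu}$, and since the order of $\ell^{\nu}$ in the regular local ring $R_{1}$ is exactly $\nu\,\nu_{x_{1}}(\ell)$, a direct comparison of orders yields
\begin{equation*}
m\;\leq\;\nu+\nu_{x_{1}}(\ell)\,(\nu-b).
\end{equation*}

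To close the argument I would split according to whether $x_{1}\in H_{1}$. If $x_{1}\notin H_{1}$, then $\nu_{x_{1}}(\ell)=0$ and the estimate immediately reads $m\leq\nu$; if $x_{1}\in H_{1}$, then $\nu_{x_{1}}(\ell)=1$ and $x=\pi(x_{1})\in Z$, so the standing hypothesis on $Z$ forces $\nu=\nu_{x}(\mathcal{J})=b$, and the estimate again collapses to $m\leq\nu$. The crucial and slightly delicate point is precisely this last step: the displayed inequality $m\leq\nu+\nu_{x_{1}}(\ell)(\nu-b)$ is weaker than $m\leq\nu$ in general, and it is only the constant-order hypothesis on $Z$ that upgrades it to the desired sharp bound at points lying on the exceptional divisor. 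Everything else is routine bookkeeping of orders and the use of the localization-compatibility of $\Diff$ encapsulated in~\eqref{localization}.
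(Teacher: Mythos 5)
Your proof is correct, and it rests on the same two tools as the paper's — the absolute Jacobian criterion (Proposition~\ref{Jacobian_criterion:_global_version}) and Giraud's lemma (Proposition~\ref{Giraud's_lemma:_Global_version}(2)) — but assembles them by a genuinely different route. After reducing to $x_{1}\in H_{1}$, the paper composes two rounds of differential operators: it establishes $\Diff_{V_{1}}^{2b}(\mathcal{J}\mathcal{O}_{V_{1}})\supseteq\Diff_{V_{1}}^{b}(\mathcal{I}(H_{1})^{b}\Diff_{V}^{b}(\mathcal{J})\mathcal{O}_{V_{1}})$ by Giraud, and then identifies the stalk of the right-hand side at $x_{1}$ with $\operatorname{Diff}^{b}_{\mathcal{O}_{V_{1},x_{1}}}(\mathcal{I}(H_{1})_{x_{1}}^{b})=\mathcal{O}_{V_{1},x_{1}}$ by a second use of the Jacobian criterion, obtaining $\nu_{x_{1}}(\mathcal{J}\mathcal{O}_{V_{1}})\leq 2b$ and hence $\nu_{x_{1}}(\mathcal{J}_{1})\leq b$. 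You instead make a single pass: Giraud with $i=\nu$ places $\ell^{\nu}$ inside $\Diff_{V_{1}}^{\nu}(\mathcal{J}\mathcal{O}_{V_{1}})_{x_{1}}$, the factorization together with~(\ref{evaluating_differential_operators_at_powers_of_ideals}) upper-bounds that ideal by $\mathfrak{m}_{V_{1},x_{1}}^{b\nu_{x_{1}}(\ell)+m-\nu}$, and comparing orders of $\ell^{\nu}$ gives $m\leq\nu+\nu_{x_{1}}(\ell)(\nu-b)$. This trades the composition $\Diff^{2b}\supseteq\Diff^{b}\circ\Diff^{b}$ and the second Jacobian invocation for explicit order bookkeeping, and it handles the cases $x_{1}\in H_{1}$ and $x_{1}\notin H_{1}$ uniformly through the factor $\nu_{x_{1}}(\ell)\in\{0,1\}$ rather than dispatching the latter as trivial up front. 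The one delicate point, which you correctly isolate, is that the intermediate estimate $m\leq\nu+\nu_{x_{1}}(\ell)(\nu-b)$ is weaker than the target until the constant-order hypothesis along $Z$ is used to force $\nu=b$ at points over the exceptional divisor.
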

\begin{proof}
	We only need to prove the claim for points $x_{1}\in H_{1}$. In this case,
	we have to prove that $\nu_{x_{1}}(\mathcal{J}_{1})\leq b$ or, equivalently,
	$\nu_{x_{1}}(\mathcal{J}\mathcal{O}_{V_{1}})\leq2b$. To this end,
	by Proposition \ref{Jacobian_criterion:_global_version} it suffices to show that
	$\nu_{x_{1}}(\Diff_{V_{1}}^{2b}(\mathcal{J}\mathcal
	{O}_{V_{1}}))=0$. Observe
	that
	\begin{equation*}
		\Diff_{V_{1}}^{2b}(\mathcal{J}\mathcal{O}_{V_{1}})
		\supseteq\Diff _{V_{1}}^{b}( \Diff_{V_{1}}^{b}(
		\mathcal{J}\mathcal{O}_{V_{1}})) \stackrel{\text{(\ref{Giraud's_lemma:_Global_version})}} {\supseteq
		}\Diff_{V_{1}}^{b}( \mathcal{I}(H_{1})^{b}
		\Diff_{V}^{b}(\mathcal{J})\mathcal{O}_{V_{1}}).
	\end{equation*}
	Given that $\nu_{\pi(x_{1})}(\mathcal{J})=b$, Proposition \ref{Jacobian_criterion:_global_version} ensures that
	$(\Diff_{V}^{b}(\mathcal{J}))_{\pi(x_{1})}=\mathcal{O}_{V,\pi(x_{1})}$,
	whence
	\begin{equation*}
		(\Diff_{V_{1}}^{b}(\mathcal{I}(H)^{b}
		\Diff_{V}^{b}(\mathcal{J}) \mathcal{O}_{V_{1}}))_{x_{1}}=
		\operatorname{Diff}_{\mathcal
			{O}_{V_{1},x_{1}}}^{b}( \mathcal{I}(H_{1})_{x_{1}}^{b})
		\stackrel{\text{(\ref
				{Jacobian_criterion:_global_version})}} {=}\mathcal{O}_{V_{1},x_{1}}.
	\end{equation*}
	We conclude that
	$(\Diff_{V_{1}}^{2b}(\mathcal{J}\mathcal
	{O}_{V_{1}}))_{x_{1}}=\mathcal{O}_{V_{1},x_{1}}$,
	that is,
	$\nu_{x_{1}}(\Diff_{V_{1}}^{2b}(\mathcal{J}\mathcal
	{O}_{V_{1}}))=0$, as
	was to be proved.
\end{proof}

We add another application, which is of interest for the interpretation
of the transformations of modules as blowups stated in Corollary \ref{a-transform_as_sequence_of_blow-ups}(b).
\begin{proposition}%
	\label{the_invariant_a_in_the_blow-up}
	Let $\mathscr{M}\subseteq\mathcal{O}_{V}$ be an $\mathscr
	{O}_{V}^{q}$-module,
	and let $a$ be a positive integer. Let $Z\subset V$ be a closed irreducible
	and regular subscheme, and consider the blowup
	$V\leftarrow V_{1}\supset H_{1}$ of $V$ along $Z$, where $H_{1}$ denotes
	the exceptional divisor. Then $Z$ is permissible for
	$(\mathscr{M},a)$ if and only if $H_{1}$ is permissible for
	$(\mathscr{M}\mathcal{O}_{V_{1}}^{q},a)$.
\end{proposition}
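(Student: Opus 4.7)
My plan is to prove both implications separately, translating each permissibility condition into an inclusion of differential ideals via Proposition \ref{characterization_of_permissible_centers_for_a_pair_(M,a)}(5), and then transporting between $V$ and $V_1$ by means of Giraud's lemma. The case $\mathscr{M}\subseteq\mathcal{O}_V^q$ is trivial (both conditions are automatic), so I may assume that $\mathscr{M}$, and hence $\mathscr{M}\mathcal{O}_{V_1}^q$, is nontrivial; the latter follows since the generic stalk of $\mathscr{M}\mathcal{O}_{V_1}^q$ coincides with $\mathscr{M}_\xi\cdot K(V)^q$, and by normality at each local ring this forces $\mathscr{M}\subseteq\mathcal{O}_V^q$ if it were contained in $\mathcal{O}_{V_1}^q$.

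For the easy direction ($Z$ permissible implies $H_1$ permissible), I would simply pull back the inclusion $\mathscr{M}\subseteq\mathcal{O}_V^q+\mathcal{I}(Z)^{qa}$. Locally, writing a section of $\mathscr{M}$ as $\lambda^q+b$ with $b\in\mathcal{I}(Z)^{qa}$, and multiplying by any $s\in\mathcal{O}_{V_1}^q$, the product decomposes as $\varphi(\lambda)^q s\in\mathcal{O}_{V_1}^q$ plus $\varphi(b)s\in\mathcal{I}(Z)^{qa}\mathcal{O}_{V_1}^q$. Since $\mathcal{I}(Z)\mathcal{O}_{V_1}=\mathcal{I}(H_1)$ by definition of the blowup, this gives $\mathscr{M}\mathcal{O}_{V_1}^q\subseteq\mathcal{O}_{V_1}^q+\mathcal{I}(H_1)^{qa}$, which is the condition for $H_1$ to be permissible for $(\mathscr{M}\mathcal{O}_{V_1}^q,a)$.

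For the converse, I would first apply Proposition \ref{characterization_of_permissible_centers_for_a_pair_(M,a)}(5) to rewrite the hypothesis as $\Diff_{V_1,+}^{q-1}(\mathscr{M}\mathcal{O}_{V_1}^q)\subseteq\mathcal{I}(H_1)^{q(a-1)+1}$. The crucial sharpening is supplied by Proposition \ref{the_largest_integer_n_such_that_Diff(M)_is_included_in_I(H)^n}: since the largest exponent of $\mathcal{I}(H_1)$ containing this ideal must be a multiple of $q$, the inclusion upgrades to $\Diff_{V_1,+}^{q-1}(\mathscr{M}\mathcal{O}_{V_1}^q)\subseteq\mathcal{I}(H_1)^{qa}$. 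Combining with the lower sandwich
\begin{equation*}
\mathcal{I}(H_1)^{q-1}\bigl(\Diff_{V,+}^{q-1}(\mathscr{M})\mathcal{O}_{V_1}\bigr)\subseteq\Diff_{V_1,+}^{q-1}(\mathscr{M}\mathcal{O}_{V_1}^q)
\end{equation*}
from Proposition \ref{Giraud's_lemma:_Global_version}(1), and cancelling the invertible ideal $\mathcal{I}(H_1)^{q-1}$, I arrive at $\Diff_{V,+}^{q-1}(\mathscr{M})\mathcal{O}_{V_1}\subseteq\mathcal{I}(H_1)^{q(a-1)+1}$.

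To descend this containment from $V_1$ back to $V$, I would evaluate at the generic point $\zeta$ of $H_1$, whose image $\pi(\zeta)$ is the generic point $z$ of $Z$. The local ring $\mathcal{O}_{V_1,\zeta}$ is the DVR of the $m_{V,z}$-adic order valuation on $K(V)=K(V_1)$, a standard property of the blowup of a regular local ring at its maximal ideal; consequently $m_{V_1,\zeta}^n\cap\mathcal{O}_{V,z}=m_{V,z}^n$ for every $n$. Intersecting the preceding containment with $\mathcal{O}_{V,z}$ yields $\Diff_{V,+}^{q-1}(\mathscr{M})_z\subseteq m_{V,z}^{q(a-1)+1}$, i.e., $\nu_z(\Diff_{V,+}^{q-1}(\mathscr{M}))\geq q(a-1)+1$, and the equivalence of conditions (4) and (1) in Proposition \ref{characterization_of_permissible_centers_for_a_pair_(M,a)} then delivers the permissibility of $Z$ for $(\mathscr{M},a)$. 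The main obstacle I anticipate is precisely the exponent arithmetic at the cancellation step: without the ``multiple of $q$'' upgrade from Proposition \ref{the_largest_integer_n_such_that_Diff(M)_is_included_in_I(H)^n}, the direct sandwich would produce only $\mathcal{I}(H_1)^{q(a-2)+2}$, too weak to recover the desired containment on $V$.
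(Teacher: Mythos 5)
Your proof is correct and follows essentially the same route as the paper: the easy direction by pulling back the inclusion $\mathscr{M}\subseteq\mathcal{O}_V^q+\mathcal{I}(Z)^{qa}$ (the paper invokes Proposition \ref{Well_definition_of_the_transforms} instead of inlining the same computation), and the converse by combining the ``multiple of $q$'' upgrade from Proposition \ref{the_largest_integer_n_such_that_Diff(M)_is_included_in_I(H)^n} with the Giraud sandwich of Proposition \ref{Giraud's_lemma:_Global_version}, then descending to $V$ via Proposition \ref{characterization_of_permissible_centers_for_a_pair_(M,a)}. You spell out the descent step (passing through the generic point of $H_1$ and the order valuation at the generic point of $Z$) that the paper leaves implicit, and you also handle the trivial case $\mathscr{M}\subseteq\mathcal{O}_V^q$ explicitly; both are sensible bits of extra care but do not change the essential argument.
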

\begin{proof}
	If $Z$ is permissible for $(\mathscr{M},a)$, then $H_{1}$ is permissible
	for $(\mathscr{M}\mathcal{O}_{V_{1}}^{q},a)$ by Proposition \ref{Well_definition_of_the_transforms}. Conversely, if $H_{1}$ is permissible
	for $(\mathscr{M}\mathcal{O}_{V_{1}}^{q},a)$, then by Proposition \ref{the_largest_integer_n_such_that_Diff(M)_is_included_in_I(H)^n}
	$\Diff_{V_{1},+}^{q-1}(\mathscr{M}\mathcal{O}_{V_{1}}^{q})\subseteq
	\mathcal{I}(H_{1})^{qa}$. This implies by Proposition \ref{Giraud's_lemma:_Global_version} that
	$\mathcal{I}(H_{1})^{q-1}(\Diff_{V,+}^{q-1}(\mathscr{M})\mathcal{O}_{V_{1}})
	\subseteq\mathcal{I}(H_{1})^{qa}$, that is,
	$\Diff_{V,+}^{q-1}(\mathscr{M})\mathcal{O}_{V_{1}}\subseteq \mathcal
	{I}(H_{1})^{q(a-1)+1}$.
	Therefore
	$\Diff_{V,+}^{q-1}(\mathscr{M})\subseteq\mathcal{I}(Z)^{q(a-1)+1}$, and
	hence by Proposition \ref{characterization_of_permissible_centers_for_a_pair_(M,a)} $Z$ is
	permissible
	for $(\mathscr{M},a)$.
\end{proof}
\begin{parrafo}%
	\label{par416}
	Fix an $\mathcal{O}_{V}^{q}$-submodule
	$\mathscr{M}\subseteq\mathcal{O}_{V}$. Then for each $x\in V$, we
	get a
	local ring $\mathcal{O}_{V,x}$ and an $\mathcal{O}_{V,x}^{q}$-submodule
	$\mathscr{M}_{x}\subset\mathcal{O}_{V,x}$. Therefore we have two numerical
	values
	$\nu_{x}^{(q)}(\mathscr{M}):=\nu_{m_{V,x}}^{(q)}(\mathscr{M}_{x})$ and
	$\eta_{x}(\mathscr{M}):=\eta_{m_{V,x}}(\mathscr{M}_{x})$; see
	Section \ref{On_the_definition_of_invariants_for_points_in_F(X)}. We
	get two numerical
	functions $V\to\mbvtex{N}_{0}$, namely
	$x\mapsto\nu_{x}^{(q)}(\mathscr{M})$ and
	$x\mapsto\eta_{x}(\mathscr{M})$. These functions are intrinsic to the
	$\mathcal{O}_{V}^{q}$-subalgebra generated by $\mathscr{M}$, as was observed
	in Lemma \ref{compatibility_of_the_q-order_with_the_weak_equivalence} and
	Proposition \ref{Diff(M)=Diff(O^q[M])}. Note that
	\begin{equation*}
		\eta_{x}(\mathscr{M})=\min\{\nu_{x}(
		\Diff_{V,+}^{i}(\mathscr{M}))+i: i=1,\ldots,q-1\},
	\end{equation*}
	and from this expression we see that the function
	$x\mapsto\eta_{x}(\mathscr{M})$ is upper-semicontinuous. Indeed, by
	Proposition \ref{Jacobian_criterion:_global_version} it is the minimum of
	upper semicontinuous
	functions.
	
	Lemma \ref{Alternative_Lemma} shows that
	$\nu_{x}^{(q)}(\mathscr{M})\leq\eta_{x}(\mathscr{M})$ for all
	$x\in V$, and the equality holds, for instance, when
	$\nu_{x}^{(q)}(\mathscr{M})$ is not divisible by $q$. The same lemma implies
	that if $V$ is a regular variety over a perfect field, then
	$\nu_{x}^{(q)}(\mathscr{M})= \eta_{x}(\mathscr{M})$ at any closed point
	$x\in V$. This shows that if for some nonclosed point $y$, it happens that
	$\nu_{y}^{(q)}(\mathscr{M})< \eta_{y}(\mathscr{M})$, then the function
	$x\mapsto\nu_{x}^{(q)}(\mathscr{M})$ cannot be upper-semicontinuous, since
	any two upper-semicontinuous functions on $V$ that coincide at closed points
	have to be the same. As an example of this phenomenon, we mention the Whitney
	umbrella. This is defined by the $(k[X,Y])^{q}$-submodule $M$ of
	$k[X,Y]$ generated by $X^{q}Y$ (here $k$ is a perfect field). We readily
	check that $\eta_{(X)}(M)=q+1$, whereas $\nu_{(X)}^{(q)}(M)=q$.
\end{parrafo}
The next proposition provides another description for
$\operatorname{Sing}(\mathscr{M},a)$ (see Proposition \ref{d(F(X))_is_closed}), this time by using $\eta_{x}(\mathscr{M})$.
\begin{proposition}%
	\label{Describing_Sing(M,a)_with_eta(M)}
	Let $\mathscr{M}\subset\mathcal{O}_{V}$ be an $\mathcal{O}_{V}^{q}$-module.
	Then for any integer $a\geq0$, we have that
	$\{x\in V:\eta_{x}(\mathscr{M})\geq qa\}=\operatorname
	{Sing}(\mathscr{M},a)$ (Definition \ref{permissible_center_for_(M,a)}). In particular, an irreducible regular
	subscheme $Z\subset V$ is permissible for $(\mathscr{M},a)$ if and only
	if $\eta_{x}(\mathscr{M})\geq qa$ for all $x\in Z$ (see Definition \ref{permissible_center_for_(M,a)}).
\end{proposition}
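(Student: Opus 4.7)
The plan is to reduce the statement to a pointwise local assertion and then derive it directly from Lemma \ref{Alternative_Lemma}. Fix $x\in V$, and set $(R,\m):=(\mathcal{O}_{V,x},m_{V,x})$ and $M:=\mathscr{M}_x\subset R$. By (\ref{good_localization}), $\eta_x(\mathscr{M})=\eta_\m(M)$, and by definition of $\operatorname{Sing}(\mathscr{M},a)$, $x\in\operatorname{Sing}(\mathscr{M},a)$ if and only if $M\subseteq R^q+\m^{qa}$, equivalently $\nu_\m^{(q)}(M)\geq qa$. Thus it suffices to establish the local equivalence
\begin{equation*}
\eta_\m(M)\geq qa \iff \nu_\m^{(q)}(M)\geq qa.
\end{equation*}

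First I would handle the easy direction: if $\nu_\m^{(q)}(M)\geq qa$, then for every $f\in M$ we have $\nu_\m^{(q)}(f)\geq qa$, and the inequality $\nu_\m^{(q)}(f)\leq\eta_\m(f)$ given by Lemma \ref{Alternative_Lemma} yields $\eta_\m(f)\geq qa$. Taking the minimum over $f\in M$ (using that $\eta_\m(M)=\min\{\eta_\m(f):f\in M\}$, which follows from (\ref{Diff(A+B)=Diff(A)+Diff(B)}) and the definition of $\eta_\m$) gives $\eta_\m(M)\geq qa$.

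For the converse, I would argue by contrapositive: assume $\nu_\m^{(q)}(M)<qa$, so there exists $f\in M$ with $\nu_\m^{(q)}(f)<qa$. Writing $\nu_\m^{(q)}(f)=qa'+b$ with $a'<a$ and $0\leq b<q$, Lemma \ref{Alternative_Lemma} guarantees (in all three cases) that $\eta_\m(f)<q(a'+1)\leq qa$. Since $\eta_\m(M)\leq \eta_\m(f)$, we conclude $\eta_\m(M)<qa$, as desired. The last assertion of the proposition, concerning permissible centers $Z\subset V$, then follows by applying this pointwise equivalence together with condition (2) (or (3)) of Proposition \ref{characterization_of_permissible_centers_for_a_pair_(M,a)}.

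The main technical content is entirely absorbed by Lemma \ref{Alternative_Lemma}, which already packages the delicate analysis of the $q$-expansion (the three case distinction on whether the minimizing term is pure in the $\mathbf{x}$-variables, mixed, or purely in the $\mathbf{y}$-variables). No additional obstacle is expected; the only point requiring care is the correct identification $\eta_\m(M)=\min_{f\in M}\eta_\m(f)$, which is immediate from the additivity (\ref{Diff(A+B)=Diff(A)+Diff(B)}) of $\Diff_{R,+}^i$ applied to generating sets.
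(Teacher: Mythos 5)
Your proposal is correct and follows essentially the same route as the paper: the paper's proof likewise reduces the statement to the local equivalence $\eta_\mvtex(M)\geq qa \iff \nu_\mvtex^{(q)}(M)\geq qa$ and derives it directly from Lemma \ref{Alternative_Lemma}, in particular from the two-sided bound $\nu_\mvtex^{(q)}(f)\leq\eta_\mvtex(f)<q(a+1)$. You simply make explicit the intermediate steps (the reduction via (\ref{good_localization}), the identity $\eta_\mvtex(M)=\min_{f\in M}\eta_\mvtex(f)$, and the contrapositive argument) that the paper leaves implicit.
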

\begin{proof}
	It follows from Lemma \ref{Alternative_Lemma} that
	$\{x\in V:\eta_{x}(\mathscr{M})\geq qa\}=\{x\in V:\nu_{x}^{(q)}(
	\mathscr{M})\geq qa\}$, and the latter is clearly
	$\operatorname{Sing}(\mathscr{M},a)$ by the definition of
	$\eta_{x}(\mathscr{M})$. The last conclusion is clear from the definition.
\end{proof}
\begin{remark}
	This proposition, together with Proposition \ref{the_invariant_a_in_the_blow-up} and Corollary \ref{a-transform_as_sequence_of_blow-ups}, covers the proof of (1) and
	(2) of Theoerem \ref{invariants_1}.
\end{remark}
\begin{parrafo}%
	\label{The_roll_played_by_eta}%
	\emph{The role played by $\eta_{x}(\mathscr{M})$ in the description of
		$\delta(F_{d}(X))$ and in the definition of transformation of
		$\mathcal{O}_{V}^{q}$-modules.} Let
	$\mathscr{M}\subset\mathcal{O}_{V}$ be an $\mathcal
	{O}_{V}^{q}$-module, let
	$\delta:X\to V$ be the $V$-scheme attached to $\mathscr{M}$, and set
	$d:=[K(X):K(V)]$, the generic rank. Then
	$\delta(F_{d}(X))=\operatorname{Sing}(\mathscr{M},1)=\{x\in V:\eta_{x}(
	\mathscr{M})\geq q\}$, where the first equality was stated in Proposition \ref{0309}, and the second one follows from Proposition \ref{Describing_Sing(M,a)_with_eta(M)} with $a=1$. Therefore we can use
	the upper-semicontinuous function $x\mapsto\eta_{x}(\mathscr{M})$ to stratify
	$F_d(X)\cong\delta(F_{d}(X))$. In this regard, it is natural to
	blow up $V$ along a regular center, say $Z$, included in the closed set
	where the semi-continuous function $\eta_{x}(\mathscr{M})$ attains its
	maximum value, say $qa+b$ with $0\leq b<q$. We are only interested in the
	case where $a\geq1$ (which means that $F_{d}(X)\neq\emptyset$).
	
	Let $V\xleftarrow{\pi} V_{1}$ be the blowup along such center $Z$, and
	let $H_{1}\subset V_{1}$ be the exceptional divisor. Proposition \ref{Describing_Sing(M,a)_with_eta(M)} implies that $Z$ is permissible
	for $(\mathscr{M},a)$ but not for $(\mathscr{M},a+1)$. Then Proposition \ref{the_invariant_a_in_the_blow-up} implies that $H_{1}$ is permissible
	for $(\mathscr{M}\mathcal{O}_{V_{1}}^{q},a)$ but not for
	$(\mathscr{M}\mathcal{O}_{V_{1}}^{q},a+1)$. Finally, Corollary \ref{a-transform_as_sequence_of_blow-ups} shows that
	$\mathscr{M}_{1}^{(a)}:=(\mathcal{O}_{V_{1}}^{q}+\mathscr{M}\mathcal
	{O}_{V_{1}}^{q}:F^{e}(
	\mathcal{I}(H_{1})^{a}))$ is the ``right'' notion of transform of
	$\mathscr{M}$ in the sense that if $\delta_{a}:X_{a}\to V_{1}$ is the
	associated
	$V_{1}$-scheme in $\mathscr{C}_{q}(V_{1})$, then
	$\delta_a(F_{d}(X_{a}))$ does not include $H_{1}$. However, we should mention
	that with this notion of transformation, the function $\eta$ does not
	always satisfy the pointwise inequality:
	$\eta_{\pi(x_{1})}(\mathscr{M})\geq\eta_{x_{1}}(\mathscr{M}_{a})$ for
	$x_{1}\in V_{1}$, that is, there is no analog to Proposition \ref{the_order_function_of_the_transform_of_an_ideal}; see the example
	discussed in Remark \ref{eta_does_not_satisfy_point-wise_inequality}. This
	problem is due essentially to the fact that although we may factorize
	$F^{e}(\mathcal{I}(H_{1})^{a})$ from
	$\mathscr{M}\mathcal{O}_{V_{1}}^{q}$, we cannot factorize
	$\mathcal{I}(H_{1})^{b}$ in a natural way, as the latter is an ideal and
	$\mathscr{M}\mathcal{O}_{V_{1}}^{q}$ is an $\mathcal{O}_{V_{1}}^{q}$-module.
	However, we seem to recover $b$, as an invariant on the $q$-modules, later
	on, when we consider logarithmic differential operators.
\end{parrafo}

\section{On $q$-Differential Collections of Ideals}
\label{Section_on_q-Diff}

In this final section, we introduce in \S\ref{Definitions_and_basic_properties} the notion of $q$-differential collection
of ideals and their transformations by blowups. These collections have
attached numerical invariants satisfying the pointwise inequality. In
\S\ref{Logarithmic_differential_invariants}, we study
$q$-differential collections
that arise from evaluating logarithmic differential operators with respect
to a set of hypersurfaces with normal crossings on
$\mathcal{O}_{V}^{q}$-submodules $\mathscr{M}\subset\mathcal
{O}_{V}$. As
a result, we obtain invariants associated with $\mathcal{O}_{V}^{q}$-modules
and hypersurfaces with normal crossings, and these invariants satisfy the
pointwise inequality.

Along the section,  $V$ denotes, as usual in this paper, an irreducible
$F$-finite regular scheme, and $q=p^{e}$ a fixed power of $p$.

\subsection{Definitions and Basic Properties}
\label{Definitions_and_basic_properties}

We introduce the main algebraic object of the section.
\begin{definition}%
	\label{qdiffcoll}
	A collection $\mathcal{G}=(\mathcal{I}_{1},\ldots,\mathcal
	{I}_{q-1})$ of
	$q-1$ $\mathcal{O}_{V}$-ideals is said to be \emph{$q$-differential} if
	\begin{equation*}
		\Diff_{V}^{j}(\mathcal{I}_{i})\subseteq
		\mathcal{I}_{i+j} \quad \text
		{whenever } i+j\leq q-1.
	\end{equation*}
	In particular,
	$\mathcal{I}_{i}=\Diff_{V}^{0}(\mathcal{I}_{i})\subseteq\Diff_{V}^{1}(
	\mathcal{I}_{i})\subseteq\mathcal{I}_{i+1}$ for all $i=1,\ldots
	,q-2$. We
	say that $\mathcal{G}$ is nonzero and write $\mathcal{G}\neq0$ if
	$\mathcal{I}_{q-1}\neq0$. If
	$\mathcal{G}'=(\mathcal{I}_{1}',\ldots,\mathcal{I}_{q-1}')$ is a second
	$q$-differential collection of $\mathcal{O}_{V}$-ideals, then we write
	$\mathcal{G}\subseteq\mathcal{G'}$ whenever
	$\mathcal{I}_{i}\subseteq\mathcal{I}_{i}'$ for all $1\leq i\leq q-1$.
\end{definition}

As a main example (we will see others later, using logarithmic differential
operators), given an $\mathcal{O}_{V}^{q}$-module $\mathscr{M}$, we set
\begin{equation*}
	\mathcal{G}({\mathscr{M}}):=(\Diff_{V,+}^{1}(
	\mathscr{M}),\Diff_{V,+}^{2}( \mathscr{M}),\ldots,
	\Diff_{V,+}^{q-1}(\mathscr{M})).
\end{equation*}
This collection is $q$-differential, as the composition of a differential
operator of degree $i$ with one of degree $j$ is another one of degree
$i+j$. Note that by Proposition \ref{characterization_of_O^q}
$\mathcal{G}({\mathscr{M}})\neq0$ if and only if
$\mathscr{M}\nsubseteq\mathcal{O}_{V}^{q}$. Note also that
$\mathcal{G}(\mathscr{M})=\mathcal{G}(\mathcal{O}_{V}^{q}[\mathscr
{M}])$ by
Proposition \ref{Diff(M)=Diff(O^q[M])}.
\begin{example}
	Let $V=\Spec(\mathbb{F}_{3}[x_{1},x_{2},x_{3},x_{4},x_{5}])$ and
	$q=p=3$. We consider
	$\mathscr{M}:=\mathcal{O}_{V}^{3}\cdot x_{1}x_{2}x_{3}x_{4}x_{5}$. Then
	\begin{equation*}
		\mathcal{G}(\mathscr{M})= \biggl( \biggl\langle\frac
		{x_{1}x_{2}x_{3}x_{4}x_{5}}{x_{j}}: j=1,
		\ldots,5 \biggr\rangle, \biggl\langle\frac{x_{1}x_{2}x_{3}x_{4}x_{5}}{x_{i}x_{j}}: 1\leq i<j \leq5 \biggr
		\rangle \biggr)
	\end{equation*}
\end{example}

\begin{definition}%
	\label{eta}
	Given a $q$-differential collection
	$\mathcal{G}=(\mathcal{I}_{1},\ldots,\mathcal{I}_{q-1})$, for each
	$x\in V$, we define
	\begin{align*}
		\eta_{x}(\mathcal{G})&:=\min\{\nu_{x}(
		\mathcal{I}_{i})+i:1\leq i \leq q-1\}\in\mathbb{N}.
	\end{align*}
\end{definition}
Observe that if $\mathcal{G}\neq0$, then $\mathcal{I}_{q-1}\neq0$, and
so
$\eta_{x}(\mathcal{G})\leq\nu_{x}(\mathcal{I}_{q-1})+q-1<\infty$ for
all $x\in V$. Note also that $\eta_{x}(\mathcal{G})\geq1$ for all
$x\in V$. Finally, the function $x\mapsto\eta_{x}(\mathcal{G})$ from
$V$ to $\mbvtex{N}_{0}$ (with the usual order topology) is upper-semicontinuous
since it is the minimum of the upper-semicontinuous functions
$x\mapsto\nu_{x}(\mathcal{I}_{i})+i$, $i=1,\ldots,q-1$ (see Proposition \ref{Jacobian_criterion:_global_version}).

In the case $\mathcal{G}=\mathcal{G}(\mathscr{M})$, we obtain
$\eta_{x}(\mathcal{G}(\mathscr{M}))=\eta_{x}(\mathscr{M})$ (see
\ref{par416}).
\begin{example}
	In the previous example the maximum value of
	$\eta_{x}(\mathcal{G}(\mathscr{M}))=\eta_{x}(\mathscr{M})$ is $5$,
	and this
	maximum is attained only when $x$ is the origin.
\end{example}

We will use the next technical lemma to show that the function
$\eta_{x}(\mathcal{G})$ satisfies the pointwise inequality, with a suitable
definition of transformation of $q$-differential collection by blowups.
\begin{lemma}%
	\label{technical_lemma_1}
	Let $\mathcal{G}=(\mathcal{I}_{1},\ldots,\mathcal{I}_{q-1})$ be a nonzero
	$q$-differential collection of ideals on $V$. Given $x\in V$, we write
	$\eta_{x}(\mathcal{G})=aq+b$ with $0\leq b<q$. Then there is an index
	$i\in\{1,\ldots,q-1\}$ such that $i\geq b $ and
	$\nu_{x}(\mathcal{I}_{i})=\eta_{x}(\mathcal{G})-i$.
\end{lemma}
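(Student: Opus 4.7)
The plan is to choose $i^* \in \{1, \ldots, q-1\}$ \emph{maximally} with $\nu_x(\mathcal{I}_{i^*}) + i^* = \eta_x = aq + b$, and then argue by contradiction that $i^* \geq b$, which furnishes the required index. Suppose $i^* < b$, and set $j := b - i^* \in \{1, \ldots, q-1-i^*\}$. Since $i^* + j = b \leq q - 1$, the $q$-differential inclusion $\Diff_V^j(\mathcal{I}_{i^*}) \subseteq \mathcal{I}_b$ gives $\nu_x(\mathcal{I}_b) \leq \nu_x(\Diff_V^j(\mathcal{I}_{i^*}))$. The lower bound $\nu_x(\mathcal{I}_b) \geq \eta_x - b = aq$ is immediate from the definition of $\eta_x$, so it suffices to prove the matching upper bound $\nu_x(\Diff_V^j(\mathcal{I}_{i^*})) \leq aq$; this forces $\nu_x(\mathcal{I}_b) = aq$, hence $\nu_x(\mathcal{I}_b) + b = \eta_x$ with $b > i^*$, contradicting the maximality of $i^*$.

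To establish the upper bound I pass to the stalk at $x$, fix an adapted $p$-basis (Proposition \ref{special_p-basis}), and work with the Taylor operators $D_\gamma$. Since $\nu_x(\mathcal{I}_{i^*}) = aq + j$, the Jacobian criterion (Proposition \ref{Jacobian_criterion:_global_version}) provides $f \in \mathcal{I}_{i^*,x}$ and a multi-index $\gamma$ with $|\gamma| = aq + j$ such that $D_\gamma(f)$ is a unit in $\mathcal{O}_{V,x}$. The crux is to decompose $\gamma = \gamma_1 + \gamma_2$ as multi-indices with $|\gamma_1| = aq$, $|\gamma_2| = j$, and $\binom{\gamma}{\gamma_1} \not\equiv 0 \pmod{p}$. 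Granted this, the composition formula (\ref{composition_of_Taylor_operators}) gives $D_{\gamma_1}(D_{\gamma_2}(f)) = \binom{\gamma}{\gamma_1}\, D_\gamma(f)$, still a unit, so $\Diff_V^{aq}(\Diff_V^j(\mathcal{I}_{i^*})) \not\subseteq \m_x$; a second application of the Jacobian criterion, now to $\Diff_V^j(\mathcal{I}_{i^*})$, then yields the desired $\nu_x(\Diff_V^j(\mathcal{I}_{i^*})) \leq aq$.

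The main obstacle is the combinatorial splitting of $\gamma$. By Lucas's theorem (\ref{Lucas_theorem}), $\binom{\gamma}{\gamma_1} \not\equiv 0 \pmod{p}$ is equivalent to saying that $\gamma_1 + \gamma_2 = \gamma$ is carried out digit-by-digit in each coordinate without carries in base $p$. Because $|\gamma_2| = j < q = p^e$, every entry $\gamma_2^{(k)}$ lies in $[0,j]$, so $\gamma_2$ only uses base-$p$ positions $< e$; setting $M_t := \sum_k (\gamma^{(k)})_t$ for $t < e$, the congruence $|\gamma| \equiv j \pmod{q}$ forces $\sum_{t < e} M_t\, p^t \equiv j \pmod{q}$, and in particular $M_0 \equiv j \pmod{p}$. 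A short induction on $e$ -- at each step peeling off the lowest base-$p$ digit of the remaining target, which is supplied by the residue of $M_0$ mod $p$, and rolling the unused units at position $0$ up as additional stock at position $1$ -- produces integers $S_t \in [0, M_t]$ with $\sum_t S_t\, p^t = j$; distributing each $S_t$ as $\sum_k e_{k,t}$ with $e_{k,t} \in [0, (\gamma^{(k)})_t]$ then yields the required $\gamma_2$, and $\gamma_1 := \gamma - \gamma_2$ inherits the no-carry condition.
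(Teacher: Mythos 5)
Your skeleton (choose $i^*$ maximal with $\nu_x(\mathcal{I}_{i^*}) + i^* = \eta_x(\mathcal{G})$, suppose $i^* < b$, and derive a contradiction by forcing $\nu_x(\mathcal{I}_b)+b=\eta_x(\mathcal{G})$) is the same as the paper's; where you diverge is in the proof of the upper bound $\nu_x(\Diff_V^{j}(\mathcal{I}_{i^*})) \leq qa$ with $j:=b-i^*$. The paper quotes Case~1 of Lemma~\ref{Alternative_Lemma}, which packages the digit-splitting inside a $q$-expansion argument; you instead argue directly with Taylor operators, the composition formula (\ref{composition_of_Taylor_operators}), the Jacobian criterion (Proposition~\ref{Jacobian_criterion:_global_version}), and Lucas's theorem. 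That is a legitimate alternative route, and the non-combinatorial steps all check out: the existence of $f$ and $\gamma$ with $|\gamma|=aq+j$ and $D_\gamma(f)$ a unit, the observation that $\gamma_2$ can only use base-$p$ positions $<e$, the reduction to finding $S_t\in[0,M_t]$ with $\sum_{t<e}S_tp^t=j$, and the digit-wise distribution converting the $S_t$ into $\gamma_1,\gamma_2$ with $\binom{\gamma}{\gamma_1}\not\equiv 0 \pmod p$.

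The gap is in the induction that is supposed to produce the $S_t$. Setting $S_0 := j\bmod p$ and \emph{rolling the unused stock $(M_0-S_0)/p$ up into position $1$} is unsound: the rolled-over units are not backed by genuine base-$p$ digits of $\gamma$ at position $1$, so the recursion may return $S_1\leq M_1+(M_0-S_0)/p$ while $S_1>M_1$, and then no digit-wise distribution $S_1=\sum_k e_{k,1}$ with $e_{k,1}\leq(\gamma^{(k)})_1$ exists. Concretely, take $p=2$, $q=4$, $\gamma=(4,1,1,1)$, so $|\gamma|=7=1\cdot 4+3$, $a=1$, $j=3$, $M_0=3$, $M_1=0$: your procedure gives $S_0=1$, rolls up one unit, and then demands $S_1=1>M_1=0$, yet the desired decomposition does exist, namely $S_0=3$, $S_1=0$, i.e.\ $\gamma_1=(4,0,0,0)$, $\gamma_2=(0,1,1,1)$. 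The repair is to take $S_0$ as \emph{large} as possible instead of as small as possible: set $S_0:=\min(M_0,j)$, which is automatically $\equiv j\pmod p$ since $M_0\equiv j\pmod p$, and recurse on $M_1,\dots,M_{e-1}$ with reduced target $(j-S_0)/p$; using $\sum_{t<e}M_tp^t\geq j$ and the congruence modulo $q$ one checks the inductive hypotheses are preserved, so this greedy terminates with $S_t\leq M_t$ for all $t$. With this repair your argument is complete and gives a genuinely self-contained alternative to the paper's appeal to Lemma~\ref{Alternative_Lemma}.
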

\begin{proof}
	By definition $\nu_{x}(\mathcal{I}_{i})\geq\eta_{x}(\mathcal
	{G})-i$ for
	all $i=1,\ldots,q-1$, and for some $i$, the equality holds. Let
	$i_{0}$ be the largest $i$ such that
	$\nu_{x}(\mathcal{I}_{i})= \eta_{x}(\mathcal{G})-i$. The claim of
	the lemma
	is that $i_{0}\geq b$. This is obvious if $b=0$, so we assume that
	$b>0$. Suppose on the contrary that $i_{0}<b$, so that
	$\nu_{x}(\mathcal{I}_{i_{0}})=qa+(b-i_{0})$ and $0< b-i_{0}<q$. We apply
	Lemma \ref{Alternative_Lemma} with an element
	$f\in(\mathcal{I}_{i_{0}})_{x}$ of order $qa+(b-i_{0})$. Note that we are
	in Case 1 of that lemma, and hence
	$\nu_{x}(\Diff_{V}^{b-i_{0}}(\mathcal{I}_{i_{0}}))=qa$. Finally,
	\begin{equation*}
		qa=\eta_{x}(\mathcal{G})-b\leq\nu_{x}(
		\mathcal{I}_{b})\leq\nu_{x}( \Diff_{V}^{b-i_{0}}(
		\mathcal{I}_{i_{0}}))=qa,
	\end{equation*}
	where the second inequality follows from the inclusion
	$\Diff_{V}^{b-i_{0}}(\mathcal{I}_{i_{0}})\subseteq\mathcal{I}_{b}$. This
	says that $\nu_{x}(\mathcal{I}_{b})=\eta_{x}(\mathcal{G})-b$ if
	$i_{0}<b$, which is in contradiction with the definition of $i_{0}$. Thus
	$i_{0}\geq b$, and the proof is complete.
\end{proof}
\begin{corollary}%
	\label{technical_lemma_2}
	Let $\mathcal{G}=(\mathcal{I}_{1},\ldots,\mathcal{I}_{q-1})$ be a
	$q$-differential
	collection of ideals on $V$. Fix $x\in V$ and write
	$\eta_{x}(\mathcal{G})=qa+b$ with $0\leq b<q$. Then
	\begin{equation*}
		qa+b=\eta_{x}(\mathcal{G})\leq\nu_{x}(
		\mathcal{I}_{q-1})+q-1<q(a+1).
	\end{equation*}
\end{corollary}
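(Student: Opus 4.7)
The plan is to establish the two inequalities separately. The left one, $\eta_x(\mathcal{G}) \leq \nu_x(\mathcal{I}_{q-1}) + q-1$, is immediate from the very definition of $\eta_x(\mathcal{G})$ as the minimum of $\nu_x(\mathcal{I}_i) + i$ over $i = 1, \ldots, q-1$: one simply specializes to $i = q-1$. The content is in the strict upper bound $\nu_x(\mathcal{I}_{q-1}) + q - 1 < q(a+1)$, which after rewriting amounts to showing $\nu_x(\mathcal{I}_{q-1}) \leq qa$.

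For this second inequality, I would invoke Lemma \ref{technical_lemma_1}, which is perfectly tailored to this situation: it guarantees an index $i \in \{1, \ldots, q-1\}$ with $i \geq b$ and $\nu_x(\mathcal{I}_i) = \eta_x(\mathcal{G}) - i = qa + b - i$. Since $i \geq b$, we obtain $\nu_x(\mathcal{I}_i) = qa + b - i \leq qa$.

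It remains to compare $\nu_x(\mathcal{I}_{q-1})$ with $\nu_x(\mathcal{I}_i)$. Here I would use the chain of inclusions $\mathcal{I}_1 \subseteq \mathcal{I}_2 \subseteq \cdots \subseteq \mathcal{I}_{q-1}$ already noted right after Definition \ref{qdiffcoll} (which follows because $\Diff_V^0$ contains the identity, so $\mathcal{I}_j \subseteq \Diff_V^1(\mathcal{I}_j) \subseteq \mathcal{I}_{j+1}$). Since $i \leq q-1$, the inclusion $\mathcal{I}_i \subseteq \mathcal{I}_{q-1}$ gives $\nu_x(\mathcal{I}_{q-1}) \leq \nu_x(\mathcal{I}_i) \leq qa$. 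Adding $q-1$ to both sides yields $\nu_x(\mathcal{I}_{q-1}) + q - 1 \leq qa + q - 1 < q(a+1)$, completing the proof.

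No real obstacle is anticipated: all the substantive work has been isolated in Lemma \ref{technical_lemma_1}, and the present corollary is essentially a repackaging of that lemma together with the monotonicity of the collection $(\mathcal{I}_i)$ under inclusion. The only point worth noting is that the argument works uniformly whether $b = 0$ or $b > 0$, because Lemma \ref{technical_lemma_1} already handles the case $b = 0$ by producing some index $i \geq 0$ (in fact any index witnessing the minimum in $\eta_x$), for which $\nu_x(\mathcal{I}_i) \leq qa$ holds automatically.
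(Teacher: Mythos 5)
Your proof is correct and follows exactly the same route as the paper: the left inequality is read off the definition of $\eta_x(\mathcal{G})$, and for the strict upper bound you invoke Lemma \ref{technical_lemma_1} to produce an index $i\geq b$ with $\nu_x(\mathcal{I}_i)\leq qa$, then use the inclusion $\mathcal{I}_i\subseteq\mathcal{I}_{q-1}$ to conclude. Nothing differs in substance from the paper's argument.
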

\begin{proof}
	The first inequality follows from the definition of
	$\eta_{x}(\mathcal{G})$. We now show the second (strict) inequality. By
	Lemma \ref{technical_lemma_1} there exists an index $i\geq b$ such that
	$\nu_{x}(\mathcal{I}_{i})=qa+b-i\leq qa$, whence
	$\nu_{x}(\mathcal{I}_{q-1})\leq qa$ since
	$\mathcal{I}_{i}\subseteq\mathcal{I}_{q-1}$. This implies that
	$\nu_{x}(\mathcal{I}_{q-1})+q-1\leq qa+q-1<q(a+1)$, as was to be shown.
\end{proof}

The next lemma provides a tool to construct examples of $q$-differential
collections of ideals from a given one. They will be used, for example,
in Definition \ref{the_transform_of_G} to give different notions of
transformation
of a $q$-differential collection of ideals under a blowup.
\begin{lemma-definition}
	Fix a $q$-differential collection of ideals
	$\mathcal{G}=(\mathcal{I}_{1},\ldots, \mathcal{I}_{q-1})$ on $V$.
	\begin{enumerate}[(2)]
		\item[(1)] If $\mathcal{L}\subset\mathcal{O}_{V}$ is an invertible
		ideal, then
		the collection of ideals
		\begin{equation*}
			\mathcal{G}_{\mathcal{L}}:=((\mathcal{I}_{1}:\mathcal
			{L}^{q}),\ldots,( \mathcal{I}_{q-1}:\mathcal{L}^{q}))
		\end{equation*}
		is also $q$-differential.
		\item[(2)] Given a second $F$-finite regular scheme $V_{1}$, and given
		a morphism
		$\pi:V_{1}\rightarrow V$, the collection
		\begin{equation*}
			\mathcal{G}\mathcal{O}_{V_{1}}:=(\mathcal{I}_{1}
			\mathcal{O}_{V_{1}}, \ldots,\mathcal{I}_{q-1}
			\mathcal{O}_{V_{1}})
		\end{equation*}
		is also $q$-differential.
	\end{enumerate}
\end{lemma-definition}
\begin{proof}
	(1) Given integers $i\geq1$ and $j\geq0$ with $i+j\leq q-1$, we have
	$\mathcal{L}^{q}\Diff_{V}^{j}((\mathcal{I}_{i}:\mathcal
	{L}^{q}))=\Diff_{V}^{j}(F^{e}
	\mathcal{L}(\mathcal{I}_{i}:\mathcal{L}^{q}))\subseteq\Diff_{V}^{j}(
	\mathcal{I}_{i})\subseteq\mathcal{I}_{i+j}$, and hence
	$\Diff_{V}^{j}((\mathcal{I}_{i}:\mathcal{L}^{q}))\subseteq(\mathcal
	{I}_{i+j}:
	\mathcal{L}^{q})$. This proves that $\mathcal{G}_\mathcal{L}$ is a
	$q$-differential
	collection.
	
	(2) For integers $i\geq1$ and $j\geq0$ with $i+j\leq q-1$, Proposition \ref{comparison_between_Diff_of_two_regular_schemes}(2) implies that
	$\Diff_{V_{1}}^{j}(\mathcal{I}_{i}\mathcal{O}_{V_{1}})\subseteq
	\Diff_{V}^{j}(
	\mathcal{I}_{i})\mathcal{O}_{V_{1}}\subseteq\mathcal
	{I}_{i+j}\mathcal{O}_{V_{1}}$.
	This proves that $\mathcal{G}\mathcal{O}_{V_{1}}$ is also a $q$-differential
	collection.
\end{proof}

We now formulate notions of transformation by blowups.
\begin{definition}%
	\label{the_transform_of_G}
	Let $\mathcal{G}=(\mathcal{I}_{1},\ldots,\mathcal{I}_{q-1})$ be a
	$q$-differential
	collection of $\mathcal{O}_{V}$-ideals, and let
	$V\xleftarrow{\pi} V_{1}\supset H_{1}$ be the blowup of $V$ along an
	irreducible
	regular center $Z$.
	\begin{enumerate}[(2)]
		\item[(1)] We call $\mathcal{G}\mathcal{O}_{V_{1}}$ the
		\emph{total transform} of $\mathcal{G}$ by the blowup.
		\item[(2)] For a positive integer $a$, we call
		$(\mathcal{G}\mathcal{O}_{V_{1}})_{\mathcal{I}(H_{1})^{a}}$ the
		\emph{$a$-transform} of $\mathcal{G}$.
	\end{enumerate}
\end{definition}
The following theorem, stated in Introduction as Theorem \ref{point-wise_inequality_for_q-diff}, establishes the fundamental pointwise
inequality for the invariant $\eta$ attached to $q$-differential collections.
\begin{theorem}%
	\label{fundamental_point-wise_inequality_for_eta(G)}
	Let $V\xleftarrow{\pi} V_{1}\supset H_{1}$ be a blowup along an irreducible
	regular center~$Z$. Assume, in addition, that
	$\eta_{x}(\mathcal{G})$ is constant along points in $Z$, say
	$\eta_{x}(\mathcal{G})=aq+b$ for all $x\in Z$, where $0\leq b<q$. If
	$\mathcal{G}_{1}^{(a)}$ denotes the $a$-transform of $\mathcal{G}$, then
	for any $x_{1}\in V_{1}$,
	\begin{equation*}
		\eta_{\pi(x_{1})}(\mathcal{G}) \geq\eta_{x_{1}}(
		\mathcal{G}_{1}^{(a)}).
	\end{equation*}
\end{theorem}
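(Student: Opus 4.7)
The inequality is local at $x_1$, so the plan is to split into two cases. When $x_1 \notin H_1$ the morphism $\pi$ is a local isomorphism and $\mathcal{I}(H_1)$ is locally the unit ideal, so each conductor $(\mathcal{I}_i \mathcal{O}_{V_1} : \mathcal{I}(H_1)^{qa})$ agrees with $\mathcal{I}_i$ at $x_1$, and the desired inequality collapses to an equality. In the remainder I would assume $x_1 \in H_1$, so that $x := \pi(x_1) \in Z$ and $\eta_x(\mathcal{G}) = aq + b$ by hypothesis.

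The strategy is to exhibit a single index $i_0$ for which $\nu_{x_1}((\mathcal{I}_{i_0} \mathcal{O}_{V_1} : \mathcal{I}(H_1)^{qa})) + i_0 \leq aq + b$; this suffices since $\eta_{x_1}(\mathcal{G}_1^{(a)})$ is the minimum over all indices. I would invoke Lemma~\ref{technical_lemma_1} at the point $x$ to obtain an index $i_0 \geq b$ with $\nu_x(\mathcal{I}_{i_0}) = aq + b - i_0 =: n_0 \leq aq$. Upper-semicontinuity of the order function (Proposition~\ref{Jacobian_criterion:_global_version}) together with the constancy of $\eta(\mathcal{G})$ on $Z$ forces $\nu_\xi(\mathcal{I}_{i_0}) = n_0$ at the generic point $\xi$ of $Z$ as well, whence $\mathcal{I}_{i_0} \subseteq \mathcal{I}(Z)^{n_0}$ (using that symbolic and ordinary powers agree for the regular prime $\mathcal{I}(Z)$). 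This legitimizes the factorization
\[
\mathcal{I}_{i_0} \mathcal{O}_{V_1} \;=\; \mathcal{I}(H_1)^{n_0}\, \tilde{\mathcal{I}}_{i_0}.
\]

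Since $n_0 \leq qa$ and $\mathcal{I}(H_1)$ is invertible, the conductor rewrites as $(\mathcal{I}_{i_0}\mathcal{O}_{V_1} : \mathcal{I}(H_1)^{qa}) = (\tilde{\mathcal{I}}_{i_0} : \mathcal{I}(H_1)^{qa - n_0})$, and since a colon ideal always contains the original ideal, $\nu_{x_1}$ of the left side is bounded above by $\nu_{x_1}(\tilde{\mathcal{I}}_{i_0})$. It therefore suffices to prove that $\nu_{x_1}(\tilde{\mathcal{I}}_{i_0}) \leq n_0$, for then
\[
\eta_{x_1}(\mathcal{G}_1^{(a)}) \;\leq\; \nu_{x_1}(\tilde{\mathcal{I}}_{i_0}) + i_0 \;\leq\; n_0 + i_0 \;=\; aq + b \;=\; \eta_x(\mathcal{G}).
\]

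The main technical obstacle is this last order bound, which is essentially what the proof of Proposition~\ref{the_order_function_of_the_transform_of_an_ideal} delivers when applied to $\mathcal{I}_{i_0}$ with factorization weight $n_0$. The argument does not require global constancy of $\nu_z(\mathcal{I}_{i_0})$ along $Z$; it only uses $\nu_x(\mathcal{I}_{i_0}) = n_0$ at the single point $x = \pi(x_1)$, which is exactly what Lemma~\ref{technical_lemma_1} secured. Concretely, Proposition~\ref{Giraud's_lemma:_Global_version}(2) yields
\[
\Diff_{V_1}^{2n_0}(\mathcal{I}_{i_0}\mathcal{O}_{V_1}) \;\supseteq\; \Diff_{V_1}^{n_0}\bigl(\mathcal{I}(H_1)^{n_0}\, \Diff_V^{n_0}(\mathcal{I}_{i_0})\mathcal{O}_{V_1}\bigr),
\]
and since $(\Diff_V^{n_0}(\mathcal{I}_{i_0}))_x = \mathcal{O}_{V,x}$ by Proposition~\ref{Jacobian_criterion:_global_version}, the stalk at $x_1$ of the right-hand side is $\mathcal{O}_{V_1, x_1}$; hence $\nu_{x_1}(\mathcal{I}_{i_0}\mathcal{O}_{V_1}) \leq 2n_0$, and stripping off the exceptional factor $\mathcal{I}(H_1)^{n_0}$ gives $\nu_{x_1}(\tilde{\mathcal{I}}_{i_0}) \leq n_0$, completing the plan.
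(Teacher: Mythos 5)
Your proof is correct and follows essentially the same strategy as the paper's: invoke Lemma~\ref{technical_lemma_1} to find an index $i_0 \geq b$ with $\nu_x(\mathcal{I}_{i_0}) = aq+b-i_0$, factor the exceptional divisor out of $\mathcal{I}_{i_0}\mathcal{O}_{V_1}$, and then bound the order of the residual ideal via the argument of Proposition~\ref{the_order_function_of_the_transform_of_an_ideal}. The only cosmetic differences are that the paper handles the legitimacy of the factorization by shrinking $V$ to an open neighborhood of $x$ on which $\nu_z(\mathcal{I}_{i_0})$ is constant along $Z$ (rather than reasoning at the generic point $\xi$, as you do), and it cites Proposition~\ref{the_order_function_of_the_transform_of_an_ideal} as stated rather than unwinding its proof; your observation that the Giraud-lemma step only needs $\nu_x(\mathcal{I}_{i_0}) = n_0$ at the single point $x = \pi(x_1)$ is accurate and makes the dependence on hypotheses more transparent.
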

\begin{proof}
	We only need to prove this inequality for points in $H_{1}$. Fix
	$x_{1}\in H_{1}$ and set $x:=\pi(x_{1})\in Z$, so that
	$\eta_{x}(\mathcal{G})=aq+b$. According to Lemma \ref{technical_lemma_1}, there is an index $i$ with
	$b\leq i\leq q-1$ such that $\nu_{x}(\mathcal{I}_{i})=aq+b-i$. We fix such
	an index $i$. Since by hypothesis $\eta_{z}(\mathcal{G})=aq+b$ for all
	$z\in Z$, we have
	$\nu_{z}(\mathcal{I}_{i})\geq\eta_{z}(\mathcal{G})-i=qa+b-i$ for all
	$z\in Z$, and the equality holds for $z=x$. As the order function is
	upper-semicontinuous
	(Proposition \ref{Jacobian_criterion:_global_version}), after restricting
	to an open neighborhood of $x$, we may assume that
	$\nu_{z}(\mathcal{I}_{i})=qa+b-i$ for all $z\in Z$. So, for our fixed index
	$i$,
	$\mathcal{I}_{i}\mathcal{O}_{V_{1}}=\mathcal
	{I}(H_{1})^{aq+b-i}\mathcal{J}_{i}$
	for an $\mathcal{O}_{V_{1}}$-ideal $\mathcal{J}_{i}$. By Proposition \ref{the_order_function_of_the_transform_of_an_ideal},
	$\nu_{x_{1}}(\mathcal{J}_{i})\leq\nu_{x}(\mathcal{I}_{i})=qa+b-i$. Finally,
	\begin{equation*}
		(\mathcal{I}_{i}\mathcal{O}_{V_{1}}:\mathcal
		{I}(H_{1})^{aq})=(\mathcal{I}(H_{1})^{aq+b-i}
		\mathcal{J}_{i}:\mathcal{I}(H_{1})^{aq})
		\supseteq\mathcal{J}_{i},
	\end{equation*}
	where the last inclusion holds since $i\geq b$. We conclude that
	$\nu_{x_{1}}(\mathcal{I}_{i}\mathcal{O}_{V_{1}}:\mathcal{I}(H_{1})^{aq})
	\leq\nu_{x_{1}}(\mathcal{J}_{i})\leq aq+b-i$, and by definition this implies
	that $\eta_{x_{1}}(\mathcal{G}_{1}^{(a)})\leq aq+b$, as was to be proved.
\end{proof}

\begin{proposition}%
	\label{G(M_1)_and_(G_M)_1}
	Fix an $\mathcal{O}_{V}^{q}$-submodule
	$\mathscr{M}\subseteq\mathcal{O}_{V}$, an integer $a\geq1$, and an
	irreducible
	regular subscheme $Z\subset V$. Consider the blowup
	$V\leftarrow V_{1}\supset H_{1}$ of $V$ along $Z$. Let
	$\mathscr{M}_{1}^{(a)}$ and $(\mathcal{G}(\mathscr{M}))_{1}^{(a)}$ denote,
	respectively, the $a$-transform of $\mathscr{M}$ (Def.
	\ref{the_a-transform_of_an_O^q-module}) and the $a$-transform of
	$\mathcal{G}({\mathscr{M}})$ (Def. \ref{the_transform_of_G}). Then
	\begin{align*}
		\mathcal{G}({\mathscr{M}_{1}}^{(a)})\subseteq(
		\mathcal{G}(\mathscr {M}))_{1}^{(a)}.
	\end{align*}
	In particular,
	$\eta_{x_{1}}(\mathscr{M}_{1}^{(a)})\geq\eta_{x_{1}}((\mathcal{G}(
	\mathscr{M}))_{1}^{(a)})$ for all $x_{1}\in V_{1}$.
\end{proposition}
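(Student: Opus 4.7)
The plan is to establish the componentwise inclusion (\ref{incl}) directly. Writing $\mathcal{G}(\mathscr{M}_1^{(a)})=(\Diff_{V_1,+}^1(\mathscr{M}_1^{(a)}),\ldots,\Diff_{V_1,+}^{q-1}(\mathscr{M}_1^{(a)}))$ and unfolding the definition of the $a$-transform of $\mathcal{G}(\mathscr{M})$, what must be shown is that for every index $i=1,\ldots,q-1$,
\begin{equation*}
\mathcal{I}(H_1)^{qa}\cdot \Diff_{V_1,+}^i(\mathscr{M}_1^{(a)}) \subseteq \Diff_{V,+}^i(\mathscr{M})\,\mathcal{O}_{V_1}.
\end{equation*}

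The first step is to exploit the $\mathcal{O}_{V_1}^q$-linearity of operators in $\Diff_{V_1,+}^i$ for $i<q$, which was recorded in~\ref{Differential_operators_of_order_<q_are_R^q-linear}. Since $F^e\mathcal{I}(H_1)^a\subset\mathcal{O}_{V_1}^q$, multiplication by its sections commutes with the action of such operators, which yields the identity
\begin{equation*}
F^e(\mathcal{I}(H_1)^a)\cdot \Diff_{V_1,+}^i(\mathscr{M}_1^{(a)}) \;=\; \Diff_{V_1,+}^i\bigl(F^e(\mathcal{I}(H_1)^a)\cdot \mathscr{M}_1^{(a)}\bigr).
\end{equation*}
Proposition~\ref{Well_definition_of_the_transforms} says that $F^e(\mathcal{I}(H_1)^a)\cdot \mathscr{M}_1^{(a)}\sim \mathscr{M}\mathcal{O}_{V_1}^q$, so the two modules span the same $\mathcal{O}_{V_1}^q$-subalgebra of $\mathcal{O}_{V_1}$; by Proposition~\ref{Diff(M)=Diff(O^q[M])} they therefore produce the same sheaf $\Diff_{V_1,+}^i(-)$. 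Consequently the right-hand side of the displayed equation equals $\Diff_{V_1,+}^i(\mathscr{M}\mathcal{O}_{V_1}^q)$.

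Next I would apply Proposition~\ref{comparison_between_Diff_of_two_regular_schemes}(1) to the morphism $\pi\colon V_1\to V$ to obtain the inclusion $\Diff_{V_1,+}^i(\mathscr{M}\mathcal{O}_{V_1}^q)\subseteq \Diff_{V,+}^i(\mathscr{M})\,\mathcal{O}_{V_1}$. Combining this with the previous identity, and noting that as an $\mathcal{O}_{V_1}$-ideal $F^e(\mathcal{I}(H_1)^a)\cdot\mathcal{O}_{V_1}=\mathcal{I}(H_1)^{qa}$ (since locally a generator $h$ of $\mathcal{I}(H_1)$ gives $F^e((h^a))=(h^{qa})$), yields the required inclusion and hence (\ref{incl}).

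The final inequality is then immediate: a componentwise inclusion of $q$-differential collections $\mathcal{I}_i\subseteq\mathcal{I}_i'$ reverses orders, i.e.\ $\nu_{x_1}(\mathcal{I}_i)\geq \nu_{x_1}(\mathcal{I}_i')$ for each $i$, so the minima defining $\eta_{x_1}$ satisfy $\eta_{x_1}(\mathscr{M}_1^{(a)})\geq \eta_{x_1}((\mathcal{G}(\mathscr{M}))_1^{(a)})$, as claimed. No step in this proof is technically delicate; the whole argument is an assembly of results already established, with the only subtlety being the careful bookkeeping between the $\mathcal{O}_{V_1}^q$-ideal $F^e(\mathcal{I}(H_1)^a)$ and the $\mathcal{O}_{V_1}$-ideal $\mathcal{I}(H_1)^{qa}$.
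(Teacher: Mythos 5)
Your proof follows the same overall line as the paper's, but it contains an unjustified step. You invoke Proposition \ref{Well_definition_of_the_transforms} to conclude $F^e(\mathcal{I}(H_1)^a)\cdot\mathscr{M}_1^{(a)}\sim\mathscr{M}\mathcal{O}_{V_1}^q$, and then feed this weak equivalence into Proposition \ref{Diff(M)=Diff(O^q[M])} to get an \emph{equality} of the resulting ideals of differentials. However, Proposition \ref{Well_definition_of_the_transforms} carries the hypothesis that $Z$ is permissible for $(\mathscr{M},a)$, and no such permissibility is assumed in Proposition \ref{G(M_1)_and_(G_M)_1}: the statement holds for an arbitrary irreducible regular center $Z$ and an arbitrary $a\geq 1$. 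Without permissibility the equivalence $F^e(\mathcal{I}(H_1)^a)\mathscr{M}_1^{(a)}\sim\mathscr{M}\mathcal{O}_{V_1}^q$ genuinely fails (by Lemma \ref{L^qM_L}(1) the left side is equivalent to $(\mathcal{O}_{V_1}^q+\mathscr{M}\mathcal{O}_{V_1}^q)\cap\mathcal{I}(H_1)^{qa}$, which is a proper $\sim$-subset of $\mathscr{M}\mathcal{O}_{V_1}^q$ whenever $H_1$ is not permissible for $(\mathscr{M}\mathcal{O}_{V_1}^q,a)$), so the cited result simply does not apply here.

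The gap is easy to close because you never actually needed the equality. From the definition of the conductor one always has the one-sided inclusion
\begin{equation*}
F^e(\mathcal{I}(H_1)^a)\cdot\mathscr{M}_1^{(a)} \subseteq \mathcal{O}_{V_1}^q+\mathscr{M}\mathcal{O}_{V_1}^q,
\end{equation*}
so applying $\Diff_{V_1,+}^i$ and using $\Diff_{V_1,+}^i(\mathcal{O}_{V_1}^q)=0$ (formula (\ref{Diff(R^q)=0})) gives
\begin{equation*}
\mathcal{I}(H_1)^{qa}\Diff_{V_1,+}^i(\mathscr{M}_1^{(a)})=\Diff_{V_1,+}^i(F^e(\mathcal{I}(H_1)^a)\mathscr{M}_1^{(a)})\subseteq\Diff_{V_1,+}^i(\mathscr{M}\mathcal{O}_{V_1}^q),
\end{equation*}
which is all you need before applying Proposition \ref{comparison_between_Diff_of_two_regular_schemes}. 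This is precisely the paper's argument. So: replace your equality, and the appeal to Propositions \ref{Well_definition_of_the_transforms} and \ref{Diff(M)=Diff(O^q[M])}, by this inclusion, and the proof is correct.
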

\begin{proof}
	By the definition of the $a$-transform of a module there is an inclusion
	$F^{e}(\mathcal{I}(H_{1})^{a})\mathscr{M}_{1}\subseteq\mathscr{M}
	\mathcal{O}_{V_{1}}^{q}+\mathcal{O}_{V_{1}}^{q}$, and hence for each
	$i=1,\ldots,q-1$,
	\begin{align*}
		\mathcal{I}(H_{1})^{qa}\Diff_{V_{1},+}^{i}(
		\mathscr{M}_{1})&=\Diff _{V_{1},+}^{i}(F^{e}(
		\mathcal{I}(H_{1})^{a})\mathscr{M}_{1})
		\\
		&\subseteq\Diff_{V_{1},+}^{i}( \mathscr{M}
		\mathcal{O}_{V_{1}}^{q})\subseteq\Diff _{V,+}^{i}(
		\mathscr{M}) \mathcal{O}_{V_{1}},
	\end{align*}
	{\spaceskip=0.2em plus 0.05em minus 0.04em where the last inclusion follows from} Proposition \ref{comparison_between_Diff_of_two_regular_schemes}. Therefore
	$ \Diff_{V_{1},+}^{i}(\mathscr{M}_{1})\subseteq(\Diff_{V,+}^{i}(
	\mathscr{M})\mathcal{O}_{V_{1}}: \mathcal{I}(H_{1})^{qa})$. This
	yields the
	desired inclusion of $q$-differential collections.
\end{proof}

\begin{remark}%
	\label{rk429}
	We discuss an outcome of interest for the study of transformations of
	singularities
	that follows from the previous results. We fix an
	$\mathcal{O}_{V}^{q}$-submodule $\mathscr{M}\subseteq\mathcal
	{O}_{V}$ We
	associate with $\mathscr{M}$ the $q$-differential collection
	$\mathcal{G}_{0}:=\mathcal{G}(\mathscr{M})$. Sequences of
	``permissible transformations''
	of this collection lead to ``permissible transformations'' of the module
	$\mathscr{M}$ by blowups in a way we now describe. We consider the sequences
	of blowups
	\begin{equation}
		\label{sequence_of_transformations_of_(G,a)} \xymatrix@R=0pt @C=30pt { V=V_{0} & \ar[l]_-{\pi_{1}} V_{1} & \ar[l]_-{
				\pi_{2}} \ldots& \ar[l]_-{\pi_{n}} V_{n}%
			\\
			\mathcal{G}_{0}=\mathcal{G}(\mathscr{M})& \mathcal{G}_{1} & \ldots&
			\mathcal{G}_{n} }
	\end{equation}
	that are constructed from the functions $\eta$ as follows: for each
	$i=0,1,\ldots,n-1$, $V_{i}\leftarrow V_{i+1}$ is the blowup of
	$V_{i}$ along an irreducible regular center $Z_{i}\subset V_{i}$ included
	in the set of points $x\in V_{i}$ at which
	$\eta_{x}(\mathcal{G}_{i})$ reaches its maximum value, say
	$a_{i}q+b_{i}$ ($0\leq b_{i}<q$), and $\mathcal{G}_{i+1}$ is the
	$a_{i}$-transform of $\mathcal{G}_{i}$. By Theorem \ref{fundamental_point-wise_inequality_for_eta(G)} we have
	\begin{equation*}
		a_{0}q+b_{0}\geq a_{1}q+b_{1}
		\geq\cdots\geq a_{n}q+b_{n}.
	\end{equation*}
	Assume that $a_{n-1}\geq1$. We now set $\mathscr{M}_{0}:=\mathscr
	{M}$, and
	for $i\geq0$, we define $\mathscr{M}_{i+1}$ as the $a_{i}$-transform of
	$\mathscr{M}_{i}$. It follows from Proposition \ref{G(M_1)_and_(G_M)_1} and induction on $i$ that there are inclusions
	\begin{equation*}
		\mathcal{G}(\mathscr{M}_{i})\subseteq\mathcal{G}_{i},\quad
		i=1,\ldots,n;
	\end{equation*}
	in particular,
	$\eta_{x}(\mathscr{M}_{i}):=\eta_{x}(\mathcal{G}(\mathscr{M}_{i}))
	\geq\eta_{x}(\mathcal{G}_{i})=:a_{i}q+b_{i}$ for all $x\in Z_{i}$. By
	Proposition \ref{Describing_Sing(M,a)_with_eta(M)} $Z_{i}$ is permissible
	for $(\mathscr{M}_{i},a_{i})$ for all $i=0,\ldots,n-1$, and hence the finite
	morphism $X_{i+1}\to V_{i+1}$ attached to $\mathscr{M}_{i+1}$ is obtained
	from the finite morphism $X_{i}\to V_{i}$ attached to
	$\mathscr{M}_{i}$ as $\delta_{a}$ is obtained from $\delta$ in Corollary \ref{a-transform_as_sequence_of_blow-ups}.
\end{remark}

\subsection{Logarithmic Differential Invariants}
\label{Logarithmic_differential_invariants}

In this final part of the section, we explore new invariants of singularities
that are constructed using logarithmic differential operators. These operators
arise naturally when we fix a regular scheme $V$ together with a sequence
of blowups along regular centers
$V \leftarrow\cdots\leftarrow V_{r}$. In this setting, $V_{r}$ is regular
and contains exceptional hypersurfaces, say $H_{1}, \dots, H_{r}$, and
it is useful to consider logarithmic differential operators with poles
along these hypersurfaces. Roughly speaking, we will construct $q$-differential
collections associated with modules and hypersurfaces with normal crossings
and define invariants from the function $\eta$ associated with $q$-differential
collections (Definition \ref{eta}). Our goal is to prove that these invariants
satisfy the fundamental pointwise inequality.

\begin{parrafo}
	Logarithmic differential operators were reviewed in
	\ref{logarithmic_differential_operators}. The discussion in
	\ref{compatibility_with_localization} enables us to extend everything to
	the setting of sheaves on schemes. So we fix an irreducible $F$-finite
	regular scheme $V$ and $q=p^{e}$, a power of $p$. Given an
	$\mathcal{O}_{V}^{q}$-module $\mathscr{M}\subset\mathcal{O}_{V}$, an
	$\mathcal{O}_{V}$-ideal $\mathcal{J}\subset\mathcal{O}_{V}$, and a
	collection
	of $\mathcal{O}_{V}$-ideals
	$\Lambda=\{\mathcal{I}_{1},\ldots,\mathcal{I}_{r}\}$, by (\ref
	{localization})
	it follows that there are $\mathcal{O}_{V}$-ideals
	\begin{align*}
		\Diff_{V,\Lambda}^{0}(\mathcal{J})&\subseteq
		\Diff_{V,\Lambda}^{1}( \mathcal{J})\subseteq
		\Diff_{V,\Lambda}^{2}(\mathcal{J})\subseteq \cdots,
		\\
		\Diff_{V,\Lambda,+}^{1}(\mathscr{M})&\subseteq
		\Diff_{V,\Lambda,+}^{2}( \mathscr{M})\subseteq\cdots\subseteq
		\Diff_{V,\Lambda,+}^{q-1}( \mathscr{M}),
	\end{align*}
	such that at every point $x\in V$, we have
	\begin{align*}
		(\Diff_{V,\Lambda}^{i}(\mathcal{J}))_{x}&=
		\operatorname {Diff}_{\mathcal{O}_{V,x},
			\Lambda_{x}}^{i}(\mathcal{J}_{x}),\quad
		i\geq0,
		\\
		(\Diff_{V,\Lambda,+}^{i}(\mathscr{M}))_{x}&=
		\operatorname {Diff}_{\mathcal{O}_{V,x},
			\Lambda_{x},+}^{i}(\mathscr{M}_{x}),\quad
		i=1,\ldots,q-1,
	\end{align*}
	where
	$\Lambda_{x}=\{(\mathcal{I}_{1})_{x},\ldots,(\mathcal{I}_{r})_{x}\}
	$. This
	enables us to reduce the verification of properties formulated for
	$\mathcal{O}_{V}^{q}$-modules, $\mathcal{O}_{V}$-ideals, and differential
	operators on $V$ at the local level.
\end{parrafo}

We are interested only in the case where $\Lambda$ consists of the ideals
of hypersurfaces with normal crossings. We review this definition.
\begin{definition}%
	\label{normal_crossings}
	A collection of hypersurfaces $\{H_{1},\ldots,H_{r}\}$ on $V$ is said
	to have only normal crossings in $V$ if for each point $z\in V$, there
	exists a regular system of parameters $(x_{1},\ldots,x_{n})$ for
	$\mathcal{O}_{V,z}$ such that for each $j=1,\ldots,r$, the ideal
	$\mathcal{I}(H_{j})_{z}\subseteq\mathcal{O}_{V,z}$ is either
	$\mathcal{O}_{V,z}$ or is $\langle x_{i_{j}}\rangle$ for some
	$i_{j}\in\{ 1,\ldots,n\}$. If in addition $Z\subset V$ is a closed regular
	subscheme, we say that $\{H_{1},\ldots,H_{r}\}$ has only normal crossings
	with $Z$ (or that $Z$ has only normal crossings with
	$\{H_{1},\ldots,H_{r}\}$) if for each $z\in Z$, there exists a regular
	system of parameters $(x_{1},\ldots,x_{n})$ for $\mathcal{O}_{V,z}$ satisfying
	the above condition and such that $\mathcal{I}(Z)_{z}$ is generated by a
	subset of $\{x_{1},\ldots,x_{n}\}$; see
	\cite[Definition 2, p. 141]{hironaka1964resolution}.
\end{definition}
We include the following proposition to show the role played by the ideal
$\Diff_{V,\Lambda,+}^{q-1}(\mathscr{M})$ in the characterization of the
``optimal monomial'' that can be extracted from the equivalence class of
$\mathscr{M}$ in the following sense.
\begin{proposition}
	Let $\{H_{1},\ldots,H_{r}\}$ be a collection of hypersurfaces with only
	normal crossings, and let $\Lambda$ be the collection of their ideals.
	Then for nonnegative integers $m_{1},\ldots,m_{r}$,
	\begin{eqnarray*}
		\Diff_{V,\Lambda,+}^{q-1}(\mathscr{M})&\subseteq&
		\mathcal{I}(H_{1})^{m_{1}} \cdots\mathcal{I}(H_{r})^{m_{r}}
		\quad \text{if and only if }
		\\
		\mathscr{M} &\subseteq&\mathcal{O}_{V}^{q}+
		\mathcal{I}(H_{1})^{m_{1}}\cdots \mathcal{I}(H_{r})^{m_{r}}.
	\end{eqnarray*}
\end{proposition}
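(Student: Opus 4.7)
The plan is to prove both directions by localizing at a point $z \in V$ and setting $\mathcal{J} := \mathcal{I}(H_1)^{m_1}\cdots \mathcal{I}(H_r)^{m_r}$. The direction ($\Leftarrow$) is the easy one: any $D \in \Diff_{V,+}^{q-1}$ annihilates $\mathcal{O}_V^q$ by (\ref{Diff(R^q)=0}), and the normal crossings hypothesis locally makes $\mathcal{J}_z = \bigcap_j \mathcal{I}(H_j)_z^{m_j}$ (the defining equations of the hypersurfaces through $z$ being part of a regular system of parameters). A $\Lambda$-logarithmic operator preserves each $\mathcal{I}(H_j)^{m_j}$, hence their intersection, hence $\mathcal{J}$; so $D(\mathcal{O}_V^q + \mathcal{J}) \subseteq \mathcal{J}$.

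For ($\Rightarrow$), I localize at $z$ and discard those $H_j$ not through $z$ (whose ideals are units at $z$). The normal crossings hypothesis provides a regular system of parameters of $\mathcal{O}_{V,z}$ in which, after reindexing, $\mathcal{I}(H_j)_z = (x_j)$ for $j=1,\ldots,r$, and $\mathcal{J}_z = (x_1^{m_1}\cdots x_r^{m_r})$. Proposition \ref{special_p-basis} extends this to an adapted $p$-basis $(x_1,\ldots,x_n,y_1,\ldots,y_s)$. For $f \in \mathscr{M}_z$ with $q$-expansion $f = \sum c_{\alpha,\beta}^q x^\alpha y^\beta$, the goal reduces to showing that each term $c_{\alpha_0,\beta_0}^q x^{\alpha_0} y^{\beta_0}$ with $(\alpha_0,\beta_0) \in \mathcal{A}_q^+$ lies in $\mathcal{J}_z$, since summing these yields $f - c_{0,0}^q \in \mathcal{J}_z$.

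Given such $(\alpha_0,\beta_0)$, I target a single nonzero coordinate with a simple Taylor operator: (i) if $\alpha_{0,j_0}>0$ for some $j_0 \leq r$, apply $D := x_{j_0}^{\alpha_{0,j_0}} D_{\alpha_{0,j_0}\mathbf{e}_{j_0},0}$; (ii) if $\alpha_0$ vanishes on $\{1,\ldots,r\}$ but $\alpha_{0,j_0}>0$ for some $j_0>r$, apply $D := D_{\alpha_{0,j_0}\mathbf{e}_{j_0},0}$; (iii) if $\alpha_0=0$ and $\beta_{0,k_0}>0$, apply $D := D_{0,\beta_{0,k_0}\mathbf{e}_{k_0}}$. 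A direct verification on $x_k^n$ for $k\leq r$ shows that each such $D$ is $\Lambda$-logarithmic, each has order strictly less than $q$, and each annihilates $1$; so $D \in \Diff_{V,\Lambda,+}^{q-1}$ and by hypothesis $D(f) \in \mathcal{J}_z$. Using $R^q$-linearity of low-order Taylor operators (\ref{Differential_operators_of_order_<q_are_R^q-linear}) together with the compatibility of Taylor operators with $q$-expansions from \ref{Taylor_operators_are_respectfull_with_the_p-basis}, I can compute the $q$-expansion of $D(f)$ explicitly, and the coefficient of the multi-index obtained from $(\alpha_0,\beta_0)$ by subtracting the differentiated exponent is precisely $c_{\alpha_0,\beta_0}^q$ (the relevant binomial being $\binom{\alpha_{0,j_0}}{\alpha_{0,j_0}}=1$ or $\binom{\beta_{0,k_0}}{\beta_{0,k_0}}=1$).

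The main obstacle is an auxiliary lemma extending Proposition \ref{order_in_terms_of_q-atoms}(2) to monomial ideals: for $\mathcal{J}_z = (x_1^{m_1}\cdots x_r^{m_r})$, an element $g = \sum d_{\alpha,\beta}^q x^\alpha y^\beta$ in $q$-expansion lies in $\mathcal{J}_z$ if and only if each term $d_{\alpha,\beta}^q x^\alpha y^\beta$ lies in $\mathcal{J}_z$. The nontrivial direction is proved by factoring $g = x_1^{m_1}\cdots x_r^{m_r} h$, writing $h$ in its $q$-expansion, and regrouping the resulting monomials (splitting each $m_j+\alpha_j$ as $qs_j+t_j$ with $0\leq t_j<q$) to read off the $q$-expansion of $g$; each coefficient $d_{\alpha,\beta}$ then carries the requisite $x_j$-divisibility. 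Applying this lemma to $D(f) \in \mathcal{J}_z$ places the identified coefficient term in $\mathcal{J}_z$: in case (i) this is already $c_{\alpha_0,\beta_0}^q x^{\alpha_0} y^{\beta_0}$; in cases (ii) and (iii) the stripped variable lies outside $\{x_1,\ldots,x_r\}$, so multiplying back by $x_{j_0}^{\alpha_{0,j_0}}$ or $y_{k_0}^{\beta_{0,k_0}}$ preserves membership in $\mathcal{J}_z$ and recovers $c_{\alpha_0,\beta_0}^q x^{\alpha_0} y^{\beta_0} \in \mathcal{J}_z$, concluding the argument.
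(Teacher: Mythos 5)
Your proof is correct and takes a closely related but not identical route to the paper's. For the nontrivial direction the paper localizes at the height-one prime $\langle x_i\rangle$ for each $i\le r$ separately, collapsing the ambient regular local ring to a DVR so that Proposition \ref{order_in_terms_of_q-atoms}(1), stated for powers of the maximal ideal, applies directly to $(x_i^{m_i})$; divisibility by $x_i^{m_i}$ is then pulled back to $R$. You instead remain at the closed point, work with the product ideal $\mathcal{J}_z=(x_1^{m_1}\cdots x_r^{m_r})$, and prove an auxiliary monomial-ideal analogue of Proposition \ref{order_in_terms_of_q-atoms}(2); your factor-and-regroup argument for that lemma (splitting each $m_j+\alpha_j$ as $qs_j+t_j$ and observing that $\alpha_j\mapsto t_j$ is a bijection of $\{0,\dots,q-1\}$) is sound, as is the identification of $\mathcal{J}_z$ with the intersection $\bigcap_j\mathcal{I}(H_j)_z^{m_j}$ used for the easy implication. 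Your case split (i)/(ii)/(iii) is worth highlighting: the paper's written argument invokes only $x_i^{\alpha_i}\partial^{\alpha_i}/\partial x_i^{\alpha_i}$, which degenerates to the identity (and hence lies outside $\operatorname{Diff}_{R,\Lambda,+}^{q-1}$) when the relevant exponent $\alpha_i$ vanishes, a case the paper does not explicitly address. Your cases (ii) and (iii) handle exactly this situation by differentiating in a free $x$- or $y$-variable and multiplying back, which does not affect membership in $\mathcal{J}_z$ since that variable is not among $x_1,\dots,x_r$. One point you should state a bit more carefully is the $\Lambda$-logarithmicity of the operators in (ii) and (iii): verifying $D(x_k^n)\in(x_k^n)$ on monomials is not by itself enough; what one actually uses is that a Taylor operator $D_\gamma$ with $\gamma$ supported away from the index $k$ commutes with multiplication by $x_k$ (a $q$-expansion computation using $R^q$-linearity and (\ref{evaluation_of_Taylor_operators_in_monomials})), from which $D(x_k^nR)=x_k^nD(R)\subseteq x_k^nR$ follows; for case (i) the simpler route is (\ref{IDiff_is_logarithmic}) for the index $j_0$ itself together with the same commutation for $k\neq j_0$.
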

\begin{proof}
	The ``if'' part is immediate from the definition of logarithmic differential
	operator. The other direction can be checked locally, and we only need
	to prove the following statement: \emph{Let $(R,\mvtex)$ be an $F$-finite
		regular local ring, let $(x_{1},\ldots,x_{n})$ be a regular system of
		parameters, and let $\Lambda=\{x_{1}R,\ldots,x_{r}R\}$ for some
		$r\leq n$. Given $f\in R$, assume that
		$\operatorname{Diff}_{R,\Lambda,+}^{q-1}(f)\in x_{1}^{m_{1}}\cdots
		x_{r}^{m_{r}}R$
		for integers $m_{1},\ldots,m_{r}\geq0$. Then
		$f\in R^{q}+x_{1}^{m_{1}}\cdots x_{r}^{m_{r}} R$.}
	
	We extend $(x_{1},\ldots,x_{n})$ to a $p$-basis
	$(x_{1},\ldots,x_{n},y_{1},\ldots,y_{n'})$ for $R$ and consider the
	$q$-expansion of $f$, say
	$f=\sum_{\mathcal{A}_{q}} c_{\alpha,\beta}^{q}\xvtex^{\alpha}
	\yvtex^{\beta}$. We will show that given
	$(\alpha,\beta)\neq(0,0)$ and $i\leq r$, the term
	$c_{\alpha,\beta}^{q}\xvtex^{\alpha}\yvtex^{\beta}$ is divisible
	by $x_{i}^{m_{i}}$.
	We can replace $R$ by its localization at the regular prime
	$\langle x_{i}\rangle$ since the $p$-basis and the $q$-expansion remain
	unaltered. Thus we may assume that $i=r=n=1$ Now
	$x_{1}^{\alpha_{1}}
	\frac{\partial^{\alpha_{1}}}{\partial x_{1}^{\alpha_{1}}}\in
	\operatorname{Diff}_{R,\Lambda,+}^{q-1}$, and
	$x_{1}^{\alpha_{1}}
	\frac{\partial^{\alpha_{1}}}{\partial x_{1}^{\alpha_{1}}}(f)$ has
	$c_{\alpha,\beta}^{q}\xvtex^{\alpha}\yvtex^{\beta}$ as a term in its
	$q$-expansion. Since
	$x_{1}^{\alpha_{1}}
	\frac{\partial^{\alpha_{1}}}{\partial x_{1}^{\alpha_{1}}}(f)$ is divisible
	by $x_{1}^{m_{1}}$ (this is the assumption), Proposition \ref{order_in_terms_of_q-atoms} implies that
	$c_{\alpha,\beta}^{q}\xvtex^{\alpha}\yvtex^{\beta}$ is divisible by
	$x_{1}^{m_{1}}$.
\end{proof}
We do not pursue in this line, namely the factorization of an ``optimal
monomial'' and the behavior under suitable blowups, and refer the reader
to \cite{moh1996newton} and \cite{HauPer2019}. We instead try to produce
a $q$-differential collection attached to $\mathscr{M}$ and
$\Lambda$ so that we can apply the results of the first part of the section
(mainly Theorem \ref{fundamental_point-wise_inequality_for_eta(G)}) and
obtain numeral invariants with a good behavior under blowups.

A natural step would be to work with the collection of
$\mathcal{O}_{V}$-ideals
\begin{equation*}
	(\Diff_{V,\Lambda,+}^{1}(\mathscr{M}),\ldots,
	\Diff_{V,\Lambda,+}^{q-1}( \mathscr{M})).
\end{equation*}
By Proposition \ref{Diff(M)=Diff(O^q[M])} this sequence of ideals is not
affected if $\mathscr{M}$ is replaced by
$\mathcal{O}_{V}^{q}[\mathscr{M}]$, so it is compatible with our
notion of
weak equivalence of $\mathcal{O}_{V}^{q}$-submodules. However, this sequence
is not (in general) a $q$-differential collection in the sense of Definition \ref{qdiffcoll}. This is due to the fact that there is no inclusion
$\Diff_{V}^{j} \Diff_{V,\Lambda,+}^{i} \subset\Diff_{V,\Lambda,+}^{i+j}$
in general. We will use the following lemmas to produce $q$-differential
collections from the previous collection (Proposition \ref{Definition_of_G_M(Lambda,L)}).
\begin{lemma}[{\cite[Lemma 1.5]{hironaka2005three}}]%
	\label{Hironaka's_lemma}%
	Let $R$ be a ring, and let
	$x\in R$ be an element that is not a zero divisor. Given integers
	$a,b\geq0$ and $D\in\operatorname{Diff}_{R}^{a}$, we have
	$x^{a-b} D x^{b}\in\sum_{k=0}^{a} x^{k}\operatorname{Diff}_{R}^{k}$.
	In particular,
	$x^{a-b}Dx^{b}$ is $(x)$-logarithmic. ($Dx^{b}$ denotes the operator
	$Dx^{b}(r)=D(x^{b} r)$, $r\in R$.)
\end{lemma}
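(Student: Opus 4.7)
The plan is to reduce everything to a single commutator identity and then iterate. Introduce the multiplication operator $\mu_x:R\to R$, $r\mapsto xr$, and define $\delta_x(D):=D\circ\mu_x-\mu_x\circ D$. The first and only nontrivial input is the claim
\[
\delta_x\bigl(\operatorname{Diff}_R^a\bigr)\subseteq \operatorname{Diff}_R^{a-1},
\]
which is immediate from the principal-parts description recalled in \ref{Differential_operators}: if the left-$R$-linear extension $\varphi_D:R\otimes R\to R$ of $D$ kills $T_R^{a+1}$, then the corresponding extension of $\delta_x(D)$ equals $\varphi_D\circ\mu_{\,1\otimes x-x\otimes 1}$, and $1\otimes x-x\otimes 1\in T_R$, so it now kills $T_R^a$.

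Next I would prove by induction on $b$ the Leibniz-type identity
\[
D\circ\mu_{x^b}\;=\;\sum_{k=0}^{b}\binom{b}{k}\,\mu_{x^{b-k}}\circ\delta_x^{k}(D),
\]
starting from the relation $D\mu_x=\mu_xD+\delta_x(D)$ and using that $\delta_x$ is itself a derivation-like operation on operators (so $\delta_x(\mu_{x^{b-k}}\circ\delta_x^k(D))=\mu_{x^{b-k}}\circ\delta_x^{k+1}(D)$ because $\delta_x(\mu_{x^{b-k}})=0$). By the previous step $\delta_x^k(D)\in\operatorname{Diff}_R^{a-k}$, so all terms with $k>a$ vanish and the sum truncates at $k=\min(a,b)$.

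Assuming now $b\le a$, multiply the identity on the left by $\mu_{x^{a-b}}$ and reindex by $j=a-k$ to obtain
\[
x^{a-b}\,D x^b \;=\; \sum_{k=0}^{b}\binom{b}{k}\,x^{a-k}\,\delta_x^{k}(D) \;\in\; \sum_{k=0}^{a} x^{k}\,\operatorname{Diff}_R^{k},
\]
which is the asserted containment. When $b>a$, each summand of $Dx^b$ already carries a factor $x^{b-k}$ with $k\le a$, hence a factor $x^{b-a}$; since $x$ is a non-zero-divisor, one can cancel this factor after extending operators to $R_x$ (or equivalently interpret $x^{a-b}Dx^b$ as the unique preimage of $Dx^b$ under multiplication by $x^{b-a}$), landing again in $\sum_{k=0}^{a} x^{k}\,\operatorname{Diff}_R^{k}$; this is the only place where the non-zero-divisor hypothesis is used. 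Finally, the logarithmic conclusion drops out of the inclusion $x^{k}\operatorname{Diff}_R^{k}\subseteq \operatorname{Diff}_{R,(x)}^{k}$ already recorded in \eqref{IDiff_is_logarithmic}.

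The only real obstacle is the order-lowering property of $\delta_x$; once that is established, the rest is an induction and a reindexing. I would also briefly note that the same commutator argument, read through \eqref{S^-1Diff(R)=Diff(S^-1R)}, sheafifies, which is how the lemma will be used in the subsequent global statements about $\Diff_{V,\Lambda,+}^{i}$.
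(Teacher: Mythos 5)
Your proof is correct, but it takes a more structural route than the paper's. The paper runs a minimal double induction on the product $ab$: writing $x^{a-b}Dx^b=x^{(a-1)-(b-1)}(Dx-xD)x^{b-1}+x^{a-(b-1)}Dx^{b-1}$ and applying the induction hypothesis to both summands, with the only ingredient being that the commutator $Dx-xD$ drops the order by one (stated without proof there, established explicitly via principal parts in yours). You instead prove, by a single induction on $b$, the closed Leibniz-type formula
\begin{equation*}
D\circ\mu_{x^b}=\sum_{k=0}^{b}\binom{b}{k}\,\mu_{x^{b-k}}\circ\delta_x^{k}(D),\qquad \delta_x(D):=D\mu_x-\mu_xD,
\end{equation*}
from which the lemma follows by truncation at $k=\min(a,b)$ and reindexing; the inductive step is Pascal's rule plus the derivation property of $\delta_x$. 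This is strictly more information than the paper's statement — you obtain an explicit expression, not merely a containment — at the cost of a slightly longer argument. Two small remarks: (i) when summarizing the inductive step you should mention Pascal's identity explicitly, since the derivation property alone does not produce the binomial coefficients; (ii) the treatment of the case $b>a$, interpreting $x^{a-b}Dx^{b}$ as the unique operator $E$ with $x^{b-a}E=Dx^{b}$, matches what the paper is doing implicitly, and correctly isolates the only use of the non-zero-divisor hypothesis.
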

\begin{proof}
	We proceed by induction on $ab$. If $ab=0$, then either $a=0$, in which
	case $D$ is $R$-linear and $x^{-b}Dx^{b}=D\in\operatorname
	{Diff}_{R}^{0}$, or
	else $b=0$, in which case the result is trivial. Assume now that
	$ab>0$, whence $a>0$ and $b>0$. Then
	\begin{eqnarray*}
		x^{a-b} D x^{b}&=& x^{a-b} (D
		x)x^{b-1}
		\\
		&=&x^{a-b}(Dx-xD+xD)x^{b-1}
		\\
		&=&x^{(a-1)-(b-1)}(Dx-xD)x^{b-1}+x^{a-(b-1)}
		D x^{b-1}.
	\end{eqnarray*}
	The first term in the last expression is in
	$\sum_{k=0}^{a-1}x^{k}\operatorname{Diff}_{R}^{k}$ by the inductive hypothesis
	since $(a-1)(b-1)<ab$ and $Dx-xD\in\operatorname{Diff}_{R}^{a-1}$. Similarly,
	the second term is in $\sum_{k=0}^{a}x^{k}\operatorname
	{Diff}_{R}^{k}$ since
	$a(b-1)<ab$. Therefore
	$x^{a-b} D x^{b}\in\sum_{k=0}^{a}x^{k}\operatorname{Diff}_{R}^{k}$.
	The induction
	is now complete.
\end{proof}
\begin{lemma}%
	\label{technical_lemma_3}
	Fix $\mathcal{O}_{V}$-ideals
	$\mathcal{J}_{1}\subseteq\cdots\subseteq\mathcal{J}_{q-1}$ and an
	invertible
	ideal $\mathcal{L}$, and assume that
	\begin{equation*}
		\mathcal{L}^{j}\Diff_{V}^{j}(
		\mathcal{J}_{i})\subseteq\mathcal{J}_{i+j}
		\quad \text{whenever } i+j\leq q-1, j\geq0, i\geq1.
	\end{equation*}
	Then the collection of $\mathcal{O}_{V}$-ideals
	$((\mathcal{J}_{1}:\mathcal{L}^{1}),\ldots,(\mathcal
	{J}_{q-1}:\mathcal{L}^{q-1}))$
	is $q$-differential (\ref[Definition]{qdiffcoll}).
\end{lemma}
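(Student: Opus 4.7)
The claim is local, so I pass to stalks: let \(R\) be an \(F\)-finite regular local ring, fix a non-zero-divisor \(x\) generating \(\mathcal{L}\), and let \(J_1 \subseteq \cdots \subseteq J_{q-1}\) be ideals satisfying \(x^j \operatorname{Diff}_R^j(J_i) \subseteq J_{i+j}\) for all \(i \geq 1\), \(j \geq 0\) with \(i+j \leq q-1\). The \(q\)-differential conclusion amounts to showing
\[
x^{i+j} \operatorname{Diff}_R^{j}\bigl((J_i : x^i)\bigr) \subseteq J_{i+j}
\]
throughout the same range of indices. I would prove this by induction on \(j\). The base case \(j = 0\) is immediate, since \(D\) is \(R\)-linear and \(x^i D(f) = D(x^i f) \in J_i\).

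The inductive step rests on a sharpening of Lemma \ref{Hironaka's_lemma}: for every \(D \in \operatorname{Diff}_R^a\) and every \(b \geq 0\), there exist \(\widetilde D_k \in \operatorname{Diff}_R^k\) (depending on \(D\) and \(b\)) such that, as operators on \(R_x\),
\[
x^{a-b} D x^b \;=\; x^a D \;+\; \sum_{k=0}^{a-1} x^k \widetilde D_k .
\]
This identification of the top-order piece follows from a short induction on \(b\) that mirrors the proof of Lemma \ref{Hironaka's_lemma}: the recursion
\[
x^{a-b} D x^b \;=\; x^{(a-1)-(b-1)}(Dx - xD)\,x^{b-1} \;+\; x^{a-(b-1)}\, D\, x^{b-1}
\]
reduces to a lower-order operator \(Dx - xD \in \operatorname{Diff}_R^{a-1}\) (whose contribution, by the original Hironaka bound, lies entirely in \(\sum_{k=0}^{a-1} x^k \operatorname{Diff}_R^k\)) plus a term with the same \(D\) and a smaller value of \(b\), whose top coefficient is \(x^a D\) by the inductive hypothesis. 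Specializing to \(a = j\), \(b = i\), applying the identity to \(f \in (J_i : x^i)\), and multiplying through by \(x^i\) produces the key equality in \(R\),
\[
x^{i+j} D(f) \;=\; x^j D(x^i f) \;-\; \sum_{k=0}^{j-1} x^{i+k} \widetilde D_k(f).
\]
The first term belongs to \(x^j \operatorname{Diff}_R^{j}(J_i) \subseteq J_{i+j}\) by the hypothesis of the lemma. For each summand on the right, \(\widetilde D_k \in \operatorname{Diff}_R^k\) with \(k < j\), so the inductive hypothesis (applied with the same \(i\) and \(j' = k\)) yields \(x^{i+k}\widetilde D_k(f) \in J_{i+k} \subseteq J_{i+j}\), using that the chain \(J_\bullet\) is increasing. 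Hence \(x^{i+j} D(f) \in J_{i+j}\) and the induction closes.

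The main obstacle is precisely the top-coefficient sharpening of Lemma \ref{Hironaka's_lemma}: without isolating \(x^a D\) as the degree-\(a\) summand of the decomposition, one would only know that \(x^{j-i} D x^i \in \sum_{k=0}^{j} x^k \operatorname{Diff}_R^k\), and the degree-\(j\) part could a priori be some unrelated \(\widetilde D_j \in \operatorname{Diff}_R^j\); this would not let us recover any useful information about the chosen \(D(f)\), and the induction would stall. A secondary point—harmless once noted—is that when \(j < i\) the exponent \(j-i\) is negative; this is legitimate because the Hironaka identity is a genuine equality of operators on the localization \(R_x\), and the final multiplication by \(x^i\) pushes both sides back into \(R\) since \(x\) is not a zero divisor.
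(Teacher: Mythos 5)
Your proof is correct, but it takes a genuinely different route from the paper's. The paper's argument is a clever two-line manipulation built on the characteristic-$p$ fact that operators in $\Diff_V^j$ with $j<q$ are $\mathcal{O}_V^q$-linear: one rewrites $\mathcal{L}^{i+j}\Diff_V^j((\mathcal{J}_i:\mathcal{L}^i))$ as $\mathcal{L}^{i+j-q}\Diff_V^j(\mathcal{L}^q(\mathcal{J}_i:\mathcal{L}^i))$, bounds $\mathcal{L}^q(\mathcal{J}_i:\mathcal{L}^i)\subseteq\mathcal{L}^{q-i}\mathcal{J}_i$, and then applies Hironaka's lemma exactly once with $a=j$, $b=q-i$ to land inside $\sum_{k\leq j}\mathcal{L}^k\Diff_V^k(\mathcal{J}_i)\subseteq\mathcal{J}_{i+j}$ — no induction and no sharpening. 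You instead localize and argue by induction on $j$, and you need the sharpened form of Hironaka's identity that singles out $x^aD$ as the top-order piece; that sharpening is genuine, correct, and proved exactly as you sketch by reducing $Dx$ to $xD+(Dx-xD)$ and invoking the unsharpened lemma on the order-$(a-1)$ error. Your route is longer and introduces an auxiliary refinement the paper never states, but it has the merit of not invoking $\mathcal{O}_V^q$-linearity at all and would therefore go through verbatim in any characteristic, whereas the paper's version is specific to the Frobenius setting. Both are fully rigorous; the paper's is slicker, yours is more robust.
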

\begin{proof}
	Observe first that
	\begin{align}
		\label{formula_log} \mathcal{L}^{i+j}\Diff_{V}^{j}((
		\mathcal{J}_{i}:\mathcal{L}^{i}))&= \mathcal{L}^{i+j-q}
		\mathcal{L}^{q}\Diff_{V}^{j}((
		\mathcal{J}_{i}: \mathcal{L}^{i}))\nonumber
		\\
		&=\mathcal{L}^{i+j-q}
		\Diff_{V}^{j}(\mathcal{L}^{q}(
		\mathcal{J}_{i}:\mathcal{L}^{i}))\nonumber
		\\
		&\subseteq
		\mathcal{L}^{i+j-q}\Diff_{V}^{j}(
		\mathcal{L}^{q-i}\mathcal{J}_{i}).
	\end{align}
	In the second equality, we use that
	$\mathcal{L}^{q}=F^{e}\mathcal{L}\cdot\mathcal{O}_{V}$ (since
	$\mathcal{L}$ is invertible) and that differential operators of order
	$\leq q-1$ are $\mathcal{O}_{V}^{q}$-linear. We now apply Lemma \ref{Hironaka's_lemma} with $a=j$ and $b=q-i$, and obtain that the expression
	on the right in (\ref{formula_log}) is included in
	$\sum_{k=0}^{j}\mathcal{L}^{k}\Diff_{V}^{k}(\mathcal{J}_{i})$,
	which is
	included in $\mathcal{J}_{i+j}$, by the hypothesis. Summarizing,
	$\mathcal{L}^{i+j}\Diff_{V}^{j}((\mathcal{J}_{i}:\mathcal{L}^{i}))
	\subseteq\mathcal{J}_{i+j}$, and hence
	$\Diff_{V}^{j}((\mathcal{J}_{i}:\mathcal{L}^{i}))\subseteq(\mathcal
	{J}_{i+j}:
	\mathcal{L}^{i+j})$, as was to be proved.
\end{proof}
\begin{proposition}%
	\label{Definition_of_G_M(Lambda,L)}
	Let $\Lambda=\{\mathcal{I}_{1},\ldots,\mathcal{I}_{r}\}$ be a
	family of
	nonzero $\mathcal{O}_{V}$-ideals, and let $\mathcal{L}$ be an invertible
	$\mathcal{O}_{V}$-ideal included in each $\mathcal{I}_{k}$,
	$k=1,\ldots,r$. Then for any $\mathcal{O}_{V}^{q}$-module
	$\mathscr{M}$, the collection of ideals
	\begin{align}
		\label{collection_Diff_(V,E_1,...,E_r)(M)_and_L}
		\mathcal{G}(\mathscr{M},\Lambda,\mathcal{L}):={}&((
		\Diff_{V,\Lambda,+}^{1}( \mathscr{M}):\mathcal{L}^{1}),(
		\Diff_{V,\Lambda,+}^{2}(\mathscr{M}): \mathcal{L}^{2}),
		\ldots,\nonumber
		\\&{}(\Diff_{V,\Lambda,+}^{q-1}(\mathscr{M}):
		\mathcal{L}^{q-1}))
	\end{align}
	is $q$-differential. In addition, there is a componentwise inclusion
	\begin{equation}
		\label{collection_Diff_(V,E_1,...,E_r)(M)_and_LXX} \mathcal{G}(\mathscr{M})\subseteq\mathcal{G}(\mathscr{M},
		\Lambda, \mathcal{L}).
	\end{equation}
\end{proposition}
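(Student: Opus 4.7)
The plan is to reduce everything to Lemma~\ref{technical_lemma_3} applied with $\mathcal{J}_i := \Diff_{V,\Lambda,+}^i(\mathscr{M})$ and the given invertible $\mathcal{L}$. The chain $\mathcal{J}_1 \subseteq \cdots \subseteq \mathcal{J}_{q-1}$ is immediate from the inclusion $\Diff_{V,\Lambda,+}^i \subseteq \Diff_{V,\Lambda,+}^{i+1}$ coming from the definition of order. So the essential task is to verify the hypothesis
\[
\mathcal{L}^j\,\Diff_V^j(\mathcal{J}_i) \subseteq \mathcal{J}_{i+j} \qquad (i\ge 1,\ j\ge 0,\ i+j\le q-1).
\]

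For this I will use (\ref{LDiff_is_Lambda_logarithmic}): since $\mathcal{L}$ is contained in every $\mathcal{I}_k \in \Lambda$, one has $\mathcal{L}^j\,\Diff_V^j \subseteq \Diff_{V,\Lambda}^j$ as subsheaves of $\Diff_V^j$. Thus, for local sections $L \in \mathcal{L}^j$ and $D \in \Diff_V^j$, the operator $L\cdot D$ is $\Lambda$-logarithmic of order $\le j$. Given a section $D' \in \Diff_{V,\Lambda,+}^i$, the composition $(L\cdot D)\circ D'$ is then $\Lambda$-logarithmic of order $\le i+j$, and it annihilates $1$ because $D'(1)=0$; hence $(L\cdot D)\circ D' \in \Diff_{V,\Lambda,+}^{i+j}$. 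Writing an arbitrary section of $\mathcal{J}_i$ locally as $x = \sum_k a_k D'_k(m_k)$ with $a_k \in \mathcal{O}_V$, $D'_k \in \Diff_{V,\Lambda,+}^i$, $m_k \in \mathscr{M}$ (and absorbing each $a_k$ into the $\mathcal{O}_V$-module $\Diff_{V,\Lambda,+}^i$), one gets
\[
L\cdot D(x) \;=\; \sum_k \bigl((L\cdot D)\circ D'_k\bigr)(m_k) \;\in\; \Diff_{V,\Lambda,+}^{i+j}(\mathscr{M}) \;=\; \mathcal{J}_{i+j},
\]
which establishes the required inclusion. Lemma~\ref{technical_lemma_3} then immediately gives that $\mathcal{G}(\mathscr{M},\Lambda,\mathcal{L})$ is $q$-differential.

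For the componentwise inclusion $\mathcal{G}(\mathscr{M}) \subseteq \mathcal{G}(\mathscr{M},\Lambda,\mathcal{L})$, I need $\Diff_{V,+}^i(\mathscr{M}) \subseteq (\Diff_{V,\Lambda,+}^i(\mathscr{M}):\mathcal{L}^i)$, equivalently $\mathcal{L}^i\,\Diff_{V,+}^i(\mathscr{M}) \subseteq \Diff_{V,\Lambda,+}^i(\mathscr{M})$. This is again a direct consequence of (\ref{LDiff_is_Lambda_logarithmic}), which gives $\mathcal{L}^i\,\Diff_{V,+}^i \subseteq \Diff_{V,\Lambda,+}^i$ at the level of operator sheaves; evaluating on $\mathscr{M}$ yields the claim.

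The only nontrivial part was already absorbed into Lemma~\ref{technical_lemma_3}, where Hironaka's lemma (Lemma~\ref{Hironaka's_lemma}) is used to move the conductor $(\,\cdot\,:\mathcal{L}^i)$ past a differential operator. At the level of the present proposition, the remaining work is routine bookkeeping with $\Lambda$-logarithmic operators, so I do not anticipate any real obstacle.
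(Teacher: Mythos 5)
Your proof is correct and follows essentially the same route as the paper: both reduce the $q$-differential property to Lemma~\ref{technical_lemma_3} applied with $\mathcal{J}_i := \Diff_{V,\Lambda,+}^i(\mathscr{M})$, both verify the hypothesis $\mathcal{L}^j\Diff_V^j(\mathcal{J}_i)\subseteq\mathcal{J}_{i+j}$ via the inclusion~(\ref{LDiff_is_Lambda_logarithmic}) and the closure of $\Lambda$-logarithmic operators under composition, and both obtain the componentwise inclusion~(\ref{collection_Diff_(V,E_1,...,E_r)(M)_and_LXX}) from $\mathcal{L}^i\Diff_{V,+}^i\subseteq\Diff_{V,\Lambda,+}^i$. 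You spell out the bookkeeping with $(L\cdot D)\circ D'$ a bit more explicitly than the paper's chain of inclusions, but the substance is identical.
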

\begin{proof}
	By (\ref{LDiff_is_Lambda_logarithmic}) there are inclusions
	$\mathcal{L}^{i}\Diff_{V}^{i}\subseteq\Diff_{V,\Lambda}^{i}$ and $\mathcal{L}^{i}\Diff_{V,+}^{i}\subseteq\Diff_{V,
		\Lambda,+}^{i}$. Thus
	\begin{equation*}
		\mathcal{L}^{j}\Diff_{V}^{j}(
		\Diff_{V,\Lambda,+}^{i}(\mathscr{M})) \subseteq
		\Diff_{V,\Lambda}^{j}(\Diff_{V,\Lambda,+}^{i}(
		\mathscr{M})) \subseteq\Diff_{V,\Lambda,+}^{i+j}(\mathscr{M}).
	\end{equation*}
	This shows that $\mathcal{L}$ and the collection
	$\Diff_{V,\Lambda,+}^{1}(\mathscr{M})\subseteq\cdots\subseteq
	\Diff_{V,\Lambda,+}^{q-1}(\mathscr{M})$ satisfy the hypothesis of Lemma \ref{technical_lemma_3}, whence the collection
	$\mathcal{G}(\mathscr{M},\Lambda,\mathcal{L})$ is $q$-differential.
	As for
	(\ref{collection_Diff_(V,E_1,...,E_r)(M)_and_LXX}), we note that for
	$i=1,\ldots,q-1$,
	\begin{equation*}
		\Diff_{V,+}^{i}(\mathscr{M})=(\mathcal{L}^{i}
		\Diff _{V,+}^{i}(\mathscr{M}): \mathcal{L}^{i})
		\subseteq(\Diff_{V,\Lambda,+}^{i}(\mathscr{M}):
		\mathcal{L}^{i}).
	\end{equation*}
\end{proof}
\begin{corollary}%
	\label{inequality}
	Let the setting and notation be as in Proposition \ref{Definition_of_G_M(Lambda,L)}. Then
	\begin{align*}
		\eta_{x}(\mathcal{G}(\mathscr{M},\Lambda,\mathcal{L}))\leq
		\eta_{x}( \mathcal{G}(\mathscr{M})),\quad  \forall x\in V.
	\end{align*}
	In particular, if $Z\subset V$ is a regular irreducible subscheme and
	$a$ is a positive integer such that
	$\eta_{x}(\mathcal{G}(\mathscr{M},\Lambda,\mathcal{L}))\geq qa$
	for all
	$x\in Z$, then $Z$ is permissible for $(\mathscr{M},a)$.
\end{corollary}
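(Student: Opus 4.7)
The proof plan is essentially bookkeeping, as both claims follow almost directly from material already established in the excerpt. The first inequality is a direct consequence of the componentwise inclusion $\mathcal{G}(\mathscr{M}) \subseteq \mathcal{G}(\mathscr{M},\Lambda,\mathcal{L})$ proved in Proposition \ref{Definition_of_G_M(Lambda,L)}. First I would observe that for each $i = 1, \ldots, q-1$, the containment $\Diff_{V,+}^{i}(\mathscr{M}) \subseteq (\Diff_{V,\Lambda,+}^{i}(\mathscr{M}):\mathcal{L}^{i})$ reverses under the order function, giving $\nu_{x}(\Diff_{V,+}^{i}(\mathscr{M})) \geq \nu_{x}((\Diff_{V,\Lambda,+}^{i}(\mathscr{M}):\mathcal{L}^{i}))$ for every $x \in V$. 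Adding $i$ and taking the minimum over $i \in \{1,\ldots,q-1\}$ then yields $\eta_{x}(\mathcal{G}(\mathscr{M})) \geq \eta_{x}(\mathcal{G}(\mathscr{M},\Lambda,\mathcal{L}))$.

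For the ``In particular'' clause, I would first recall (from the discussion after Definition \ref{eta} and from \ref{par416}) that $\eta_{x}(\mathcal{G}(\mathscr{M})) = \eta_{x}(\mathscr{M})$ by our convention. Thus, under the hypothesis $\eta_{x}(\mathcal{G}(\mathscr{M},\Lambda,\mathcal{L})) \geq qa$ on $Z$, the first inequality gives $\eta_{x}(\mathscr{M}) \geq qa$ for all $x \in Z$. Then Proposition \ref{Describing_Sing(M,a)_with_eta(M)} characterizes $\operatorname{Sing}(\mathscr{M},a)$ as the locus where $\eta_{x}(\mathscr{M}) \geq qa$, so $Z \subseteq \operatorname{Sing}(\mathscr{M},a)$, which by Definition \ref{permissible_center_for_(M,a)} says exactly that $Z$ is permissible for $(\mathscr{M},a)$.

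There is no real obstacle here; the content was already packaged into Proposition \ref{Definition_of_G_M(Lambda,L)} (which provides the componentwise inclusion) and Proposition \ref{Describing_Sing(M,a)_with_eta(M)} (which identifies the maximum locus of $\eta_{x}(\mathscr{M})$ with $\operatorname{Sing}(\mathscr{M},a)$). The corollary merely combines these two facts, observing that passing to a $\Lambda$-logarithmic, $\mathcal{L}$-twisted version of the $q$-differential collection can only decrease $\eta_{x}$, so any permissibility-type condition phrased via $\eta_{x}(\mathcal{G}(\mathscr{M},\Lambda,\mathcal{L}))$ is a priori stronger than the one phrased via $\eta_{x}(\mathscr{M})$ alone, and in particular forces the permissibility of $Z$ for $(\mathscr{M},a)$.
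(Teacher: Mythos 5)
Your proof is correct and follows essentially the same route as the paper's (which simply cites the componentwise inclusion from Proposition \ref{Definition_of_G_M(Lambda,L)} for the inequality and Proposition \ref{Describing_Sing(M,a)_with_eta(M)} for permissibility). You have merely written out the details that the paper leaves implicit.
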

\begin{proof}
	The inequality follows from (\ref{collection_Diff_(V,E_1,...,E_r)(M)_and_LXX}),
	and the last assertion follows from Proposition \ref{Describing_Sing(M,a)_with_eta(M)}.
\end{proof}
\begin{remark}
	So far we have associated with a 3-uple
	$(\mathscr{M},\Lambda,\mathcal{L})$ as above an upper-semicontinuous function
	$V\to\mbvtex{N}$ given by
	$x\to\eta_{x}(\mathcal{G}(\mathscr{M},\Lambda,\mathcal{L}))$.
	This function
	coincides with $\eta_{x}(\mathscr{M})$ outside the support of the ideals
	of $\Lambda$ and $\mathcal{L}$. Now we want to formulate a notion of
	transformation
	of triples $(\mathscr{M},\Lambda,\mathcal{L})$ by blowups and to
	show that
	the fundamental pointwise inequality holds for the function
	$x\to\eta_{x}(\mathcal{G}(\mathscr{M},\Lambda,\mathcal{L}))$. To
	do this,
	we will restrict ourselves to the case where $\Lambda$ consists of ideals
	of hypersurfaces with only normal crossings. We also require a version
	of Corollary \ref{corollary_of_Giraud_lemma} with logarithmic differential
	operators, which is our main tool to compare invariants before and after
	a blowup. We begin with the formulation of the latter task.
\end{remark}

\begin{parrafo}
	Let $R$ be an $F$-finite regular local ring, let $P\subset R$ be a regular
	prime, and let $(x_{1},\ldots,x_{n})$ be a regular system of parameters
	for $R$ such that $P=\langle x_{1},\ldots,x_{s}\rangle$ for some index
	$s\leq n$. We refer to the discussion of the blowup of
	$\operatorname{Spec}(R)$ along $P$ in \ref{blow-up} and consider
	\begin{equation*}
		R_{1}:=\biggl\{\frac{z}{x_{1}^{t}}: z\in P^{t}, t
		\geq0\biggr\}=R\biggl[ \frac{x_{2}}{x_{1}},\ldots,\frac{x_{s}}{x_{1}}\biggr]
		\subset R_{x_{1}}.
	\end{equation*}
	For each index $j=1,\ldots, n$, we set
	\[
	x_{j}'=\begin{cases}
		\frac{x_{j}}{x_{1}}& \text{if } j\leq s,
		\\
		x_{j}&\text{if } j>s.
	\end{cases}
	\]
	Notice that $x_{j}'R_{1}$ is the restriction to
	$\operatorname{Spec}(R_{1})$ of the strict transform of $x_{j}R$ by
	the blowup
	along $P$.
	
	We fix a subset $\Phi\subseteq\{1,\ldots,n\}$ and set
	$\Lambda:=\{x_{j}R: j\in\Phi\}$, which is a collection of ideals of
	$R$, and
	$\Lambda_{1}:=\{x_{j}'R_{1}: j\in\Phi\}\cup\{x_{1} R_{1}\}$, which
	is a collection of ideals in $R_{1}$. In the formulation of the following
	lemma, we identify differential operators of $R$ or of $R_{1}$ with their
	extensions to the fraction field.
\end{parrafo}

\begin{lemma}
	There are inclusions
	\begin{align}
		\label{inclusion} \operatorname{Diff}_{R,\Lambda\cup\{P\}}^{i}
		\subseteq\operatorname {Diff}_{R_{1},
			\Lambda_{1}}^{i} \quad \text{and}\quad
		\operatorname{Diff}_{R,\Lambda\cup\{P\},+}^{i} \subseteq
		\operatorname{Diff}_{R_{1},\Lambda_{1},+}^{i}
	\end{align}
	for all $i\geq0$. In particular, given an $R^{q}$-submodule
	$M\subseteq R$, there are inclusions
	\begin{align}
		\label{inclusions2} x_{1}^{i}\operatorname{Diff}_{R,\Lambda,+}^{i}(M)
		R_{1} \subseteq \operatorname{Diff}_{R_{1},
			\Lambda_{1},+}^{i}(MR_{1}^{q})
	\end{align}
	for all $i=1,\ldots,q-1$. ($MR_{1}^{q}$ denotes the $R_{1}^{q}$-submodule
	of $R_{1}$ generated by $M$.)
\end{lemma}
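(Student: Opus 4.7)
The plan is to prove the inclusion \eqref{inclusion} first, and then derive \eqref{inclusions2} by combining it with \eqref{IDiff_is_logarithmic}.

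For \eqref{inclusion}, I would fix $D \in \operatorname{Diff}_{R, \Lambda \cup \{P\}}^i$ and a power $q = p^e > i$, so that $D$ is $R^q$-linear. Because $D$ is $P$-logarithmic, Lemma \ref{local_Girauds_lemma} already extends $D$ to an operator $D_1 : R_1 \to R_1$ that is $x_1 R_1$-logarithmic, which handles the ideal $x_1 R_1 \in \Lambda_1$. To handle the remaining ideal $x_j' R_1$ (for $j \in \Phi$), I would take a typical element $y = (x_j')^k z / x_1^t$ of $(x_j')^k R_1$ (with $z \in P^t$), rewrite it as $y = x_j^k z x_1^{q - t - k\varepsilon}/x_1^q$ (with $\varepsilon = 1$ if $j \leq s$, $\varepsilon = 0$ if $j > s$, and $q$ chosen large), and compute $D_1(y) = D(x_j^k z x_1^{q-t-k\varepsilon})/x_1^q$ by $R_1^q$-linearity. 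The numerator lies in $x_j^k R \cap P^q$ (the first containment by factoring out $x_j^k$; the second because $z \in P^t$, $x_1^{q-t-k\varepsilon} \in P^{q-t-k\varepsilon}$, and when $\varepsilon=1$ also $x_j^k \in P^k$), so $x_j R$- and $P$-logarithmicity of $D$ keep the image inside $x_j^k R \cap P^q$. The ``$+$'' version is then automatic since $D(1) = 0$ is preserved by extension.

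The crucial algebraic input is the intersection formula $x_j^k R \cap P^q = x_j^k P^{q-k\varepsilon}$ (for $k \leq q$): once it is available, writing $D(x_j^k z x_1^{q-t-k\varepsilon}) = x_j^k v$ with $v \in P^{q-k\varepsilon}$ gives $D_1(y) = (x_j')^k (v/x_1^{q-k\varepsilon}) \in (x_j')^k R_1$, as desired. I expect this intersection formula to be the main obstacle. When $j > s$, it follows from $x_j \notin P$ being a nonzerodivisor on $R/P^q$, since $P$ is the unique associated prime of $R/P^q$. When $j \leq s$, the argument relies on the polynomial structure of the associated graded $\operatorname{gr}_P R \cong (R/P)[T_1, \ldots, T_s]$, which yields $(P^q : x_j) = P^{q-1}$ and, by iteration, $(P^q : x_j^k) = P^{q-k}$; this is most transparent after passing to the $\m$-adic completion and inspecting monomial exponents.

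For \eqref{inclusions2}, the first step is to observe that $x_1^i \operatorname{Diff}_{R, \Lambda, +}^i \subseteq \operatorname{Diff}_{R, \Lambda \cup \{P\}, +}^i$: right multiplication by $x_1^i$ preserves $\Lambda$-logarithmicity (since $x_1^i D(I^k) \subseteq x_1^i I^k \subseteq I^k$ for $I \in \Lambda$), and \eqref{IDiff_is_logarithmic} applied with $I = P$ supplies the missing $P$-logarithmicity because $x_1^i \in P^i$. Applying \eqref{inclusion} then places $x_1^i D$ inside $\operatorname{Diff}_{R_1, \Lambda_1, +}^i$, so for any $m \in M \subseteq M R_1^q$ we get $x_1^i D(m) = (x_1^i D)(m) \in \operatorname{Diff}_{R_1, \Lambda_1, +}^i(M R_1^q)$. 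Taking the $R_1$-span and summing over $D \in \operatorname{Diff}_{R, \Lambda, +}^i$ yields the componentwise inclusion of \eqref{inclusions2}, which is a straightforward consequence once the hard part—the intersection formula above—has been established.
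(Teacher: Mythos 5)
Your proposal is correct and follows essentially the same route as the paper: extend $D$ to $R_1$ via Lemma~\ref{local_Girauds_lemma}, then use $R^q$-linearity for a large power $q>i$ to reduce the action on $(x_j')^kR_1$ to the action of $D$ on the numerator, and close the argument with the intersection formula $x_j^kR\cap P^m = x_j^kP^{m-k\varepsilon}$ (your normalization to denominator $x_1^q$ versus the paper's normalization to $x_1^{k+t}$ with $k+t$ a $p$-power is the only cosmetic difference); the derivation of~\eqref{inclusions2} from~\eqref{inclusion} via~(\ref{IDiff_is_logarithmic}) is also identical. The only substantive divergence is that you explicitly flag the intersection formula as the key algebraic input and sketch a proof through the associated graded ring $\operatorname{gr}_P R\cong(R/P)[T_1,\ldots,T_s]$, whereas the paper uses the same identities silently as standard facts about regular local rings.
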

\begin{proof}
	We prove the first inclusion in (\ref{inclusion}), the second being just
	a consequence. Fix $D\in\operatorname{Diff}_{R,\Lambda\cup\{P\}
	}^{i}$. By Lemma
	\ref{local_Girauds_lemma} $D$ extends to a differential operator
	of $R_{1}$, and this operator is $x_{1}R_{1}$-logarithmic. To show that
	$D\in\operatorname{Diff}_{R_{1},\Lambda_{1}}^{i}$, it is only left
	to show
	that $D({x_{j}'}^{k} R_{1})\subseteq{x_{j}'}^{k} R_{1}$ for all
	$k\geq0$ and $j\in\Phi$, $j\neq1$.
	
	Fix $j\in\Phi$. Assume first that $j\leq s$, so that
	$x_{j}'=\frac{x_{j}}{x_{1}}$ and the elements of $x_{j}^{\prime
		k}R_{1}$ are of
	the form $\frac{x_{j}^{k}z}{x_{1}^{k+t}}$ where $z\in P^{t}$. After
	multiplication
	and division by a power of $x_{1}$, we may assume that $k+t$ is a power
	of $p$ that is greater than the order of $D$. Hence
	$D(x_{j}^{k}z/x_{1}^{k+t})=D(x_{j}^{k}z)/x_{1}^{k+t}$. Since $D$ is
	$x_{j}R$-logarithmic and also $P$-logarithmic, and since
	$x_{j}^{k}z\in x_{j}^{k} R\cap P^{k+t}$, the numerator of the latter fraction
	belongs to $x_{j}^{k} R\cap P^{k+t}=x_{j}^{k} P^{t}$. Thus the fraction
	belongs to $x_{j}^{\prime k} R_{1}$. This proves that
	$D({x_{j}'}^{k} R_{1})\subseteq{x_{j}'}^{k} R_{1}$ for all
	$k\geq0$.
	
	Assume now that $j>s$, so that $x_{j}'=x_{j}$, and the elements of
	$x_{j}^{\prime k} R_{1}$ are of the form $x_{j}^{k}z/x_{1}^{t}$ with
	$z\in P^{t}$. As in the previous case, we may assume that $t$ is a power
	of $p$ that is greater than the order of $D$. Then
	$D(x_{j}^{k} z/x_{1}^{t})=D(x_{j}^{k}z)/x_{1}^{t}$. Since
	$x_{j}^{k} z\in x_{j}^{k}\cap P^{t}$, and since $D$ is $x_{j} R$-logarithmic
	and $P$-logarithmic, the numerator of the latter fraction is in
	$x_{j}^{k} R\cap P^{t}=x_{j}^{k} P^{t}$. Thus the fraction belongs to
	$x_{j}^{k} R_{1}=x_{j}^{\prime k} R_{1}$. This proves that
	$D({x_{j}'}^{k} R_{1})\subseteq{x_{j}'}^{k} R_{1}$ for all
	$k\geq0$.
	
	As for the inclusion in (\ref{inclusions2}), we just need to observe that
	$x_{1}^{i}\operatorname{Diff}_{R,\Lambda,+}^{i}\subset\operatorname
	{Diff}_{R,
		\Lambda\cup\{P\},+}^{i}$, which follows from (\ref{IDiff_is_logarithmic})
	and then use the second inclusion in (\ref{inclusion}).
\end{proof}

\begin{proposition}%
	\label{Giraud's_lemma,_logarithmic_version}
	Let $\Lambda$ be a finite collection of hypersurfaces on $V$ with only
	normal crossings, and let $Z\subset V$ be an irreducible regular closed
	subscheme such that $\Lambda$ has only normal crossings with $Z$ (Definition \ref{normal_crossings}). Let $V\leftarrow V_{1}\supset H_{1}$ be the blowup
	along $Z\subset V$, and let $\Lambda_{1}$ be the collection of the strict
	transforms of each $H\in\Lambda$ plus the exceptional hypersurface
	$H_{1}\subset V_{1}$. Then $\Lambda_{1}$ has only normal crossings on
	$V_{1}$, and for any $\mathcal{O}_{V}^{q}$-module $\mathscr{M}$, we have
	\begin{equation*}
		\mathcal{I}(H_{1})^{i}(\Diff_{V,\Lambda,+}^{i}(
		\mathscr{M})\mathcal {O}_{V_{1}}) \subseteq\Diff_{V_{1},\Lambda_{1},+}^{i}(
		\mathscr{M}\mathcal {O}_{V_{1}}^{q}) \quad \text{for } i=1,
		\ldots,q-1.
	\end{equation*}
\end{proposition}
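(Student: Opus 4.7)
The plan is to reduce the stated inclusion to the local statement proved in the preceding lemma (the inclusions (\ref{inclusion}) and (\ref{inclusions2})), and to obtain the normal crossings assertion as a byproduct of the same local computation. Using the good-localization properties of differential operators recorded in (\ref{S^-1Diff(R)=Diff(S^-1R)}) and their logarithmic variant, it suffices to prove the stalkwise inclusion
\begin{equation*}
\mathcal{I}(H_1)_{z_1}^{i}\,\bigl(\Diff_{V,\Lambda,+}^{i}(\mathscr{M})\bigr)_{\pi(z_1)}\mathcal{O}_{V_1,z_1}\subseteq \Diff_{\mathcal{O}_{V_1,z_1},\Lambda_{1,z_1},+}^{i}(\mathscr{M}_{\pi(z_1)}\mathcal{O}_{V_1,z_1}^{q})
\end{equation*}
at every point $z_1\in V_1$, and the case $z_1\notin H_1$ is trivial since $\pi$ is a local isomorphism there and $\Lambda_1$ locally coincides with $\Lambda$.

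So first, I would fix $z_1\in H_1$ and set $z:=\pi(z_1)\in Z$. The normal crossings hypothesis on $(\Lambda,Z)$ furnishes a regular system of parameters $(x_1,\ldots,x_n)$ of $R:=\mathcal{O}_{V,z}$ so that $P:=\mathcal{I}(Z)_z=\langle x_1,\ldots,x_s\rangle$ and each $\mathcal{I}(H)_z$ with $H\in\Lambda$ is either the unit ideal or is of the form $\langle x_j\rangle$ for some $j\in\{1,\ldots,n\}$. Set $\Phi:=\{j:\langle x_j\rangle\in\Lambda_z\}$, so that $\Lambda_z$ is exactly the collection $\{x_jR:j\in\Phi\}$ of the previous lemma. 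Passing to the appropriate chart of the blowup (after relabeling $x_1,\ldots,x_s$ so that the chart is the $x_1$-chart), the local ring $R_1$ of $V_1$ at the image of $z_1$ has the regular system of parameters $(x_1,x_2',\ldots,x_s',x_{s+1},\ldots,x_n)$ with $x_j':=x_j/x_1$ for $2\leq j\leq s$, and then $\mathcal{O}_{V_1,z_1}$ is a further localization of this ring. In these coordinates, $\mathcal{I}(H_1)_{z_1}=x_1\mathcal{O}_{V_1,z_1}$, and the strict transforms of the hypersurfaces in $\Lambda$ are cut out precisely by the elements $x_j'$ ($j\in\Phi$, $j\leq s$) and $x_j$ ($j\in\Phi$, $j>s$); this shows that $\Lambda_1$ has only normal crossings at $z_1$ and that $\Lambda_{1,z_1}$ matches the collection called $\Lambda_1$ in the preceding lemma.

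Once these identifications are in place, the inclusion follows immediately from (\ref{inclusions2}): applied to $M=\mathscr{M}_z$ and the collections $\Lambda_z$, $\Lambda_{1,z_1}$ just identified, it gives
\begin{equation*}
x_1^{i}\operatorname{Diff}_{R,\Lambda_z,+}^{i}(\mathscr{M}_z)R_1\subseteq \operatorname{Diff}_{R_1,\Lambda_{1,z_1},+}^{i}(\mathscr{M}_zR_1^{q}),
\end{equation*}
and a further localization to $\mathcal{O}_{V_1,z_1}$ (using (\ref{localization}) in logarithmic form) gives exactly the desired stalk inclusion. Gluing over $V_1$ produces the inclusion of sheaves of ideals stated in the proposition.

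The main obstacle, and essentially the only nontrivial point, is the bookkeeping in the second paragraph: one must verify carefully that in every chart of the blowup along $Z$ (and at every point on the exceptional divisor), the local collection of strict transforms together with $H_1$ is correctly described by parameters of the form treated in the preceding lemma, distinguishing the indices $j\leq s$ (so $\langle x_j\rangle\supset P$, giving strict transforms $x_j'=x_j/x_1$) from $j>s$ (so $x_j$ is unchanged). Once this dictionary is set up, both the normal crossings conclusion and the sheaf inclusion drop out of the local statement without further work.
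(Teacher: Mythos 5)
Your proposal is correct and follows essentially the same route as the paper. The paper's proof is deliberately terse, simply asserting that the normal crossings verification is straightforward and that the inclusion is a consequence of the local inclusion (\ref{inclusions2}); you fill in the localization bookkeeping (passing to stalks, choosing an adapted regular system of parameters via the normal crossings hypothesis, identifying the local collections with those in the preceding lemma, and invoking (\ref{S^-1Diff(R)=Diff(S^-1R)}) to glue). Two small points of imprecision that do not affect correctness: the ring $R_1$ in the preceding lemma is the chart ring, not the local ring at $z_1$ (the stalk $\mathcal{O}_{V_1,z_1}$ is a further localization, which you do acknowledge); and the stalk $\Lambda_z$ may contain unit ideals for those $H\in\Lambda$ not passing through $z$, so the identification with $\{x_jR:j\in\Phi\}$ should be understood up to discarding unit ideals (which impose no logarithmic condition and are therefore harmless).
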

(Here and further, logarithmic differential operators with respect to a
collection of hypersurfaces mean logarithmic differential operators with
respect to the ideals of these hypersurfaces.)
\begin{proof}
	The verification that $\Lambda_{1}$ has normal crossings is straightforward.
	The above inclusion is a consequence of (\ref{inclusions2}).
\end{proof}
\begin{parrafo}%
	\label{logarithmic_setting_and_blow-ups}
	In view of  Propositions \ref{Definition_of_G_M(Lambda,L)} and
	\ref{Giraud's_lemma,_logarithmic_version}, we now consider 3-tuples
	$(\mathscr{M},\Lambda,\mathcal{L})$, where $\mathscr{M}$ is an
	$\mathcal{O}_{V}^{q}$-module on a connected $F$-finite regular scheme
	$V$, $\Lambda$ is a finite collection of hypersurfaces on $V$ with only
	normal crossings, and $\mathcal{L}$ is an invertible $\mathcal{O}_{V}$-ideal
	included in $\mathcal{I}(H)$ for all $H\in\Lambda$. We attach to such
	a triple $(\mathscr{M},\Lambda,\mathcal{L})$ the $q$-differential collection
	$\mathcal{G}(\mathscr{M},\Lambda,\mathcal{L})$ obtained in Proposition \ref{Definition_of_G_M(Lambda,L)}. We set
	\begin{equation*}
		\operatorname{Sing}(\mathcal{G}(\mathscr{M},\Lambda,\mathcal {L})):=\{x\in V:
		\eta_{x}(\mathcal{G}(\mathscr{M},\Lambda,\mathcal{L}))\geq q\}
		\subset V.
	\end{equation*}
	A \emph{permissible center} for $(\mathscr{M},\Lambda,\mathcal{L})$
	is a
	closed irreducible regular subscheme $Z\subset V$ included in
	$\operatorname{Sing}(\mathcal{G}(\mathscr{M},\Lambda,\mathcal
	{L}))$ such that
	$\Lambda$ has only normal crossings with $Z$. Let
	$V\xleftarrow{\pi} V_{1}\supset H_{1}$ be the blow-up of $V$ along
	$Z$. Given a positive integer $a$, we define the \emph{$a$-transform of
		$(\mathscr{M},\Lambda,\mathcal{L})$} as the 3-tuple
	$(\mathscr{M}_{1}^{(a)},\Lambda_{1},\mathcal{L}_{1})$ on $V_{1}$, where
	\begin{enumerate}[(3)]
		\item[(1)] $\mathscr{M}_{1}^{(a)}:=(\mathscr{M}\mathcal
		{O}_{V_{1}}^{q})_{\mathcal{I}(H_{1})^{a}}$
		is the $a$-transform of $\mathscr{M}$ (Definition \ref{the_a-transform_of_an_O^q-module}),
		\item[(2)] $\Lambda_{1}$ is the collection of the strict transforms
		of the
		hypersurfaces in $\Lambda$ plus the exceptional hypersurface $H_{1}$,
		and
		\item[(3)] $\mathcal{L}_{1}:=(\mathcal{L}\mathcal
		{O}_{V_{1}})\mathcal{I}(H_{1})
		\subset\mathcal{O}_{V_{1}}$.
	\end{enumerate}
	Note that the family $\Lambda_{1}$ has only normal crossings and that
	$\mathcal{L}_{1}$ is included in $\mathcal{I}(H')$ for all
	$H'\in\Lambda_{1}$. Hence
	$(\mathscr{M}_{1},\Lambda_{1},\mathcal{L}_{1})$ is in the same situation
	as the original triple $(\mathscr{M},\Lambda,\mathcal{L})$. See Example \ref{final_example1}.
\end{parrafo}

\begin{proposition}%
	\label{inclusion_of_a-transforms_of_logarithmic_collections}
	Within the setting of \ref{logarithmic_setting_and_blow-ups}, assume in
	addition that
	$\eta_{x}(\mathcal{G}(\mathscr{M}, \Lambda,\mathcal{L}))\geq qa$
	for all
	$x\in Z$. Then $Z$ is permissible for $(\mathscr{M},a)$, and there is an
	inclusion
	\begin{align*}
		(\mathcal{G}(\mathscr{M},\Lambda,\mathcal{L})\mathcal{O}_{V_{1}})_{
			\mathcal{I}(H_{1})^{a}}&
		\subseteq\mathcal{G}(\mathscr {M}_{1},\Lambda_{1},
		\mathcal{L}_{1}),
	\end{align*}
	where the term on the left is the $a$-transform of
	$\mathcal{G}(\mathscr{M},\Lambda,\mathcal{L})$ (Definition \ref{the_transform_of_G}).
\end{proposition}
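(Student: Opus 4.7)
The plan is to prove the two claims separately. Part (1), that $Z$ is permissible for $(\mathscr{M},a)$, is immediate from Corollary \ref{inequality}: the hypothesis $\eta_x(\mathcal{G}(\mathscr{M},\Lambda,\mathcal{L})) \geq qa$ on $Z$ is precisely the condition that guarantees this. So the real work lies in establishing the componentwise inclusion in (2).

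For the inclusion, I would fix an index $i \in \{1,\ldots,q-1\}$ and unpack both sides. The $i$-th component of the $a$-transform on the left is $\mathcal{B}_i := ((\Diff_{V,\Lambda,+}^i(\mathscr{M}) : \mathcal{L}^i)\mathcal{O}_{V_1}) : \mathcal{I}(H_1)^{qa}$, while the $i$-th component on the right is $\Diff_{V_1,\Lambda_1,+}^i(\mathscr{M}_1^{(a)}) : \mathcal{L}_1^i$ with $\mathcal{L}_1^i = (\mathcal{L}\mathcal{O}_{V_1})^i \mathcal{I}(H_1)^i$. So the goal reduces to showing
\begin{equation*}
\mathcal{L}_1^i \cdot \mathcal{B}_i \subseteq \Diff_{V_1,\Lambda_1,+}^i(\mathscr{M}_1^{(a)}).
\end{equation*}

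I would multiply by the invertible ideal $\mathcal{I}(H_1)^{qa}$ and push the inclusions through the following chain:
\begin{align*}
\mathcal{I}(H_1)^{qa} \cdot \mathcal{L}_1^i \cdot \mathcal{B}_i
&\subseteq (\mathcal{L}\mathcal{O}_{V_1})^i \cdot \mathcal{I}(H_1)^i \cdot (\Diff_{V,\Lambda,+}^i(\mathscr{M}) : \mathcal{L}^i)\mathcal{O}_{V_1} \\
&\subseteq \mathcal{I}(H_1)^i \cdot \Diff_{V,\Lambda,+}^i(\mathscr{M})\mathcal{O}_{V_1} \\
&\subseteq \Diff_{V_1,\Lambda_1,+}^i(\mathscr{M}\mathcal{O}_{V_1}^q),
\end{align*}
where the first step uses the definition of $\mathcal{B}_i$ as a conductor, the second uses $\mathcal{L}^i \cdot (\Diff^i : \mathcal{L}^i) \subseteq \Diff^i$, and the third is exactly Proposition \ref{Giraud's_lemma,_logarithmic_version}. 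Then I would translate the right-hand term into one involving $\mathscr{M}_1^{(a)}$ using Proposition \ref{Well_definition_of_the_transforms}: since $\mathscr{M}\mathcal{O}_{V_1}^q \sim F^e(\mathcal{I}(H_1)^a) \mathscr{M}_1^{(a)}$ and $\Diff_{V_1,\Lambda_1,+}^i$ annihilates $\mathcal{O}_{V_1}^q$ while being $\mathcal{O}_{V_1}^q$-linear (as $i<q$), it factors the scalar out, yielding
\begin{equation*}
\Diff_{V_1,\Lambda_1,+}^i(\mathscr{M}\mathcal{O}_{V_1}^q) = \mathcal{I}(H_1)^{qa} \cdot \Diff_{V_1,\Lambda_1,+}^i(\mathscr{M}_1^{(a)}).
\end{equation*}
Canceling the invertible ideal $\mathcal{I}(H_1)^{qa}$ from both sides then gives the required inclusion.

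The main obstacle I anticipate is the bookkeeping around the two sources of $\mathcal{I}(H_1)$: one factor of $\mathcal{I}(H_1)^i$ comes from the new logarithmic structure $\mathcal{L}_1 = \mathcal{L}\mathcal{O}_{V_1} \cdot \mathcal{I}(H_1)$ and is absorbed into the logarithmic differential operators via Proposition \ref{Giraud's_lemma,_logarithmic_version}, while the factor $\mathcal{I}(H_1)^{qa}$ comes from the $a$-transform of the module and is absorbed using the $\mathcal{O}_{V_1}^q$-linearity of low-order differential operators together with the equivalence in Proposition \ref{Well_definition_of_the_transforms}. Keeping these two mechanisms cleanly separated, and verifying that the compatibility $\mathcal{L}_1 \subseteq \mathcal{I}(H')$ for every $H' \in \Lambda_1$ indeed holds (so that $\mathcal{G}(\mathscr{M}_1^{(a)}, \Lambda_1, \mathcal{L}_1)$ is well-defined as a $q$-differential collection in the sense of Proposition \ref{Definition_of_G_M(Lambda,L)}), is the delicate piece of the argument.
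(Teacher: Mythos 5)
Your proof is correct and follows essentially the same route as the paper's: establish permissibility of $Z$ from Corollary~\ref{inequality}, absorb the factor $\mathcal{I}(H_1)^i$ via Proposition~\ref{Giraud's_lemma,_logarithmic_version}, and then use Proposition~\ref{Well_definition_of_the_transforms} (i.e., $\mathscr{M}\mathcal{O}_{V_1}^q\sim F^e(\mathcal{I}(H_1)^a)\mathscr{M}_1^{(a)}$, together with $\mathcal{O}_{V_1}^q$-linearity and annihilation of $\mathcal{O}_{V_1}^q$) to pull out $\mathcal{I}(H_1)^{qa}$ and cancel it. The only difference is stylistic: you clear the conductors by multiplying through by the invertible ideals $\mathcal{I}(H_1)^{qa}$ and $\mathcal{L}_1^i$, whereas the paper manipulates the conductor ideals directly; the underlying chain of inclusions is the same.
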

\begin{proof}
	Corollary \ref{inequality} implies that
	$\eta_{x}(\mathscr{M}):=\eta_{x}(\mathcal{G}(\mathscr{M}))\geq
	\eta_{x}(
	\mathcal{G}(\mathscr{M},\Lambda,\mathcal{L}))\geq qa$ for all $x\in
	Z$. Hence
	$Z$ is permissible for $(\mathscr{M},a)$ by Proposition  \ref{Describing_Sing(M,a)_with_eta(M)}. Next, we have to prove that
	\begin{equation*}
		((\Diff_{V,\Lambda,+}^{i}(\mathscr{M}):\mathcal{L}^{i})
		\mathcal{O}_{V_{1}}: \mathcal{I}(H_{1})^{qa})
		\subseteq(\Diff_{V_{1},\Lambda_{1},+}^{i}( \mathscr{M}_{1}):
		\mathcal{L}_{1}^{i})
	\end{equation*}
	for $i=1,\ldots,q-1$. Since $Z$ is permissible for $(\mathscr{M},a)$,
	Proposition \ref{Well_definition_of_the_transforms} shows that
	$\mathscr{M}\mathcal{O}_{V_{1}}^{q}\sim(\mathcal{I}(H_{1})^{(q)})^{a}
	\mathscr{M}_{1}$; therefore
	$\Diff_{V_{1},\Lambda_{1},+}^{i}(\mathscr{M}\mathcal{O}_{V_{1}}^{q})=
	\mathcal{I}(H_{1})^{qa}\Diff_{V_{1},\Lambda_{1},+}^{i}(\mathscr{M}_{1})$.
	We use this to prove the above inclusion:
	\begin{align*}
		&((\Diff_{V,\Lambda,+}^{i}(\mathscr{M}):\mathcal{L}^{i})
		\mathcal{O}_{V_{1}}: \mathcal{I}(H_{1})^{qa})
		\\
		&\quad
		\subseteq((\Diff_{V,\Lambda
			,+}^{i}(\mathscr{M})
		\mathcal{O}_{V_{1}}:\mathcal{L}^{i}\mathcal{O}_{V_{1}}):
		\mathcal {I}(H_{1})^{qa})
		\\
		&\quad =(\Diff_{V,\Lambda,+}^{i}(\mathscr{M})\mathcal
		{O}_{V_{1}}:(\mathcal{L}^{i} \mathcal{O}_{V_{1}})
		\mathcal{I}(H_{1})^{qa})
		\\
		&\quad =(\mathcal{I}(H_{1})^{i}(\Diff_{V,\Lambda,+}^{i}(
		\mathscr {M})\mathcal{O}_{V_{1}}):( \mathcal{L}\mathcal{O}_{V_{1}})^{i}
		\mathcal{I}(H_{1})^{qa}\mathcal {I}(H_{1})^{i})
		\\
		&\quad \stackrel{\text{(\ref
				{Giraud's_lemma,_logarithmic_version})}} {\subseteq}( \Diff_{V_{1},\Lambda_{1},+}^{i}(
		\mathscr{M}\mathcal{O}_{V_{1}}^{q}): \mathcal{L}_{1}^{i}
		\mathcal{I}(H_{1})^{qa})
		\\
		&\quad =(\mathcal{I}(H_{1})^{qa}\Diff_{V_{1},\Lambda_{1},+}^{i}(
		\mathscr{M}_{1}): \mathcal{L}_{1}^{i}
		\mathcal{I}(H_{1})^{qa})
		\\
		&\quad =(\Diff_{V_{1},\Lambda_{1},+}^{i}(\mathscr{M}_{1}):
		\mathcal{L}_{1}^{i}).
	\end{align*}
	\end{proof}
\begin{corollary}%
	\label{eta(G1)_logarithmic_is_at_most_eta(G)}
	Within the setting of \ref{logarithmic_setting_and_blow-ups}, assume in
	addition that $a\geq1$ and that there exist $0\leq b<q$ such that
	$\eta_{x}(\mathcal{G}(\mathscr{M},\Lambda,\mathcal{L}))=aq+b$ for all
	$x\in Z$. Then $Z$ is permissible for $(\mathscr{M},a)$, and
	\begin{equation*}
		\eta_{\pi(x_{1})}(\mathcal{G}(\mathscr{M},\Lambda,\mathcal {L}))\geq
		\eta_{x_{1}}(\mathcal{G}(\mathscr{M}_{1},
		\Lambda_{1},\mathcal{L}_{1})), \quad \forall
		x_{1}\in V_{1}.
	\end{equation*}
\end{corollary}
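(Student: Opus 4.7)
The plan is to combine Proposition \ref{inclusion_of_a-transforms_of_logarithmic_collections} with Theorem \ref{fundamental_point-wise_inequality_for_eta(G)} in a straightforward two-step chain. Once we know that $Z$ is permissible for $(\mathscr{M},a)$ (which is the first conclusion of Proposition \ref{inclusion_of_a-transforms_of_logarithmic_collections}, using only the hypothesis $\eta_{x}(\mathcal{G}(\mathscr{M},\Lambda,\mathcal{L}))\geq qa$ on $Z$), the transforms $\mathscr{M}_{1}^{(a)}$, $\Lambda_{1}$, $\mathcal{L}_{1}$ are all well defined, so the right-hand side $\eta_{x_{1}}(\mathcal{G}(\mathscr{M}_{1},\Lambda_{1},\mathcal{L}_{1}))$ makes sense.

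For the pointwise inequality itself, first I would apply Theorem \ref{fundamental_point-wise_inequality_for_eta(G)} to the $q$-differential collection $\mathcal{G}:=\mathcal{G}(\mathscr{M},\Lambda,\mathcal{L})$, which is legitimate because by hypothesis the function $x\mapsto\eta_{x}(\mathcal{G})$ is constant equal to $aq+b$ along $Z$ with $0\leq b<q$. This yields
\begin{equation*}
\eta_{\pi(x_{1})}(\mathcal{G}(\mathscr{M},\Lambda,\mathcal{L}))\geq \eta_{x_{1}}\bigl((\mathcal{G}(\mathscr{M},\Lambda,\mathcal{L}))_{1}^{(a)}\bigr),\quad \forall x_{1}\in V_{1},
\end{equation*}
where $(\mathcal{G}(\mathscr{M},\Lambda,\mathcal{L}))_{1}^{(a)}$ denotes the $a$-transform in the sense of Definition \ref{the_transform_of_G}.

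Next I would invoke the componentwise inclusion
\begin{equation*}
(\mathcal{G}(\mathscr{M},\Lambda,\mathcal{L})\mathcal{O}_{V_{1}})_{\mathcal{I}(H_{1})^{a}}\subseteq \mathcal{G}(\mathscr{M}_{1},\Lambda_{1},\mathcal{L}_{1})
\end{equation*}
proved in Proposition \ref{inclusion_of_a-transforms_of_logarithmic_collections}. Since larger ideals have smaller order at any point, this componentwise inclusion immediately gives $\eta_{x_{1}}((\mathcal{G}(\mathscr{M},\Lambda,\mathcal{L}))_{1}^{(a)})\geq \eta_{x_{1}}(\mathcal{G}(\mathscr{M}_{1},\Lambda_{1},\mathcal{L}_{1}))$ for every $x_{1}\in V_{1}$. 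Concatenating the two inequalities yields the claim.

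There is no serious obstacle here: all the real work is packaged in Theorem \ref{fundamental_point-wise_inequality_for_eta(G)} (where the pointwise inequality for $\eta$ of a $q$-differential collection is extracted from Lemma \ref{technical_lemma_1} and the order-of-transform inequality of Proposition \ref{the_order_function_of_the_transform_of_an_ideal}) and in Proposition \ref{inclusion_of_a-transforms_of_logarithmic_collections} (where the logarithmic Giraud-type inclusion of Proposition \ref{Giraud's_lemma,_logarithmic_version} is used together with the relation $\mathscr{M}\mathcal{O}_{V_{1}}^{q}\sim F^{e}(\mathcal{I}(H_{1})^{a})\mathscr{M}_{1}^{(a)}$ from Proposition \ref{Well_definition_of_the_transforms}). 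The only thing to check carefully is that the hypothesis of Theorem \ref{fundamental_point-wise_inequality_for_eta(G)} is met, namely that $\eta(\mathcal{G}(\mathscr{M},\Lambda,\mathcal{L}))$ really is constant on $Z$ with remainder $b$ in the range $[0,q)$, which is exactly what is assumed; no additional verification is required.
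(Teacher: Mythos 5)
Your proposal is correct and follows essentially the same two-step route as the paper's own proof: apply Theorem \ref{fundamental_point-wise_inequality_for_eta(G)} to $\mathcal{G}(\mathscr{M},\Lambda,\mathcal{L})$, then use the componentwise inclusion of $a$-transforms from Proposition \ref{inclusion_of_a-transforms_of_logarithmic_collections} together with the monotonicity of $\eta$ under inclusion of collections.
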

\begin{proof}
	The fact that $Z$ is permissible for $(\mathscr{M},a)$ was already noted
	in Proposition \ref{inclusion_of_a-transforms_of_logarithmic_collections}. Next, by Theorem \ref{fundamental_point-wise_inequality_for_eta(G)} we have that
	$\eta_{\pi(x_{1})}(\mathcal{G}(\mathscr{M},\Lambda,\mathcal
	{L}))\geq
	\eta_{x_{1}}((\mathcal{G}(\mathscr{M},\Lambda,\mathcal{L})\mathcal
	{O}_{V_{1}})_{
		\mathcal{I}(H_{1})^{a}})$ for $x_{1}\in V_{1}$, and from Proposition \ref{inclusion_of_a-transforms_of_logarithmic_collections} we deduce the
	inequality
	$\eta_{x_{1}}((\mathcal{G}(\mathscr{M},\Lambda,\mathcal
	{L})\mathcal{O}_{V_{1}})_{
		\mathcal{I}(H_{1})^{a}})\geq\eta_{x_{1}}(\mathcal{G}(\mathscr{M}_{1},
	\Lambda_{1},\mathcal{L}_{1}))$, $\forall x_{1}\in V_{1}$. These two
	inequalities
	imply the corollary.
\end{proof}

\begin{remark}%
	\label{setting_for_logarithmic}
	We use our previous results to define ``permissible'' sequences of
	transformations
	of a module by using functions satisfying the pointwise inequality:
	
	Fix an $\mathcal{O}_{V}^{q}$-submodule
	$\mathscr{M}\subseteq\mathcal{O}_{V}$. We attach to $\mathscr{M}$
	the 3-tuple
	$(\mathscr{M}_{0},\Lambda_{0},\mathcal{L}_{0}):=(\mathscr
	{M},\emptyset,
	\mathcal{O}_{V})$ with its associated $q$-differential collection
	$\mathcal{G}(\mathscr{M}_{0},\Lambda_{0},\mathcal{L}_{0})=\mathcal{G}(
	\mathscr{M})$. From this collection and its associated function $\eta$
	we can define sequences of transformations of the triple
	$(\mathscr{M}_{0},\Lambda_{0},\mathcal{L}_{0})$, say
	\begin{equation}
		\label{50342} \xymatrix@R=0pt @C=30pt { V_{0} & \ar[l]_-{\pi_{1}} V_{1} & \ar[l]_-{
				\pi_{2}} \ldots& \ar[l]_-{\pi_{r}} V_{r}%
			\\
			(\mathscr{M}_{0},\Lambda_{0},\mathcal{L}_{0})& (\mathscr{M}_{1},
			\Lambda_{1},\mathcal{L}_{1}) & \ldots& (\mathscr{M}_{r},\Lambda_{r},
			\mathcal{L}_{r}) }
	\end{equation}
	constructed as follows. For $i=0,\ldots, r-1$,
	$V_{i}\leftarrow V_{i+1}$ is a blowup with a permissible center
	$Z_{i}$ for the triple
	$(\mathscr{M}_{i},\Lambda_{i},\mathcal{L}_{i})$, along which the function
	$x\mapsto\eta_{x}(\mathcal{G}(\mathscr{M}_{i},\Lambda_{i},\mathcal
	{L}_{i}))$
	reaches its maximum value, say $a_{i}q+b_{i}$ (with $0\leq b_{i}<q$ and
	$a_{i}\geq1$). The triple
	$(\mathscr{M}_{i+1},\Lambda_{i+1},\mathcal{L}_{i+1})$ is the
	$a_{i}$-transform
	of $(\mathscr{M}_{i},\Lambda_{i},\mathcal{L}_{i})$.
	
	By Corollary \ref{eta(G1)_logarithmic_is_at_most_eta(G)} the functions
	$\eta$ defined from the $q$-differential collections
	$\mathcal{G}(\mathscr{M}_{i},\Lambda_{i},\mathcal{L}_{i})$ satisfy
	the fundamental
	pointwise inequality; in particular,
	\begin{align}
		\label{last_inequality} a_{1}q+b_{1}\geq
		a_{2}q+b_{2}\geq a_{3}q+b_{3}
		\geq\cdots.
	\end{align}
	By the same corollary, $Z_{i}$ is permissible for
	$(\mathscr{M}_{i},a_{i})$.
\end{remark}

\begin{example}%
	\label{final_example1}
	Let $V=\Spec(\mathbb{F}_{3}[x_{1},x_{2},x_{3},x_{4},x_{5}])$ and
	$q=p=3$. We consider
	$\mathscr{M}:=\mathcal{O}_{V}^{3}\cdot x_{1}x_{2}x_{3}x_{4}x_{5}$,
	$\Lambda:=\emptyset$, and $\mathcal{L}:=\mathcal{O}_{V}$. Then
	\begin{eqnarray*}
		\mathcal{G}(\mathscr{M},\Lambda,\mathcal{L})&=&\mathcal{G}(\mathscr{M})
		\\
		&=& \biggl(
		\biggl\langle\frac{x_{1}x_{2}x_{3}x_{4}x_{5}}{x_{j}}: j=1, \ldots,5 \biggr\rangle, \biggl\langle
		\frac
		{x_{1}x_{2}x_{3}x_{4}x_{5}}{x_{i}x_{j}}: 1\leq i<j\leq5 \biggr\rangle \biggr).
	\end{eqnarray*}
	The maximum value of
	$\eta_{x}(\mathcal{G}(\mathscr{M},\Lambda,\mathcal{L}))=\eta_{x}(
	\mathscr{M})$ is $5=3\cdot1+2$ (so that $a=1$), and this maximum is attained
	only when $x$ is the origin.
	
	We blowup $V$ at the origin and look at the affine chart
	$V_{1}:=\Spec(\mathbb{F}_{3}[x_{1},x_{2}', x_{3}',x_{4}',x_{5}'])$, where
	$x_{i}'=\frac{x_{i}}{x_{1}}$ for $i=2,\ldots,5$. The 1-transform of
	$\mathscr{M}$ (on $V_{1}$) is
	$\mathscr{M}_{1}:=\mathcal{O}_{V_{1}}^{3}\cdot
	x_{1}^{2}x_{2}'x_{3}'x_{4}'x_{5}'$.
	Note also that $\Lambda_{1}=\{\mathcal{O}_{V_{1}}\cdot x_{1}\}$ and
	$\mathcal{L}_{1}=\mathcal{O}_{V_{1}}x_{1}$. A~straightforward computation
	yields
	\begin{align*}
		\mathcal{G}(\mathscr{M}_{1},\Lambda_{1},
		\mathcal{L}_{1})={}& \biggl( \biggl( \biggl\langle x_{1}^{2}
		\frac{x'_{2}x'_{3}x'_{4}x'_{5}}{x'_{j}}: j=2,\ldots,5 \biggr\rangle:\mathcal{O}_{V_{1}}
		\cdot x_{1} \biggr),
		\\
		&{} \biggl( \biggl\langle x_{1}^{2}
		\frac{x'_{2}x'_{3}x'_{4}x'_{5}}{x'_{i}x'_{j}}: 2\leq i<j\leq5 \biggr\rangle:\mathcal{O}_{V_{1}}
		\cdot x_{1}^{2} \biggr) \biggr)
		\\
		={}& \biggl( \biggl\langle x_{1}\frac{x'_{2}x'_{3}x'_{4}x'_{5}}{x'_{j}}: j=2, \ldots,5
		\biggr\rangle, \biggl\langle\frac
		{x'_{2}x'_{3}x'_{4}x'_{5}}{x'_{i}x'_{j}}: 2\leq i<j\leq5 \biggr\rangle
		\biggr).
	\end{align*}
	The maximum value of
	$\eta_{x}(\mathcal{G}(\mathscr{M}_{1},\Lambda_{1},\mathcal
	{L}_{1}))$ along
	points of $V_{1}$ is $4=3\cdot1+1$ and is attained along the regular subscheme
	$Z_{1}$ defined by $\langle x_{2}',x_{3}',x_{4}',x_{5}'\rangle$, whereas
	the maximum value of
	$\eta_{x}(\mathcal{G}(\mathscr{M}_{1}))=\eta_{x}(\mathscr{M}_{1})$ is
	$6=3\cdot2$, and it is only attained at the origin.
	
	We blow up $V_{1}$ along $Z_{1}$ and look at the affine chart
	$V_{2}:=\Spec(\mathbb{F}_{3}[x_{1},x_{2}',x_{3}'', x_{4}'',x_{5}''])$,
	where $x_{i}'':=\frac{x_{i}'}{x_{2}'}$ for $i=3,4,5$. The 1-transform of
	$\mathscr{M}_{1}$ (on $V_{2}$) is
	$\mathscr{M}_{2}:=\mathcal{O}_{V_{2}}^{3}\cdot
	x_{1}^{2}x_{2}'x_{3}''x_{4}''x_{5}''$.
	Note also that
	$\Lambda_{2}=\{\mathcal{O}_{V_{2}}\cdot x_{1},\mathcal{O}_{V_{2}}
	\cdot x_{2}\}$ and that
	$\mathcal{L}_{2}=\mathcal{O}_{V_{2}}\cdot x_{1}x_{2}'$. A~straightforward
	computation yields
	\begin{align*}
		\mathcal{G}(\mathscr{M}_{2},\Lambda_{2},
		\mathcal{L}_{2})={}& \biggl( \biggl( \biggl\langle x_{1}^{2}x_{2}'
		\frac{x''_{3}x''_{4}x''_{5}}{x''_{j}}: j=3,4,5 \biggr\rangle:\mathcal {O}_{V_{2}} \cdot
		x_{1}x_{2}' \biggr),
		\\
		&{} ( \langle
		x_{1}^{2}x_{2}'x''_{j}:
		j=3,4, 5 \rangle:\mathcal{O}_{V_{2}}\cdot x_{1}^{2}x_{2}^{\prime 2}
		) \biggr)
		\\
		={}& \biggl( \biggl\langle x_{1}\frac{x''_{3}x''_{4}x''_{5}}{x''_{j}}: j=3,4,5 \biggr
		\rangle, \langle x_{3}'',x_{4}'',x_{5}''
		\rangle \biggr).
	\end{align*}
	The maximum value of
	$\eta_{x}(\mathcal{G}(\mathscr{M}_{2},\Lambda_{2},\mathcal
	{L}_{2}))$ is
	3, and it is attained along the regular subscheme $Z_{2}$ defined by the
	ideal $\langle x''_{3},x''_{4},x''_{5}\rangle$. On the other hand, the
	maximum of
	$\eta_{x}(\mathcal{G}(\mathscr{M}_{2}))=\eta_{x}(\mathscr{M}_{2})$ is
	$6=3\cdot2$, and it is only attained at the origin.
	
	We blow up $V_{2}$ along $Z_{2}$ and look at the affine chart
	$V_{3}=\operatorname{Spec}(\mathbb
	{F}_{3}[x_{1},x_{2}',x_{3}'', x_{4}''',x_{5}'''])$,
	where $x_{i}''':=\frac{x_{i}''}{x_{3}''}$ for $i=4,5$. The 1-transform
	of $\mathscr{M}_{2}$ is
	$\mathscr{M}_{3}:=\mathcal{O}_{V_{3}}^{3}\cdot
	x_{1}^{2}x_{2}'x_{4}'''x_{5}'''$.
	Note also that
	$\Lambda_{3}=\{\mathcal{O}_{V_{3}}\cdot x_{1},\mathcal{O}_{V_{3}}
	\cdot x_{2}',\mathcal{O}_{V_{3}}\cdot x_{3}''\}$ and that
	$\mathcal{L}_{3}=\mathcal{O}_{V_{3}}\cdot x_{1}x_{2}'x_{3}''$.
	A~straightforward
	computation yields
	\begin{align*}
		&\mathcal{G}(\mathscr{M}_{3},\Lambda_{3},
		\mathcal{L}_{3})
		\\
		&\quad = ( ( \langle x_{1}^{2}x_{2}'x'''_{j},
		j=4,5 \rangle: \mathcal{O}_{V_{2}}\cdot x_{1}x_{2}'
		x_{3}'' ), ( \langle
		x_{1}^{2}x_{2}' \rangle:
		\mathcal{O}_{V_{2}}\cdot x_{1}^{2}x_{2}^{\prime 2}x_{3}^{\prime\prime 2}
		) )
		\\
		&\quad = ( \langle x_{1}{x'''_{j}}:
		j=4,5 \rangle, \mathcal {O}_{V_{3}} ).
	\end{align*}
	
	The process stops here, at least over this chart, as now the maximum of
	$\eta_{x}(\mathcal{G}_{3}(\mathscr{M}_{3},\Lambda_{3},\mathcal{L}_{3}))$
	is $2<3$. Moreover, the maximum of $\eta_{x}(\mathscr{M}_{3})$ is still
	5, and hence the $V_{3}$-scheme, say
	\begin{equation*}
		X_{3}=\operatorname{Spec}((\mathcal{O}^{3}_{V_{3}}[
		\mathscr{M}_{3}])^{1/3}),
	\end{equation*}
	still has points of multiplicity 3. In fact, the dimension of the closed
	set of points of multiplicity 3 of $X_{3}$ is higher than the dimension
	of those of the original scheme, say
	$X=\Spec((\mathcal{O}^{3}_{V}[\mathscr{M}])^{1/3})$ (recall here that
	$X$ has highest multiplicity $q=3$ and that $X_{3}$ is obtained form
	$X$ by blowing up successively at regular equimultiple centers of multiplicity
	3).
	
	Nevertheless, at least in this example, points of multiplicity 3 of
	$X_{3}$ are easier to deal with than those of the original scheme
	$X$. In fact, we can eliminate points of multiplicity 3 of $X_{3}$ by simply
	blowing up at the higher-dimensional components of the closed set of points
	of multiplicity 3, namely by blowing up first at the components of codimension
	2, followed by a blowup at a component of codimension 3.
\end{example}

We hope that the invariants developed in this paper, defined in terms of
differentials and of logarithmic differentials, will eventually lead to
a simplification of the highest multiplicity locus and hence to a reduction
of points of multiplicity $q$.



\section*{Acknowledgement}
	We thank C. Abad, A. Benito, A. Bravo, and S. Encinas for stimulating
	discussions on these questions. We are very grateful to the referee, whose
	comments helped us to significantly improve the exposition. The first author
	gratefully acknowledges support by CONICET (Argentina) in the form of a
	Postdoctoral
	Fellowship and also Universidad Aut\'{o}noma de Madrid and ICMAT (Madrid)
	for the support and for excellent working conditions.

	\bibliography{References}
	\bibliographystyle{abbrv}
	
\end{document}